\documentclass[a4paper,10pt]{amsart}
\usepackage{amsmath,amssymb}
\usepackage{amscd}
\pagestyle{plain}
\newtheorem{thm}{Theorem}[section]
\newtheorem{prop}[thm]{Proposition}
\newtheorem{lemma}[thm]{Lemma}
\newtheorem{cor}[thm]{Corollary}

\theoremstyle{remark}
\newtheorem{remark}[thm]{Remark}
\newcommand{\id}{{\rm{id}}}
\newcommand{\Ad}{{\rm{Ad}}}
\newcommand{\Hom}{{\rm{Hom}}}

\newcommand{\BC}{\mathbf C}

\newcommand{\BK}{\mathbf K}
\newcommand{\BB}{\mathbf B}
\newcommand{\la}{\langle}
\newcommand{\ra}{\rangle}

\newcommand{\Pic}{{\rm{Pic}}}
\newcommand{\Equi}{{\rm{Equi}}}
\newcommand{\Aut}{{\rm{Aut}}}
\newcommand{\Int}{{\rm{Int}}}
\newcommand{\Ima}{{\rm{Im}}}
\newcommand{\Ker}{{\rm{Ker}}}
\newcommand{\rU}{{\rm{U}}}
\newtheorem{Def}{Definition}[section]

\title{Equivariant Picard groups of $C^*$-algebras with finite dimensional $C^*$-Hopf algebra coactions}
\author{Kazunori Kodaka}
\address{Department of Mathematical Sciences, Faculty of Science, Ryukyu
\endgraf
University, Nishihara-cho, Okinawa, 903-0213, Japan}
\address{\sl{E-mail address}: \rm{kodaka@math.u-ryukyu.ac.jp}}

\begin{document}
\maketitle
\begin{abstract}
Let $A$ be a $C^*$-algebra and $H$ a finite dimensional $C^*$-Hopf algebra
with its dual $C^*$-Hopf algebra $H^0$. Let $(\rho, u)$ be a twisted coaction of $H^0$ on $A$.
We shall define the $(\rho, u, H)$-equivariant Picard group of $A$, which is denoted by
$\Pic_H^{\rho, u}(A)$, and discuss basic properties of $\Pic_H^{\rho, u}(A)$.
Also, we suppose that $(\rho, u)$ is the coaction of $H^0$ on the unital $C^*$-algebra $A$,
that is, $u=1\otimes 1^0$. We investigate the relation between $\Pic(A^s )$, the ordinary Picard group
of $A^s$ and $\Pic_H^{\rho^s}(A^s )$ where $A^s$ is the stable $C^*$-algebra of $A$
and $\rho^s$ is the coaction of $H^0$ on $A^s$ induced by $\rho$.
Furthermore, we shall show that $\Pic_{H^0}^{\widehat{\rho}}(A\rtimes_{\rho, u}H)$ is
isomorphic to $\Pic_H^{\rho, u}(A)$, where $\widehat{\rho}$ is the dual coaction of $H$ on
the twisted crossed product $A\rtimes_{\rho, u}H$ of $A$ by the twisted coaction $(\rho, u)$
of $H^0$ on $A$.

\end{abstract}

\section{Introduction}\label{sec:intro}Let $A$ be a $C^*$-algebra and $H$ a finite dimensional
$C^*$-Hopf algebra with its dual $C^*$-Hopf algebra $H^0$. Let $(\rho, u)$
be a twisted coaction of $H^0$ on $A$. We shall define the $(\rho, u, H)$-equivariant
Picard group of $A$, which is denoted by $\Pic_H^{\rho, u}(A)$.
Also, we shall give a similar result to the ordinary Picard group
as follows: Let $\Aut_H^{\rho, u}(A)$ be the group of all automorphisms $\alpha$ of $A$
satisfying that $(\alpha\otimes\id)\circ\rho=\rho\circ\alpha$ and let $\Int_H^{\rho, u}(A)$ be
the normal subgroup of $\Aut_H^{\rho, u}(A)$ consisting of all generalized inner
automorphisms $\Ad(v)$ of $A$ satisfying that $\rho(v)=v\otimes 1^0$, where
$v$ is a unitary element in the multiplier algebra $M(A)$ of $A$. Then we have the
following exact sequence:
$$
1\longrightarrow \Int_H^{\rho, u}(A)\longrightarrow \Aut_H^{\rho, u}(A)
\longrightarrow \Pic_H^{\rho, u}(A) .
$$
Especially, let $A^s$ be a stable $C^*$-algebra of a unital $C^*$-algebra $A$ and
$\rho$ a coaction of $H^0$ on $A$.
Also, let $\rho^s$ be the coaction of $H^0$ on $A^s$ induced by a coaction $\rho$
of $H^0$ on $A$. Then under a certain condition, we can obtain the exact
sequence
$$
1\longrightarrow \Int_H^{\rho^s}(A^s )\longrightarrow \Aut_H^{\rho^s}(A^s )
\longrightarrow \Pic_H^{\rho^s }(A^s )\longrightarrow 1 .
$$
In order to do this, we shall extend definitions and results in the case of unital $C^*$-algebras
to ones in the case of non-unital $C^*$-algebras in the section of Preliminaries.
Using this result, we shall investigate the relation between $\Pic (A^s )$, the ordinary Picard
group of $A^s$ and $\Pic_H^{\rho^s}(A^s )$, the $(\rho^s, H)$-equivariant Picard group of $A^s$.
Furthermore, we shall show that $\Pic_{H^0}^{\widehat{\rho}}(A\rtimes_{\rho, u}H)$
is isomorphic to $\Pic_H^{\rho, u}(A)$, where $\widehat{\rho}$ is the dual coaction of
$H$ on the twisted crossed product $A\rtimes_{\rho, u}H$ of $A$ by the twisted coaction
$(\rho, u)$.

\section{Preliminaries}\label{sec:pre}Let $H$ be a finite dimensional $C^*$-Hopf algebra.
We denote its comultiplication, counit and antipode by $\Delta$, $\epsilon$ and $S$, respectively. We shall
use Sweedler's notation $\Delta(h)=h_{(1)}\otimes h_{(2)}$ for any $h\in H$
which suppresses a possible summation when we write comultiplications. We denote by $N$
the dimension of $H$. Let $H^0$ be the dual $C^*$-Hopf algebra of $H$. We denote its
comultiplication, counit and antipode by $\Delta^0$, $\epsilon^0$ and $S^0$, respectively. There is the distinguished
projection $e$ in $H$. We note that $e$ is the Haar trace on $H^0$. Also, there is the distinguished projection
$\tau$ in $H^0$ which is the Haar trace on $H$. Since $H$ is finite dimensional,
$H\cong\oplus_{k=1}^L M_{f_k }(\BC)$ and $H^0 \cong\oplus_{k=1}^K M_{d_k }(\BC)$ as $C^*$-algebras.
Let $\{\, v_{ij}^k \, | \, k=1,2,\dots,L, \, i,j=1,2,\dots,f_k \, \}$
be a system of matrix units of $H$. Let $\{\, w_{ij}^k \, |\, k=1,2,\dots,K, \, i,j=1,2,\dots,d_k \}$
be a basis of $H$ satisfying Szyma\'nski and Peligrad's \cite [Theorem 2.2,2]{SP:saturated},
which is called  a system of {\it comatrix} units of $H$, that is,
the dual basis of a system of matrix units of $H^0$. Also let $\{\, \phi_{ij}^k \, |\, k=1,2,\dots,K, \, i,j=1,2,\dots,d_k \}$
and $\{\, \omega_{ij}^k \, |\, k=1,2,\dots,L, \, i,j=1,2,\dots,f_k \}$ be systems of matrix units and comatrix units of $H^0$,
respectively.
\par
Let $A$ be a $C^*$-algebra and $M(A)$ its multiplier algebra. Let $p, q$ be
projections in $A$. If $p$ and $q$ are Murray-von Neumann equivalent,
then we denote it by $p\sim q$ in $A$. We denote by $\id_A$ and $1_A$ the identity map on $A$ and the unit element
in $A$, respectively. We simply denote them by $\id$ and $1$ if
no confusion arises. Modifying Blattner, Cohen and
Montgomery \cite [Definition 2.1]{BCM:crossed}, we shall define a weak coaction of $H^0$ on $A$.

\begin{Def}\label{Def:wcoaction}By a weak coaction of $H^0$ on $A$, we mean a $*$-homomorphism
$\rho:A\to A\otimes H^0$ satisfying the following conditions:
\newline
(1) $\overline{\rho(A)(A\otimes H^0 )}=A\otimes H^0$,
\newline
(2) $(\id\otimes\epsilon^0 )(\rho(x))=x$ for any $x\in A$,
\par
By a coaction of $H^0$ on $A$, we mean a weak coaction $\rho$ such that
\newline
(3) $(\rho\otimes\id)\circ\rho =(\id\otimes \Delta^0 )\circ\rho$.
\end{Def}

By Condition (1) in Definition \ref{Def:wcoaction}, for any approximate unit $\{u_{\alpha}\}$ of
$A$ and $x\in A\otimes H^0$, $\rho(u_{\alpha})x\to x$ $(\alpha\to\infty)$. Hence $\rho(1)=1\otimes 1^0$ when
$A$ is unital. Since $H^0$
is finite dimensional, $M(A\otimes H^0 )\cong M(A)\otimes H^0$.
We identify $M(A\otimes H^0 )$ with $M(A)\otimes H^0$. We also identify $M(A\otimes H^0 \otimes H^0 )$
with $M(A)\otimes H^0 \otimes H^0$. Let $\rho$ be a weak coaction of
$H^0$ on $A$. By Jensen and Thomsen \cite [Corollary 1.1.15]{JT:KK}, there is the unique strictly continuous
homomorphism $\underline{\rho}: M(A)\to M(A)\otimes H^0$ extending $\rho$.

\begin{lemma}\label{lem:extension}With the above notations, $\underline{\rho}$ is a weak
coaction of $H^0$ on $M(A)$.
\end{lemma}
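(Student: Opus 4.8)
The plan is to verify directly the two defining conditions of a weak coaction (Definition~\ref{Def:wcoaction}, parts (1) and (2)) for the extension $\underline{\rho}\colon M(A)\to M(A)\otimes H^0$, using nothing more than the nondegeneracy that is built into Condition~(1) for $\rho$ and the uniqueness of the strictly continuous extension of a nondegenerate $*$-homomorphism.

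For Condition~(1): the identity $\overline{\rho(A)(A\otimes H^0)}=A\otimes H^0$ says precisely that $\rho\colon A\to M(A\otimes H^0)=M(A)\otimes H^0$ is nondegenerate, and so the extension theorem of Jensen and Thomsen \cite[Corollary 1.1.15]{JT:KK} produces an extension $\underline{\rho}$ that is unital, i.e.\ $\underline{\rho}(1)=1\otimes 1^0$. Hence $1\otimes1^0\in\underline{\rho}(M(A))$, so $\underline{\rho}(M(A))(M(A)\otimes H^0)=M(A)\otimes H^0$ even without passing to closures, which gives Condition~(1) for $\underline{\rho}$.

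For Condition~(2): first note that $\id_A\otimes\epsilon^0\colon A\otimes H^0\to A$ is a nondegenerate $*$-homomorphism, since $\epsilon^0$ is a unital $*$-character and hence $(\id_A\otimes\epsilon^0)(a\otimes 1^0)=a$. I would then observe that $\id_{M(A)}\otimes\epsilon^0\colon M(A)\otimes H^0\to M(A)$ is strictly continuous: $\epsilon^0$, being a character of $H^0\cong\oplus_k M_{d_k}(\BC)$, is the coordinate projection onto a one-dimensional summand, so $\id_{M(A)}\otimes\epsilon^0$ is the projection of $M(A\otimes H^0)\cong\oplus_k M_{d_k}(M(A))$ onto the corresponding block, which is patently strictly continuous (equivalently, it coincides with the canonical strict extension of $\id_A\otimes\epsilon^0$). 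Consequently $(\id_{M(A)}\otimes\epsilon^0)\circ\underline{\rho}$ is a strictly continuous $*$-homomorphism $M(A)\to M(A)$ which, on the subalgebra $A$, restricts to $(\id_A\otimes\epsilon^0)\circ\rho=\id_A$ by Condition~(2) for $\rho$. Since $A$ is strictly dense in $M(A)$ — approximate $m\in M(A)$ by the bounded net $m u_\alpha$ for an approximate unit $\{u_\alpha\}$ of $A$ — and strict limits are unique, $(\id_{M(A)}\otimes\epsilon^0)\circ\underline{\rho}$ must equal $\id_{M(A)}$, which is Condition~(2) for $\underline{\rho}$.

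The one step deserving genuine care is the strict continuity of $\id_{M(A)}\otimes\epsilon^0$ at the level of multiplier algebras (equivalently, that it agrees with the Jensen–Thomsen extension of $\id_A\otimes\epsilon^0$); once that is in hand the remainder is a routine continuity-and-density argument. Condition~(3) plays no role here, since the assertion is only that $\underline{\rho}$ is a weak coaction.
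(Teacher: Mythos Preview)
Your proposal is correct and follows essentially the same route as the paper: for Condition~(1) you use that nondegeneracy of $\rho$ forces $\underline{\rho}(1)=1\otimes 1^0$ (the paper phrases this via an approximate unit), and for Condition~(2) you invoke strict continuity of $\id\otimes\epsilon^0$ together with strict density of $A$ in $M(A)$, which is exactly what the paper does (it just asserts the strict continuity by citing finite-dimensionality of $H^0$, whereas you spell out the block-projection reason).
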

\begin{proof}
Clearly $\underline{\rho}$ is a $*$-homomorphism of $M(A)$ to $M(A)\otimes H^0$.
Let $\{u_{\alpha}\}$ be an approximate unit of $A$. Then by Condition (1) in Definition \ref{Def:wcoaction},
$\{\rho(u_{\alpha})\}$ is an approximate unit of $A\otimes H^0$.
Hence $\underline{\rho}(1)=1\otimes 1^0$. Since $H^0$ is finite dimensional,
$\id\otimes\epsilon^0$ is strictly continuous. Hence $\underline{\rho}$ satisfies
Condition (2) in Definition \ref{Def:wcoaction}.
\end{proof}

Let $\rho$ be a weak coaction of $H^0$ on $A$ and $u$ a unitary element
in $M(A)\otimes H^0 \otimes H^0$. Following Masuda and Tomatsu
\cite [Section 3]{MT:Kac}, we shall define a twisted coaction of $H^0$ on $A$.

\begin{Def}\label{Def:tcoaction}The pair $(\rho, u)$ is a 
\sl
twisted coaction
\rm
of $H^0$ on $A$ if the following conditions hold:
\newline
(1) $(\rho\otimes \id)\circ\rho=\Ad(u)\circ (\id\otimes\Delta^0 )\circ\rho$,
\newline
(2) $(u\otimes 1^0 )(\id\otimes\Delta^0 \otimes \id)(u)
=(\underline{\rho}\otimes \id\otimes \id)(u)(\id\otimes \id\otimes\Delta^0 )(u)$,
\newline
(3) $(\id\otimes \id \otimes\epsilon^0 )(u)=(\id\otimes\epsilon^0 \otimes \id)(u)
=1\otimes 1^0$.
\end{Def}

\begin{remark}\label{remark:extension}Let $(\rho, u)$ be a twisted coaction of $H^0$ on $A$.
Since $H^0$ is finite dimensional, $\id_{M(A)}\otimes \Delta^0$
is strictly continuous. Hence by Lemma \ref{lem:extension}, $(\underline{\rho}, u)$ satisfies
Definition \ref{Def:tcoaction}. Therefore, $(\underline{\rho}, u)$ is a twisted coaction of
$H^0$ on $M(A)$. Hence if $\rho$ is a coaction of $H^0$ on $A$, $\underline{\rho}$ is
a coaction of $H^0$ on $M(A)$.
\end{remark}

Let $\Hom (H, M(A))$ be the linear space of all linear maps from $H$ to $M(A)$.
Then by Sweedler \cite [pp69-70]{Sweedler:Hopf}, it becomes a unital
convolution *-algebra. Similarly, we define $\Hom (H\times H, M(A))$. We note
that $\epsilon$ and $\epsilon\otimes\epsilon$ are the unit elements in $\Hom (H, M(A))$
and $\Hom (H\times H, M(A))$, respectively.

Modifying Blattner, Cohen and Montgomery, \cite [Definition 1.1]{BCM:crossed}
we shall define a weak action of $H$ on $A$.

\begin{Def}\label{Def:waction}By a weak action of $H$ on $A$ , we mean a bilinear
map $(h, x)\mapsto h\cdot x$ of $H\times A$ to $A$ satisfying the following conditions:
\newline
(1) $h\cdot (xy)=[h_{(1)}\cdot x][h_{(2)}\cdot y]$ for any $h\in H$, $x, y\in A$,
\newline
(2) $[h\cdot u_{\alpha}]x\to\epsilon (h)x$ for any approximate unit $\{u_{\alpha}\}$
of $A$ and $x\in A$.
\newline
(3) $1\cdot x=x$ for any $x\in A$,
\newline
(4) $[h\cdot x]^* =S(h)^* \cdot x^*$ for any $h\in H$, $x\in A$.
\par
By an action of $H$ on $A$, we mean a weak action of such that
\newline
(5) $h\cdot [l\cdot x]=(hl)\cdot x$ for any $x\in A$ and $h, l\in H$.
\end{Def}

Since $H$ is finite dimensional, as mentioned in
\cite [pp. 163]{BCM:crossed}, there is an isomorphism $\imath$ of
$M(A)\otimes H^0$ onto $\Hom (H, M(A))$ defined by $\imath(x\otimes \phi)(h)=\phi(h)x$
for any $x\in M(A)$, $h\in H$, $\phi\in H^0$. Also, we can define
an isomorphism $\jmath$ of $M(A)\otimes H^0 \otimes H^0$ onto $\Hom (H\times H, M(A))$
in the similar way to the above. We note that $\imath(A\otimes H^0 )=\Hom(H, A)$ and
$\jmath (A\otimes H^0 \otimes H^0 )=\Hom (H\otimes H, A)$. For any $x\in M(A)\otimes H^0$
and $y\in M(A)\otimes H^0 \otimes H^0 $, we denote $\imath(x)$ and $\jmath(y)$
by $\widehat{x}$ and $\widehat{y}$, respectively.
\par
Let a bilinear map $(h, x)\mapsto h\cdot x$ from $H\times A$ to $A$ be
a weak action. For any $x\in A$, let $f_x$ be the linear map from $H$ to $A$
defined by $f_x (h)=h\cdot x$ for any $h\in H$. Let $\rho$ be the linear map
from $A$ to $A\otimes H^0$ defined by $\rho(x)=\imath^{-1}(f_x )$ for any
$x\in A$.

\begin{lemma}\label{lem:tocoaction}With the above notations, $\rho$ is a weak coaction of
$H^0$ on $A$.
\end{lemma}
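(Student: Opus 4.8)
The plan is to push everything through the isomorphism $\imath$. Since $H^0$ is finite dimensional, $\imath$ is a homeomorphism of $M(A)\otimes H^0$ onto $\Hom(H,M(A))$, it is an algebra isomorphism for the convolution product, it is $*$-preserving for the involution $g^*(h)=g(S(h)^*)^*$ on $\Hom(H,M(A))$, and it carries $A\otimes H^0$ onto $\Hom(H,A)$. Because $\widehat{\rho(x)}=f_x$ by the definition of $\rho$, it therefore suffices to prove: (a) $x\mapsto f_x$ is a $*$-homomorphism of $A$ into $\Hom(H,A)$; (b) $f_x(1)=x$ for all $x\in A$ (the translation of Definition \ref{Def:wcoaction}(2), using that $\epsilon^0$ is evaluation at $1\in H$); and (c) $\overline{\{f_x*g:x\in A,\ g\in\Hom(H,A)\}}=\Hom(H,A)$ (the translation of Definition \ref{Def:wcoaction}(1)). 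I would first record these reductions carefully and then dispatch (a)--(c) using the axioms of Definition \ref{Def:waction}.

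For (a): multiplicativity is immediate from Definition \ref{Def:waction}(1), since $(f_x*f_y)(h)=[h_{(1)}\cdot x][h_{(2)}\cdot y]=h\cdot(xy)=f_{xy}(h)$. For the involution I would use Definition \ref{Def:waction}(4) together with the identities $S^2=\id$ and $S(h)^*=S(h^*)$, which hold because $H$ is a finite dimensional $C^*$-Hopf algebra; these give $S(S(h)^*)^*=h$, whence $(f_x)^*(h)=[S(h)^*\cdot x]^*=S(S(h)^*)^*\cdot x^*=h\cdot x^*=f_{x^*}(h)$. For (b): $f_x(1)=1\cdot x=x$ by Definition \ref{Def:waction}(3). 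So $\rho$ is a $*$-homomorphism of $A$ into $A\otimes H^0$ satisfying Definition \ref{Def:wcoaction}(2).

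It remains to prove (c). Fix $g\in\Hom(H,A)$ and an approximate unit $\{u_\alpha\}$ of $A$. For $h\in H$, expanding the finite Sweedler sum $\Delta(h)=h_{(1)}\otimes h_{(2)}$ and applying Definition \ref{Def:waction}(2) to each summand, $(f_{u_\alpha}*g)(h)=[h_{(1)}\cdot u_\alpha]\,g(h_{(2)})\to\epsilon(h_{(1)})g(h_{(2)})=g(h)$. Evaluating on a linear basis of $H$ and using that $H$ is finite dimensional, $f_{u_\alpha}*g\to g$ in norm in $\Hom(H,A)$; applying the homeomorphism $\imath^{-1}$ shows that $g$, pulled back, lies in $\overline{\rho(A)(A\otimes H^0)}$, while the reverse inclusion $\rho(A)(A\otimes H^0)\subseteq A\otimes H^0$ is trivial. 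I expect (c) to be the only step requiring genuine care: one has to be scrupulous about transferring the convolution-algebra limit in $\Hom(H,A)$ back across $\imath$ into the closure statement in $A\otimes H^0$, and about applying Definition \ref{Def:waction}(2) termwise to the Sweedler expansion rather than treating $h_{(1)}$, $h_{(2)}$ as single elements. Everything else is a routine unwinding of definitions.
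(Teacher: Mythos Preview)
Your proof is correct and follows essentially the same strategy as the paper: verify that $\rho$ is a $*$-homomorphism satisfying the counit condition (which the paper simply asserts ``by its definition''), and then show that $\rho(u_\alpha)$ acts as an approximate unit on $A\otimes H^0$ via the weak-action axiom (2).

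The only noteworthy difference is in presentation of the density step. The paper works directly in $A\otimes H^0$: it chooses a basis $\{\phi_j\}$ of $H^0$ with the special property $\sum_j\phi_j=1^0$, takes the dual basis $\{h_j\}$, and uses this constraint to force $\epsilon(h_j)=1$ for every $j$, so that each coefficient $u_{\alpha j}$ of $\rho(u_\alpha)$ converges strictly to $1$. You instead stay in the convolution picture and use the counit identity $\epsilon(h_{(1)})g(h_{(2)})=g(h)$ directly on a Sweedler expansion, then pull the limit back through the homeomorphism $\imath^{-1}$. Your route is slightly cleaner, as it avoids the ad hoc basis choice; the paper's route has the minor advantage of never leaving $A\otimes H^0$. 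Both are short and entirely equivalent.
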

\begin{proof}By its definition, $\rho$ is a $*$-homomorphism of $A$ to $A\otimes H^0$
satisfying Conditions (2) in Definiotn \ref{Def:wcoaction}.
So, we have only to show that $\rho$ satisfies Condition (1) in Definition \ref{Def:wcoaction}. Let
$\{u_{\alpha}\}$ be an approximate unit of $A$. We can write that
$\rho(u_{\alpha})=\sum_j u_{\alpha j}\otimes\phi_j$, where $u_{\alpha j}\in A$ and
$\{\phi_j \}$ is a basis of $H^0$ with $\sum_j \phi_j =1^0 $. Let $\{h_j \}$ be the
dual basis of $H$ corresponding to $\{\phi_j \}$. Then for any $x\in A$ and $j$,
$[h_j \cdot u_{\alpha}]x\to \epsilon (h_j )x$ by Condition (2) in Definition \ref{Def:waction}.
Since $[h_j \cdot u_{\alpha}]x=(\id\otimes h_j )(\rho(u_{\alpha}))x=u_{\alpha j}x$,
$u_{\alpha j}x\to\epsilon (h_j )x$ for any $j$. Also, since $\sum_j \phi_j =1^0 $,
$$
1=\phi_j (h_j )=\sum_i \phi_i (h_j )=1^0 (h_j )=\epsilon (h_j )
$$
for any $j$. Hence $u_{\alpha j}x\to x$ for any $j$. Therefore, for any $x\in A$ and $\phi\in H^0 $,
$$
\rho(u_{\alpha})(x\otimes\phi)=\sum_j u_{\alpha j}x\otimes\phi_j \phi \to \sum_j x\otimes\phi_j \phi
=x\otimes \phi .
$$
Thus $\overline{\rho(A)(A\otimes H^0 )}=A\otimes H^0 $.
\end{proof}

For any weak coaction $\rho$ of $H^0$ on $A$, we can define the bilinear
map $(h, x)\mapsto h\cdot_{\rho}x$ from $H\times A$ to $A$ by
$$
h\cdot_{\rho}x=(\id\otimes h)(\rho(x))=\rho(x)^{\widehat{}}(h) .
$$
We shall prove that the above map is a weak action of $H$ on $A$.

\begin{lemma}\label{lem:toaction}With the above notations, the linear map
$(h, x)\mapsto h\cdot_{\rho}x$ from $H\times A$ to $A$ is a weak action of $H$
on $A$.
\end{lemma}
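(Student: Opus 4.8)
The plan is to verify the five defining conditions of a weak action (Definition \ref{Def:waction}) directly from the defining conditions of a weak coaction (Definition \ref{Def:wcoaction}), translating each coaction identity into a convolution identity in $\Hom(H, A)$ via the isomorphism $\imath$. The key observation throughout is that for $h \in H$, $x \in A$, and $\rho(x) = x_{(1)} \otimes x_{(2)}^0$ in Sweedler-type notation for $H^0$-coactions, we have $h \cdot_{\rho} x = x_{(1)} \, x_{(2)}^0(h)$, so that evaluation against $h$ of products in $A \otimes H^0$ produces the convolution structure and hence the comultiplication $\Delta$ of $H$ appears by duality with $\Delta^0$.

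First I would check (1): since $\rho$ is a $*$-homomorphism, $\rho(xy) = \rho(x)\rho(y)$, and applying $\id \otimes h$ and using that multiplication in $H^0$ is dual to $\Delta$ on $H$ gives $(\id \otimes h)(\rho(x)\rho(y)) = [h_{(1)} \cdot_{\rho} x][h_{(2)} \cdot_{\rho} y]$; this is essentially the computation that $\imath$ is an algebra isomorphism onto the convolution algebra $\Hom(H, A)$. For (3), $1 \cdot_{\rho} x = (\id \otimes 1)(\rho(x)) = (\id \otimes \epsilon^0)(\rho(x)) = x$ by Condition (2) of Definition \ref{Def:wcoaction}, using that the unit $1$ of $H$ is the counit $\epsilon^0$ of $H^0$. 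For (4), $[h \cdot_{\rho} x]^* = (\id \otimes h)(\rho(x))^* = (\id \otimes \overline{h})(\rho(x)^*) = (\id \otimes \overline{h})(\rho(x^*))$ since $\rho$ is a $*$-homomorphism, and one then identifies $\overline{h}$ acting this way with $S(h)^*$ acting on $x^*$ via the compatibility of the antipode $S$ of $H$ with the $*$-structure (the standard identity $S(h)^* = \overline{S^{-1}(h)}$ or the appropriate finite-dimensional $C^*$-Hopf algebra version), so that $[h \cdot_{\rho} x]^* = S(h)^* \cdot_{\rho} x^*$.

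The main obstacle is Condition (2), the approximate-unit condition $[h \cdot_{\rho} u_{\alpha}] x \to \epsilon(h) x$. This is precisely the content that was extracted in the proof of Lemma \ref{lem:tocoaction} but run in the reverse direction: there, Condition (2) of the weak action was used to deduce the density Condition (1) of the weak coaction; here I must use the density condition $\overline{\rho(A)(A \otimes H^0)} = A \otimes H^0$ to recover the approximate-unit behaviour. Concretely, write $\rho(u_{\alpha}) = \sum_j u_{\alpha j} \otimes \phi_j$ for a fixed basis $\{\phi_j\}$ of $H^0$ with dual basis $\{h_j\}$ of $H$; the density condition forces $\rho(u_{\alpha})(x \otimes \phi) \to x \otimes \phi$, and pairing with $h_j$ (together with the identity $\epsilon(h_j) = 1^0(h_j) = 1$ established in the proof of Lemma \ref{lem:tocoaction}, which shows $\sum_j \phi_j = 1^0$ because $\{\phi_j\}$ can be chosen so that $\rho(u_\alpha) \to 1 \otimes 1^0$ strictly) yields $u_{\alpha j} x \to x = \epsilon(h_j) x$ for each $j$. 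Expanding a general $h = \sum_j \epsilon(h_j)^{-1}\epsilon(h) \cdots$ — more precisely, writing $h$ in terms of the basis, $h = \sum_j h_j \phi_j(h)$ is not quite it; rather one writes $h \cdot_{\rho} u_{\alpha} = (\id \otimes h)(\rho(u_\alpha)) = \sum_j u_{\alpha j}\, \phi_j(h)$, so $[h \cdot_{\rho} u_\alpha] x = \sum_j \phi_j(h)\, u_{\alpha j} x \to \sum_j \phi_j(h)\, x = 1^0(h)\, x = \epsilon(h)\, x$, using again that $\sum_j \phi_j = 1^0$ and $1^0 = \epsilon$ as functionals on $H$. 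I expect the delicate point to be justifying that the approximate unit $\{u_\alpha\}$ can be used so that $\rho(u_\alpha)$ converges strictly to $1 \otimes 1^0$ and that the finite sums commute with the limits, which is routine since $H^0$ is finite dimensional, so all the topologies involved reduce to the norm topology on $A$ in each of the finitely many coordinates.
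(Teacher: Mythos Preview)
Your proposal is correct and follows the same approach as the paper: conditions (1), (3), (4) are routine consequences of $\rho$ being a $*$-homomorphism and of the duality between $(H,\epsilon,S)$ and $(H^0,1^0,*)$, so the only substantive point is Condition~(2), and for that one uses that $\rho(u_\alpha)$ is an approximate unit of $A\otimes H^0$ (from the proof of Lemma~\ref{lem:extension}).

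Where the paper is cleaner is in the execution of Condition~(2). You expand $\rho(u_\alpha)=\sum_j u_{\alpha j}\otimes\phi_j$ in a basis, argue coordinate-wise that $u_{\alpha j}x$ converges, and then reassemble using $\sum_j\phi_j(h)=\epsilon(h)$; along the way you invoke the special basis of Lemma~\ref{lem:tocoaction} with $\sum_j\phi_j=1^0$, which is not actually needed and muddles the logic. The paper bypasses all of this with the single identity
\[
[h\cdot_\rho u_\alpha]\,a=(\id\otimes h)\bigl(\rho(u_\alpha)(a\otimes 1^0)\bigr),
\]
and then uses only that $\id\otimes h$ is a bounded linear map $A\otimes H^0\to A$, so $\rho(u_\alpha)(a\otimes 1^0)\to a\otimes 1^0$ immediately gives $(\id\otimes h)(a\otimes 1^0)=1^0(h)\,a=\epsilon(h)\,a$. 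This is the same idea as yours, just without the detour through a basis.
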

\begin{proof}We have only to show that the above linear map satisfies Condition (2) in
Definition \ref{Def:waction}. Let $\{u_{\alpha}\}$ be an approximate unit of $A$. Then for any
$x\in A\otimes H^0$, $\rho(u_{\alpha})x\to x$ by the proof of Lemma \ref{lem:extension}. We can write that $\rho(u_{\alpha})
=\sum_j u_{\alpha j }\otimes \phi_j $, where $u_{\alpha j }\in A$ and $\{\phi_j \}$
is a basis of $H^0$. Then for any $a\in A$,
\begin{align*}
[h\cdot_{\rho}u_{\alpha}]a & =(\id\otimes h)(\rho(u_{\alpha}))a=\sum_j u_{\alpha j}\phi_j (h)a \\
& =(\id\otimes h)(\rho(u_{\alpha})(a\otimes 1^0 ))\to\epsilon(h)a
\end{align*}
since $\id\otimes h$ is a bounded operator from $A\otimes H^0$ to $A$.
\end{proof}
\begin{remark}\label{remark:some}By the proofs of Lemmas \ref{lem:tocoaction} and \ref{lem:toaction},
Condition (2) in Definition \ref{Def:waction} is equivalent to
the following Condition (2)':
\newline
(2)' $[h\cdot u_{\alpha}]x\to\epsilon(h)x$ for some approximate unit of $A$ and any $x\in A$.
\newline
Also, if $A$ is unital, Condition (2) in Definition \ref{Def:waction} means that $h\cdot 1=\epsilon(h)$
for any $h\in H$.
\end{remark}

Let $\rho$ be a weak coaction of $H^0$ on $A$. Then by Lemma \ref{lem:toaction}, there
is a weak action of $H$ on $A$. We call it the weak action of $H$ on $A$
\sl
induced by $\rho$.
\rm
Also, by Lemma \ref{lem:extension}, there is the weak coaction $\underline{\rho}$
of $H^0$ on $M(A)$, which is an extension of $\rho$ to $M(A)$. Hence we can obtain
the action of $H$ on $M(A)$ induced by $\underline{\rho}$. We can see that this action
is an extension of the action induced by $\rho$ to $M(A)$.

\begin{Def}\label{cocycle}Let $\sigma: H\times H\to M(A)$ be a bilinear map.
$\sigma$ is a
\it
unitary cocycle
\rm
for a weak action of $H$ on $A$ if $\sigma$ satisfies the
following conditions:
\newline
(1) $\sigma$ is a unitary element in $\Hom (H\times H, M(A))$,
\newline
(2) $\sigma$ is normal, that is, for any $h\in H$, $\sigma(h, 1)=\sigma(1, h)=\epsilon(h)1$,
\newline
(3) (cocycle condition) For any $h, l, m\in H$,
\par
$[h_{(1)}\cdot\sigma(l_{(1)}, m_{(1)})]\sigma(h_{(2)}, l_{(2)}m_{(2)})=\sigma(h_{(1)}, l_{(1)})\sigma(h_{(2)}l_{(2)}, m)$,
\newline
(4) (twisted modular condition) For any $h, l\in H$, $x\in A$,
\par
$[h_{(1)}\cdot[l_{(1)}\cdot x]]\sigma(h_{(2)}, l_{(2)})=\sigma(h_{(1)}, l_{(1)})[(h_{(2)}l_{(2)})\cdot x]$,
\newline
where if necessary, we consider the extension of the weak action to $M(A)$.
\end{Def}

We call a pair which is consisting of a weak action of $H$ on $A$ and its unitary
cocycle, a
\it
twisted action
\rm
of $H$ on $A$.
\par
Let $(\rho, u)$ be a twisted coaction of $H^0$ on $A$. Then we can consider the twisted
action of $H$ on $A$ and its unitary cocycle $\widehat{u}$ defined by
$$
h\cdot_{\rho, u}x =\rho(x)^{\widehat{}}(h)=(\id\otimes h)(\rho(x))
$$
for any $x\in A$ and $h\in H$. We call it the twisted
action
\sl
induced by $(\rho, u)$.
\rm
 Also, we can consider the twisted coaction
$(\underline{\rho}, u)$ of $H^0$ on $M(A)$ and the twisted action of $H$
on $M(A)$ induced by $(\underline{\rho}, u)$. Let $M(A)\rtimes_{\underline{\rho}, u}H$
be the twisted crossed product by the twisted action of $H$ on $M(A)$ induced
by $(\underline{\rho}, u)$. Let $x\rtimes_{\underline{\rho}, u}h$ be the element in
$M(A)\rtimes_{\underline{\rho}, u}H$ induced by  elements $x\in M(A)$, $h\in H$.
Let $A\rtimes_{\rho, u}H$ be the set of all finite sums of elements
in the form $x\rtimes_{\underline{\rho}, u}h$, where $x\in A$, $h\in H$. By easy computations,
we can see that $A\rtimes_{\rho, u}H$ is a closed two-sided ideal of
$M(A)\rtimes_{\underline{\rho}, u}H$. We call it the twisted crossed product by $(\rho, u)$ and its 
element denotes by $x\rtimes_{\rho, u}h$, where $x\in A$ and $h\in H$.
Let $E_1^{\underline{\rho}, u}$ be the canonical conditional expectation from
$M(A)\rtimes_{\underline{\rho}, u}H$ onto $M(A)$ defined by
$$
E_1^{\underline{\rho}, u}(x\rtimes_{\underline{\rho}, u}h)=\tau(h)x
$$
for any $x\in M(A)$ and $h\in H$. Let $\Lambda$ be the set of all triplets $(i, j, k)$,
where $i, j=1,2,\dots, d_k$ and $k=1,2,\dots, K$ with $\sum_{k=1}^K d_k^2 =N$.
Let $W_I =\sqrt{d_k}\rtimes_{\underline{\rho}, u}w_{ij}^k$ for any $I=(i, j, k)\in\Lambda$.
By \cite [Proposition 3.18]{KT1:inclusion}, $\{(W_I^* , \, W_I )\}_{I\in\Lambda}$
is a quasi-basis for $E_1^{\underline{\rho}, u}$. We suppose that $A$ acts on a Hilbert space
faithfully and nondegenerately.

\begin{lemma}\label{lem:multiplier}With the above notations, $M(A)\rtimes_{\underline{\rho}, u}H
=M(A\rtimes_{\rho, u}H)$.
\end{lemma}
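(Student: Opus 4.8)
The plan is to show that the canonical $*$-homomorphism
$$
\pi:M(A)\rtimes_{\underline{\rho},u}H\longrightarrow M(A\rtimes_{\rho,u}H),\qquad \pi(b)=(c\mapsto bc,\ c\mapsto cb),
$$
which exists precisely because $A\rtimes_{\rho,u}H$ is a closed two-sided ideal of $M(A)\rtimes_{\underline{\rho},u}H$, is a $*$-isomorphism. Write $B=A\rtimes_{\rho,u}H$, $\widetilde B=M(A)\rtimes_{\underline{\rho},u}H$, $E=E_1^{\underline{\rho},u}$. First I record the easy preliminaries: $\widetilde B$ is unital (with unit $1_{M(A)}\rtimes_{\underline{\rho},u}1_H$) since $M(A)$ is; the map $\iota:A\to B$, $\iota(a)=a\rtimes_{\rho,u}1$, is an injective $*$-homomorphism whose image is nondegenerate in $B$ (for an approximate unit $\{u_\alpha\}$ of $A$, $\iota(u_\alpha)(x\rtimes_{\rho,u}h)=u_\alpha x\rtimes_{\rho,u}h\to x\rtimes_{\rho,u}h$, so $\iota(u_\alpha)\to 1$ strictly in $M(B)$); and $\underline{\iota}(M(A))=\{x\rtimes_{\underline{\rho},u}1:x\in M(A)\}$ is exactly the range of $E$, onto which $E$ is a conditional expectation (in particular an $M(A)$-bimodule map).

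\emph{Injectivity.} Since $\ker\pi\subseteq\{b\in\widetilde B:Bb=0\}$, it suffices to show the latter is $\{0\}$. Let $b\in\widetilde B$ with $Bb=0$. Using the quasi-basis $\{(W_I^*,W_I)\}_{I\in\Lambda}$ for $E$, write $b=\sum_{I\in\Lambda}W_I^*E(W_Ib)$; since $B$ is an ideal and $W_I\in\widetilde B$, $B(W_Ib)=(BW_I)b\subseteq Bb=0$. So it is enough to prove that $c\in\widetilde B$, $Bc=0$ imply $E(c)=0$. For $a\in A$ we have $\iota(a)c\in Bc=0$, whence $aE(c)=E(\iota(a)c)=0$ by the bimodule property of $E$; thus $AE(c)=0$, and since $A$ is an essential ideal of $M(A)$, $E(c)=0$. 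Therefore $b=0$. From now on we regard $\widetilde B\subseteq M(B)$; in particular $1_{M(B)}=1_{\widetilde B}$, $W_I\in M(B)$, and $E$ restricts to the canonical conditional expectation $E_1^{\rho,u}:B\to\iota(A)$, for which $\{(W_I^*,W_I)\}$ still satisfies $x=\sum_IW_I^*E_1^{\rho,u}(W_Ix)$ for all $x\in B$.

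\emph{Surjectivity.} Because $\iota(A)$ is nondegenerate in $B$ and $E_1^{\rho,u}$ is a conditional expectation onto it, one checks that for $d\in M(B)$ the prescription $\overline E(d)\iota(a):=E_1^{\rho,u}(d\,\iota(a))$, $\iota(a)\overline E(d):=E_1^{\rho,u}(\iota(a)\,d)$ $(a\in A)$ defines an element $\overline E(d)\in M(\iota(A))=\underline{\iota}(M(A))\subseteq\widetilde B$, yielding a map $\overline E:M(B)\to\widetilde B$ that extends $E_1^{\rho,u}$ and satisfies $\overline E(d\,\iota(a))=\overline E(d)\iota(a)$. Now let $m\in M(B)$ and put $\widehat b:=\sum_{I\in\Lambda}W_I^*\,\overline E(W_Im)\in\widetilde B$. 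For every $a\in A$, using the right $\iota(A)$-linearity of $\overline E$, that $W_I(m\iota(a))\in B$, and the quasi-basis relation applied to $m\iota(a)\in B$,
$$
\widehat b\,\iota(a)=\sum_IW_I^*\,\overline E(W_Im)\iota(a)=\sum_IW_I^*\,E_1^{\rho,u}\bigl(W_I(m\iota(a))\bigr)=m\iota(a).
$$
Hence $(\widehat b-m)\iota(a)=0$ for all $a\in A$, so $(\widehat b-m)c=\lim_\alpha(\widehat b-m)\iota(u_\alpha)c=0$ for every $c\in B$, i.e.\ $(\widehat b-m)B=0$; then for $c\in B$ the element $c(\widehat b-m)\in B$ satisfies $c(\widehat b-m)B=0$, hence vanishes, so $B(\widehat b-m)=0$ as well, and since $B$ is essential in $M(B)$ we conclude $m=\widehat b\in\widetilde B$. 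Thus $\pi$ is surjective, completing the proof.

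I expect the surjectivity step to be the main obstacle: the real content is to manufacture, from an abstract multiplier of $B$, an honest element of $M(A)\rtimes_{\underline{\rho},u}H$, and this is exactly where the finite-dimensionality of $H$ enters — it makes $B$ a finitely generated module over $\iota(A)$ with the explicit ``basis'' $\{W_I\}$ and lets the quasi-basis reconstruction formula be pushed from $B$ to $M(B)$. The delicate point is building the extension $\overline E$ of $E_1^{\rho,u}$ to $M(B)$ and keeping the left/right module bookkeeping consistent; once $\overline E$ is in hand the displayed computation is essentially forced. (The standing assumption that $A$ acts faithfully and nondegenerately on a Hilbert space is not needed for this argument, though it permits a more spatial variant.)
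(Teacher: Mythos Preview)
Your proof is correct and hinges on the same key device as the paper's---the quasi-basis $\{(W_I^*,W_I)\}$ for the canonical conditional expectation---but the execution differs. The paper works spatially: having fixed a faithful nondegenerate Hilbert-space representation of $A$, the inclusion $M(A)\rtimes_{\underline{\rho},u}H\subseteq M(A\rtimes_{\rho,u}H)$ is declared ``clear,'' and for the reverse inclusion one approximates $x\in M(B)$ by a bounded strictly convergent net $x_\alpha\in B$, writes each $x_\alpha=\sum_I E_1^{\underline{\rho},u}(x_\alpha W_I^*)W_I$, verifies that $E_1^{\underline{\rho},u}(\,\cdot\,W_I^*)$ is strictly continuous, and passes to the limit to get $x=\sum_I E_1^{\underline{\rho},u}(xW_I^*)W_I\in M(A)\rtimes_{\underline{\rho},u}H$. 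Your argument is instead purely module-theoretic: you prove injectivity by an essential-ideal argument (which the paper bypasses via the spatial picture), and for surjectivity you build the extension $\overline E:M(B)\to\underline{\iota}(M(A))$ by hand from the multiplier formulas and then invoke the quasi-basis identity directly. Your route has the advantage, which you note, of not needing the Hilbert-space hypothesis or any strict-topology bookkeeping; the paper's route is a bit shorter once the spatial setup is granted and avoids constructing $\overline E$ explicitly. Both are ultimately the same idea: the finite quasi-basis lets one push the reconstruction formula from $B$ up to $M(B)$.
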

\begin{proof}By the definition of the multiplier algebras $M(A)$ and  $M(A\rtimes_{\rho, u}H)$,
it is clear that $M(A)\rtimes_{\underline{\rho}, u}H\subset M(A\rtimes_{\rho, u}H)$
since $M(A)\rtimes_{\underline{\rho}, u}H$ and $M(A\rtimes_{\rho, u}H)$ act on the same Hilbert space.
We show that another inclusion. Let $x\in M(A\rtimes_{\rho, u}H)$.
Then there is a bounded net $\{x_{\alpha}\}_{\alpha\in\Gamma}\subset A\rtimes_{\rho, u}H$ such that
$\{x_{\alpha}\}_{\alpha\in\Gamma}$ converges to $x$ strictly. Since $x_{\alpha}\in A\rtimes_{\rho, u}H$,
$x_{\alpha}=\sum_I E_1^{\underline{\rho}, u}(x_{\alpha}W_I^* )W_I$. By the definition of $E_1^{\underline{\rho}, u}$,
$E_1^{\underline{\rho}, u}(x_{\alpha}W_I^* )\in A$. Also, for any $a\in A$,
$$
\lim_{\alpha\to\infty}E_1^{\underline{\rho}, u}(x_{\alpha}W_I^* )a =\lim_{\alpha\to\infty}E_1^{\underline{\rho}, u}
(x_{\alpha}W_I^* a)=E_1^{\underline{\rho}, u}(xW_I^* a) \\
=E_1^{\underline{\rho}, u}(xW_I^* )a .
$$
Similarly, $\lim_{\alpha\to\infty}aE_1^{\underline{\rho}, u}(x_{\alpha}W_I^* )
=aE_1^{\underline{\rho}, u}(xW_I^* )$.
Hence $E_1^{\underline{\rho}, u}(xW_I^* )\in M(A)$.
Also, by the above discussions, we can see that $E_1^{\underline{\rho}, u}(\, \cdot \, \, W_I^*  )$ is strictly continuous
for any $I\in \Lambda$.
For any $a\in A$ and $h\in H$,
\begin{align*}
(a\rtimes_{\rho, u}h)E_1^{\underline{\rho}, u}(x_{\alpha}W_I^* ) & =a[h_{(1)}\cdot_{\rho, u}E_1^{\underline{\rho},u}
(x_{\alpha}W_I^* )]\rtimes_{\rho, u}h_{(2)} \\
& =a((\id\otimes h_{(1)})\circ(\underline{\rho}\otimes\id))(E_1^{\underline{\rho}, u}(x_{\alpha}W_I^* )
\rtimes_{\rho, u}h_{(2)}) .
\end{align*}
Since $\id\otimes h_{(1)}$, $\underline{\rho}\otimes\id$ and $E_1^{\underline{\rho}, u}(\, \cdot  \,\, W_I )$ are
strictly continuous for any $I\in\Lambda$, we can see that
$$
\lim_{\alpha\to\infty}(a\rtimes_{\rho, u}h)E_1^{\underline{\rho}, u}(x_{\alpha}W_I^* )
=(a\rtimes_{\rho, u}h)E_1^{\underline{\rho}, u}(xW_I^* ) .
$$
Similarly, we can see that for any $a\in A$, $h\in H$,
$$
\lim_{\alpha\to\infty}E_1^{\underline{\rho}, u}(x_{\alpha}W_I^* )(a\rtimes_{\rho, u}h)
=E_1^{\underline{\rho}, u}(xW_I^* )(a\rtimes_{\rho, u}h).
$$
Thus $E_1^{\underline{\rho}, u}(x_{\alpha}W_I^* )$ converges to $E_1^{\underline{\rho}, u}(xW_I^* )$
in $M(A\rtimes_{\rho, u}H)$ strictly. Therefore, $x=\sum_I E_1^{\underline{\rho}, u}(xW_I^* )W_I$
since $x_{\alpha}=\sum_I E_1^{\underline{\rho}, u}(x_{\alpha}W_I^* )W_I$.
It follows that $x\in M(A)\rtimes_{\underline{\rho}, u}H$.
\end{proof}

\begin{remark}\label{remark:dual}Let $(\underline{\rho})^{\widehat{}}$ be the dual coaction of $\underline{\rho}$ of
$H$ on $M(A)\rtimes_{\underline{\rho}, u}H$ and $\underline{(\widehat{\rho})}$ the coaction of $H$
on $M(A\rtimes_{\rho, u}H)$ induced by the dual coaction $\widehat{\rho}$ of $H$ on $A\rtimes_{\rho, u}H$.
By Lemma \ref{lem:multiplier}, we can see that $(\underline{\rho})^{\widehat{}}=\underline{(\widehat{\rho})}$.
Indeed, by Lemma \ref{lem:multiplier}, it suffices to show that
$\underline{(\widehat{\rho})}(x\rtimes_{\underline{\rho}, u}h)
=(\underline{\rho})^{\widehat{}}(x\rtimes_{\underline{\rho}, u}h)$ for any
$x\in M(A)$ and $h\in H$. Since $x\in M(A)$, there is a bounded net $\{x_{\alpha}\}\subset A$
such that $x_{\alpha}$ converges to $x$ strictly in $M(A)$. Then since
$x_{\alpha}\rtimes_{\rho, u}h$ converges to $x\rtimes_{\underline{\rho}, u}h$ strictly in $M(A)\rtimes_{\underline{\rho}, u}H$ and
$\underline{(\widehat{\rho})}$ is strictly continuous,
\begin{align*}
\underline{(\widehat{\rho})}(x\rtimes_{\underline{\rho}, u}h) & =
\lim_{\alpha\to\infty}\widehat{\rho}(x_{\alpha}\rtimes_{\rho, u}h)
=\lim_{\alpha\to\infty}(x_{\alpha}\rtimes_{\rho, u}h_{(1)})\otimes h_{(2)} \\
& =(x\rtimes_{\underline{\rho}, u}h_{(1)})\otimes h_{(2)}
=(\underline{\rho})^{\widehat{}}(x\rtimes_{\underline{\rho}, u}h) ,
\end{align*}
where the limits are taken under the strict topology. We denote it by
$\underline{\widehat{\rho}}$.
\end{remark}

Next, we extend \cite [Theorem 3.3]{KT2:coaction} to a twisted coaction
of $H^0$ on a (non-unital) $C^*$-algebra $A$. Before doing it, we
define the exterior equivalence for twisted coactions of a finite dimensional
$C^*$-Hopf algebra $H^0$ on a $C^*$-algebra $A$.

\begin{Def}\label{Def:exterior}Let $(\rho, u)$ and $(\sigma, v)$ be twisted coactions
of $H^0$ on $A$. We say that $(\rho, u)$ is 
\sl
exterior equivalent
\rm
to $(\sigma, v)$ if there is a unitary element $w\in M(A)\otimes H^0$
satisfying the following conditions:
\newline
(1) $\sigma=\Ad(w)\circ\rho$,
\newline
(2) $v=(w\otimes 1^0 )(\underline{\rho}\otimes\id)(w)u(\id\otimes\Delta^0 )(w^* )$.
\end{Def}

The above conditions (1), (2) are equivalent to the following, respectively:
\newline
(1)' $h\cdot_{\sigma, v}a =\widehat{w}(h_{(1)})[h_{(2)}\cdot_{\rho, u}a]\widehat{w}^* (h_{(3)})$
for any $a\in A$ and $h\in H$,
\newline
(2)' $\widehat{v}(h, l)=\widehat{w}(h_{(1)})[h_{(2)}\cdot_{\underline{\rho}, u}\widehat{w}(l_{(1)})]
\widehat{u}(h_{(3)}, l_{(2)})\widehat{w}^* (h_{(4)}l_{(3)})$ for any $h, l\in H^0$.
\newline
If $\rho$ and $\sigma$ are coactions of $H^0$ on $A$, Conditions (1), (2) and (1)', (2)' are following:
\newline
(1) $\sigma=\Ad(w)\circ\rho$,
\newline
(2) $(w\otimes 1^0 )(\underline{\rho}\otimes\id)(w)=(\id\otimes\Delta^0 )(w)$,
\newline
(1)' $h\cdot_{\sigma}a =\widehat{w}(h_{(1)})[h_{(2)}\cdot_{\rho}a]\widehat{w}^* (h_{(3)})$
for any $a\in A$, $h\in H^0$,
\newline
(2)' $\widehat{w}(h_{(1)})[h_{(2)}\cdot_{\underline{\rho}}\widehat{w}(l)]=\widehat{w}(hl)$
for any $h, l\in H^0$.
\newline
Furthermore, let $(\rho, u)$ be a twisted coaction of $H^0$ on $A$ and let $w$ be any
unitary element in $M(A)\otimes H^0$ with $(\id\otimes\epsilon^0 )(w)=1^0 $.
Let
$$
\sigma=\Ad(w)\circ\rho , \quad v=(w\otimes 1^0 )(\underline{\rho}\otimes\id)(w)u(\id\otimes\Delta^0 )(w^* ) .
$$
Then $(\sigma, v)$ is a twisted coaction of $H^0$ on $A$ by easy computations.

In the case of twisted coactions on von Neumann algebras, Vaes and Vanierman \cite {VV:bicrossed}
and in the coase of ordinary coactions on $C^*$-algebras,
Baaj and Skandalis \cite {BS:Kasparov} have already obtained the much more generalized results than the following.
We give a proof related to inclusions of unital $C^*$-algebras of Watatani index-finite type.

\begin{prop}\label{prop:nonunital}Let $A$ be a $C^*$-algebra and
$H$ a finite dimensional $C^*$-Hopf algebra with its dual $C^*$-Hopf
algebra $H^0$. Let $(\rho, u)$ be a twisted coaction of $H^0$ on $A$.
Then there are an isomorphism $\Psi$ of $M(A)\otimes M_N (\BC)$
onto $M(A)\rtimes_{\underline{\rho}, u}H\rtimes_{\underline{\widehat{\rho}}}H^0$ and
a unitary element $U\in (M(A)\rtimes_{\underline{\rho}, u}H\rtimes_{\underline{\widehat{\rho}}}H^0 )
\otimes H^0$ such that
\begin{align*}
& \Ad(U)\circ\widehat{\widehat{\rho}}=(\Psi\otimes\id_{H^0})\circ(\rho\otimes\id_{M_N (\BC)})\circ\Psi^{-1} , \\
& (\Psi\otimes\id_{H^0}\otimes\id_{H^0})(u\otimes I_N )=(U\otimes 1^0 )(\underline{\widehat{\widehat{\rho}}}
\otimes\id_{H^0})(U)(\id\otimes\Delta^0 )(U^* ) , \\
& \Psi(A\otimes M_N (\BC))=A\rtimes_{\rho, u}H\rtimes_{\widehat{\rho}}H^0 .
\end{align*}
That is, the coaction $\widehat{\widehat{\rho}}$ of $H^0$ on $A\rtimes_{\rho, u}\rtimes_{\widehat{\rho}}H^0$
is exterior equivalent to the twisted coaction
$$
((\Psi\otimes\id_{H^0})\circ(\rho\otimes\id_{M_N (\BC)})\circ\Psi^{-1}, \,
(\Psi\otimes\id_{H^0}\otimes\id_{H^0})(u\otimes I_N )) ,
$$
where we identify $A\otimes H^0 \otimes H^0 \otimes M_N (\BC)$ with $A\otimes M_N (\BC)\otimes H^0 \otimes H^0$.
\end{prop}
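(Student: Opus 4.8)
The plan is to deduce the statement from the unital case \cite[Theorem 3.3]{KT2:coaction}, applied to the (always unital) multiplier algebra $M(A)$, and then to cut everything down to the canonical dense ideals. By Remark \ref{remark:extension}, $(\underline{\rho}, u)$ is a twisted coaction of $H^0$ on $M(A)$, so \cite[Theorem 3.3]{KT2:coaction} supplies an isomorphism $\Psi$ of $M(A)\otimes M_N(\BC)$ onto $M(A)\rtimes_{\underline{\rho}, u}H\rtimes_{(\underline{\rho})^{\widehat{}}}H^0$ together with a unitary $U$ in $(M(A)\rtimes_{\underline{\rho}, u}H\rtimes_{(\underline{\rho})^{\widehat{}}}H^0)\otimes H^0$ satisfying the two intertwining/cocycle identities of the Proposition, with $A$ replaced by $M(A)$ and with the double dual coaction of $\underline{\rho}$ in place of $\widehat{\widehat{\rho}}$.

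The first step is to identify the source and target of this $\Psi$ with multiplier algebras of the objects appearing in the Proposition. Since $M_N(\BC)$ is unital and finite dimensional, $M(A)\otimes M_N(\BC)=M(A\otimes M_N(\BC))$. For the target, Lemma \ref{lem:multiplier} gives $M(A)\rtimes_{\underline{\rho}, u}H=M(A\rtimes_{\rho, u}H)$, and by Remark \ref{remark:dual} the dual coaction $(\underline{\rho})^{\widehat{}}$ is exactly $\underline{\widehat{\rho}}$, the coaction of $H$ on $M(A\rtimes_{\rho, u}H)$ induced by $\widehat{\rho}$. Applying Lemma \ref{lem:multiplier} a second time, now to the ordinary coaction $\widehat{\rho}$ of $H$ on $A\rtimes_{\rho, u}H$, yields $M(A\rtimes_{\rho, u}H)\rtimes_{\underline{\widehat{\rho}}}H^0=M(A\rtimes_{\rho, u}H\rtimes_{\widehat{\rho}}H^0)$, and iterating the argument of Remark \ref{remark:dual} identifies the double dual coaction $(\underline{\rho})^{\widehat{}\,\widehat{}}$ on this multiplier algebra with $\underline{\widehat{\widehat{\rho}}}$. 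Hence $\Psi$ is already an isomorphism of $M(A\otimes M_N(\BC))$ onto $M(A\rtimes_{\rho, u}H\rtimes_{\widehat{\rho}}H^0)$, the unitary $U$ lies in $M((A\rtimes_{\rho, u}H\rtimes_{\widehat{\rho}}H^0)\otimes H^0)$, and the two displayed identities hold at the level of these multiplier algebras.

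The remaining point is that $\Psi$ restricts to an isomorphism of the canonical dense ideals, i.e. $\Psi(A\otimes M_N(\BC))=A\rtimes_{\rho, u}H\rtimes_{\widehat{\rho}}H^0$. Here I would invoke the explicit form of $\Psi$ from \cite[Theorem 3.3]{KT2:coaction}: writing the matrix units of $M_N(\BC)$ as $e_{I,J}$ with $I,J\in\Lambda$, $\Psi(x\otimes e_{I,J})$ is a fixed linear combination of products of $x$, the elements $W_I=\sqrt{d_k}\rtimes_{\underline{\rho}, u}w_{ij}^k$, and the canonical generators of the second crossed product by $H^0$. When $x\in A$, each such product lies in $A\rtimes_{\rho, u}H\rtimes_{\widehat{\rho}}H^0$, because $A\rtimes_{\rho, u}H$ is a two-sided ideal of $M(A)\rtimes_{\underline{\rho}, u}H$ and the further crossed product by $H^0$ preserves that ideal structure; thus $\Psi(A\otimes M_N(\BC))\subseteq A\rtimes_{\rho, u}H\rtimes_{\widehat{\rho}}H^0$. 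The reverse inclusion follows from the corresponding formula for $\Psi^{-1}$, which carries each generator $(x\rtimes_{\rho, u}h)\rtimes_{\widehat{\rho}}\phi$ with $x\in A$ into $A\otimes M_N(\BC)$; since such generators span a dense subalgebra, $\Psi^{-1}(A\rtimes_{\rho, u}H\rtimes_{\widehat{\rho}}H^0)\subseteq A\otimes M_N(\BC)$, and equality results. Finally, $\rho\otimes\id_{M_N(\BC)}$ is the restriction of $\underline{\rho}\otimes\id_{M_N(\BC)}$, so the three displayed formulas over $M(A)$ restrict to the asserted formulas over $A$, and the exterior equivalence statement is then just a transcription of Definition \ref{Def:exterior}.

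The routine ingredients are the strict-continuity verifications (for $\id\otimes h$, $\underline{\rho}\otimes\id$, the relevant conditional expectations, and the dual coactions) needed to make the two multiplier-algebra identifications and the iterated version of Remark \ref{remark:dual} go through; these are entirely parallel to the computations in Lemma \ref{lem:multiplier} and Remark \ref{remark:dual}. I expect the main obstacle to be purely organizational: one must simultaneously track the explicit shape of $\Psi$ coming from the unital theorem, the two-stage multiplier identification $M(A)\rtimes_{\underline{\rho}, u}H\rtimes_{(\underline{\rho})^{\widehat{}}}H^0=M(A\rtimes_{\rho, u}H\rtimes_{\widehat{\rho}}H^0)$, and the matching identification of the double dual coactions, and check that all three are mutually compatible so that the identities genuinely descend from $M(A)$ to $A$.
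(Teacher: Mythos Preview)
Your proposal is correct and follows the same overall strategy as the paper: apply \cite[Theorem~3.3]{KT2:coaction} to the unital algebra $M(A)$, identify the two levels of crossed product over $M(A)$ with the multiplier algebras of the corresponding crossed products over $A$ (via Lemma~\ref{lem:multiplier} and Remark~\ref{remark:dual}), and then check that $\Psi$ restricts to the dense ideals.

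The only place where the paper is more concrete than your sketch is the reverse inclusion $A\rtimes_{\rho,u}H\rtimes_{\widehat{\rho}}H^0\subset\Psi(A\otimes M_N(\BC))$. Rather than invoking an unspecified formula for $\Psi^{-1}$ on generators $(x\rtimes_{\rho,u}h)\rtimes_{\widehat{\rho}}\phi$, the paper first shows (using an approximate unit) that elements of the form $(x\rtimes_{\widehat{\rho}}1^0)(1\rtimes_{\widehat{\rho}}\tau)(y\rtimes_{\widehat{\rho}}1^0)$ with $x,y\in A\rtimes_{\rho,u}H$ are dense, and then exhibits the explicit preimage
\[
\Psi\bigl([E_1^{\underline{\rho},u}(W_I x)\,E_1^{\underline{\rho},u}(yW_J^*)]_{I,J}\bigr)=(x\rtimes_{\widehat{\rho}}1^0)(1\rtimes_{\widehat{\rho}}\tau)(y\rtimes_{\widehat{\rho}}1^0),
\]
noting that $E_1^{\underline{\rho},u}(A\rtimes_{\rho,u}H)=A$ so the matrix entries lie in $A$. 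Your version is not wrong, but you would need to actually write down $\Psi^{-1}$ on a generator and verify it lands in $A\otimes M_N(\BC)$; the paper's route via the Jones projection $\tau$ and the conditional expectation $E_1^{\underline{\rho},u}$ is a clean way to do exactly that.
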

\begin{proof}
By \cite [Theorem 3.3]{KT2:coaction}, there are an isomorphism $\Psi$ of $M(A)\otimes M_N (\BC)$ onto
$M(A)\rtimes_{\underline{\rho}, u}H\rtimes_{\underline{\widehat{\rho}}}H^0$ and a unitary
element $U\in (M(A)\rtimes_{\underline{\rho}, u}H\rtimes_{\underline{\widehat{\rho}}}H^0 )\otimes H^0$
satisfying the required conditions except for the equation
$$
\Psi(A\otimes M_N (\BC))=A\rtimes_{\rho, u}H\rtimes_{\widehat{\rho}}H^0 .
$$
So, we show the equation.
By \cite [Section 3]{KT2:coaction},
$$
\Psi([a_{IJ}])=\sum_{I, J}V_I^* (a_{IJ}\rtimes_{\rho, u}1\rtimes_{\widehat{\rho}}1^0 )V_J
$$
for any $[a_{IJ}]\in A\otimes M_N (\BC)$, where $V_I
=(1\rtimes_{\underline{\widehat{\rho}}}\tau)(W_I \rtimes_{\underline{\widehat{\rho}}}1^0 )$
for any $I\in\Lambda$. Since $V_I\in M(A)\rtimes_{\underline{\rho}, u}H\rtimes_{\underline{\widehat{\rho}}}H^0$
for any $I\in\Lambda$, $\Psi(A\otimes M_N (\BC))\subset A\rtimes_{\rho, u}\rtimes_{\widehat{\rho}}H^0$.
For any $z\in A\rtimes_{\rho, u}H\rtimes_{\widehat{\rho}}H^0$, we can write that
$$
z=\sum_{i=1}^n (x_i \rtimes_{\widehat{\rho}}1^0 )(1\rtimes_{\widehat{\rho}}\tau)(y_i \rtimes_{\widehat{\rho}}1^0 ) ,
$$
where $x_i , y_i \in M(A)\rtimes_{\underline{\rho}, u}H$ for any $i$.
Let $\{u_{\alpha}\}$ be an approximate unit of $A$. Then
$(u_{\alpha}\rtimes_{\rho, u}1\rtimes_{\widehat{\rho}}1^0 )(x_i \rtimes_{\widehat{\rho}}1^0 )$ and
$(y_i \rtimes_{\widehat{\rho}}1^0 )(u_{\alpha}\rtimes_{\rho, u}1\rtimes_{\widehat{\rho}}1^0 )$ are
in $A\rtimes_{\rho, u}H\rtimes_{\widehat{\rho}}H^0$ for any $i$ and $\alpha$.
Hence $(A\rtimes_{\rho, u}H\rtimes_{\widehat{\rho}}1^0 )(1\rtimes_{\widehat{\rho}}\tau)
(A\rtimes_{\rho, u}H\rtimes_{\widehat{\rho}}1^0 )$ is dense in $A\rtimes_{\rho, u}H\rtimes_{\widehat{\rho}}H^0$.
On the other hand, for any $x, y \in A\rtimes_{\rho, u }H$,
$$
\Psi([E_1^{\underline{\rho}, u}(W_I x)E_1^{\underline{\rho}, u}(yW_J^* )]_{I, J})
=(x\rtimes_{\widehat{\rho}}1^0 )(1\rtimes_{\widehat{\rho}}\tau)(y\rtimes_{\widehat{\rho}}1^0 )
$$
by the proof of \cite[Theorem 3.3]{KT2:coaction}. Since $E_1^{\underline{\rho}, u}(A\rtimes_{\rho, u}H)=A$
and $E_1^{\underline{\rho}, u}$ is continuous by its definition,
$A\rtimes_{\rho, u}H\rtimes_{\widehat{\rho}}H^0 \subset\Psi(A\otimes M_N (\BC))$.
\end{proof}

We extend \cite [Theorem 6.4]{KT1:inclusion} to coactions of $H^0$ on a (non-unital) $C^*$-algebra.
First, we recall a saturated coaction. We say that a coaction $\rho$ of $H^0$ on a unital $C^*$-algebra $A$ is
\sl
saturated
\rm
if the induced action from $\rho$ of $H$ on $A$ is saturated in the sense of \cite [Definition 4.2]{SP:saturated}.
\par
Let $B$ be a $C^*$-algebra and $\sigma$ a coaction of $H^0$ on $B$.
Let $B^{\sigma}=\{b\in B \, | \, \sigma(b)=b\otimes 1^0 \}$, the fixed point $C^*$-subalgebra of $B$
for the coaction $\sigma$. We suppose that $B$ acts on a Hilbert space $\mathcal{H}$
non-degenerately and faithfully. Also, we suppose that $\underline{\sigma}$ is saturated.
Then the canonical conditional expectation $E^{\underline{\sigma}}$ from $M(B)$
onto $M(B)^{\underline{\sigma}}$ defined by $E^{\underline{\sigma}}(x)=e\cdot_{\underline{\sigma}}x$
for any $x\in M(B)$ is of Watatani index-finite type by \cite [Theorem 4.3]{SP:saturated}.
Thus there is a quasi-basis $\{(u_i, u_i^* )\}_{i=1}^n$ of $E^{\underline{\sigma}}$.
Let $\{v_{\alpha}\}$ be an approximate unit of $B^{\sigma}$. For any $x\in B$,
$$
v_{\alpha}x =v_{\alpha}\sum_{i=1}^n E^{\underline{\sigma}}(xu_i )u_i^* 
\to\sum_{i=1}^n E^{\underline{\sigma}}(xu_i )u_i^* =x \quad (\alpha\to\infty)
$$
since $E^{\sigma}(xu_i )\in B^{\sigma}$.
Similarly $xv_{\alpha}\to x$ $(\alpha\to\infty)$ since $x=\sum_{i=1}^n u_i E^{\underline{\sigma}}(u_i^* x)$.
Thus $\{v_{\alpha}\}$ is an approximate
unit of $B$. Hence $B^{\sigma}$ acts on $\mathcal{H}$ non-degenerately and faithfully.
 
\begin{lemma}\label{lem:fix}With the above notations, we suppose that
$\underline{\sigma}$ is saturated. Then $M(B^{\sigma})=M(B)^{\underline{\sigma}}$.
\end{lemma}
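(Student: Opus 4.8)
The plan is to realize both $M(B)$ and $M(B^{\sigma})$ as concrete operator algebras on the fixed Hilbert space $\mathcal{H}$ and to compare them there. Since $B$ acts on $\mathcal{H}$ non-degenerately and faithfully, and by the discussion preceding the lemma so does $B^{\sigma}$, I would use the identifications $M(B)=\{T\in B(\mathcal{H}) \, | \, TB+BT\subseteq B\}$ and $M(B^{\sigma})=\{T\in B(\mathcal{H}) \, | \, TB^{\sigma}+B^{\sigma}T\subseteq B^{\sigma}\}$, together with $M(B\otimes H^0)=M(B)\otimes H^0$ acting on $\mathcal{H}\otimes\mathcal{K}$ for a faithful unital representation of $H^0$ on $\mathcal{K}$. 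One preliminary observation to record is that $E^{\underline{\sigma}}(B)\subseteq B^{\sigma}$: for $b\in B$ we have $\underline{\sigma}(b)=\sigma(b)\in B\otimes H^0$, so $E^{\underline{\sigma}}(b)=e\cdot_{\underline{\sigma}}b\in B$, and since $E^{\underline{\sigma}}(b)\in M(B)^{\underline{\sigma}}$ it lies in $B\cap M(B)^{\underline{\sigma}}=B^{\sigma}$.

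For the inclusion $M(B)^{\underline{\sigma}}\subseteq M(B^{\sigma})$ I would argue directly: if $x\in M(B)$ satisfies $\underline{\sigma}(x)=x\otimes 1^0$ and $b\in B^{\sigma}$, then $xb\in B$ and $\sigma(xb)=\underline{\sigma}(x)\sigma(b)=(x\otimes 1^0)(b\otimes 1^0)=xb\otimes 1^0$, so $xb\in B^{\sigma}$; symmetrically $bx\in B^{\sigma}$, whence $x\in M(B^{\sigma})$.

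The substantive direction is $M(B^{\sigma})\subseteq M(B)^{\underline{\sigma}}$. Fix $T\in M(B^{\sigma})$. Using a quasi-basis $\{(u_i,u_i^* )\}_{i=1}^n$ of $E^{\underline{\sigma}}$, which exists because $\underline{\sigma}$ is saturated so that $E^{\underline{\sigma}}$ is of Watatani index-finite type by \cite[Theorem 4.3]{SP:saturated}, I would write, for $x\in B$, $x=\sum_{i=1}^n u_i E^{\underline{\sigma}}(u_i^* x)=\sum_{i=1}^n E^{\underline{\sigma}}(xu_i)u_i^*$, where $E^{\underline{\sigma}}(u_i^* x),E^{\underline{\sigma}}(xu_i)\in B^{\sigma}$ by the preliminary observation and $u_i\in M(B)$. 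Then $Tx=\sum_i(TE^{\underline{\sigma}}(xu_i))u_i^*\in B^{\sigma}M(B)\subseteq B$ and $xT=\sum_i u_i(E^{\underline{\sigma}}(u_i^* x)T)\in M(B)B^{\sigma}\subseteq B$, so $T\in M(B)$. It then remains to check $\underline{\sigma}(T)=T\otimes 1^0$: for $b\in B^{\sigma}$ we have $Tb\in B^{\sigma}\subseteq B$, hence $\underline{\sigma}(T)(b\otimes 1^0)=\underline{\sigma}(Tb)=\sigma(Tb)=Tb\otimes 1^0=(T\otimes 1^0)(b\otimes 1^0)$, and similarly $(b\otimes 1^0)\underline{\sigma}(T)=(b\otimes 1^0)(T\otimes 1^0)$. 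Taking $b=v_{\alpha}$ along the approximate unit $\{v_{\alpha}\}$ of $B^{\sigma}$ — which by the discussion before the lemma is an approximate unit of $B$, so that $v_{\alpha}\otimes 1^0\to 1$ strictly in $M(B\otimes H^0)$ — and passing to the strict limit yields $\underline{\sigma}(T)=T\otimes 1^0$, i.e. $T\in M(B)^{\underline{\sigma}}$.

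I expect the only real obstacle to be the inclusion $M(B^{\sigma})\subseteq M(B)$, where saturation enters through the existence of a finite quasi-basis for $E^{\underline{\sigma}}$ and the key point is that $E^{\underline{\sigma}}$ carries $B$ into $B^{\sigma}$, allowing a multiplier of $B^{\sigma}$ to be pushed forward to a multiplier of $B$; the transfer of the fixed-point condition and the reverse inclusion are then routine manipulations with multiplier algebras and strict continuity.
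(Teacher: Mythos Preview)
Your proof is correct and follows essentially the same route as the paper. The only organizational difference is that the paper packages the quasi-basis input into the fact (established just before the lemma) that an approximate unit of $B^{\sigma}$ is one of $B$, and then uses that fact both to embed $M(B^{\sigma})$ into $M(B)$ and, via a strict approximating net $a_{\alpha}\to x$, to verify $\underline{\sigma}(x)(y\otimes 1^0)=xy\otimes 1^0$; you instead apply the quasi-basis expansion $x=\sum_i E^{\underline{\sigma}}(xu_i)u_i^*$ directly to obtain $M(B^{\sigma})\subseteq M(B)$ and then run the same approximate-unit strict-limit argument for the fixed-point condition. The inclusion $M(B)^{\underline{\sigma}}\subseteq M(B^{\sigma})$ is handled identically in both.
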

\begin{proof}By the above discussions, we may suppose that $B$ and $B^{\sigma}$ act on a Hilbert space
non-degenerately and faithfully. Let $x\in M(B^{\sigma})$.
Then there is a bounded net $\{a_{\alpha}\}\subset B^{\sigma}$ such that
$a_{\alpha}\to x$ $(\alpha\to\infty)$ strictly in $M(B^{\sigma})$. Since any approximate unit of $B^{\sigma}$
is an approximate unit of $B$ by the above discussion, for any $y\in B^{\sigma}$,
$$
\underline{\sigma}(x)(y\otimes 1^0 )=\sigma(xy)=\sigma(\lim_{\alpha\to\infty}a_{\alpha}y)=\lim_{\alpha\to\infty}\sigma(a_{\alpha}y)
=\lim_{\alpha\to\infty}a_{\alpha}y\otimes 1^0 =xy\otimes 1^0 .
$$
Thus $x\in M(B)^{\underline{\sigma}}$.
Next, let $x\in M(B)^{\underline{\sigma}}$. Then for any $b\in B^{\sigma}$, $xb$ and $bx$ are in $B$.
Thus
\begin{align*}
\sigma(xb) & =\underline{\sigma}(x)\sigma(b)=(x\otimes 1^0 )(b\otimes 1^0 )=xb\otimes 1^0 , \\
\sigma(bx) & =\sigma(b)\underline{\sigma}(x)=(b\otimes 1^0 )(x\otimes 1^0 )=bx\otimes 1^0 .
\end{align*}
Hence $x\in M(B^{\sigma})$.
\end{proof}

We suppose that $\widehat{\underline{\sigma}}(1\rtimes_{\underline{\sigma}}e)
\sim(1\rtimes_{\underline{\sigma}}e)\otimes 1$ in $(M(B)\rtimes_{\underline{\sigma}}H)\otimes H$.
As mentioned in \cite [Section 2]{KT2:coaction}, without the assumption of saturation for
an action, all the statements in Sections 4, 5 and 6 in \cite {KT1:inclusion} hold.
Hence by \cite [Sections 4 and 5]{KT1:inclusion}, $\underline{\sigma}$ is
saturated and 
there is a unitary element $w^{\sigma}\in M(B)\otimes H$ satisfying that
$$
w^{\sigma *}((1\rtimes_{\underline{\sigma}}e)\otimes 1)w^{\sigma}=\widehat{\underline{\sigma}}
(1\rtimes_{\underline{\sigma}}e) , 
$$Let $U^{\sigma}=w^{\sigma}(z^{\sigma*}\otimes 1)$,
where $z^{\sigma}=(\id_{M(B)}\otimes \epsilon)(w^{\sigma})\in M(B)^{\underline{\sigma}}$.
Then $U^{\sigma}\in M(B)\otimes H$ and
satisfies that
$$
\widehat{U}^{\sigma}(1^0 )=1 , \quad
\widehat{U}^{\sigma}(\phi_{(1)})a\widehat{U}^{\sigma *}(\phi_{(2)})
\in M(B)^{\underline{\sigma}} 
$$
for any $a\in M(B)^{\underline{\sigma}}$, $\phi\in H^0$.
Let $\widehat{u}^{\sigma}$ be a bilinear map from $H^0 \times H^0$ to $M(B)$
defined by
$$
\widehat{u}^{\sigma}(\phi, \psi)=\widehat{U}^{\sigma}(\phi_{(1)})
\widehat{U}^{\sigma}(\psi_{(1)})\widehat{U}^{\sigma *}(\phi_{(2)}\psi_{(2)})
$$
for any $\phi, \psi\in H^0$. Then by \cite [Lemma 5.4]{KT1:inclusion},
$\widehat{u}^{\sigma}(\phi, \psi)\in M(B)^{\underline{\sigma}}$
for any $\phi, \psi\in H^0$ and by \cite [Corollary 5.3]{KT1:inclusion}, the map
$$
H^0 \times M(B)^{\underline{\sigma}}\longrightarrow M(B)^{\underline{\sigma}}: (\phi, a)\mapsto
\widehat{U}^{\sigma}(\phi_{(1)})a\widehat{U}^{\sigma *}(\phi_{(2)})
$$
is a weak action of $H^0$ on $M(B)^{\underline{\sigma}}$. Furthermore, by \cite [Proposition 5.6]{KT1:inclusion}
$\widehat{u}^{\sigma}$ is a unitary cocycle for the above weak action.
Let $u^{\sigma}$ be the unitary element in $M(B)^{\underline{\sigma}}\otimes H\otimes H$ induced by
$\widehat{u}^{\sigma}$ and $\rho'$ the weak coaction of $H$ on $M(B)^{\underline{\sigma}}$
induced by the above weak action. Thus we obtain a twisted coaction $(\rho', u^{\sigma})$
of $H$ on $M(B)^{\underline{\sigma}}$. Let $\pi'$ be the map from
$M(B)^{\underline{\sigma}}\rtimes_{\rho' , u^{\sigma}}H^0$ to $M(B)$ defined by
$$
\pi'(a\rtimes_{\rho' , u^{\sigma}}\phi)=a\widehat{U}^{\sigma}(\phi)
$$
for any $a\in M(B)^{\underline{\sigma}}$, $\phi\in H^0$. Then by
\cite [Proposition 6.1 and Theorem 6.4]{KT1:inclusion}, $\pi'$ is an isomorphism of
$M(B)^{\underline{\sigma}}\rtimes_{\rho' , u^{\sigma}}H^0$ onto $M(B)$ satisfying that
$$
\underline{\sigma}\circ\pi' =(\pi' \otimes\id_{H^0} )\circ\widehat{\rho'} \, , \quad
E_1^{\rho' , u^{\sigma}}=E^{\underline{\sigma}}\circ\pi' ,
$$
where $E_1^{\rho' , u^{\sigma}}$ is the canonical conditional expectation
from $M(B)^{\underline{\widehat{\sigma}}}\rtimes_{\rho' , u^{\sigma}}H^0$ onto $M(B)^{\underline{\sigma}}$
and $E^{\underline{\sigma}}$
is the canonical conditional expectation from $M(B)$ onto $M(B)^{\underline{\sigma}}$.
Let $\rho=\rho' |_{B^{\sigma}}$.

\begin{lemma}\label{lem:restriction1}With the above notations, $(\rho, u^{\sigma})$ is a twisted
coaction of $H$ on $B^{\sigma}$ and $\underline{\rho}=\rho'$.
\end{lemma}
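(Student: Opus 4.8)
The plan is to show that $\rho'$ restricts to $B^\sigma$, that this restriction $\rho$ has strict extension equal to $\rho'$, and that the twisted-coaction axioms for $(\rho,u^\sigma)$ then follow from the ones for $(\rho',u^\sigma)$. Fix a basis $\{\phi_m\}$ of $H^0$ with dual basis $\{h_m\}$ of $H$, so that $\rho'(a)=\sum_m(\id\otimes\phi_m)(\rho'(a))\otimes h_m=\sum_m\widehat{U}^\sigma(\phi_{m(1)})\,a\,\widehat{U}^{\sigma *}(\phi_{m(2)})\otimes h_m$ for every $a\in M(B)^{\underline\sigma}$, where the second equality uses that the weak action of $H^0$ on $M(B)^{\underline\sigma}$ induced by $\rho'$ is $(\phi,a)\mapsto\widehat{U}^\sigma(\phi_{(1)})a\widehat{U}^{\sigma *}(\phi_{(2)})$. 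First I would check that $\rho'(B^\sigma)\subseteq B^\sigma\otimes H$: for $a\in B^\sigma$ each summand $\widehat{U}^\sigma(\phi_{m(1)})a\widehat{U}^{\sigma *}(\phi_{m(2)})$ lies in $M(B)\,B\,M(B)\subseteq B$ since $\widehat{U}^\sigma(\psi)\in M(B)$, while by \cite[Corollary 5.3]{KT1:inclusion} it also lies in $M(B)^{\underline\sigma}$; as $\underline\sigma|_B=\sigma$ we have $B\cap M(B)^{\underline\sigma}=B^\sigma$, so the summand is in $B^\sigma$ and hence $\rho'(a)\in B^\sigma\otimes H$. Thus $\rho:=\rho'|_{B^\sigma}$ is a $*$-homomorphism of $B^\sigma$ into $B^\sigma\otimes H$, and by Lemma \ref{lem:fix} the unitary $u^\sigma$ lies in $M(B)^{\underline\sigma}\otimes H\otimes H=M(B^\sigma)\otimes H\otimes H$.

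The \emph{key step} is to prove $\underline\rho=\rho'$. By Lemma \ref{lem:fix}, $M(B^\sigma)=M(B)^{\underline\sigma}$, so $\rho'$ and $\underline\rho$ are both maps $M(B^\sigma)\to M(B^\sigma)\otimes H=M(B^\sigma\otimes H)$ agreeing on the strictly dense subalgebra $B^\sigma$; by uniqueness of the strict extension (\cite[Corollary 1.1.15]{JT:KK}) it suffices to show $\rho'$ carries bounded strictly convergent nets to strictly convergent nets. Let $\{(u_i,u_i^*)\}_{i=1}^n$ be the quasi-basis of $E^{\underline\sigma}$, and recall $E^{\underline\sigma}(B)\subseteq B^\sigma$ because $E^{\underline\sigma}(c)=e\cdot_{\underline\sigma}c=(\id\otimes e)(\sigma(c))\in B\cap M(B)^{\underline\sigma}$ for $c\in B$. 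The crucial observation is that a bounded net $x_\alpha\to x$ strictly in $M(B^\sigma)$ converges $B$-strictly as well: for $d\in B$, writing $d=\sum_i E^{\underline\sigma}(du_i)u_i^*$ with $E^{\underline\sigma}(du_i)\in B^\sigma$ gives $x_\alpha d=\sum_i x_\alpha E^{\underline\sigma}(du_i)u_i^*\to xd$ in norm, and writing $d=\sum_i u_iE^{\underline\sigma}(u_i^*d)$ gives $dx_\alpha\to dx$ in norm. Applying this with $d=\widehat{U}^{\sigma *}(\phi_{m(2)})b$ and with $d=b\,\widehat{U}^\sigma(\phi_{m(1)})$ for $b\in B^\sigma$, and then multiplying on the outside by the fixed multipliers $\widehat{U}^\sigma(\phi_{m(1)})$ resp.\ $\widehat{U}^{\sigma *}(\phi_{m(2)})$, shows $\rho'(x_\alpha)(b\otimes h)\to\rho'(x)(b\otimes h)$ and $(b\otimes h)\rho'(x_\alpha)\to(b\otimes h)\rho'(x)$ in norm for all $b\in B^\sigma$, $h\in H$. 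Hence $\rho'$ is strictly continuous on bounded sets, and $\underline\rho=\rho'$.

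It remains to verify Definition \ref{Def:tcoaction} for $(\rho,u^\sigma)$ on $B^\sigma$, with the roles of $H$ and $H^0$ (and of $\Delta^0,\epsilon^0$ and $\Delta,\epsilon$) interchanged. Condition (1) and condition (3) of Definition \ref{Def:tcoaction}, together with condition (2) of Definition \ref{Def:wcoaction} (that is, $(\id\otimes\epsilon)\circ\rho=\id$), are identities already valid for $(\rho',u^\sigma)$ on $M(B)^{\underline\sigma}$; since they involve only elements of $B^\sigma$, the fixed unitary $u^\sigma$, and maps that carry $B^\sigma\otimes\cdots$ into itself (using $\rho'(B^\sigma)\subseteq B^\sigma\otimes H$ from the first paragraph), they restrict to $B^\sigma$. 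Condition (2) of Definition \ref{Def:tcoaction} involves only $u^\sigma$ and $\underline\rho=\rho'$, so it is literally the corresponding identity for $(\rho',u^\sigma)$. Finally, for the nondegeneracy condition (1) of Definition \ref{Def:wcoaction}: if $\{v_\alpha\}$ is an approximate unit of $B^\sigma$ then $v_\alpha\to1$ strictly in $M(B^\sigma)$, hence $\rho(v_\alpha)=\underline\rho(v_\alpha)=\rho'(v_\alpha)\to\rho'(1)=1\otimes1$ strictly (as $\rho'$ is a weak coaction on the unital algebra $M(B^\sigma)$), so $\rho(v_\alpha)(b\otimes h)\to b\otimes h$ for all $b\in B^\sigma$, $h\in H$, giving $\overline{\rho(B^\sigma)(B^\sigma\otimes H)}=B^\sigma\otimes H$. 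I expect the strict-continuity step of the second paragraph to be the main obstacle, as it is the only place where one must pass between the strict topologies of $M(B^\sigma)$ and $M(B)$ and where the quasi-basis of $E^{\underline\sigma}$ is genuinely needed; the remaining verifications are routine restrictions of identities already established for $(\rho',u^\sigma)$.
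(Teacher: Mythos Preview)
Your argument is correct and follows the same overall strategy as the paper: show $\rho'(B^\sigma)\subset B^\sigma\otimes H$, establish $\underline{\rho}=\rho'$ via strict continuity, and then restrict the axioms of $(\rho',u^\sigma)$ to $B^\sigma$. The paper's proof is more compact in two places. First, instead of expanding in a basis, it uses the global formula $\rho'(a)=U^\sigma(a\otimes 1)U^{\sigma *}$ to see immediately that $\rho'(a)\in(M(B)^{\underline{\sigma}}\otimes H)\cap(B\otimes H)=B^\sigma\otimes H$ for $a\in B^\sigma$. Second, for strict continuity the paper simply invokes this same formula (conjugation by a fixed multiplier is strictly continuous on bounded sets) together with the observation, already made before Lemma~\ref{lem:fix}, that an approximate unit of $B^\sigma$ is one for $B$; your explicit quasi-basis argument proves the more general fact that $M(B^\sigma)$-strict convergence implies $M(B)$-strict convergence, which is exactly the missing link but is not spelled out in the paper. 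So your proof is the same in spirit, just with the strict-continuity step made fully explicit rather than left implicit in the formula for $\rho'$.
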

\begin{proof}By the definition of $\rho$, for any $a\in B^{\sigma}$,
$$
\rho(a)=U^{\sigma}(a\otimes 1)U^{\sigma *}.
$$
Since $a\in B^{\sigma}\subset M(B^{\sigma})=M(B)^{\underline{\sigma}}$
by Lemma \ref{lem:fix}, $\rho(a)\in M(B)^{\underline{\sigma}}\otimes H$.
On the other hand, since $U^{\sigma}\in M(B)\otimes H$, $\rho(a)\in B\otimes H$.
Thus $\rho(a)\in (M(B)^{\underline{\sigma}}\otimes H)\cap(B\otimes H)=B^{\sigma}\otimes H$.
Hence $\rho$ is a homomorphism of $B^{\sigma}$ to $B^{\sigma}\otimes H$. Since
$(\rho' \otimes\id)\circ\rho' =\Ad (u^{\sigma})\circ(\id\otimes \Delta)\circ\rho'$ and $\rho(a)\in B^{\sigma}\otimes H$ for
any $a\in B^{\sigma}$, we can see that $(\rho\otimes\id)\circ\rho=\Ad(u^{\sigma})\circ(\id\otimes\Delta)\circ\rho$.
By the definition of $\rho'$, $\rho'$ is strictly continuous on $M(B)^{\underline{\sigma}}$. Hence for any approximate unit
$\{u_{\alpha}\}$ of $B^{\sigma}$,
$$
1\otimes 1=\rho' (1)=\rho' (\lim_{\alpha\to\infty}u_{\alpha})=\lim_{\alpha\to\infty}\rho' (u_{\alpha})
=\lim_{\alpha\to\infty}\rho(u_{\alpha}) ,
$$
where the limits are taken under the strict topologies in $M(B^{\sigma})$ and
$M(B^{\sigma})\otimes H$, respectively. This means that
$$
\overline{\rho(B^{\sigma})(B^{\sigma}\otimes H)}=B^{\sigma}\otimes H .
$$
It follows that $(\rho, u^{\sigma})$ is a twisted coaction of $H$ on $B^{\sigma}$.
Furthermore, since $\rho'$ is strictly continuous, $\rho'=\underline{\rho}$ on $M(B^{\sigma})$.
\end{proof}

Let $\pi=\pi' |_{B^{\sigma}\rtimes_{\rho, u^{\sigma}}H^0 }$.

\begin{lemma}\label{lem:restriction2}With the above notations, $\pi$ is an isomorphism of
$B^{\sigma}\rtimes_{\rho, u^{\sigma}}H^0$ onto $B$ satisfying that
$$
\sigma\circ\pi=(\pi\otimes\id_{H^0} )\circ\widehat{\rho} , \quad
E_1^{\rho, u^{\sigma}}=E^{\sigma}\circ\pi ,
$$
where $E_1^{\rho, u^{\sigma}}$ is the canonical conditional expectation
from $B^{\sigma}\rtimes_{\rho, u^{\sigma}}H^0$ onto $B^{\sigma}$ and $E^{\sigma}$ is the canonical
conditional expectation from $B$ onto $B^{\sigma}$. Furthermore, $\pi'=\underline{\pi}$.
\end{lemma}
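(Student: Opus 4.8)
The plan is to deduce Lemma \ref{lem:restriction2} from Lemma \ref{lem:restriction1} together with the properties of $\pi'$ recalled just before Lemma \ref{lem:restriction1}, essentially by a restriction-and-extension argument paralleling Lemma \ref{lem:restriction1}. First I would check that $\pi$ maps $B^{\sigma}\rtimes_{\rho, u^{\sigma}}H^0$ into $B$. By definition $\pi(a\rtimes_{\rho, u^{\sigma}}\phi)=a\widehat{U}^{\sigma}(\phi)$ for $a\in B^{\sigma}$, $\phi\in H^0$; since $a\in B^{\sigma}\subset B$ and $\widehat{U}^{\sigma}(\phi)\in M(B)$, a priori this lands in $B$, and linearity gives $\pi(B^{\sigma}\rtimes_{\rho, u^{\sigma}}H^0)\subset B$. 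For injectivity, $\pi$ is the restriction of the isomorphism $\pi'$, hence injective. The real content is surjectivity onto $B$: I would use the quasi-basis $\{(W_I^*, W_I)\}_{I\in\Lambda}$ for $E_1^{\rho', u^{\sigma}}$ (the analogue of the one used in Lemma \ref{lem:multiplier}), write any $b\in B$ via the identity $b=\sum_I (\text{something})$ coming from $E_1^{\rho', u^{\sigma}}=E^{\underline{\sigma}}\circ\pi'$ and the fact that $E^{\underline{\sigma}}(B)=B^{\sigma}$, and observe that each coefficient lies in $B^{\sigma}$ so that $b$ is the $\pi$-image of a genuine element of $B^{\sigma}\rtimes_{\rho, u^{\sigma}}H^0$. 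Concretely, for $b\in B$ one has $b=\pi'(\pi'^{-1}(b))$ and $\pi'^{-1}(b)=\sum_I E_1^{\rho', u^{\sigma}}(\pi'^{-1}(b)\,\widetilde{W_I}^*)\widetilde{W_I}$ for appropriate generators $\widetilde{W_I}$ of $M(B^{\sigma})\rtimes_{\rho', u^{\sigma}}H^0$; applying $E^{\underline{\sigma}}\circ\pi'=E_1^{\rho', u^{\sigma}}$ and using $E^{\underline{\sigma}}(b\cdot(\text{stuff in }B))\in B^{\sigma}$ shows these coefficients are in $B^{\sigma}$, hence $\pi'^{-1}(b)\in B^{\sigma}\rtimes_{\rho, u^{\sigma}}H^0$.

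Next I would verify the two intertwining identities. These are simply the restrictions of $\underline{\sigma}\circ\pi'=(\pi'\otimes\id_{H^0})\circ\widehat{\rho'}$ and $E_1^{\rho', u^{\sigma}}=E^{\underline{\sigma}}\circ\pi'$ to $B^{\sigma}\rtimes_{\rho, u^{\sigma}}H^0$. For the first, if $z\in B^{\sigma}\rtimes_{\rho, u^{\sigma}}H^0$ then $\pi'(z)=\pi(z)\in B$ by the previous paragraph, so $\underline{\sigma}(\pi(z))=\sigma(\pi(z))\in B\otimes H^0$; moreover $\widehat{\rho'}$ restricted to $B^{\sigma}\rtimes_{\rho, u^{\sigma}}H^0$ is $\widehat{\rho}$ (using $\underline{\rho}=\rho'$ from Lemma \ref{lem:restriction1}, so the dual coaction of $\rho$ is the restriction of that of $\rho'$), and $(\pi'\otimes\id_{H^0})=(\pi\otimes\id_{H^0})$ there. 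Hence $\sigma\circ\pi=(\pi\otimes\id_{H^0})\circ\widehat{\rho}$. For the second, $E_1^{\rho, u^{\sigma}}$ is by construction $E_1^{\rho', u^{\sigma}}$ restricted to $B^{\sigma}\rtimes_{\rho, u^{\sigma}}H^0$ (it takes values in $B^{\sigma}$ by Lemma \ref{lem:restriction1}), and $E^{\sigma}=E^{\underline{\sigma}}|_B$; since $\pi$ maps into $B$, the equation $E_1^{\rho', u^{\sigma}}=E^{\underline{\sigma}}\circ\pi'$ restricts to $E_1^{\rho, u^{\sigma}}=E^{\sigma}\circ\pi$.

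Finally, for $\pi'=\underline{\pi}$: I would argue that $\pi'$ is a strictly continuous extension of $\pi$ from $B^{\sigma}\rtimes_{\rho, u^{\sigma}}H^0$ to its multiplier algebra. By Lemma \ref{lem:multiplier} applied to the twisted coaction $(\rho, u^{\sigma})$ of $H$ on $B^{\sigma}$, one has $M(B^{\sigma}\rtimes_{\rho, u^{\sigma}}H^0)=M(B^{\sigma})\rtimes_{\underline{\rho}, u^{\sigma}}H^0=M(B)^{\underline{\sigma}}\rtimes_{\rho', u^{\sigma}}H^0$, using $M(B^{\sigma})=M(B)^{\underline{\sigma}}$ (Lemma \ref{lem:fix}) and $\underline{\rho}=\rho'$ (Lemma \ref{lem:restriction1}); this is precisely the domain of $\pi'$. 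Since $\pi'$ is an isomorphism onto $M(B)$ and restricts to $\pi$ on $B^{\sigma}\rtimes_{\rho, u^{\sigma}}H^0$, and since an isomorphism of $C^*$-algebras carrying an ideal onto an ideal is automatically strictly continuous on that multiplier algebra, uniqueness of the strictly continuous extension (as in \cite[Corollary 1.1.15]{JT:KK}) gives $\pi'=\underline{\pi}$.

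The main obstacle I expect is the surjectivity of $\pi$ onto $B$: one must be careful that the $\pi'^{-1}$-decomposition of an element $b\in B$ via the quasi-basis actually has all coefficients in the \emph{ideal} $B^{\sigma}$ rather than merely in $M(B)^{\underline{\sigma}}$, which is exactly where the fact $E^{\underline{\sigma}}(B)=B^{\sigma}$ (and not just $E^{\underline{\sigma}}(M(B))=M(B)^{\underline{\sigma}}$) must be invoked, together with the continuity of $E^{\underline{\sigma}}$ and the explicit form $\pi'(a\rtimes_{\rho', u^{\sigma}}\phi)=a\widehat{U}^{\sigma}(\phi)$. Everything else is bookkeeping with the already-established identities for $\pi'$.
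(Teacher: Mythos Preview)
Your proposal is correct and follows essentially the same approach as the paper. The only cosmetic difference is in the surjectivity step: the paper works directly on the $B$ side, using the quasi-basis $\{(\sqrt{f_k}\,\widehat{U}^{\sigma}(\omega_{ij}^k)^*, \sqrt{f_k}\,\widehat{U}^{\sigma}(\omega_{ij}^k))\}$ for $E^{\underline{\sigma}}$ to write $b=\sum_{i,j,k} f_k\,E^{\underline{\sigma}}(b\,\widehat{U}^{\sigma}(\omega_{ij}^k)^*)\,\widehat{U}^{\sigma}(\omega_{ij}^k)$ and then read off the preimage $a=\sum_{i,j,k} f_k\,E^{\underline{\sigma}}(b\,\widehat{U}^{\sigma}(\omega_{ij}^k)^*)\rtimes_{\rho,u^{\sigma}}\omega_{ij}^k\in B^{\sigma}\rtimes_{\rho,u^{\sigma}}H^0$, whereas you phrase the same computation on the crossed-product side via the quasi-basis for $E_1^{\rho',u^{\sigma}}$ and then transport through $\pi'$; since $\pi'(1\rtimes_{\rho',u^{\sigma}}\omega_{ij}^k)=\widehat{U}^{\sigma}(\omega_{ij}^k)$, the two are the same calculation. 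For $\pi'=\underline{\pi}$ the paper simply asserts that $\pi'$ is strictly continuous by its definition, which is what your more detailed argument via Lemmas \ref{lem:multiplier} and \ref{lem:fix} ultimately justifies.
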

\begin{proof}Let $E^{\underline{\sigma}}$ be the canonical conditional expectation
from $M(B)$ onto $M(B)^{\underline{\sigma}}$. By \cite [Proposition 4.3 and Remark 4.9]
{KT1:inclusion},
$\{(\sqrt{f_k}\widehat{U}^{\sigma}(\omega_{ij}^k )^*, \, \sqrt{f_k}\widehat{U}^{\sigma}(\omega_{ij}^k ))\}_{i, j, k}$
is a quasi-basis for $E^{\underline{\sigma}}$. Hence for any $b\in B$,
$$
b=\sum_{i, j, k}f_k E^{\underline{\sigma}}(b\widehat{U}^{\sigma}(\omega_{ij}^k )^* )
\widehat{U}^{\sigma}(\omega_{ij}^k ) .
$$
Since $\widehat{U}^{\sigma}(\omega_{ij}^k )\in M(B)$ for any $i, j, k$,
$$
E^{\underline{\sigma}}(b\widehat{U}^{\sigma}(\omega_{ij}^k )^* )\in B^{\sigma}
$$
for any $i, j, k$ and $b\in B$. Let $a=\sum_{i, j, k}f_k E^{\underline{\sigma}}(b\widehat{U}^{\sigma}(\omega_{ij}^k )^* )
\rtimes_{\rho, u^{\sigma}}\omega_{ij}^k$. Then $a\in B^{\sigma}\rtimes_{\rho, u^{\sigma}}H^0$
and $\pi(a)=b$. Thus $\pi$ is surjective. Since $\pi'$ is an isomorphism
of $M(B)^{\underline{\sigma}}\rtimes_{\underline{\rho}, u^{\sigma}}H^0$ onto
$M(B)$, we can see that $\pi$ is an isomorphism of $B^{\sigma}\rtimes_{\rho, u^{\sigma}}H^0$ onto $B$.
Also, since $\underline{\sigma}\circ\pi' =(\pi' \otimes\id)\circ\widehat{\underline{\rho}}$ and
$E_1^{\underline{\rho}, u^{\sigma}}=E^{\underline{\sigma}}\circ\pi' $,
we can see that
$$
\sigma\circ\pi=(\pi\otimes\id)\circ\widehat{\rho}, \quad
E_1^{\rho, u^{\sigma}}\circ\pi .
$$
Furthermore, by the definition of $\pi'$, $\pi'$ is strictly continuous. Thus $\pi' =\underline{\pi}$.
\end{proof}

Combining Lemmas \ref{lem:fix}, \ref{lem:restriction1} and \ref{lem:restriction2}, we obtain the following proposition:

\begin{prop}\label{prop:nonunital2}Let $B$ be a $C^*$-algebra and $\sigma$ a coaction of $H^0$
on $B$. We suppose that $\widehat{\underline{\sigma}}(1\rtimes_{\underline{\sigma}}e)
\sim(1\rtimes_{\underline{\sigma}}e)\otimes 1$ in $(M(B)\rtimes_{\underline{\sigma}}H)\otimes H$.
Then there are a twisted coaction $(\rho, u^{\sigma})$ of $H$ on $B^{\sigma}$ and an isomorphism $\pi$ of
$B^{\sigma}\rtimes_{\rho, u^{\sigma}}H^0$ onto $B$ satisfying that
$$
\sigma\circ\pi =(\pi\otimes \id_H )\circ\widehat{\rho}, \quad E_1^{\rho, u^{\sigma}}=E^{\sigma}\circ\pi ,
$$
where $B^{\sigma}$ is the fixed point $C^*$-subalgebra of $B$ for $\sigma$ and $E_1^{\rho, u^{\sigma}}$
and $E^{\sigma}$ are the canonical conditional expectations from $B$ and $B^{\sigma}\rtimes_{\rho, u^{\sigma}}H^0$
onto $B^{\sigma}$, respectively.
\end{prop}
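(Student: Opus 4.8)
The plan is to assemble the three preceding lemmas, since the substantive work has already been carried out there. First I would observe that the standing hypothesis $\widehat{\underline{\sigma}}(1\rtimes_{\underline{\sigma}}e)\sim(1\rtimes_{\underline{\sigma}}e)\otimes 1$ in $(M(B)\rtimes_{\underline{\sigma}}H)\otimes H$ is exactly what is needed to invoke \cite[Sections 4 and 5]{KT1:inclusion} (valid without the saturation assumption, as noted in \cite[Section 2]{KT2:coaction}): this yields that $\underline{\sigma}$ is saturated, produces the unitary $w^{\sigma}\in M(B)\otimes H$ implementing $\widehat{\underline{\sigma}}(1\rtimes_{\underline{\sigma}}e)$, hence the normalized unitary $U^{\sigma}=w^{\sigma}(z^{\sigma *}\otimes 1)$, the bilinear map $\widehat{u}^{\sigma}$, the twisted coaction $(\rho',u^{\sigma})$ of $H$ on $M(B)^{\underline{\sigma}}$, and the isomorphism $\pi'\colon M(B)^{\underline{\sigma}}\rtimes_{\rho',u^{\sigma}}H^0\to M(B)$ of \cite[Proposition 6.1 and Theorem 6.4]{KT1:inclusion} intertwining $\underline{\sigma}$ with $\widehat{\underline{\rho}}$ and carrying $E_1^{\underline{\rho},u^{\sigma}}$ to $E^{\underline{\sigma}}$.

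Next, since $\underline{\sigma}$ is saturated, Lemma \ref{lem:fix} supplies the identification $M(B^{\sigma})=M(B)^{\underline{\sigma}}$, which lets me regard $(\rho',u^{\sigma})$ as living on the multiplier algebra of $B^{\sigma}$. Restricting $\rho'$ to $B^{\sigma}$, Lemma \ref{lem:restriction1} shows that $\rho\colon B^{\sigma}\to B^{\sigma}\otimes H$ is well-defined, using that $\rho(a)=U^{\sigma}(a\otimes 1)U^{\sigma *}$ lies in $(M(B)^{\underline{\sigma}}\otimes H)\cap(B\otimes H)=B^{\sigma}\otimes H$, that $(\rho,u^{\sigma})$ is a twisted coaction of $H$ on $B^{\sigma}$ (the nondegeneracy $\overline{\rho(B^{\sigma})(B^{\sigma}\otimes H)}=B^{\sigma}\otimes H$ coming from strict continuity of $\rho'$ evaluated on an approximate unit of $B^{\sigma}$), and that $\underline{\rho}=\rho'$.

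Then I would set $\pi=\pi'|_{B^{\sigma}\rtimes_{\rho,u^{\sigma}}H^0}$ and appeal to Lemma \ref{lem:restriction2}: surjectivity onto $B$ follows by expanding $b\in B$ against the quasi-basis $\{(\sqrt{f_k}\widehat{U}^{\sigma}(\omega_{ij}^k)^*,\,\sqrt{f_k}\widehat{U}^{\sigma}(\omega_{ij}^k))\}$ for $E^{\underline{\sigma}}$ and observing $E^{\underline{\sigma}}(b\widehat{U}^{\sigma}(\omega_{ij}^k)^*)\in B^{\sigma}$; injectivity is inherited from $\pi'$; and the relations $\sigma\circ\pi=(\pi\otimes\id_H)\circ\widehat{\rho}$ and $E_1^{\rho,u^{\sigma}}=E^{\sigma}\circ\pi$ are the restrictions to $B^{\sigma}\rtimes_{\rho,u^{\sigma}}H^0$ of the corresponding identities for $\pi'$, $\widehat{\underline{\rho}}$, $E_1^{\underline{\rho},u^{\sigma}}$ and $E^{\underline{\sigma}}$. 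Concatenating these three lemmas gives the proposition.

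The main obstacle does not lie in this final assembly — which is essentially a one-line concatenation once Lemmas \ref{lem:fix}, \ref{lem:restriction1} and \ref{lem:restriction2} are available — but inside the restriction arguments of those lemmas, namely establishing $M(B^{\sigma})=M(B)^{\underline{\sigma}}$ and checking that the ambient construction on the multiplier level descends correctly to $B^{\sigma}$ and $B$, in particular that $\pi'$ actually restricts to a surjection onto $B$.
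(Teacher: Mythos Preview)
Your proposal is correct and matches the paper's approach exactly: the paper states the proposition as an immediate consequence of combining Lemmas \ref{lem:fix}, \ref{lem:restriction1} and \ref{lem:restriction2}, which is precisely the concatenation you outline. Your write-up simply makes explicit the role each lemma plays in the assembly, which the paper leaves implicit.
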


\section{Twisted coactions on a Hilbert $C^*$-bimodule and
strong Morita equivalence for twisted coactions}\label{sec:morita}

First, we shall define crossed products of Hilbert $C^*$-bimodules in the
sense of Brown, Mingo and Shen \cite {BMS:quasi} and show their
duality theorem, which is similar to \cite [Theorem 5.7]{KT3:equivalence}.
We give the definition of a Hilbert $C^*$-bimodule in the sense of
\cite {BMS:quasi}.
\par
Let $A$ and $B$ be $C^*$-algebras. Let $X$ be a left pre-Hilbert $A$-bimodule
and a right pre-Hilbert $B$-module. Its left $A$-valued inner product
and right $B$-valued inner product denote by $ {}_A \la \cdot \, , \, \cdot \ra$
and $\la \cdot \, , \, \cdot \ra_B$, respectively.

\begin{Def}\label{Def:bimodule}We call $X$ a
\sl
pre-Hilbert $A-B$-bimodule
\rm
if $X$ satisfies the condition
$$
{}_A \la x , \, y \ra z=x \la y, \, z \ra_B
$$
for any $x, y, z\in X$. We call $X$ a
\sl
Hilbert $A-B$-bimodule
\rm
if  $X$
is complete with the norms.
\end{Def}

\begin{remark}\label{remark:bimodule}We suppose that $X$ is a pre-Hilbert $A-B$-bimodule. Then
by \cite [Remark 1.9]{BMS:quasi}, we can see the following:
\newline
(1) For any $x\in X$, $||{}_A \la x, \, x\ra ||=||\la x, \, x \ra_B ||$,
\newline
(2) For any $a\in A$, $b\in B$ and $x, y\in X$,
$$
{}_A \la x, \, yb \ra ={}_A \la xb^* , \, y \ra , \quad
\la ax, \, y \ra_B = \la x, \, a^* y \ra_B .
$$
(3) If $X$ is complete with the norm and full with the both-sided inner products, then
$X$ is an $A-B$-equivalence bimodule.
\end{remark}

In this paper, by the words `` pre-Hilbert $C^*$-bimodules" and `` Hilbert $C^*$-bimodules",
we mean pre-Hilbert
$C^*$-bimodule and Hilbert $C^*$-bimodules in the sense of \cite {BMS:quasi}, respectively.
\par
Let $A$ and $B$ be $C^*$-algebras. Let $X$
be a Hilbert $A-B$-bimodule and let $\BB_B (X)$ be the $C^*$-algebra
of all right $B$-linear operators on $X$ for which
there is a right adjoint $B$-linear operator on $X$. We note that a right $B$-linear operator
on $X$ is bounded. For each $x, y\in X$, let $\theta_{x, y}$ be a rank-one
operator on $X$ defined by $\theta_{x, y}(z)=x\la y, z \ra_B$ for any $z\in X$.
Then $\theta_{x, y}$ is a right $B$-linear operator on $X$. Let $\BK_B (X)$ be the
closure of all linear spans of such $\theta_{x, y}$. Then $\BK_B (X)$ is a closed two-sided
ideal of $\BB_B (X)$. Similarly, we define ${}_A \BB(X)$ and ${}_A \BK(X)$.
If $X$ is an $A-B$-equivalence bimodule,
we identify $A$ and $M(A)$ with $\BK_B (X)$ and $\BB_B (X)$,
respectively and identify $B$
and $M(B)$ with ${}_A \BB(X)$ and ${}_A \BK(X)$, respectively.
For any $a\in M(A)$, we regard $a\in M(A)$ as an element in $\BB_B (X)$ as follows: For
any $b\in A$, $x\in X$,
$$
a(bx)=(ab)x.
$$
Since $X=\overline{AX}$ by \cite [Proposition 1.7]{BMS:quasi}, we can obtain an element in $\BB_B (X)$
induced by $a\in M(A)$. Similarly, we can obtain an element in ${}_A \BB (X)$ induced by
any $b\in M(B)$.

\begin{lemma}\label{lem:convergent}With the above notations, we suppose that $X$ is
a Hilbert $A-B$-bimodule. For any $a\in M(A)$, there is
a bounded net $\{a_{\alpha}\}_{\alpha\in\Gamma}\subset A$ such that $ax=\lim_{\alpha\to\infty}a_{\alpha}x$ for any $x\in X$.
\end{lemma}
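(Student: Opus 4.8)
The plan is to exhibit the desired net $\{a_\alpha\}$ explicitly using an approximate unit of $A$. Let $\{u_\alpha\}_{\alpha\in\Gamma}$ be an approximate unit of $A$ and set $a_\alpha = au_\alpha$. Since $a\in M(A)$ and $u_\alpha\in A$, we have $a_\alpha\in A$, and $\|a_\alpha\| = \|au_\alpha\|\le\|a\|$, so the net is bounded by $\|a\|$. It remains to show $a_\alpha x\to ax$ for every $x\in X$. Here the element $ax$ makes sense because $a\in M(A)\subset\BB_B(X)$ by the paragraph preceding the lemma (using $X=\overline{AX}$ from \cite[Proposition 1.7]{BMS:quasi}).

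First I would verify convergence on the dense subspace $AX$ of $X$. For $x = by$ with $b\in A$, $y\in X$, we have $a_\alpha x = (au_\alpha)(by) = a((u_\alpha b)y)$, and since $u_\alpha b\to b$ in $A$ and the action of $A$ on $X$ is norm-continuous (together with $a$ acting boundedly on $X$), $a_\alpha x = a((u_\alpha b)y)\to a(by) = ax$. For a general element $\sum_{i=1}^n b_i y_i\in AX$, linearity gives the same conclusion.

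Finally I would pass from $AX$ to all of $X$ by the standard $\varepsilon/3$ argument: the net $\{a_\alpha\}$ is uniformly bounded (by $\|a\|$) as a net of operators in $\BB_B(X)$, and $a\in\BB_B(X)$, so given $x\in X$ and $\varepsilon>0$, choose $x'\in AX$ with $\|x-x'\|<\varepsilon/(3\|a\|+1)$; then $\|a_\alpha x - ax\|\le\|a_\alpha(x-x')\| + \|a_\alpha x' - ax'\| + \|a(x'-x)\|$, where the first and third terms are each at most $\|a\|\cdot\varepsilon/(3\|a\|+1)$ and the middle term tends to $0$ by the previous paragraph. Hence $a_\alpha x\to ax$ for all $x\in X$, as required. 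I do not anticipate a genuine obstacle here; the only point requiring a little care is making sure that $a\in M(A)$ really does define a bounded operator on $X$ with $\|a\|_{\BB_B(X)}\le\|a\|_{M(A)}$, but this is exactly what the discussion preceding the lemma (via $X=\overline{AX}$) provides, so the uniform bound needed for the $\varepsilon/3$ argument is available.
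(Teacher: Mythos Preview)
Your proof is correct and follows essentially the same approach as the paper's: the paper simply takes a bounded net $\{a_\alpha\}\subset A$ converging strictly to $a$ and says the conclusion follows ``in a routine way'' from $X=\overline{AX}$, while you make this explicit by choosing the particular net $a_\alpha=au_\alpha$ (which does converge strictly to $a$) and spelling out the $\varepsilon/3$ density argument.
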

\begin{proof}Since$a\in M(A)$, there is a bounded net $\{a_{\alpha}\}_{\alpha\in\Gamma}\subset A$
such that $\{a_{\alpha}\}_{\alpha\in\Gamma}$ converges to $a$ strictly.
We can prove that $ax=\lim_{\alpha\to\infty}a_{\alpha}x$ for any
$x\in X$ in a routine way since $X=\overline{AX}$ by \cite [Proposition 1.7]{BMS:quasi}.
\end{proof}

Let $(\rho, u)$ and $(\sigma, v)$ be twisted coactions of $H^0$ on $A$ and $B$,
respectively.

\begin{Def}\label{Def:coaction}Let $\lambda$ be a linear map from
a Hilbert $A-B$-bimodule $X$ to $X\otimes H^0$.
Then we say that $\lambda$ is a
\sl
twisted coaction
\rm
of $H^0$ on $X$ with respect to
$(A, B, \rho, u, \sigma, v)$ if the following conditions hold:
\newline
(1) $\lambda(ax)=\rho(a)\lambda(x)$ for any $a\in A$, $x\in X$,
\newline
(2) $\lambda(xb)=\lambda(x)\sigma(b)$ for any $b\in B$, $x\in X$,
\newline
(3) $\rho({}_A \la x, y \ra)={}_{A\otimes H^0 }\la \lambda(x), \lambda(y) \ra$ for ny $x, y\in X$,
\newline
(4) $\sigma(\la x, y \ra_B )=\la \lambda(x), \lambda(y) \ra_{B\otimes H^0}$ for any $x, y\in X$,
\newline
(5) $(\id_X \otimes\epsilon^0 )\circ\lambda=\id_X$,
\newline
(6) $(\lambda\otimes\id)(\lambda(x))=u(\id\otimes\Delta^0 )(\lambda(x))v^* $ for any $x\in X$,
\newline
where $u$ and $v$ are regarded as elements in $\BB_B (X)$ and ${}_A \BB(X)$, respectively.
\end{Def}

We note that the twisted coaction $\lambda$ of $H^0$ on
the Hilbert $A-B$-bimodule $X$ with respect to $(A, B, \rho, u, \sigma, v)$ is isometric.
Indeed, for any $x\in X$
$$
||\lambda(x)||^2 = ||{}_{A\otimes H^0} \la \lambda(x), \lambda(x) \ra ||
=||\rho({}_A \la x, y \ra) ||=||{}_A \la x, y \ra ||=||x||^2.
$$

Let $\lambda$ be a twisted coaction of $H^0$on a Hilbert $A-B$-bimodule $X$ with respect to
$(A, B, \rho, u, \sigma, v )$. We define the
\sl
twisted action
\rm
of $H$ on $X$ induced by $\lambda$ as
follows: For any $x\in X$, $h\in H$,
$$
h\cdot_{\lambda}x =(\id\otimes h)(\lambda(x))=\lambda(x)^{\widehat{}}(h) ,
$$
where $\lambda(x)^{\widehat{}}$ is the element in $\Hom (H, X)$ induced by $\lambda(x)$
in $X\otimes H^0$. Then we obtain the following conditions which are equivalent to
Conditions (1)-(6) in Definition \ref{Def:coaction}, respectively:
\newline
(1)' $h\cdot_{\lambda}ax =[h_{(1)}\cdot_{\rho, u}a][h_{(2)}\cdot_{\lambda}x]$ for any $a\in A$, $x\in X$,
\newline
(2)' $h\cdot_{\lambda}xb =[h_{(1)}\cdot_{\lambda}x][h_{(2)}\cdot_{\sigma, v}b]$ for any $b\in B$, $x\in X$,
\newline
(3)' $h\cdot_{\rho} {}_A \la x, y \ra ={}_A \la [h_{(1)}\cdot_{\lambda}x], \, [S(h_{(2)}^* )\cdot_{\lambda}y] \ra$
for any $x, y\in X$,
\newline
(4)' $h\cdot_{\sigma} \la x, y \ra_B =\la [S(h_{(1)}^* )\cdot_{\lambda}x], \, [h_{(2)}\cdot_{\lambda}y] \ra_B$
for any $x, y\in X$,
\newline
(5)' $1_H \cdot_{\lambda}x =x$ for any $x\in X$,
\newline
(6)' $h\cdot_{\lambda}[l\cdot_{\lambda}x]
=\widehat{u}(h_{(1)}, l_{(1)})[h_{(2)}l_{(2)}\cdot_{\lambda}x]\widehat{v}^* (h_{(3)}, l_{(3)})$
for any $x\in X$, $h, l\in H$,
\newline
where $\widehat{u}$ and $\widehat{v}$ are elements in $\Hom (H\times H, M(A))$ and $\Hom (H\times H, M(B))$
induced by $u\in M(A)\otimes H^0 \otimes H^0$ and $v\in M(B)\otimes H^0 \otimes H^0$, respectively.

\begin{remark}\label{remark:coaction}In Definition \ref{Def:coaction}, if $\rho$ and $\sigma$ are coactions
of $H^0$ on $A$ and $B$, respectively, then Condition (6) in Definition \ref{Def:coaction} and its equivalent Condition (6)' are
following, respectively:
\newline
$(6)$ $(\lambda\otimes\id)\circ\lambda=(\id\otimes\Delta^0 )\circ\lambda$,
\newline
$(6)$' $h\cdot_{\lambda}[l\cdot_{\lambda}x]=hl\cdot_{\lambda}x$ for any $x\in X$.
\newline
In this case, we call $\lambda$ a {\it coaction} of $H^0$ on $X$ with respect to $(A, B, \rho, \sigma)$.
\end{remark}
\par
Next, we shall define crossed products of Hilbert $C^*$-bimodules by twisted coactions in the same
way as in \cite [Section 4]{KT3:equivalence} and give a duality theorem for them.
\par
Let $(\rho, u)$ and $(\sigma, v)$ be twisted coactions of $H^0$ on $C^*$-algebras $A$ and
$B$, respectively. Let $\lambda$ be a twisted coaction of $H^0$ on a Hilbert $A-B$-bimodule
$X$ with respect to $(A, B, \rho, u, \sigma, v )$. We define $X\rtimes_{\lambda}H$, a
Hilbert $A\rtimes_{\rho, u}H-B\rtimes_{\sigma, v}H$-bimodule as follows:
Let $(X\rtimes_{\lambda}H)_0$ be just $X\otimes H$ (the algebraic tensor product) as vector spaces.
Its left and right actions are given by
\begin{align*}
(a\rtimes_{\rho, u}h)(x\rtimes_{\lambda}l) & =a[h_{(1)}\cdot_{\lambda}x]\widehat{v}(h_{(2)}, l_{(1)})
\rtimes_{\lambda}h_{(3)}l_{(2)} , \\
(x\rtimes_{\lambda}l)(b\rtimes_{\sigma, v}m) & =x[l_{(1)}\cdot_{\sigma, v}b]\widehat{v}(l_{(2)}, m_{(1)})
\rtimes_{\lambda}l_{(3)}m_{(2)}
\end{align*}
for any $a\in A, b\in B, x\in X$ and $h, l, m\in H$. Also, its left $A\rtimes_{\rho, u}H$-valued
and right $B\rtimes_{\sigma,v}H$-valued inner products are given by
\begin{align*}
{}_{A\rtimes_{\rho, u}H} \la x\rtimes_{\lambda}h, \, y\rtimes_{\lambda}l \ra & ={}_A \la x, \,
[S(h_{(2)}l_{(3)}^* )^* \cdot_{\lambda}y ]\widehat{v}(S(h_{(1)}l_{(2)}^* )^* , \, l_{(1)}) \ra
\rtimes_{\rho, u}h_{(3)}l_{(4)}^* , \\
\la x\rtimes_{\lambda}h, \, y\rtimes_{\lambda}l \ra_{B\rtimes_{\sigma, v}H} & =
\widehat{v}^* (h_{(2)}^* , \, S(h_{(1)})^* )[h_{(3)}^* \cdot_{\sigma, v}\la x, y \ra_B ]\widehat{v}(h_{(4)}^* ,l_{(1)})
\rtimes_{\sigma, v}h_{(5)}^* l_{(2)}
\end{align*}
for any $x, y\in X$ and $h, l\in H$. In the same way as in \cite [Section 4]{KT3:equivalence}, we can
see that $(X\rtimes_{\lambda}H)_0$ is a pre-Hilbert $A\rtimes_{\rho, u}H-B\rtimes_{\sigma, v}H$-bimodule.
Let $X\rtimes_{\lambda}H$ be the completion of $(X\rtimes_{\lambda}H)_0$.
It is a Hilbert $A\rtimes_{\rho, u}H-B\rtimes_{\sigma, v}H$-bimodule. Let $\widehat{\lambda}$
be a linear map from $(X\rtimes_{\lambda}H)_0$ to $(X\rtimes_{\lambda}H)_0 \otimes H$
defined by
$$
\widehat{\lambda}(x\rtimes_{\lambda}h)=(x\rtimes_{\lambda}h_{(1)})\otimes h_{(2)}
$$
for any $x\in X$, $h\in H$. By easy computations, we can see that $\widehat{\lambda}$ is a linear
map from $H$ to $(X\rtimes_{\lambda}H)_0 \otimes H$ satisfying Conditions (1)-(6)
in Definition \ref{Def:coaction}.
Thus for any $x\in (X\rtimes_{\lambda}H)_0$,
$$
||\widehat{\lambda}(x)||^2 =||{}_{(A\rtimes_{\rho. u}H)\otimes H} \la \widehat{\lambda}(x), \, \widehat{\lambda}(x) \ra ||
=||\widehat{\rho}({}_A \la x, \, x \ra)||=||{}_A \la x, x \ra ||=||x||^2 .
$$
Hence $\widehat{\lambda}$ is an isometry. We extend $\widehat{\lambda}$ to $X\rtimes_{\lambda}H$.
We can see that the extension of $\widehat{\lambda}$ is a coaction of $H$ on
$X\rtimes_{\lambda}H$ with respect to $(A\rtimes_{\rho, u}H, B\rtimes_{\sigma, v}H, \widehat{\rho}, \widehat{\sigma})$.
We also denote it by the same symbol $\widehat{\lambda}$ and call it
the dual coaction of $\lambda$. Similarly we define the second dual coaction of $\lambda$,
which is a coaction of $H^0$ on $X\rtimes_{\lambda}H\rtimes_{\widehat{\lambda}}H^0$.
Let $\Lambda$ be as in Section \ref{sec:pre}. For any $I=(i, j, k)\in \Lambda$, let  $W_I^{\rho}$, $V_I^{\rho}$
be elements in $M(A)\rtimes_{\underline{\rho}, u}H\rtimes_{\underline{\widehat{\rho}}}H^0$ defined by
$$
W_I^{\rho}=\sqrt{d_k}\rtimes_{\underline{\rho}, u}w_{ij}^k, \quad
V_I^{\rho}=(1\rtimes_{\underline{\rho}, u}1\rtimes_{\widehat{\underline{\rho}}}\tau)(W_I^{\rho}
\rtimes_{\widehat{\underline{\rho}}}1^0 ) .
$$
Similarly for any $I=(i, j, k)\in\Lambda$, we define elements
$$
W_I^{\sigma}=\sqrt{d_k}\rtimes_{\underline{\sigma}, v}w_{ij}^k, \quad
V_I^{\sigma}=(1\rtimes_{\underline{\sigma}, v}1\rtimes_{\widehat{\underline{\sigma}}}\tau)(W_I^{\sigma}
\rtimes_{\widehat{\underline{\sigma}}}1^0 )
$$
in $M(B)\rtimes_{\underline{\sigma}, v}H\rtimes_{\widehat{\underline{\sigma}}}H^0$.
We regard $M_N (\BC)$ as an equivalence $M_N (\BC)-M_N (\BC)$-bimodule in the
usual way. Let $X\otimes M_N (\BC)$ be the exterior tensor product of $X$ and $M_N (\BC)$,
which is a Hilbert $A\otimes M_N (\BC)-B\otimes M_N (\BC)$-bimodule.
Let $\{f_{IJ}\}_{I, J\in\Lambda}$ be a system of matrix units of $M_N (\BC)$.
Let $\Psi_X$ be a linear map from $X\otimes M_N (\BC)$ to $X\rtimes_{\lambda}H\rtimes_{\widehat{\lambda}}H^0$
defined by
$$
\Psi_X (\sum_{I, J}x_{IJ}\otimes f_{IJ})=\sum_{I, J}V_I^{\rho*}(x_{IJ}\rtimes_{\lambda}1\rtimes_{\widehat{\lambda}}1^0 )
V_J^{\sigma} .
$$
Let $\Psi_A$ and $\Psi_B$ be the isomorphisms of $A\otimes M_N (\BC)$ and $B\otimes M_N (\BC)$
onto $A\rtimes_{\rho, u}H\rtimes_{\widehat{\rho}}H^0$ and $B\rtimes_{\sigma, v}H\rtimes_{\widehat{\sigma}}H^0$
defined in Proposition \ref{prop:nonunital}, respectively. Then we have the same lemmas as
\cite [Lemmas 5.1 and 5.5]{KT3:equivalence}. Hence $\Psi_X$ is an isometry from $X\otimes M_N (\BC)$
to $X\rtimes_{\lambda}H\rtimes_{\widehat{\lambda}}H^0$ whose image is
$(X\rtimes_{\lambda}H)_0 \rtimes_{\widehat{\lambda}}H^0$, the linear span of the set
$$
\{x\rtimes_{\lambda}h\rtimes_{\widehat{\lambda}}\phi \, | \, x\in X, h\in H, \phi\in H^0 \} .
$$
Since $X\otimes M_N (\BC)$ is complete, so is $(X\rtimes_{\lambda}H)_0 \rtimes_{\widehat{\lambda}}H^0$.
Furthermore, we claim that $(X\rtimes_{\lambda}H)_0$ is also complete. In order to show it,
we need the following lemma: Let $E_1^{\lambda}$ be a linear map from $(X\rtimes_{\lambda}H)_0$
onto $X$ defined by
$$
E_1^{\lambda}(x\rtimes_{\lambda}h)=\tau(h)x
$$
for any $x\in X$, $h\in H$.

\begin{lemma}\label{lem:continuous}With the above notations, $E_1^{\lambda}$ is continuous.
\end{lemma}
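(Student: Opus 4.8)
The plan is to prove the stronger assertion that $E_1^{\lambda}$ is a contraction. The point is that $E_1^{\lambda}$ is ``adjoint'' to the map $\iota:X\to (X\rtimes_{\lambda}H)_0$, $x\mapsto x\rtimes_{\lambda}1_H$, via the canonical conditional expectation on the coefficient crossed product.

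First I would evaluate the right $B\rtimes_{\sigma, v}H$-valued inner product of $X\rtimes_{\lambda}H$ at $h=1_H$. Since $S(1_H)^*=1_H$, $1_H\cdot_{\sigma, v}b=b$, and $\widehat{v}$ is normal (so $\widehat{v}(1_H, 1_H)=1$ and $\widehat{v}(1_H, l_{(1)})=\epsilon(l_{(1)})1$), all the twisting factors disappear, and using $\epsilon(l_{(1)})l_{(2)}=l$ one obtains
$$
\la x\rtimes_{\lambda}1_H, \, y\rtimes_{\lambda}l\ra_{B\rtimes_{\sigma, v}H}=\la x, \, y\ra_B\rtimes_{\sigma, v}l \qquad (x, y\in X, \ l\in H).
$$
Putting $y=x$, $l=1_H$ and using that $b\mapsto b\rtimes_{\sigma, v}1_H$ is a $*$-homomorphism of $B$ into $B\rtimes_{\sigma, v}H$ (a similar, easier substitution) and hence contractive, this gives $||\iota(x)||\leq ||x||$. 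Next, let $E_1^{\sigma, v}$ be the restriction to the ideal $B\rtimes_{\sigma, v}H$ of the canonical conditional expectation $E_1^{\underline{\sigma}, v}$ from $M(B)\rtimes_{\underline{\sigma}, v}H$ onto $M(B)$; it takes values in $B$ and is contractive, being a restriction of a conditional expectation.

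Applying $E_1^{\sigma, v}$ to the displayed identity and using $E_1^{\sigma, v}(b\rtimes_{\sigma, v}l)=\tau(l)b$ together with linearity in the second variable, I would deduce
$$
E_1^{\sigma, v}(\la \iota(x), \, \xi\ra_{B\rtimes_{\sigma, v}H})=\la x, \, E_1^{\lambda}(\xi)\ra_B \qquad (x\in X, \ \xi\in (X\rtimes_{\lambda}H)_0).
$$
Since $||y||=\sup\{\, ||\la x, \, y\ra_B|| \ | \ x\in X, \ ||x||\leq 1\,\}$ for every $y\in X$, and since $(X\rtimes_{\lambda}H)_0$ is a pre-Hilbert $B\rtimes_{\sigma, v}H$-module, the Cauchy--Schwarz inequality then yields, for all $\xi\in (X\rtimes_{\lambda}H)_0$,
$$
||E_1^{\lambda}(\xi)||=\sup_{||x||\leq 1}||E_1^{\sigma, v}(\la \iota(x), \, \xi\ra_{B\rtimes_{\sigma, v}H})||\leq\sup_{||x||\leq 1}||\iota(x)||\,||\xi||\leq ||\xi|| ,
$$
so $E_1^{\lambda}$ is continuous.

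I expect the only real computation to be the collapse of the inner-product formula at $h=1_H$ in the first step: it is routine but must be done carefully, keeping track of the iterated coproducts and the $*$-operation, and it is the one place where the precise form of the crossed-product inner product is used; the rest is a formal consequence of the Cauchy--Schwarz inequality and the contractivity of conditional expectations. (One could equally run the argument on the left, using the $A\rtimes_{\rho, u}H$-valued inner product and $E_1^{\underline{\rho}, u}$.)
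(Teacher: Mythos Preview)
Your proof is correct and takes a genuinely different route from the paper.

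The paper proves continuity by realizing $E_1^{\lambda}$ as the action of the Haar projection $\tau$ on $X\rtimes_{\lambda}H$ via the dual coaction $\widehat{\lambda}$, and then writing this action concretely as
\[
E_1^{\lambda}(x\rtimes_{\lambda}h)=\widehat{V}^{\widehat{\rho}}(\tau_{(1)})\,(x\rtimes_{\lambda}h\rtimes_{\widehat{\lambda}}1^0)\,\widehat{V}^{\widehat{\sigma}*}(\tau_{(2)}) ,
\]
i.e.\ as left/right multiplication by fixed multiplier elements inside $X\rtimes_{\lambda}H\rtimes_{\widehat{\lambda}}H^0$; continuity is then immediate because multiplication by bounded elements is bounded. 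This argument is borrowed from the duality machinery (the $\widehat{V}$ operators and the identification of the module with its image in the double crossed product) that the paper is developing anyway.

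Your argument is more elementary and self-contained: it avoids the second crossed product entirely, using only the adjointness identity
\[
E_1^{\sigma, v}\bigl(\la \iota(x),\,\xi\ra_{B\rtimes_{\sigma, v}H}\bigr)=\la x,\,E_1^{\lambda}(\xi)\ra_B
\]
together with Cauchy--Schwarz and the contractivity of the conditional expectation on the coefficient algebra. It even yields the sharper statement $\|E_1^{\lambda}\|\le 1$, which the paper's argument does not make explicit. The only place your approach touches the specific structure of the twisted crossed product is the collapse of the inner-product formula at $h=1_H$, and that is a routine use of normality of $\widehat{v}$ and the counit axiom, exactly as you indicated. The paper's approach, on the other hand, has the advantage of identifying $E_1^{\lambda}$ intrinsically as the $\tau$-averaging map for $\widehat{\lambda}$, which fits naturally into the subsequent duality arguments.
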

\begin{proof}In the same way as in the proof of \cite [Lemma 5.6]{KT3:equivalence}, we can see that
$$
E_1^{\lambda}(x\rtimes_{\lambda}h)=\tau\cdot_{\widehat{\lambda}}(x\rtimes_{\lambda}h)
=\widehat{V}^{\widehat{\rho}}(\tau_{(1)})(x\rtimes_{\lambda}h\rtimes_{\widehat{\lambda}}1^0 )
\widehat{V}^{\widehat{\sigma}*}(\tau_{(2)}) ,
$$
where we identify $X\rtimes_{\lambda}H\rtimes_{\widehat{\lambda}}1^0 $
with $X\rtimes_{\lambda}H$ and
$$
\widehat{V}^{\widehat{\rho}}(\phi)=1\rtimes_{\underline{\rho}, u}1\rtimes_{\underline{\widehat{\rho}}}\phi, \quad
\widehat{V}^{\widehat{\sigma}}(\phi)=1\rtimes_{\underline{\sigma}, v}1\rtimes_{\underline{\widehat{\sigma}}}\phi
$$
for any $\phi\in H^0$. Hence $E_1^{\lambda}$ is continuous.
\end{proof}
Let $E_2^{\lambda}$ be a linear map from $(X\rtimes_{\lambda}H\rtimes_{\widehat{\lambda}}H^0 )_0$
to $X\rtimes_{\lambda}H$ defined by
$$
E_2^{\lambda}(x\rtimes_{\widehat{\lambda}}\phi)=\phi(e)x
$$
for any $x\in X\rtimes_{\lambda}H$, $\phi\in H^0$.

\begin{lemma}\label{lem:complete}With the above notations,
$(X\rtimes_{\lambda}H)_0$ is complete.
\end{lemma}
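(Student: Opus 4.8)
The goal is to show that $(X\rtimes_{\lambda}H)_0$ is already complete in its norm, i.e., that it equals its completion $X\rtimes_{\lambda}H$. I already know from the discussion preceding Lemma \ref{lem:continuous} that $\Psi_X$ identifies the complete space $X\otimes M_N(\BC)$ isometrically with $(X\rtimes_{\lambda}H)_0\rtimes_{\widehat{\lambda}}H^0$, so the latter is complete; the task is to descend this completeness one level, from the double crossed product down to the single crossed product $(X\rtimes_{\lambda}H)_0$. The plan is to realize $(X\rtimes_{\lambda}H)_0$ as the image of a \emph{bounded idempotent-type} reconstruction map applied to the complete space $(X\rtimes_{\lambda}H)_0\rtimes_{\widehat{\lambda}}H^0$, using the conditional-expectation-type map $E_2^{\lambda}$ together with the quasi-basis elements $\{V_I^{\rho}, V_I^{\sigma}\}$ coming from the second dual coaction, exactly as in the passage from $A\rtimes_{\rho,u}H\rtimes_{\widehat{\rho}}H^0$ back to $A\rtimes_{\rho,u}H$ in Proposition \ref{prop:nonunital2} and Lemma \ref{lem:restriction2}.

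Concretely, first I would record that $E_2^{\lambda}$ is continuous: arguing as for $E_1^{\lambda}$ in Lemma \ref{lem:continuous}, one writes $E_2^{\lambda}(y\rtimes_{\widehat{\lambda}}\phi)=e\cdot_{\widehat{\widehat{\lambda}}}(y\rtimes_{\widehat{\lambda}}\phi)$ in terms of the second dual coaction $\widehat{\widehat{\lambda}}$, which is implemented by a conjugation by fixed unitaries in the relevant multiplier algebras, hence bounded. Next, using the quasi-basis $\{(W_I^{\rho*},W_I^{\rho})\}$ for $E_1^{\underline{\rho},u}$ and the corresponding elements $V_I^{\rho}$ (and likewise on the $B$-side), one obtains, for any $\xi\in (X\rtimes_{\lambda}H)_0\rtimes_{\widehat{\lambda}}H^0$, a reconstruction formula expressing a generic element $x\rtimes_{\lambda}h\in(X\rtimes_{\lambda}H)_0$ through finitely many applications of $E_2^{\lambda}$, left/right multiplication by the $V_I$'s, and the bimodule operations — the same computation that yields $\Psi_X^{-1}$ on the algebraic level. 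Since all these operations are bounded, the composite is a bounded map $\Phi$ from the complete space $(X\rtimes_{\lambda}H)_0\rtimes_{\widehat{\lambda}}H^0$ onto $(X\rtimes_{\lambda}H)_0$, and one checks $\Phi\circ\Psi_X$ acts as a projection whose range is exactly $(X\rtimes_{\lambda}H)_0$.

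Then completeness follows: take a Cauchy sequence $(\xi_n)$ in $(X\rtimes_{\lambda}H)_0$; view it inside the complete space $X\rtimes_{\lambda}H$, where it converges to some $\xi$; lift via $\widehat{\lambda}$ (or directly via $\Psi_X^{-1}$ applied to $\xi_n\rtimes_{\widehat{\lambda}}1^0$, which lies in $(X\rtimes_{\lambda}H)_0\rtimes_{\widehat{\lambda}}H^0$ and is Cauchy there, hence convergent since that space is complete) and apply the bounded reconstruction map $\Phi$ to deduce that $\xi\in(X\rtimes_{\lambda}H)_0$. Equivalently, and more cleanly: $\Psi_X\bigl((X\rtimes_{\lambda}H)_0\otimes M_N(\BC)\text{-image}\bigr)$ is complete, $\Phi$ maps it boundedly onto $(X\rtimes_{\lambda}H)_0$, and a bounded surjective linear image of a Banach space that is isometrically a subspace of the target's completion must be all of a closed — hence complete — subspace; since it is dense by construction, it is the whole completion.

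\textbf{Main obstacle.} The delicate point is the reconstruction formula itself: one must verify that the finite expression recovering $x\rtimes_{\lambda}h$ from $x\rtimes_{\lambda}h\rtimes_{\widehat{\lambda}}1^0$ via the $V_I^{\rho},V_I^{\sigma}$ and $E_2^{\lambda}$ is genuinely bounded with a constant independent of $x$ and $h$ — this is where the quasi-basis estimates from \cite{KT1:inclusion} and the analogues of \cite[Lemmas 5.1 and 5.5]{KT3:equivalence} do the real work, and where one must be careful that the bimodule-valued inner product computations behave as in the unital case. Once that boundedness is in hand, the descent of completeness is formal.
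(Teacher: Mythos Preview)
Your approach is correct and is essentially what the paper does: its one-line proof invokes precisely Lemma~\ref{lem:continuous} and the map $E_2^{\lambda}$, i.e.\ one descends completeness from the already complete double crossed product $(X\rtimes_{\lambda}H)_0\rtimes_{\widehat{\lambda}}H^0$ to $(X\rtimes_{\lambda}H)_0$ via the continuous retraction built from $E_2^{\lambda}$. Your ``main obstacle'' is not a genuine obstacle, since the reconstruction map is a \emph{finite} sum of compositions of bounded operations (left and right multiplication by fixed multiplier elements together with the continuous $E_2^{\lambda}$), and is therefore automatically bounded with no delicate quasi-basis estimates needed.
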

\begin{proof}Let $\{x_n \}$ be a Cauchy sequence in $(X\rtimes_{\lambda}H)_0$.
Using Lemma \ref{lem:continuous} and the lonear map $E_2^{\lambda}$, we can see that
$\{x_n \}$ is convergent in $(X\rtimes_{\lambda}H)_0$.
\end{proof}

By Lemma \ref{lem:complete}, $X\rtimes_{\lambda}H=(X\rtimes_{\lambda}H)_0$.
In the same way as in the proof of \cite [Theorem 5.7]{KT3:equivalence}, we obtain the
following proposition using Lemma \ref{lem:complete}:

\begin{prop}\label{prop:dual2}Let $A$, $B$ be $C^*$-algebras and $H$ a finite dimensional
$C^*$-Hopf algebra with its dual $C^*$-Hopf algebra $H^0$. Let $(\rho, u)$ and $(\sigma, v)$
be twisted coactions of $H^0$ on $A$ and $B$, respectively. Let $\lambda$ be a twisted
coaction of $H^0$ on a Hilbert $A-B$-bimodule $X$ with respect to $(A, B, \rho, u, \sigma, v)$.
Then there is an isomorphism $\Psi_X$ from $X\otimes M_N (\BC)$ onto
$X\rtimes_{\lambda}H\rtimes_{\widehat{\lambda}}H^0$ satisfying that
\begin{align*}
(1)\, \Psi_X ((\sum_{I, J}a_{IJ}\otimes f_{IJ})(\sum_{I, J}x_{IJ}\otimes f_{IJ}))
& =\Psi_A (\sum_{I, J}a_{IJ}\otimes f_{IJ})\Psi_X (\sum_{I, J}x_{IJ}\otimes f_{IJ}) , \\
(2)\, \Psi_X ((\sum_{I, J}x_{IJ}\otimes f_{IJ})(\sum_{I, J}b_{IJ}\otimes f_{IJ}))
& =\Psi_X (\sum_{I, J}x_{IJ}\otimes f_{IJ})\Psi_B (\sum_{I, J}b_{IJ}\otimes f_{IJ}) , \\
(3)\, {}_{A\rtimes_{\rho, u}H\rtimes_{\widehat{\rho}}H^0} \la \Psi_X (\sum_{I, J}x_{IJ}\otimes f_{IJ}) \, , \,
\Psi_X &  (\sum_{I, J}  y_{IJ}\otimes f_{IJ}) \ra \\
= \Psi_A ({}_{A\otimes M_N (\BC)} & \la \sum_{I, J}x_{IJ}\otimes f_{IJ}\, , \,
\sum_{I, J}y_{IJ}\otimes f_{IJ} \ra ) , \\
(4) \, \, \la \Psi_X (\sum_{I, J}x_{IJ}\otimes f_{IJ}) \, , \,
\Psi_X (\sum_{I, J} y_{IJ}\otimes & f_{IJ}) \ra_{B\rtimes_{\sigma, v}H\rtimes_{\widehat{\sigma}}H^0} \\
= \Psi_B ( \la & \sum_{I, J}x_{IJ}\otimes f_{IJ}\, , \,
\sum_{I, J}y_{IJ}\otimes f_{IJ} \ra_{B\otimes M_N (\BC)} )
\end{align*}
for any $a_{IJ}\in A$, $b_{IJ}\in B$, $x_{IJ}\, , \, y_{IJ}\in X$, $I, J\in \Lambda$,
where $X\rtimes_{\lambda}H\rtimes_{\widehat{\lambda}}H^0$
is a Hilbert $A\rtimes_{\rho, u}H\rtimes_{\widehat{\rho}}H^0 -B\rtimes_{\sigma, v}H\rtimes_{\widehat{\sigma}}H^0$-
bimodule and $X\otimes M_N (\BC)$ is an exterior tensor product of $X$ and the Hilbert
$M_N (\BC)-M_N (\BC)$-bimodule $M_N (\BC)$. Furthermore, there are unitary elements
$U\in (M(A)\rtimes_{\underline{\rho}, u}H\rtimes_{\widehat{\underline{\rho}}}H^0 )\otimes H^0$
and $V\in (M(B)\rtimes_{\underline{\sigma}, v}H\rtimes_{\widehat{\underline{\sigma}}}H^0 )\otimes H^0$
such that
$$
U\widehat{\widehat{\lambda}}(x)V=((\Psi_X \otimes\id)\circ(\lambda\otimes\id_{M_N (\BC)})\circ\Psi_X^{-1})(x)
$$
for any $x\in X\otimes M_N (\BC)$.
\end{prop}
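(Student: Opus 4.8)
The plan is to imitate the proof of \cite [Theorem 5.7]{KT3:equivalence} verbatim, replacing the unital ordinary-coaction ingredients by their non-unital twisted-coaction analogues that have been set up in this section and in Section \ref{sec:pre}. The key point is that $\Psi_X$ has already been shown to be an isometry from $X\otimes M_N(\BC)$ onto $(X\rtimes_{\lambda}H)_0\rtimes_{\widehat{\lambda}}H^0$, and by Lemma \ref{lem:complete} together with the surjectivity of $\Psi_X$ at the algebra level from Proposition \ref{prop:nonunital} we in fact have $(X\rtimes_{\lambda}H)_0\rtimes_{\widehat{\lambda}}H^0 = X\rtimes_{\lambda}H\rtimes_{\widehat{\lambda}}H^0$; so $\Psi_X$ is already a surjective isometry. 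What remains is to check the four bimodule/inner-product compatibility identities (1)--(4) and the exterior-equivalence identity involving $U$ and $V$.

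First I would recall, from Proposition \ref{prop:nonunital} (applied to $(A,\rho,u)$ and to $(B,\sigma,v)$ separately), the explicit form $\Psi_A([a_{IJ}])=\sum_{I,J}V_I^{\rho*}(a_{IJ}\rtimes_{\rho,u}1\rtimes_{\widehat{\rho}}1^0)V_J^{\rho}$ and similarly for $\Psi_B$, and the definition $\Psi_X(\sum x_{IJ}\otimes f_{IJ})=\sum V_I^{\rho*}(x_{IJ}\rtimes_{\lambda}1\rtimes_{\widehat{\lambda}}1^0)V_J^{\sigma}$. Then identities (1) and (2) reduce, after cancelling the factors $V_J^{\rho*}V_J^{\rho}$ (which act as matrix units in the relevant corner, exactly as in \cite{KT3:equivalence}), to the statement that the left $A\rtimes_{\rho,u}H$-action and the right $B\rtimes_{\sigma,v}H$-action on $X\rtimes_{\lambda}H$ are compatible with the inclusions $a\mapsto a\rtimes_{\rho,u}1$ etc.; this is immediate from the definitions of the actions on $(X\rtimes_{\lambda}H)_0$. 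Identities (3) and (4) are handled the same way: one computes ${}_{A\rtimes_{\rho,u}H\rtimes_{\widehat{\rho}}H^0}\la\Psi_X(x),\Psi_X(y)\ra$ using the formula for $V_I^{\rho}$, the definition of the double-dual inner product on $X\rtimes_{\lambda}H\rtimes_{\widehat{\lambda}}H^0$, and the quasi-basis relations for $E_1^{\underline{\rho},u}$ and $E_1^{\lambda}$ (Lemma \ref{lem:continuous}) to collapse the sum to $\Psi_A({}_{A\otimes M_N(\BC)}\la x,y\ra)$; the non-unital case introduces no new difficulty because all maps in sight ($\underline{\rho}$, $\id\otimes h$, $E_1^{\underline{\rho},u}$, and the conditional expectations) are strictly continuous, which was exactly the content of Lemmas \ref{lem:multiplier}--\ref{lem:restriction2} and Remark \ref{remark:dual}.

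For the last assertion, I would take $U$ and $V$ to be precisely the unitaries produced by Proposition \ref{prop:nonunital} for $(A,\rho,u)$ and $(B,\sigma,v)$ respectively, so that $\Ad(U)\circ\widehat{\widehat{\rho}}=(\Psi_A\otimes\id)\circ(\rho\otimes\id)\circ\Psi_A^{-1}$ and likewise for $V$. The identity $U\,\widehat{\widehat{\lambda}}(x)\,V=((\Psi_X\otimes\id)\circ(\lambda\otimes\id_{M_N(\BC)})\circ\Psi_X^{-1})(x)$ is then a bimodule-level version of these two algebra-level relations: one verifies it on an elementary tensor $x\rtimes_{\lambda}1\rtimes_{\widehat{\lambda}}1^0$ in the image of $\Psi_X$, where $\widehat{\widehat{\lambda}}$ acts by the formula dual-dual to $\widehat{\lambda}$, and then uses that $U$, $V$ and $\Psi_X$ intertwine left and right multiplications by $V_I^{\rho}$, $V_J^{\sigma}$ as in (1),(2). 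The main obstacle is purely bookkeeping: keeping track of which copy of $H$ or $H^0$ each leg of $U$, $V$, $u$, $v$ lives in, and making sure the identification of $A\otimes H^0\otimes H^0\otimes M_N(\BC)$ with $A\otimes M_N(\BC)\otimes H^0\otimes H^0$ used in Proposition \ref{prop:nonunital} is carried consistently through the bimodule computation; once the ordinary-coaction proof of \cite [Theorem 5.7]{KT3:equivalence} is written out with $u$, $v$ inserted in the six spots dictated by Conditions (6),(6)$'$ of Definition \ref{Def:coaction}, everything goes through line by line, so I would simply say ``In the same way as in the proof of \cite [Theorem 5.7]{KT3:equivalence}, using Proposition \ref{prop:nonunital} and Lemma \ref{lem:complete}, we obtain the conclusion.''
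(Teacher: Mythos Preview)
Your proposal is correct and matches the paper's approach essentially verbatim: the paper's entire ``proof'' is the single sentence preceding the statement, namely that one obtains the proposition ``in the same way as in the proof of \cite[Theorem 5.7]{KT3:equivalence}\ldots using Lemma \ref{lem:complete},'' which is precisely your concluding line. Your added elaboration---that $\Psi_X$ is already known to be an isometry onto $(X\rtimes_{\lambda}H)_0\rtimes_{\widehat{\lambda}}H^0$, that Lemma \ref{lem:complete} upgrades this to surjectivity onto $X\rtimes_{\lambda}H\rtimes_{\widehat{\lambda}}H^0$, and that $U$, $V$ come from Proposition \ref{prop:nonunital} applied to $A$ and $B$ separately---is exactly the content of the discussion the paper places before the proposition statement.
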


The above proposition has already obtained in the case of
Kac systems by Guo and Zhang \cite {GZ:Kac2}, which is a generalization of
the above result. Also, we have the following lemmas:

\begin{lemma}\label{lem:full0}With the above notations, if $X$ is full with the both-sided inner products,
then so is $X\rtimes_{\lambda}H$.
\end{lemma}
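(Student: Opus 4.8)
The plan is to transport fullness across the isomorphism $\Psi_X$ of Proposition \ref{prop:dual2} and then use the duality between the double crossed product and matrix amplification to descend back to $X\rtimes_\lambda H$. First I would record that if $X$ is full with the left inner product, i.e. $\overline{{}_A \la X, X\ra}=A$, then the exterior tensor product $X\otimes M_N(\BC)$ is full over $A\otimes M_N(\BC)$: its left inner product spans ${}_A\la X, X\ra\otimes {}_{M_N(\BC)}\la M_N(\BC), M_N(\BC)\ra$, and $M_N(\BC)$ is full over itself. The same argument handles the right inner product. Applying Proposition \ref{prop:dual2}(3) and (4), $\Psi_X$ carries these inner products onto the inner products of $X\rtimes_\lambda H\rtimes_{\widehat\lambda}H^0$ (composed with the isomorphisms $\Psi_A$, $\Psi_B$), so $X\rtimes_\lambda H\rtimes_{\widehat\lambda}H^0$ is full over $A\rtimes_{\rho,u}H\rtimes_{\widehat\rho}H^0$ and over $B\rtimes_{\sigma,v}H\rtimes_{\widehat\sigma}H^0$ with both-sided inner products.

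Next I would descend. Since $X\rtimes_\lambda H$ is complete by Lemma \ref{lem:complete}, and $E_1^\lambda$ (Lemma \ref{lem:continuous}) together with the quasi-basis $\{(\sqrt{d_k}\rtimes_{\underline\rho,u}w_{ij}^k)^*, \sqrt{d_k}\rtimes_{\underline\rho,u}w_{ij}^k\}$ for $E_1^{\underline\rho,u}$ expresses each element of $A\rtimes_{\rho,u}H$ in terms of its images under $E_1^{\underline\rho,u}(\,\cdot\, W_I^*)\in A$, I would show that the closed span of ${}_{A\rtimes_{\rho,u}H}\la X\rtimes_\lambda H, X\rtimes_\lambda H\ra$ contains every generator $a\rtimes_{\rho,u}h$. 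Concretely, using the formula for the left inner product on $(X\rtimes_\lambda H)_0$ one computes ${}_{A\rtimes_{\rho,u}H}\la x\rtimes_\lambda h, y\rtimes_\lambda l\ra$ in terms of ${}_A\la x, \cdot\ra$; summing over a net realizing $\overline{{}_A\la X, X\ra}=A$ and over $h, l$ ranging through the comatrix units $w_{ij}^k$ produces, after the $E_1$-quasi-basis bookkeeping, all of $A\rtimes_{\rho,u}H$. Hence $X\rtimes_\lambda H$ is left-full; the right-fullness over $B\rtimes_{\sigma,v}H$ is symmetric, using the corresponding formula for $\la\cdot,\cdot\ra_{B\rtimes_{\sigma,v}H}$ and the quasi-basis for $E_1^{\underline\sigma,v}$.

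Alternatively — and this is the cleaner route I would actually write — I would avoid the bimodule inner-product computation entirely and argue purely algebraically: fullness of a Hilbert $C^*$-bimodule $Y$ over $(C, D)$ is equivalent to the statement that $Y\otimes M_N(\BC)$ is full over $(C\otimes M_N(\BC), D\otimes M_N(\BC))$, and also to the statement that $Y\rtimes_{\widehat\lambda}H^0$ is full over the corresponding crossed products (the latter because the crossed product inner products span, by the quasi-basis for the canonical conditional expectation on the $H^0$-crossed product). Running this equivalence twice — once to pass from $X\rtimes_\lambda H$ to $X\rtimes_\lambda H\rtimes_{\widehat\lambda}H^0$, and once (via $\Psi_X$, $\Psi_A$, $\Psi_B$ and Proposition \ref{prop:dual2}) to identify the latter with $X\otimes M_N(\BC)$ over $A\otimes M_N(\BC)$, $B\otimes M_N(\BC)$ — reduces the claim to the trivial fact that $X\otimes M_N(\BC)$ is full iff $X$ is. The main obstacle is the ``descent'' direction: knowing $X\rtimes_\lambda H\rtimes_{\widehat\lambda}H^0$ is full does not immediately give fullness of $X\rtimes_\lambda H$, so one genuinely needs that the span of the crossed-product inner products recovers the whole crossed product, which is exactly where the quasi-basis $\{(W_I^\rho)^*, W_I^\rho\}$ (equivalently the Watatani index-finiteness of $E_1^{\underline\rho,u}$) is used. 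Everything else is routine tensor-product and Sweedler-notation manipulation parallel to \cite[proof of Theorem 5.7]{KT3:equivalence}.
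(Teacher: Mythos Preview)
Your argument is correct, but the detour through Proposition~\ref{prop:dual2} and the double crossed product is superfluous. The direct computation you sketch in your second paragraph --- showing that every generator $a\rtimes_{\rho,u}h$ lies in the closed span of the left inner products ${}_{A\rtimes_{\rho,u}H}\la x\rtimes_\lambda h,\, y\rtimes_\lambda l\ra$ by using fullness of $X$ together with the explicit inner-product formula and the comatrix units --- already proves the lemma on its own, without any reference to $\Psi_X$ or to $X\rtimes_\lambda H\rtimes_{\widehat\lambda}H^0$. This direct route is exactly what the paper does: it simply points to the proof of \cite[Lemma~4.5]{KT3:equivalence}, which is precisely that Sweedler-notation computation in the unital case, and observes that the same manipulation works here.

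Your ``cleaner'' alternative is in fact the longer one: the descent step (from fullness of $Y\rtimes_{\widehat\lambda}H^0$ back to fullness of $Y$) still requires a conditional-expectation/quasi-basis argument of the same flavour as the direct proof, so the passage through duality buys nothing. Note also that in the paper's logical order, the duality isomorphism $\Psi_X$ together with Lemma~\ref{lem:full0} is what is used to prove the \emph{converse} direction, Lemma~\ref{lem:full}; routing the proof of Lemma~\ref{lem:full0} itself through that machinery risks obscuring which statement is doing the work.
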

\begin{proof}Modifying the proof of \cite [Lemma 4.5]{KT3:equivalence},
we can prove the lemma.
\end{proof}
\begin{lemma}\label{lem:full}With the above notations, if $X\rtimes_{\lambda}H$ is full with
the both-sided inner products, then so is $X$.
\end{lemma}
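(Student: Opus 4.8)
\textbf{Proof proposal for Lemma \ref{lem:full}.}

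The plan is to exploit the duality isomorphism $\Psi_X$ of Proposition \ref{prop:dual2} together with the already-established converse direction in Lemma \ref{lem:full0}. First I would observe that, by Proposition \ref{prop:dual2}, the Hilbert bimodule $X\rtimes_{\lambda}H\rtimes_{\widehat{\lambda}}H^0$ is isomorphic (as a Hilbert $C^*$-bimodule, compatibly with left and right inner products) to the exterior tensor product $X\otimes M_N(\BC)$, via $\Psi_X$, $\Psi_A$, $\Psi_B$. Fullness of a Hilbert $C^*$-bimodule is preserved under such isomorphisms, so it suffices to track fullness through two applications of the dual-coaction construction.

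The key steps, in order: (i) Assume $X\rtimes_{\lambda}H$ is full with both-sided inner products. Apply Lemma \ref{lem:full0} to the twisted coaction $\widehat{\lambda}$ of $H$ on the Hilbert $A\rtimes_{\rho,u}H-B\rtimes_{\sigma,v}H$-bimodule $X\rtimes_{\lambda}H$; this yields that $X\rtimes_{\lambda}H\rtimes_{\widehat{\lambda}}H^0$ is full with both-sided inner products. (Strictly, Lemma \ref{lem:full0} is stated for coactions of $H^0$, but the construction and its proof are symmetric in $H$ and $H^0$, so the same argument applies to the coaction $\widehat{\lambda}$ of $H$; alternatively one invokes the analogue of \cite[Lemma 4.5]{KT3:equivalence} directly.) (ii) Transport fullness across $\Psi_X$: since $\Psi_X$ is a bimodule isomorphism onto $X\rtimes_{\lambda}H\rtimes_{\widehat{\lambda}}H^0$ intertwining the inner products with $\Psi_A$ and $\Psi_B$ (parts (3) and (4) of Proposition \ref{prop:dual2}), and $\Psi_A$, $\Psi_B$ are $*$-isomorphisms, the bimodule $X\otimes M_N(\BC)$ is full with both-sided inner products. (iii) Deduce fullness of $X$: the left inner products on $X\otimes M_N(\BC)$ take values spanning ${}_A\la X,X\ra\otimes M_N(\BC)$, so fullness of $X\otimes M_N(\BC)$ forces $\overline{{}_A\la X,X\ra}=A$, and symmetrically $\overline{\la X,X\ra_B}=B$. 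Hence $X$ is full with both-sided inner products.

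The main obstacle I anticipate is step (i): making precise that the dual-coaction construction $Y\mapsto Y\rtimes H^0$ preserves fullness when the coaction in question is of $H$ rather than $H^0$. This is a matter of checking that the proof of \cite[Lemma 4.5]{KT3:equivalence} (equivalently Lemma \ref{lem:full0} here) uses only the formal algebraic structure — the explicit formulas for the left and right $\BK$-valued inner products on the crossed-product bimodule, and the quasi-basis $\{(W_I^*,W_I)\}$ — all of which are available verbatim with the roles of $H$ and $H^0$ exchanged. Once that symmetry is noted, the remaining steps (ii) and (iii) are routine transport-of-structure arguments along $*$-isomorphisms, and carry no real difficulty.
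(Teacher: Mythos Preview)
Your proposal is correct and follows essentially the same route as the paper: apply Lemma~\ref{lem:full0} to pass from fullness of $X\rtimes_{\lambda}H$ to fullness of $X\rtimes_{\lambda}H\rtimes_{\widehat{\lambda}}H^0$, transport this across $\Psi_X$ via Proposition~\ref{prop:dual2} to get fullness of $X\otimes M_N(\BC)$, and then descend to $X$. The paper carries out your step~(iii) by compressing with $1\otimes f$ for a minimal projection $f\in M_N(\BC)$, which makes your claim that $\overline{{}_A\la X,X\ra}\otimes M_N(\BC)=A\otimes M_N(\BC)$ forces $\overline{{}_A\la X,X\ra}=A$ fully explicit; and the paper simply invokes Lemma~\ref{lem:full0} without commenting on the $H\leftrightarrow H^0$ symmetry you (rightly) flag as the only point needing care.
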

\begin{proof}
Since $X\rtimes_{\lambda}H$ is full with the both-sided inner products,
so is $X\rtimes_{\lambda}H\rtimes_{\widehat{\lambda}}H^0$ by Lemma \ref{lem:full0}.
Thus $X\otimes M_N (\BC)$ is full with the both-sided inner products by
Proposition \ref {prop:dual2}. Let $f$ be a minimal projection in $M_N (\BC)$.
Then
\begin{align*}
A\otimes f &= (1_{M(A)}\otimes f)(A\otimes M_N (\BC))(1_{M(A)}\otimes f) \\
& =(1\otimes f) \, \overline{{}_{A\otimes M_N (\BC)} \la X\otimes M_N (\BC), \, X\otimes M_N (\BC) \ra }(1\otimes f) \\
& =\overline{{}_A \la X, \, X \ra\otimes f M_N (\BC) f}=\overline{{}_A \la X, \, X \ra }\otimes f .
\end{align*}
Hence $X$ is full with the left-sided inner product. Similarly, we can see that $X$ is full with the right-sided
inner product. Therefore, we obtain the conclusion.
\end{proof}

\begin{Def}\label{Def:morita}Let $(\rho, u)$ and $(\sigma, v)$ be twisted coactions of $H^0$ on
$C^*$-algebras $A$ and $B$, respectively. Then $(\rho, u)$ is
\sl
strongly Morita equivalent
\rm
to $(\sigma, v)$ if there are an $A-B$-equivalence bimodule $X$ and a twisted coaction $\lambda$
of $H^0$ on $X$ with respect to $(A, B, \rho, u, \sigma, v )$.
\end{Def}

In the same way as in \cite [Section 3]{KT3:equivalence}, we can see that the strong Morita
equivalence for twisted coactions of $H^0$ on $C^*$-algebras is an equivalence relation.
Also, we can obtain the following lemma in the similar way to \cite [Lemma 3.12]{KT3:equivalence}
using approximate units in a $C^*$-algebra. We give it without its proof.

\begin{lemma}\label{lem:extmorita}Let $(\rho, u)$ and $(\sigma, v)$ be
twisted coactions of $H^0$ on $A$. Then the following conditions are equivalent:
\newline
$(1)$ The twisted coactions $(\rho, u)$ and $(\sigma,v)$ are exterior equivalent,
\newline
$(2)$ The twisted coactions $(\rho, u)$ and $(\sigma,v)$ are strongly Morita equivalent by
a twisted coaction $\lambda$ of $H^0$ on ${}_A A_A$, which is a linear map from ${}_A A_A$
to ${}_{A\otimes H^0} A\otimes H_{A\otimes H^0}^0$, where ${}_A A_A$
and ${}_{A\otimes H^0} A\otimes H_{A\otimes H^0}^0$ are regarded as an $A-A$-equivalence
bimodule and an $A\otimes H^0 -A\otimes H^0$-equivalence bimodule in the
usual way.
\end{lemma}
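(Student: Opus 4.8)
The plan is to adapt the proof of \cite[Lemma 3.12]{KT3:equivalence} to the non-unital setting, the essentially new point being to replace the unit of $A$ by an approximate unit and to pass to strict limits in $M(A\otimes H^0)=M(A)\otimes H^0$. Throughout I take $X={}_AA_A$ and identify $X\otimes H^0$ with ${}_{A\otimes H^0}(A\otimes H^0)_{A\otimes H^0}$, so that its left and right inner products are $(\xi,\eta)\mapsto\xi\eta^*$ and $(\xi,\eta)\mapsto\xi^*\eta$ and the actions are by multiplication; the elements $u,v\in M(A)\otimes H^0\otimes H^0$ act on $X\otimes H^0\otimes H^0$ by left and right multiplication, respectively.

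For $(1)\Rightarrow(2)$ I start from a unitary $w\in M(A)\otimes H^0$ implementing the exterior equivalence. First I note that the normalization $(\id\otimes\epsilon^0)(w)=1\otimes1^0$ is automatic: applying the homomorphism $\id\otimes\id\otimes\epsilon^0$ to Condition (2) of Definition \ref{Def:exterior} and using $(\id\otimes\id\otimes\epsilon^0)(u)=1\otimes1^0$ together with $(\id\otimes\epsilon^0)\circ\Delta^0=\id$ gives $\underline{\rho}((\id\otimes\epsilon^0)(w))=1\otimes1^0$, whence $(\id\otimes\epsilon^0)(w)=1$. I then set $\lambda(x)=\rho(x)w^*$, which lands in $X\otimes H^0$ since $\rho(x)\in A\otimes H^0$ and $w^*\in M(A\otimes H^0)$, and check Conditions (1)--(6) of Definition \ref{Def:coaction}. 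Conditions (1), (3) are immediate from $\rho$ being a $*$-homomorphism and $w^*w=1$; (2), (4) use in addition $\sigma=\Ad(w)\circ\rho$; (5) uses $(\id\otimes\epsilon^0)\circ\rho=\id$ and the normalization. For Condition (6), using $(\rho\otimes\id)\circ\rho=\Ad(u)\circ(\id\otimes\Delta^0)\circ\rho$ together with the identity $(\lambda\otimes\id)(\rho(x)w^*)=(\rho\otimes\id)(\rho(x))(\underline{\rho}\otimes\id)(w^*)(w^*\otimes1^0)$, one obtains
\begin{align*}
(\lambda\otimes\id)(\lambda(x)) &= u(\id\otimes\Delta^0)(\rho(x))u^*(\underline{\rho}\otimes\id)(w^*)(w^*\otimes1^0), \\
u(\id\otimes\Delta^0)(\lambda(x))v^* &= u(\id\otimes\Delta^0)(\rho(x))(\id\otimes\Delta^0)(w^*)v^*,
\end{align*}
and substituting into the second line the expression $v^*=(\id\otimes\Delta^0)(w)u^*(\underline{\rho}\otimes\id)(w^*)(w^*\otimes1^0)$ coming from Condition (2) of Definition \ref{Def:exterior}, and using $(\id\otimes\Delta^0)(w^*w)=1$, turns it into the first line. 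Thus $\lambda$ is a twisted coaction of $H^0$ on ${}_AA_A$ with respect to $(A,A,\rho,u,\sigma,v)$, so $(\rho,u)$ and $(\sigma,v)$ are strongly Morita equivalent.

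For $(2)\Rightarrow(1)$ I run this backwards. Given such a $\lambda$ and an approximate unit $\{u_\alpha\}$ of $A$, Conditions (1), (2) of Definition \ref{Def:coaction} give $\rho(a)\lambda(u_\alpha)=\lambda(au_\alpha)\to\lambda(a)$ and $\lambda(u_\alpha)\sigma(a)=\lambda(u_\alpha a)\to\lambda(a)$ for each $a\in A$; since $\{\lambda(u_\alpha)\}$ is bounded ($\lambda$ is isometric) and $\rho(A)(A\otimes H^0)$ and $\sigma(A)(A\otimes H^0)$ are dense in $A\otimes H^0$, the net $\{\lambda(u_\alpha)\}$ is strictly Cauchy and converges strictly to a $W\in M(A)\otimes H^0$ with $\lambda(a)=\rho(a)W=W\sigma(a)$ for all $a$. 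Conditions (3), (4) with $x=y=u_\alpha$ then force $WW^*=1\otimes1^0=W^*W$, so $W$ is unitary; put $w=W^*$. Then $\sigma=\Ad(w)\circ\rho$ follows from $\rho(a)W=W\sigma(a)$, $(\id\otimes\epsilon^0)(w)=1$ from Condition (5), and Condition (6), which now reads $u(\id\otimes\Delta^0)(\rho(x))\bigl[u^*(\underline{\rho}\otimes\id)(w^*)(w^*\otimes1^0)-(\id\otimes\Delta^0)(w^*)v^*\bigr]=0$ for all $x$, yields Condition (2) of Definition \ref{Def:exterior} after specializing $x$ to $u_\alpha$ and passing to the strict limit, since left multiplication by $u(\id\otimes\Delta^0)(\rho(u_\alpha))$ converges strictly to left multiplication by the unitary $u$. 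Hence $w$ implements an exterior equivalence.

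The routine parts are Conditions (1)--(5) of Definition \ref{Def:coaction} and their reverse reading, which are short algebraic identities. I expect the two delicate points to be: keeping track of which $H^0$-leg each factor occupies when matching Condition (6) against the cocycle identity of Definition \ref{Def:exterior}, in particular ensuring that it is $\underline{\rho}$, and not $\id\otimes\Delta^0$, acting on $w^*$ in the correct slot; and, in the non-unital direction, the strict-convergence arguments --- that $\{\lambda(u_\alpha)\}$ is strictly Cauchy and that left multiplication by $u(\id\otimes\Delta^0)(\rho(u_\alpha))$ converges strictly to multiplication by $u$ --- both of which rest on the density requirements in Definition \ref{Def:wcoaction} and the strict continuity of the extensions discussed in Lemma \ref{lem:extension} and Remark \ref{remark:extension}.
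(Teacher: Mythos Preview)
Your proof is correct and follows exactly the route the paper indicates: the paper omits the proof entirely, stating only that it is obtained ``in the similar way to \cite[Lemma 3.12]{KT3:equivalence} using approximate units in a $C^*$-algebra,'' and that is precisely what you have carried out, including the strict-limit arguments for $\{\lambda(u_\alpha)\}$ and the verification of Condition~(6) against Definition~\ref{Def:exterior}(2).
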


\begin{remark}\label{remark:right}Let $A$ and $B$ be $C^*$-algebras and $\sigma$ a coaction of $H^0$ on $B$.
Let $X$ be an $A-B$-equivalence bimodule and $\lambda$ a linear map from $X$
to $X\otimes H^0$ satisfying that
\newline
(1) $\lambda(xb)=\lambda(x)\sigma(b)$ for any $b\in B$, $x\in X$,
\newline
(2) $\sigma(\la x, y \ra_B )=\la \lambda(x) , \, \lambda(y) \ra_{B\otimes H^0 }$ for any $x, y\in X$,
\newline
(3) $(\id_X \otimes \epsilon^0 )\circ \lambda =\id_X$,
\newline
(4) $(\lambda \otimes \id)\circ \lambda =(\id\otimes \Delta^0 )\circ\lambda$.
\newline
We call $(B, X, \sigma, \lambda, H^0 )$ a
\sl
right covariant system
\rm
(See \cite [Definition 3.4]{KT3:equivalence}).
Then we can construct an action `` $\cdot $" of $H$ on $\BK_B (X)$ as follows:
For any $a\in\BB_B (X)$, $h\in H$ and $x\in X$,
$$
[h\cdot a]x=h_{(1)}\cdot_{\lambda}a[S(h_{(2)})\cdot_{\lambda}x] .
$$
If $a\in \BK_B (X)$, we can see that $h\cdot a\in\BK_B (X)$. Thus identifying $A$ with $\BK_B (X)$,
we can obtain an action of $H$ on $A$.
\end{remark}

\section{Linking $C^*$-algebras and coactions on $C^*$-algebras}\label{sec:stable}

Let $(\rho, u)$ and $(\sigma, v)$ be twisted coactions of $H^0$ on $C^*$-algebras $A$ and $B$, respectively.
We suppose that there are a Hilbert $A-B$- bimodule $X$ and a twisted coaction $\lambda$ of $H^0$ on $X$ with
respect to $(A, B, \rho, u, \sigma , v)$. Let $C$ be the linking $C^*$-algebra for $X$ defined in
Brown, Mingo and Shen \cite {BMS:quasi}. By  \cite [Proposition 2.3]
{BMS:quasi}, $C$ is the $C^*$-algebra which is
consisting of all $2\times2$-matrices
$$
\begin{bmatrix}
a & x \\
\widetilde{y} & b
\end{bmatrix}, \quad a\in A, \quad b\in B,\quad x, y\in X,
$$
where $\widetilde{y}$ denotes $y$ viewed as an element in $\widetilde{X}$,
the dual Hilbert $C^*$-bimodule of $X$. Before we define the coaction of $H^0$
on $C$ induced by the twisted coaction $\lambda$ of $H^0$ on $X$ with
respect to $(A, B, \rho,u, \sigma, v)$, we give a remark.

\begin{remark}\label{remark:another} We identify the $H^0 -H^0$-equivalence bimodule $\widetilde{H^0}$ with $H^0$ as
$H^0 -H^0$-equivalence bimodule by the map
$$
\widetilde{H^0}\longrightarrow H^0 : \widetilde{\phi}\mapsto \phi^* .
$$
Also, we identify the Hilbert $B\otimes H^0 -A\otimes H^0$-bimodule $\widetilde{X\otimes H^0}$
with $\widetilde{X}\otimes H^0$ by the map
$$
\widetilde{X\otimes H^0}\longrightarrow \widetilde{X}\otimes H^0 : \widetilde{x\otimes\phi}\mapsto\widetilde{x}\otimes\phi^* .
$$
Furthermore, we identify the linking $C^*$-algebra for $X\otimes H^0$,
the Hilbert $A\otimes H^0 -B\otimes H^0$-bimodule with $C\otimes H^0$ by the isomorphism
defined by
\begin{align*}
& \Phi(\begin{bmatrix} a\otimes\phi_{11} & x\otimes\phi_{12} \\
\widetilde{y\otimes\phi_{21}} & b\otimes\phi_{22} \end{bmatrix}) \\
& =\begin{bmatrix} a & 0 \\
0 & 0 \end{bmatrix}\otimes\phi_{11}+
\begin{bmatrix} 0 & x \\
0 & 0 \end{bmatrix}\otimes\phi_{12}+
\begin{bmatrix} 0 & 0 \\
\widetilde{y} & 0 \end{bmatrix}\otimes\phi_{21}^* +
\begin{bmatrix} 0 & 0 \\
0 & b \end{bmatrix}\otimes\phi_{22} ,
\end{align*}
where $a\in A$, $b\in B$, $x, y\in X$ and $\phi_{ij}\in H^0$ $(i, j=1,2)$.
\end{remark}

Let $\gamma$ be the homomorphism of $C$ to $C\otimes H^0$ defined by
for any $a\in A$, $b\in B$, $x, y\in X$,
$$
\gamma(\begin{bmatrix} a & x \\
\widetilde{y} & b \end{bmatrix})=\begin{bmatrix} \rho(a) & \lambda(x) \\
\widetilde{\lambda(y)} & \sigma(b) \end{bmatrix} .
$$
Let $w$ be the unitary element in $M(C)$ defined by
$w=\begin{bmatrix} u & 0 \\
0 & v \end{bmatrix}$.
By routine computations, $(\gamma, w)$ is a twisted coaction of $H^0$ on $C$.

\begin{remark}\label{remark:linking} (1) We note that the twisted action of $H$ on $C$
induced by $(\gamma, w)$ as follows: For any $a\in A, b\in B, x,y\in X$ and $h\in H$,
$$
h\cdot_{\gamma}\begin{bmatrix} a & x \\
\widetilde{y} & b \end{bmatrix}
=\begin{bmatrix} h\cdot_{\rho, u}a & h\cdot_{\lambda}x \\
\widetilde{S(h)^* \cdot_{\lambda}y} & h\cdot_{\sigma, v}b \end{bmatrix} .
$$
(2) Let $\widetilde{\lambda}$ be a linear map from $X$ to $X\otimes H^0$ defined by for any
$x\in X$,
$$
\widetilde{\lambda}(\widetilde{x})=\widetilde{\lambda(x)} .
$$
Then
$\widetilde{\lambda}$ is the coaction of $H^0$ on $\widetilde{X}$ induced by $\lambda$.
Also, the twisted action of $H$ on $\widetilde{X}$ induced by $\widetilde{\lambda}$ is as follows:
For any $x\in X , h\in H$,
$$
h\cdot_{\widetilde{\lambda}}\widetilde{x}=\widetilde{S(h^* )\cdot_{\lambda}y} .
$$
\end{remark}

Let $C_1$ be the linking $C^*$-algebra for the Hilbert $A\rtimes_{\rho}H-B\rtimes_{\sigma}H$-
bimodule $X\rtimes_{\lambda}H$. Then we obtain the following lemma
by Remarks \ref{remark:another} and \ref{remark:linking}:

\begin{lemma}\label{lem:C1}With the above notations, there is an isomorphism
$\pi_1$ of $C\rtimes_{\gamma, w}H$ onto $C_1$.
\end{lemma}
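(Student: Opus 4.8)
The plan is to present $C\rtimes_{\gamma,w}H$ as a linking $C^*$-algebra with corners $A\rtimes_{\rho,u}H$ and $B\rtimes_{\sigma,v}H$ and off-diagonal part $X\rtimes_{\lambda}H$, and then to appeal to the description of linking algebras in \cite [Proposition 2.3]{BMS:quasi}. Write $p=\begin{bmatrix}1_{M(A)}&0\\0&0\end{bmatrix}$ and $q=\begin{bmatrix}0&0\\0&1_{M(B)}\end{bmatrix}$ in $M(C)$; these are complementary projections with $p+q=1_{M(C)}$, and from the defining formula for $\gamma$ one reads off $\underline{\gamma}(p)=p\otimes 1^0$ and $\underline{\gamma}(q)=q\otimes 1^0$, so $p$ and $q$ are $\underline{\gamma}$-invariant, while $pCp=A$, $qCq=B$, $pCq=X$, $qCp=\widetilde{X}$, and $\gamma$ restricts to $\rho$, $\sigma$, $\lambda$, $\widetilde{\lambda}$ on these four pieces (Remark \ref{remark:linking}). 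By Lemma \ref{lem:multiplier}, applied with $(\gamma,w)$ in place of $(\rho,u)$, we have $M(C\rtimes_{\gamma,w}H)=M(C)\rtimes_{\underline{\gamma},w}H$, so $P:=p\rtimes_{\underline{\gamma},w}1$ and $Q:=q\rtimes_{\underline{\gamma},w}1$ are complementary projections in $M(C\rtimes_{\gamma,w}H)$.

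Next I would work out the four corners of $D:=C\rtimes_{\gamma,w}H$ with respect to $P$ and $Q$. Since $p,q$ are $\underline{\gamma}$-invariant, the twisted crossed product multiplication collapses on the unit of $H$ and gives $(p\rtimes 1)(c\rtimes_{\gamma,w}h)(p\rtimes 1)=pcp\rtimes_{\gamma,w}h$, and likewise for $q$; hence $PDP$ is the closure of $\{a\rtimes_{\gamma,w}h:a\in A,\ h\in H\}$, $QDQ$ that of $\{b\rtimes_{\gamma,w}h:b\in B,\ h\in H\}$, and $PDQ$ that of $\{x\rtimes_{\gamma,w}h:x\in X,\ h\in H\}$. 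Using Remark \ref{remark:linking} together with the fact that $w=\begin{bmatrix}u&0\\0&v\end{bmatrix}$, and hence $\widehat{w}$, is diagonal, one checks that the multiplication and involution of $D$ restrict on $PDP$ and $QDQ$ to those of $A\rtimes_{\rho,u}H$ and $B\rtimes_{\sigma,v}H$, and that the left $PDP$- and right $QDQ$-actions on the corner $PDQ$ restrict to the left and right actions which define $X\rtimes_{\lambda}H$ in Section \ref{sec:morita}; the analogous bookkeeping, together with Remark \ref{remark:another}, identifies $QDP$ with $\widetilde{X\rtimes_{\lambda}H}$. It then remains to see that the intrinsic corner inner products ${}_{PDP}\la\xi,\eta\ra=\xi\eta^{*}$ and $\la\xi,\eta\ra_{QDQ}=\xi^{*}\eta$, computed in $D$, coincide with the $A\rtimes_{\rho,u}H$-valued and $B\rtimes_{\sigma,v}H$-valued inner products of $X\rtimes_{\lambda}H$ written out in Section \ref{sec:morita}.

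Granting this, $D$ is a $C^*$-algebra equipped with complementary projections $P,Q\in M(D)$ whose corner data is exactly $(A\rtimes_{\rho,u}H,\ B\rtimes_{\sigma,v}H,\ X\rtimes_{\lambda}H,\ \widetilde{X\rtimes_{\lambda}H})$, so by \cite [Proposition 2.3]{BMS:quasi} it is isomorphic to $C_1$, the linking $C^*$-algebra of $X\rtimes_{\lambda}H$; the isomorphism is
$$
\pi_1(d)=\begin{bmatrix}PdP&PdQ\\ QdP&QdQ\end{bmatrix},
$$
equivalently $\pi_1$ sends $\begin{bmatrix}a&x\\ \widetilde{y}&b\end{bmatrix}\rtimes_{\gamma,w}h$ to the $2\times 2$ matrix with $(1,1)$-entry $a\rtimes_{\rho,u}h$, $(1,2)$-entry $x\rtimes_{\lambda}h$, $(2,2)$-entry $b\rtimes_{\sigma,v}h$, and $(2,1)$-entry the image of $\widetilde{y}\rtimes_{\gamma,w}h$ in $\widetilde{X\rtimes_{\lambda}H}$ under the identification coming from Remark \ref{remark:another}.

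The step I expect to be the main obstacle is the last one of the second paragraph: matching the intrinsic inner products $\xi\eta^{*}$, $\xi^{*}\eta$ of the corner $PDQ$ with the explicit formulas for ${}_{A\rtimes_{\rho,u}H}\la\cdot,\cdot\ra$ and $\la\cdot,\cdot\ra_{B\rtimes_{\sigma,v}H}$ of Section \ref{sec:morita}, and correctly describing the dual-bimodule structure on $QDP$. This forces one to expand the involution of the twisted crossed product $C\rtimes_{\gamma,w}H$ in coordinates, and it is exactly here that the antipode factors such as $S(h_{(2)}l_{(3)}^{*})^{*}$ appearing in those formulas are produced. The verification is a somewhat lengthy Sweedler-notation computation rather than a conceptual difficulty; indeed the formulas in Section \ref{sec:morita} were set up precisely so that this matching goes through.
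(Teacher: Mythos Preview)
Your proposal is correct and leads to the same isomorphism the paper constructs. The organizational route differs slightly: the paper simply writes down $\pi_1$ explicitly on generators (with the $(2,1)$-entry given by the formula $\{\widehat{u}(S(h_{(2)}),h_{(1)})^*[h_{(3)}^*\cdot_{\lambda}y]\rtimes_{\lambda}h_{(4)}^*\}^{\widetilde{}}\,$), writes down an explicit inverse $\theta_1$, and declares that routine computations show $\pi_1$ is a homomorphism with inverse $\theta_1$. You instead use the $\underline{\gamma}$-invariant projections $P=p\rtimes 1$, $Q=q\rtimes 1$ to cut $C\rtimes_{\gamma,w}H$ into corners, identify each corner, and then invoke \cite[Proposition~2.3]{BMS:quasi}. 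Both arguments rest on exactly the same Sweedler-notation verification you flag as the main obstacle; your framing makes it clearer \emph{why} the formulas in Section~\ref{sec:morita} are what they are, while the paper's explicit inverse $\theta_1$ gives a concrete check of bijectivity without appealing to the abstract characterization of linking algebras.
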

\begin{proof}Let $\pi_1$ be the map from $C\rtimes_{\gamma, w}H$ to $C_1$ defined by
$$
\pi_1 (\begin{bmatrix} a & x \\
\widetilde{y} & b \end{bmatrix} \rtimes_{\gamma, w} h)
=\begin{bmatrix}a\rtimes _{\rho, u}h & x\rtimes_{\lambda}h \\
\{\widehat{u}(S(h_{(2)}), h_{(1)})^* [h_{(3)}^* \cdot_{\lambda}y]\rtimes_{\lambda}h_{(4)}^* \}{\widetilde{}} & b\rtimes_{\sigma, v}h
\end{bmatrix}
$$
for any $a\in A$, $b\in B$, $x, y\in X$ and $h\in H$. Let $\theta_1$ be the map
from $C_1$ to $C\rtimes_{\gamma, w}H$ defined by
\begin{align*}
& \theta_1 (\begin{bmatrix} a\rtimes_{\rho, u}h &  x\rtimes_{\lambda}l \\
\widetilde{y\rtimes_{\lambda}k} & b\rtimes_{\sigma, v}m \end{bmatrix}) \\
& =\begin{bmatrix} a & 0 \\
0 & 0 \end{bmatrix} \rtimes_{\gamma, w}h
+\begin{bmatrix} 0 & x \\
0 & 0 \end{bmatrix} \rtimes_{\gamma, w}l
+\begin{bmatrix} 0 & 0 \\
\{[S(h_{(3)}\cdot_{\lambda}y\widehat{v}(S(k_{(2)}), k_{(1)})\}^{\widetilde{}} & 0 \end{bmatrix} \rtimes_{\gamma, w}k_{(4)}^* \\
& +\begin{bmatrix} 0 & 0 \\
0 & b \end{bmatrix} \rtimes_{\gamma, w}m
\end{align*}
for any $a\in A$, $b\in B$, $x, y\in X$ and $h, k, l, m\in H$.
Then by routine computations, $\pi_1$ is a homomorphism of
$C\rtimes_{\gamma, w}H$ to $C_1$ and $\theta_1$ is a homomorphism of $C_1$ to $C\rtimes_{\gamma, w}H$
Moreover, we can see that $\theta_1$ is the inverse map
of $\pi_1$. Therefore, we obtain the conclusion.
\end{proof}

By the proof of the above lemma, we obtain the following corollary:

\begin{cor}\label{cor:product2}With the above notations, there is a Hilbert
$B\rtimes_{\sigma}H-A\rtimes_{\rho}H$-bimodule isomorphism $\pi$ of
$\widetilde{X\rtimes_{\lambda}H}$ onto $\widetilde{X}\rtimes_{\widetilde{\lambda}}H$.
\end{cor}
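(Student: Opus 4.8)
The plan is to deduce the corollary entirely from the explicit isomorphisms $\pi_1$ and $\theta_1=\pi_1^{-1}$ built in the proof of Lemma~\ref{lem:C1}, by tracking the four block corners of the two linking $C^*$-algebras. Recall that $C=\begin{bmatrix} A & X \\ \widetilde{X} & B\end{bmatrix}$ and that the twisted coaction $(\gamma,w)$ respects this matrix decomposition: the diagonal projections $p=\begin{bmatrix} 1 & 0 \\ 0 & 0\end{bmatrix}$ and $q=\begin{bmatrix} 0 & 0 \\ 0 & 1\end{bmatrix}$ of $M(C)$ satisfy $\underline{\gamma}(p)=p\otimes 1^0$ and $\underline{\gamma}(q)=q\otimes 1^0$, and the cocycle $w$ is block-diagonal. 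Hence $p(C\rtimes_{\gamma, w}H)p$, $q(C\rtimes_{\gamma, w}H)q$ and the off-diagonal corner $q(C\rtimes_{\gamma, w}H)p$ are canonically $A\rtimes_{\rho, u}H$, $B\rtimes_{\sigma, v}H$ and a Hilbert $B\rtimes_{\sigma, v}H-A\rtimes_{\rho, u}H$-bimodule whose two module actions and two inner products are simply restrictions of the multiplication of $C\rtimes_{\gamma, w}H$. By Remark~\ref{remark:linking}(2) the coaction $\gamma$ restricts on the $(2,1)$-corner $\widetilde{X}$ of $C$ to $\widetilde{\lambda}$ (and $w$ to the appropriate cocycle on $\widetilde X$), so the first step is to check that this off-diagonal corner of $C\rtimes_{\gamma, w}H$ is, as a Hilbert bimodule, exactly $\widetilde{X}\rtimes_{\widetilde{\lambda}}H$ in the sense of Section~\ref{sec:morita}.

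The second step is the observation that both $\pi_1$ and $\theta_1$ send the matrix units $\begin{bmatrix} 1 & 0 \\ 0 & 0\end{bmatrix}$ and $\begin{bmatrix} 0 & 0 \\ 0 & 1\end{bmatrix}$ to the corresponding matrix units on the other side (this is visible from the formulas in the proof of Lemma~\ref{lem:C1}), hence carry each block corner isomorphically onto the corresponding block corner of $C_1=\begin{bmatrix} A\rtimes_{\rho, u}H & X\rtimes_{\lambda}H \\ \widetilde{X\rtimes_{\lambda}H} & B\rtimes_{\sigma, v}H\end{bmatrix}$. In particular, $\theta_1$ restricts to a linear bijection $\pi$ of the $(2,1)$-corner $\widetilde{X\rtimes_{\lambda}H}$ of $C_1$ onto the $(2,1)$-corner of $C\rtimes_{\gamma, w}H$, which by the first step is $\widetilde{X}\rtimes_{\widetilde{\lambda}}H$; reading off the formula for $\theta_1$, this $\pi$ is
$$
\pi(\widetilde{y\rtimes_{\lambda}k})=\widetilde{[S(k_{(3)})\cdot_{\lambda}y]\,\widehat{v}(S(k_{(2)}), k_{(1)})}\,\rtimes_{\widetilde{\lambda}}k_{(4)}^*
$$
for $y\in X$ and $k\in H$.

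Finally I would verify that $\pi$ is a Hilbert $B\rtimes_{\sigma, v}H-A\rtimes_{\rho, u}H$-bimodule isomorphism. Because $\theta_1$ is a $*$-isomorphism of $C^*$-algebras that carries the $(1,1)$- and $(2,2)$-corners isomorphically onto $A\rtimes_{\rho, u}H$ and $B\rtimes_{\sigma, v}H$, and because on either linking algebra the left and right actions on the $(2,1)$-corner together with the $A\rtimes_{\rho,u}H$- and $B\rtimes_{\sigma,v}H$-valued inner products are nothing but products of matrix blocks inside that $C^*$-algebra, compatibility of $\pi$ with the module actions and with both inner products is then automatic. The one genuinely substantive point — the main, if routine, obstacle — is the identification carried out in the first step: that the module actions and inner products of the bimodule crossed product $\widetilde{X}\rtimes_{\widetilde{\lambda}}H$ defined in Section~\ref{sec:morita} really agree with the corner structure of $C\rtimes_{\gamma, w}H$. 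This is settled by writing the multiplication of $C\rtimes_{\gamma, w}H$ in block form and comparing with the defining formulas of Section~\ref{sec:morita}, using Remarks~\ref{remark:another} and~\ref{remark:linking}, exactly as in the proof of Lemma~\ref{lem:C1}.
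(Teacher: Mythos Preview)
Your proposal is correct and follows exactly the approach the paper intends: the paper offers no separate proof for this corollary beyond the sentence ``By the proof of the above lemma,'' meaning one is to read off the bimodule isomorphism from the $(2,1)$-corners of the explicit $*$-isomorphisms $\pi_1$ and $\theta_1$ of Lemma~\ref{lem:C1}. You have simply spelled out in detail what the paper leaves implicit, including the explicit formula for $\pi$ extracted from $\theta_1$ and the verification that corner compression of a $*$-isomorphism of linking algebras yields a Hilbert bimodule isomorphism.
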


\begin{remark}\label{remark:conjugate2}Let $\gamma_1$ be a coaction of $H$ on $C_1$
defined by
$$
\gamma_1 =(\pi_1 \otimes \id_H )\circ\widehat{\gamma}\circ\pi_1^{-1} .
$$
Then by routine computations, for any $a\in A$, $b\in B$, $x, y\in X$ and
$h, l, k, m\in H$,
\begin{align*}
& \gamma_1 (\begin{bmatrix} a\rtimes_{\rho, u}h & x\rtimes_{\lambda}l \\
\widetilde{y\rtimes_{\lambda}k} & b\rtimes_{\sigma, v}m \end{bmatrix} ) \\
& =\begin{bmatrix} a\rtimes_{\rho, u}h_{(1)} & 0 \\
0 & 0 \end{bmatrix}\otimes h_{(2)}
+\begin{bmatrix} 0 & x\rtimes_{\lambda}l_{(1)} \\
0 & 0 \end{bmatrix} \otimes l_{(2)}
+\begin{bmatrix} 0 & 0 \\
(y\rtimes_{\lambda}k_{(1)})^{\widetilde{}} & 0 \end{bmatrix} \otimes k_{(2)}^* \\
& +\begin{bmatrix} 0 & 0 \\
0 & b\rtimes_{\sigma, v}m_{(1)} \end{bmatrix} \otimes m_{(2)} .
\end{align*}
\end{remark}

We give a result similar to \cite [Theorem 6.4]{KT1:inclusion} for coactions of $H^0$
on a Hilbert $C^*$-bimodule applying Proposition \ref{prop:nonunital2} to a linking $C^*$-algebra.
Let $\rho$ and $\sigma$ be coactions of  $H^0$ on $A$ and $B$, respectively and
let $X$ be a Hilbert $A-B$-bimodule. Let $\lambda$ be a coaction of $H^0$ on $X$
with respect to $(A, B, \rho, \sigma)$. Let $C$ be the linking $C^*$-algebra and
$\gamma$ the coaction of $H^0$ on $C$ induced by $\rho, \sigma$ and $\lambda$.
As defined in Section \ref{sec:morita}, let
$$
X^{\lambda}=\{x\in X \, | \, \lambda(x)=x\otimes 1^0 \}.
$$
Then by Lemma \ref{lem:full}, $X^{\lambda}$ is an Hilbert $A^{\rho}-B^{\sigma}$-
bimodule. Let $C_0$ be the linking $C^*$-algebra for $X^{\lambda}$.
We can prove the following lemma in the straightforward way. So, we give it without its proof.

\begin{lemma}\label{lem:fix3}With the above notations and assumptions, $C^{\gamma}=C_0$,
where $C^{\gamma}$ is the fixed point $C^*$-subalgebra of $C$ for $\gamma$.
\end{lemma}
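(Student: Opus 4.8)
The statement to prove is Lemma \ref{lem:fix3}: with $C$ the linking $C^*$-algebra for a Hilbert $A$-$B$-bimodule $X$ carrying a coaction $\lambda$ compatible with coactions $\rho$, $\sigma$ of $H^0$ on $A$, $B$, and $\gamma$ the induced coaction on $C$, we have $C^\gamma = C_0$, the linking $C^*$-algebra for $X^\lambda$.

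The plan is to exploit the explicit matrix description of $C$ and of $\gamma$. An element $\begin{bmatrix} a & x \\ \widetilde{y} & b \end{bmatrix} \in C$ lies in $C^\gamma$ iff $\gamma$ of it equals itself tensored with $1^0$, i.e. iff $\rho(a) = a\otimes 1^0$, $\lambda(x) = x\otimes 1^0$, $\widetilde{\lambda(y)} = \widetilde{y}\otimes 1^0$, and $\sigma(b) = b\otimes 1^0$, using the fact that the four matrix entries sit in ``independent'' corners so the matrix equation decouples entrywise (here I invoke the identification of $\widetilde{X\otimes H^0}$ with $\widetilde{X}\otimes H^0$ from Remark \ref{remark:another}, under which $\widetilde{\lambda(y)} = \widetilde{y}\otimes 1^0$ is equivalent to $\lambda(y) = y\otimes 1^0$ since $(1^0)^* = 1^0$). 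Thus the conditions are exactly $a\in A^\rho$, $b\in B^\sigma$, $x\in X^\lambda$, $y\in X^\lambda$. On the other hand $C_0$, the linking algebra for $X^\lambda$ as an $A^\rho$-$B^\sigma$-bimodule, consists precisely of the matrices $\begin{bmatrix} a & x \\ \widetilde{y} & b \end{bmatrix}$ with $a\in A^\rho$, $b\in B^\sigma$, $x,y\in X^\lambda$ — where I should note that the left/right inner products and module actions of $X^\lambda$ are the restrictions of those of $X$, so $\widetilde{X^\lambda}$ really is the obvious subset $\{\widetilde{y} : y\in X^\lambda\}$ of $\widetilde{X}$. Comparing the two descriptions gives $C^\gamma = C_0$ as sets; since both are $C^*$-subalgebras of $M(C)$ (indeed of $C$), they coincide as $C^*$-algebras.

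Concretely I would carry out the steps in this order. First, record that for a linking algebra the matrix realization is faithful and that $\gamma$ respects the $2\times 2$ block structure, so the fixed-point equation $\gamma(c) = c\otimes 1^0$ is equivalent to the four scalar equations on $a,b,x,y$; this uses the explicit formula for $\gamma$ and the identification $\Phi$ in Remark \ref{remark:another} so that the four summands live in distinct summands of $C\otimes H^0$ and can be matched separately. Second, translate ``$\widetilde{\lambda(y)} = \widetilde{y}\otimes 1^0$'' into ``$\lambda(y) = y\otimes 1^0$'' via the dual-bimodule identification, so that it reads $y\in X^\lambda$. Third, recall that $X^\lambda$ is a Hilbert $A^\rho$-$B^\sigma$-bimodule (this is where Lemma \ref{lem:full}, or rather its argument showing $X^\lambda$ is full and hence an equivalence bimodule when $X$ is, together with the fact that the inner products land in the fixed-point subalgebras, is invoked), and spell out that $C_0$ is the set of matrices with those entries. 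Fourth, observe equality of the two sets and conclude the $C^*$-algebra identity.

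The main obstacle — really the only subtle point — is making precise that $\gamma$ ``decouples'' over the four matrix corners and that $C_0$ sits inside $M(C)$ (or $C$) in the way that makes the set-theoretic comparison legitimate: one must check that the dual Hilbert bimodule $\widetilde{X^\lambda}$ of $X^\lambda$ is literally $\{\widetilde{y}:y\in X^\lambda\}\subset\widetilde X$ with the induced structure, and that the $B^\sigma$-$A^\rho$-valued inner product ${}_{B^\sigma}\langle\widetilde x,\widetilde y\rangle$, $\langle\widetilde x,\widetilde y\rangle_{A^\rho}$ agree with the restrictions of the corresponding inner products on $\widetilde X$. Once these compatibilities are in hand — all of which are routine verifications from the definitions in \cite{BMS:quasi} and the entrywise formula for $\gamma$ in the paragraph preceding Remark \ref{remark:linking} — the identification $C^\gamma = C_0$ follows immediately, which is why the lemma can be (and is) stated without a detailed proof.
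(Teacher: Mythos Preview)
Your proposal is correct and is precisely the straightforward entrywise verification the paper has in mind; indeed the paper omits the proof entirely, stating only that it ``can be proved in the straightforward way.'' Your use of the block decomposition of $\gamma$, the identification from Remark~\ref{remark:another} to handle the $\widetilde{y}$-entry, and the observation that $\widetilde{X^\lambda}$ sits in $\widetilde{X}$ with the restricted structure are exactly the ingredients needed, and nothing more is required.
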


\begin{lemma}\label{lem:sim1}With the above notations, if $\widehat{\underline{\rho}}(1\rtimes_{\underline{\rho}}e)
\sim(1\rtimes_{\underline{\rho}}e)\otimes 1$ in $(M(A)\rtimes_{\underline{\rho}}H)\otimes H$
and $\widehat{\underline{\sigma}}(1\rtimes_{\underline{\rho}}e)
\sim(1\rtimes_{\underline{\sigma}}e)\otimes 1$ in $(M(B)\rtimes_{\underline{\sigma}}H)\otimes H$,
then $\widehat{\underline{\gamma}}(1_{M(C)}\rtimes_{\underline{\gamma}}e)
\sim(1_{M(C)}\rtimes_{\underline{\gamma}}e)\otimes 1$ in
$(M(C)\rtimes_{\underline{\gamma}}H)\otimes H$.
\end{lemma}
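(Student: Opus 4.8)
The plan is to exploit the $2\times2$-matrix structure of $C$ and reduce the statement to the two hypotheses on $\rho$ and $\sigma$. Write
$$
p=\begin{bmatrix} 1_A & 0 \\ 0 & 0 \end{bmatrix}, \qquad q=\begin{bmatrix} 0 & 0 \\ 0 & 1_B \end{bmatrix}
$$
for the complementary projections in $M(C)$, so that $pM(C)p=M(A)$ and $qM(C)q=M(B)$. Since $\rho(1_A)=1_A\otimes1^0$ and $\sigma(1_B)=1_B\otimes1^0$, the matrix form of $\gamma$ gives $\underline{\gamma}(p)=p\otimes1^0$ and $\underline{\gamma}(q)=q\otimes1^0$, equivalently $h\cdot_{\gamma}p=\epsilon(h)p$ and $h\cdot_{\gamma}q=\epsilon(h)q$ for all $h\in H$. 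Hence, working inside $M(C)\rtimes_{\underline{\gamma}}H=M(C\rtimes_{\gamma}H)$ (Lemma \ref{lem:multiplier}, with trivial cocycle), the elements $p\rtimes_{\underline{\gamma}}1$ and $q\rtimes_{\underline{\gamma}}1$ are orthogonal projections summing to $1$, each commuting with every $1\rtimes_{\underline{\gamma}}h$. A short computation with the crossed-product multiplication then yields
$$
(p\rtimes_{\underline{\gamma}}1)(M(C)\rtimes_{\underline{\gamma}}H)(p\rtimes_{\underline{\gamma}}1)=M(A)\rtimes_{\underline{\rho}}H, \qquad (q\rtimes_{\underline{\gamma}}1)(M(C)\rtimes_{\underline{\gamma}}H)(q\rtimes_{\underline{\gamma}}1)=M(B)\rtimes_{\underline{\sigma}}H,
$$
and, from the defining formula $\widehat{\gamma}(c\rtimes_{\gamma}h)=(c\rtimes_{\gamma}h_{(1)})\otimes h_{(2)}$ together with strict continuity, the restriction of $\widehat{\underline{\gamma}}$ to the first corner is $\widehat{\underline{\rho}}$ and to the second is $\widehat{\underline{\sigma}}$.

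Next I would split the projection of interest. Since $p\rtimes_{\underline{\gamma}}1$ is central with respect to $1\rtimes_{\underline{\gamma}}H$, we have $1_{M(C)}\rtimes_{\underline{\gamma}}e=e_A+e_B$ with $e_A:=p\rtimes_{\underline{\gamma}}e$ and $e_B:=q\rtimes_{\underline{\gamma}}e$ orthogonal projections lying in the two corners, and under the identifications above $e_A=1_{M(A)}\rtimes_{\underline{\rho}}e$ and $e_B=1_{M(B)}\rtimes_{\underline{\sigma}}e$. Applying $\widehat{\underline{\gamma}}$ and using compatibility of restrictions, $\widehat{\underline{\gamma}}(e_A)=\widehat{\underline{\rho}}(1_{M(A)}\rtimes_{\underline{\rho}}e)\in(M(A)\rtimes_{\underline{\rho}}H)\otimes H$ and likewise $\widehat{\underline{\gamma}}(e_B)=\widehat{\underline{\sigma}}(1_{M(B)}\rtimes_{\underline{\sigma}}e)\in(M(B)\rtimes_{\underline{\sigma}}H)\otimes H$. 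Now the two hypotheses furnish partial isometries $V_A\in(M(A)\rtimes_{\underline{\rho}}H)\otimes H$ and $V_B\in(M(B)\rtimes_{\underline{\sigma}}H)\otimes H$ with $V_A^*V_A=\widehat{\underline{\gamma}}(e_A)$, $V_AV_A^*=e_A\otimes1$, $V_B^*V_B=\widehat{\underline{\gamma}}(e_B)$, $V_BV_B^*=e_B\otimes1$.

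Finally I would set $V=V_A+V_B\in(M(C)\rtimes_{\underline{\gamma}}H)\otimes H$. Since $(p\rtimes_{\underline{\gamma}}1)\otimes1$ and $(q\rtimes_{\underline{\gamma}}1)\otimes1$ are orthogonal, $V_A$ and $V_B$ sit in orthogonal corners of $(M(C)\rtimes_{\underline{\gamma}}H)\otimes H$, so $V_A^*V_B=V_B^*V_A=V_AV_B^*=V_BV_A^*=0$; therefore $V^*V=\widehat{\underline{\gamma}}(e_A+e_B)=\widehat{\underline{\gamma}}(1_{M(C)}\rtimes_{\underline{\gamma}}e)$ and $VV^*=(e_A+e_B)\otimes1=(1_{M(C)}\rtimes_{\underline{\gamma}}e)\otimes1$, which is the asserted equivalence.

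The routine but slightly delicate part — the one I would actually write out — is the identification of the corners $(p\rtimes_{\underline{\gamma}}1)(M(C)\rtimes_{\underline{\gamma}}H)(p\rtimes_{\underline{\gamma}}1)$ with $M(A)\rtimes_{\underline{\rho}}H$ (and similarly for $q$) and the fact that $\widehat{\underline{\gamma}}$ restricts there to $\widehat{\underline{\rho}}$ and $\widehat{\underline{\sigma}}$; alternatively one can bypass this by transporting everything through the isomorphism $\pi_1$ of Lemma \ref{lem:C1} and the formula in Remark \ref{remark:conjugate2}, which present $C\rtimes_{\gamma}H$ directly as the linking $C^*$-algebra of $X\rtimes_{\lambda}H$ with its induced coaction. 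Everything else is just the orthogonal-sum-of-partial-isometries bookkeeping above.
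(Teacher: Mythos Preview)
Your proposal is correct and follows essentially the same approach as the paper: split $1_{M(C)}\rtimes_{\underline{\gamma}}e$ into its $A$- and $B$-corner pieces, apply the two hypotheses to obtain partial isometries in each corner, and add them. The paper carries this out via the identification $\pi_1:C\rtimes_{\gamma}H\cong C_1$ of Lemma~\ref{lem:C1} and the explicit formula of Remark~\ref{remark:conjugate2} (citing \cite[Proposition~A.1]{ER:multiplier} for the corner embeddings into $M(C_1)$), which is precisely the alternative route you sketch in your last paragraph.
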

\begin{proof}
By Remark \ref{lem:C1}, we identify $C\rtimes_{\gamma}H$ with
$C_1$, the linking $C^*$-algebra for the Hilbert $A\rtimes_{\rho}H-A\rtimes_{\rho}H$-
bimodule $X\rtimes_{\lambda}H$. Also, we identify $\widehat{\gamma}$ with
$\gamma_1$, the coaction of $H$ on $C_1$ defined in Remark \ref{remark:conjugate2}.
Hence
$$
\widehat{\underline{\gamma}}(1\rtimes_{\underline{\gamma}}e)=
\begin{bmatrix} 1\rtimes_{\underline{\rho}}e_{(1)} & 0 \\
0 & 0 \end{bmatrix}\otimes e_{(2)}
+\begin{bmatrix} 0 & 0 \\
0 & 1\rtimes_{\underline{\sigma}}e_{(1)} \end{bmatrix}\otimes e_{(2)} .
$$
By the assumptions,
\begin{align*}
& \begin{bmatrix} 1\rtimes_{\underline{\rho}}e_{(1)} & 0 \\
0 & 0 \end{bmatrix}\otimes e_{(2)}\sim\begin{bmatrix} 1\rtimes_{\underline{\rho}}e & 0 \\
0 & 0 \end{bmatrix}\otimes 1 \quad\text{in $\begin{bmatrix} M(A)\rtimes_{\underline{\rho}}H & 0 \\
0 & 0 \end{bmatrix} \otimes H$}, \\
& \begin{bmatrix} 0 & 0 \\
0 & 1\rtimes_{\underline{\sigma}}e_{(1)} \end{bmatrix}\otimes e_{(2)} \sim
\begin{bmatrix} 0 & 0 \\
0 & 1\rtimes_{\underline{\sigma}}e \end{bmatrix}\otimes 1 \quad\text{in $\begin{bmatrix} 0 & 0 \\
0 & M(A)\rtimes_{\underline{\sigma}}H \end{bmatrix} \otimes H$} .
\end{align*}
Since $\begin{bmatrix} M(A)\rtimes_{\underline{\rho}}H & 0 \\
0 & 0 \end{bmatrix}$ and $\begin{bmatrix} 0 & 0 \\
0 & M(A)\rtimes_{\underline{\sigma}}H \end{bmatrix}$ are $C^*$-subalgebras of $M(C_1 )$ by
the proof of Echterhoff and Raeburn \cite [Proposition A.1]{ER:multiplier},
$$
\begin{bmatrix} 1\rtimes_{\underline{\rho}}e_{(1)} & 0 \\
0 & 0 \end{bmatrix}\otimes e_{(2)}
+\begin{bmatrix} 0 & 0 \\
0 & 1\rtimes_{\underline{\sigma}}e_{(1)} \end{bmatrix}\otimes e_{(2)}\sim
\begin{bmatrix} 1\rtimes_{\underline{\rho}}e & 0 \\
0 & 1\rtimes_{\underline{\sigma}}e \end{bmatrix}\otimes 1
$$
in $M(C_1 )\otimes H$. Therefore, we obtain the conclusion since $M(C_1 )\otimes H$ is
identified with $(M(C)\rtimes_{\underline{\gamma}}H)\otimes H$.
\end{proof}

By \cite [Section 4]{KT1:inclusion}, there is a unitary element $w^{\rho}\in M(A)\otimes H$ satisfying that
\begin{align*}
& w^{\rho *}((1\rtimes_{\underline{\rho}}e)\otimes 1)w^{\rho}=\widehat{\underline{\rho}}(1\rtimes_{\underline{\rho}}e) , \\
& U^{\rho}=w^{\rho}(z^{\rho *}\otimes 1) , \quad
z^{\rho}=(\id_{M(A)}\otimes\epsilon)(w^{\rho})\in M(A)^{\underline{\rho}} \, .
\end{align*}
Also, there is a unitary element $w^{\sigma}\in M(B)\otimes H$ satisfying that
\begin{align*}
& w^{\sigma *}((1\rtimes_{\underline{\sigma}}e)\otimes 1)w^{\sigma}=\widehat{\underline{\sigma}}
(1\rtimes_{\underline{\sigma}}e) , \\
& U^{\sigma}=w^{\sigma}(z^{\sigma *}\otimes 1) , \quad
z^{\sigma}=(\id_{M(A)}\otimes\epsilon)(w^{\sigma})\in M(A)^{\underline{\sigma}} \, .
\end{align*}

Let $w^{\gamma}=\begin{bmatrix} w^{\rho} & 0 \\
0 & w^{\sigma} \end{bmatrix}\in M(C)\otimes H$. Then $w^{\gamma}$ is a unitary element
satisfying that $w^{\gamma*}((1\rtimes_{\underline{\gamma}}e)\otimes )w^{\gamma}
=\widehat{\underline{\gamma}}(1\rtimes_{\underline{\gamma}}e)$.
Let $U^{\gamma}=w^{\gamma}(z^{\gamma*}\otimes 1)$, where $z^{\gamma}=(\id_{M(C)}\otimes\epsilon)(w^{\gamma})
\in M(C)^{\underline{\gamma}}$. Then by Section \ref {sec:pre}, $U^{\gamma}$ satisfies that
$$
\widehat{U}^{\gamma}(1^0 )=1, \quad \widehat{U}^{\gamma}(\phi_{(1)})c\widehat{U}^{\gamma *}(\phi_{(2)})\in M(C)^{\underline{\gamma}}
$$
for any $c\in M(C)^{\underline{\gamma}}$, $\phi\in H^0$. Let $(\eta, u^{\gamma})$ be a twisted
coaction of $H$ on $C^{\gamma}$ induced by $U^{\gamma}$ which is defined in Section \ref{sec:pre}.
Then by the proof of Proposition \ref {prop:nonunital2}, there is the isomorphism $\pi_C$ of
$C^{\gamma}\rtimes_{\eta, u^{\gamma}}H^0$ onto $C$ defined by
$$
\pi_C (c\rtimes_{\eta, u^{\gamma}}\phi)=c\widehat{U}^{\gamma}(\phi)
$$
for any $c\in C^{\gamma}, \phi\in H^0$, which satisfies that
$$
\gamma\circ\pi_C =(\pi_C\otimes\id_H )\circ\widehat{\eta}, \quad E^{\eta. u^{\gamma}}=E^{\gamma}\circ\pi_C ,
$$
where $E^{\eta, u^{\gamma}}$ and $E^{\gamma}$ are the canonical conditional expectations from
$C^{\gamma}\rtimes_{\eta, u^{\gamma}}H^0$ and $C$ onto $C^{\gamma}$, respectively.
Let $p=\begin{bmatrix} 1_A & 0 \\
0 & 0 \end{bmatrix}$, $q=\begin{bmatrix} 0 & 0 \\
0 & 1_B \end{bmatrix}$.
Then $p$ and $q$ are projection in $M(C^{\gamma})$. We note that $M(C^{\gamma})=M(C)^{\underline{\gamma}}$ by
Lemma \ref{lem:fix}.

\begin{lemma}\label{lem:restriction}With the above notations and assumptions,
\begin{align*}
& \pi_C (p\rtimes_{\underline{\eta}, u^{\gamma}}1^0 )=p, \quad
u^{\gamma}(p\otimes 1\otimes 1)=(p\otimes 1\otimes 1)u^{\gamma}, \\
& \pi_C (q\rtimes_{\underline{\eta}, u^{\gamma}}1^0 )=q, \quad
u^{\gamma}(q\otimes 1\otimes 1)=(q\otimes 1\otimes 1)u^{\gamma},
\end{align*}
\end{lemma}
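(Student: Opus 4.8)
The plan is to unwind the definitions of the objects $\pi_C$, $U^\gamma$ and $u^\gamma$ in the block-matrix picture, using the fact that $w^\gamma$, $z^\gamma$ and hence $U^\gamma$ are all block-diagonal with respect to the decomposition coming from $p$ and $q$. The key observation is that the projections $p,q\in M(C^\gamma)=M(C)^{\underline\gamma}$ are central idempotents of $M(C)$ that are fixed by $\underline\gamma$, i.e. $\underline\gamma(p)=p\otimes 1^0$ and $\underline\gamma(q)=q\otimes 1^0$; in fact $p+q=1_{M(C)}$. From this, everything should fall out formally.

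First I would record that $w^\gamma=\begin{bmatrix} w^\rho & 0 \\ 0 & w^\sigma\end{bmatrix}$ is block-diagonal, so $z^\gamma=(\id\otimes\epsilon)(w^\gamma)=\begin{bmatrix} z^\rho & 0 \\ 0 & z^\sigma\end{bmatrix}$ and therefore $U^\gamma=w^\gamma(z^{\gamma*}\otimes 1)=\begin{bmatrix} U^\rho & 0 \\ 0 & U^\sigma\end{bmatrix}$ is block-diagonal in $M(C)\otimes H$. Consequently $\widehat{U}^\gamma(\phi)=\begin{bmatrix}\widehat{U}^\rho(\phi) & 0 \\ 0 & \widehat{U}^\sigma(\phi)\end{bmatrix}$ commutes with both $p\otimes 1^0$ and $q\otimes 1^0$ (viewed appropriately), and in particular $\widehat U^\gamma(\phi)$ leaves $p$ and $q$ invariant. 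Then, since $\pi_C(c\rtimes_{\eta,u^\gamma}\phi)=c\,\widehat U^\gamma(\phi)$ and $p\in C^\gamma$ (more precisely $p\in M(C^\gamma)$, applied via $\underline{\pi_C}$), we get
$$
\pi_C(p\rtimes_{\underline\eta,u^\gamma}1^0)=p\,\widehat U^\gamma(1^0)=p\cdot 1=p,
$$
using $\widehat U^\gamma(1^0)=1$; the computation for $q$ is identical. This handles the two equalities involving $\pi_C$.

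For the commutation relations, I would use the definition of $u^\gamma=u^\gamma$ (the cocycle attached to $\eta$), namely $\widehat{u}^\gamma(\phi,\psi)=\widehat U^\gamma(\phi_{(1)})\widehat U^\gamma(\psi_{(1)})\widehat U^{\gamma*}(\phi_{(2)}\psi_{(2)})$, transported to the von Neumann form $u^\gamma\in M(C)^{\underline\gamma}\otimes H\otimes H$. Since $U^\gamma$ is block-diagonal, so is $u^\gamma$: $u^\gamma=\begin{bmatrix}u^\rho & 0 \\ 0 & u^\sigma\end{bmatrix}$ with respect to the $p,q$ decomposition (here $u^\rho$, $u^\sigma$ lie in the corners $p M(C)p\otimes H\otimes H$, $qM(C)q\otimes H\otimes H$). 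A block-diagonal element of $M(C)\otimes H\otimes H$ automatically commutes with $p\otimes 1\otimes 1$ and with $q\otimes 1\otimes 1$, which is exactly the assertion $u^\gamma(p\otimes 1\otimes 1)=(p\otimes 1\otimes 1)u^\gamma$ and likewise for $q$.

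\textbf{Main obstacle.} The only genuinely delicate point is the bookkeeping needed to justify that $p$ and $q$ really are multipliers of $C^\gamma$ and that $\pi_C$ (extended to multipliers as $\underline{\pi_C}$) sends $p\rtimes 1^0$, $q\rtimes 1^0$ to $p$, $q$ — this requires knowing $M(C^\gamma)=M(C)^{\underline\gamma}$ (Lemma \ref{lem:fix}, already invoked in the paragraph preceding the statement) and that $p\rtimes_{\underline\eta,u^\gamma}1^0$ is the image of $p$ under the canonical embedding $M(C^\gamma)\hookrightarrow M(C^\gamma\rtimes_{\eta,u^\gamma}H^0)$. Once one is comfortable with the identifications of multiplier algebras of linking algebras (Echterhoff--Raeburn, cited above, gives the block structure $M(C)\supset\begin{bmatrix}M(A)&0\\0&0\end{bmatrix}\oplus\cdots$), the rest is a routine matrix computation, which is why the paper presumably states it as a lemma to be used rather than proved in detail. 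I would therefore spend the bulk of the write-up making the block-diagonality of $w^\gamma$, $U^\gamma$, $u^\gamma$ precise, and then note that the four claimed identities are immediate consequences together with $\widehat U^\gamma(1^0)=1$.
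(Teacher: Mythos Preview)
Your argument for the commutation relations $u^{\gamma}(p\otimes 1\otimes 1)=(p\otimes 1\otimes 1)u^{\gamma}$ and the analogue for $q$ is exactly what the paper does: observe that $U^{\gamma}=\begin{bmatrix} U^{\rho} & 0 \\ 0 & U^{\sigma}\end{bmatrix}$ is block-diagonal, hence commutes with $p\otimes 1$, and then read off the commutation for $u^{\gamma}$ from the formula $\widehat{u}^{\gamma}(\phi,\psi)=\widehat{U}^{\gamma}(\phi_{(1)})\widehat{U}^{\gamma}(\psi_{(1)})\widehat{U}^{\gamma*}(\phi_{(2)}\psi_{(2)})$.

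For the identities $\pi_C(p\rtimes_{\underline{\eta},u^{\gamma}}1^0)=p$ and its companion for $q$, however, you take a shorter route than the paper. You use the defining formula $\pi_C(c\rtimes\phi)=c\,\widehat{U}^{\gamma}(\phi)$ (extended to multipliers, as justified by Lemma~\ref{lem:restriction2}) together with $\widehat{U}^{\gamma}(1^0)=1$, giving $\pi_C(p\rtimes 1^0)=p$ in one line. The paper instead runs the chain
\[
p=E_1^{\eta,u^{\gamma}}(p\rtimes 1^0)=E^{\gamma}(\pi_C(p\rtimes 1^0))
=e\cdot_{\gamma}\pi_C(p\rtimes 1^0)=\pi_C\bigl(e\cdot_{\widehat{\eta}}(p\rtimes 1^0)\bigr)=\pi_C(p\rtimes 1^0),
\]
using the compatibility $E_1^{\eta,u^{\gamma}}=E^{\gamma}\circ\pi_C$, the intertwining relation $\gamma\circ\pi_C=(\pi_C\otimes\id)\circ\widehat{\eta}$, and \cite[Proposition~2.12]{SP:saturated}. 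Your approach is more economical; the paper's has the minor advantage of never explicitly invoking the multiplier extension of $\pi_C$, since the conditional-expectation identities are already stated at the multiplier level. Either way the proof is straightforward once the block-diagonality of $U^{\gamma}$ is in hand, and your identification of that as the essential point is correct.
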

\begin{proof}
We note that $C^{\gamma}$ is identified with the $C^*$-subalgebra
$C^{\gamma}\rtimes_{\eta, u^{\gamma}}1^0$ of $C^{\gamma}\rtimes_{\eta, u^{\gamma}}H^0$.
Then by \cite [Proposition 2.12]{SP:saturated},
\begin{align*}
p & =E_1^{\eta, u^{\gamma}}(p\rtimes_{\underline{\eta}, u^{\gamma}}1^0 )
=E^{\gamma}(\pi_C (p\rtimes_{\underline{\eta}, u^{\gamma}}1^0 ))
=e\cdot_{\gamma}\pi_C (p\rtimes_{\underline{\eta}, u^{\gamma}}1^0 ) \\
& =\pi_C (e\cdot_{\widehat{\eta}}(p\rtimes_{\underline{\eta}, u^{\gamma}}1^0 ))=\pi_C (p)
=\pi_C (p\rtimes_{\underline{\eta}, u^{\gamma}}1^0 )
\end{align*}
since $\gamma\circ\pi_C =(\pi_C \otimes\id_H )\circ\widehat{\eta}$. Similarly,
we can obtain that $\pi_C (q\rtimes_{\underline{\eta}, u^{\gamma}}1^0 )=q$.
Furthermore, by the deifnition of $U^{\gamma}$, $U^{\gamma}=
\begin{bmatrix} U^{\rho} & 0 \\
0 & U^{\sigma} \end{bmatrix}\in M(C)\otimes H$. Hence
$U^{\gamma}(p\otimes 1)=(p\otimes 1)U^{\gamma}$. Since
$$
\widehat{u}^{\gamma}(\phi, \psi)
=\widehat{U}^{\gamma}(\phi_{(1)})\widehat{U}^{\gamma}(\psi_{(1)})\widehat{U}^{\gamma *}(\phi_{(2)}\psi_{(2)})
$$
for any $\phi, \psi\in H^0 $, we can see that $u^{\gamma}(p\otimes 1\otimes 1)=(p\otimes 1\otimes 1)u^{\gamma}$.
Similarly $u^{\gamma}(q\otimes 1\otimes 1)=(q\otimes 1\otimes 1)u^{\gamma}$.
\end{proof}

Let $\alpha=\eta|_{A^{\rho}}$, $\beta=\eta|_{B^{\sigma}}$ and $\mu=\eta|_{X^{\lambda}}$.
Let $u^{\rho}=u^{\gamma}(p\otimes 1\otimes 1)$, $u^{\sigma}=u^{\gamma}(q\otimes 1\otimes 1)$
Furthermore, let $\pi_A =\pi_C |_A$, $\pi_B =\pi_C |_B$, $\pi_X =\pi_C |_X$.
Then $(\alpha, u^{\rho})$ and $(\beta, u^{\sigma})$ are twisted coactions o $H^0$ on $A^{\rho}$ and $B^{\sigma}$,
respectively and $\mu$ is a twisted coaction of $H^0$ on $X^{\lambda}$
with respect to $(A, B, \alpha, u^{\rho}, \beta, u^{\sigma})$. Also, $\pi_A$ and $\pi_B$ are isomorphisms
of $A^{\rho}\rtimes_{\alpha, u^{\rho}}H^0$ and $B^{\sigma}\rtimes_{\beta, u^{\sigma}}H^0$ onto
$A$ and $B$ satisfying the results in Proposition \ref {prop:nonunital2}, respectively.
Furthermore, we obtain the following:

\begin{thm}\label{thm:surjective}Let $A$ and $B$ be $C^*$-algebras and $H$ a finite
dimensiona $C^*$-Hopf algebra with its dual $C^*$-Hopf algebra $H^0$.
Let $\rho$ and $\sigma$ be coactions of $H^0$
on $A$ and $B$, respectively. Let $\lambda$ be a coaction of $H^0$ on
a Hilbert $A-B$-bimodule $X$ with respect to $(A, B, \rho, \sigma)$.
We suppose that $\widehat{\underline{\rho}}(1\rtimes_{\underline{\rho}}e)
\sim (1\rtimes_{\underline{\rho}}e)\otimes 1$ in $M(A)\rtimes_{\underline{\rho}}H$
and that $\widehat{\underline{\sigma}}(1\rtimes_{\underline{\sigma}}e)\sim(1\rtimes_{\underline{\sigma}}e)
\otimes 1$ in $M(B)\rtimes_{\underline{\sigma}}H$. Then there are a twisted coaction $\mu$
of $H^0$ on $X^{\lambda}$ and a bijective linear map $\pi_X$ from $X^{\lambda}\rtimes_{\mu}H^0$
onto $X$ satisfying the following conditions:
\newline
$(1)$ $\pi_X ((a\rtimes_{\alpha, u^{\rho}}\phi)(x\rtimes_{\mu}\psi))
=\pi_A (a\rtimes_{\alpha, u^{\rho}}\phi)\pi_X (x\rtimes_{\mu}\psi)$,
\newline
$(2)$ $\pi_X((x\rtimes_{\mu}\phi)(b\rtimes_{\beta, u^{\sigma}}\psi))
=\pi_X (x\rtimes_{\mu}\phi)\pi_B (b\rtimes_{\beta, u^{\sigma}}\psi)$,
\newline
$(3)$ $\pi_A ({}_{A^{\rho}\rtimes_{\alpha, u^{\rho}}H^0} \la x\rtimes_{\mu}\phi, \, y\rtimes_{\mu}\psi \ra)
={}_A \la \pi_X (x\rtimes_{\mu}\phi), \, \pi_X (y\rtimes_{\mu}\psi) \ra$,
\newline
$(4)$ $\pi_B (\la x\rtimes_{\mu}\phi, \, y\rtimes_{\mu}\psi \ra_{B^{\sigma}\rtimes_{\beta, u^{\sigma}}H^0})
=\la \pi_X (x\rtimes_{\mu}\phi), \, \pi_X(y\rtimes_{\mu}\psi) \ra_B$,
\newline
$(5)$ $h\cdot_{\lambda}\pi_X (x\rtimes_{\mu}\phi)=\pi_X (h\cdot_{\widehat{\mu}}(x\rtimes_{\mu}\phi))$,
for any $x,y\in X^{\lambda}$, $a\in A^{\rho}$, $b\in B^{\sigma}$, $h\in H$, $\phi, \psi\in H^0$.
\end{thm}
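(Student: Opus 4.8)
The plan is to obtain the whole statement by restricting the isomorphism $\pi_C$ of Proposition~\ref{prop:nonunital2} to the corners of the linking $C^*$-algebra $C$. The first step is to apply Proposition~\ref{prop:nonunital2} to the coaction $\gamma$ of $H^0$ on $C$: its hypothesis $\widehat{\underline{\gamma}}(1_{M(C)}\rtimes_{\underline{\gamma}}e)\sim(1_{M(C)}\rtimes_{\underline{\gamma}}e)\otimes 1$ is exactly the content of Lemma~\ref{lem:sim1}, given the assumed equivalences for $\rho$ and $\sigma$. This yields the twisted coaction $(\eta,u^{\gamma})$ of $H$ on $C^{\gamma}$ and an isomorphism $\pi_C\colon C^{\gamma}\rtimes_{\eta,u^{\gamma}}H^0\to C$ with $\gamma\circ\pi_C=(\pi_C\otimes\id)\circ\widehat{\eta}$ and $E^{\eta,u^{\gamma}}=E^{\gamma}\circ\pi_C$. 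By Lemma~\ref{lem:fix3}, $C^{\gamma}=C_0$, the linking $C^*$-algebra for $X^{\lambda}$, so that for the complementary projections $p,q\in M(C_0)=M(C)^{\underline{\gamma}}$ one has $A^{\rho}=pC_0p$, $B^{\sigma}=qC_0q$ and $X^{\lambda}=pC_0q$.

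Next I would exploit Lemma~\ref{lem:restriction}: since $U^{\gamma}$ is block-diagonal with diagonal entries $U^{\rho}$ and $U^{\sigma}$, it commutes with $p\otimes 1$ and $q\otimes 1$, hence $\underline{\eta}(p)=p\otimes 1$, $\underline{\eta}(q)=q\otimes 1$, and $u^{\gamma}$ commutes with $p\otimes 1\otimes 1$ and with $q\otimes 1\otimes 1$. Therefore $\eta$ restricts to each corner, giving $\alpha=\eta|_{A^{\rho}}$, $\beta=\eta|_{B^{\sigma}}$, $\mu=\eta|_{X^{\lambda}}$ with $\mu(X^{\lambda})\subset X^{\lambda}\otimes H$, and cocycles $u^{\rho}=u^{\gamma}(p\otimes 1\otimes 1)$, $u^{\sigma}=u^{\gamma}(q\otimes 1\otimes 1)$. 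Reading the twisted coaction identities for $\eta$ on $C_0$ entrywise in the $2\times2$-matrix picture, with the formulas of Remark~\ref{remark:linking} for the twisted action of $H$ on $C$ and for $\widetilde{\lambda}$, then shows that $(\alpha,u^{\rho})$ and $(\beta,u^{\sigma})$ are twisted coactions of $H$ and that $\mu$ satisfies Conditions (1)--(6) of Definition~\ref{Def:coaction} with respect to $(A^{\rho},B^{\sigma},\alpha,u^{\rho},\beta,u^{\sigma})$; for instance Condition (6) comes from $(\eta\otimes\id)\circ\eta=\Ad(u^{\gamma})\circ(\id\otimes\Delta)\circ\eta$ restricted to the off-diagonal corner, where conjugation by $u^{\gamma}$ sends an off-diagonal entry $z$ to $u^{\rho}z(u^{\sigma})^{*}$. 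Moreover, since $p,q$ are $\underline{\eta}$-invariant and commute with $u^{\gamma}$, the crossed product $C_0\rtimes_{\eta,u^{\gamma}}H^0$ decomposes along $p$ and $q$ into a linking $C^*$-algebra, whose corners a computation as in the proof of Lemma~\ref{lem:C1} identifies as $A^{\rho}\rtimes_{\alpha,u^{\rho}}H^0=p(C_0\rtimes_{\eta,u^{\gamma}}H^0)p$, $B^{\sigma}\rtimes_{\beta,u^{\sigma}}H^0=q(C_0\rtimes_{\eta,u^{\gamma}}H^0)q$ and $X^{\lambda}\rtimes_{\mu}H^0=p(C_0\rtimes_{\eta,u^{\gamma}}H^0)q$, with matching module actions and inner products. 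This corner bookkeeping — the precise identification of the crossed-product linking algebra together with the entrywise verification of Definition~\ref{Def:coaction} for $\mu$ — is where I expect the main effort to lie, and it rests on the explicit matrix formulas of Remark~\ref{remark:linking}.

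Finally, set $\pi_A=\pi_C|_A$, $\pi_B=\pi_C|_B$, $\pi_X=\pi_C|_X$. Because $\pi_C(p\rtimes 1^0)=p$ and $\pi_C(q\rtimes 1^0)=q$ by Lemma~\ref{lem:restriction}, $\pi_C$ carries the corner decomposition of $C_0\rtimes_{\eta,u^{\gamma}}H^0$ onto that of $C$; in particular $\pi_X$ is a bijective, indeed isometric, linear map from $X^{\lambda}\rtimes_{\mu}H^0$ onto $X=pCq$, while $\pi_A$, $\pi_B$ are the isomorphisms already recorded before the theorem. Identities (1)--(4) are then just the statement that the $*$-isomorphism $\pi_C$ is multiplicative and $*$-preserving, read off the matrix corners: (1) is $\pi_C((a\rtimes_{\alpha,u^{\rho}}\phi)(x\rtimes_{\mu}\psi))=\pi_C(a\rtimes_{\alpha,u^{\rho}}\phi)\,\pi_C(x\rtimes_{\mu}\psi)$ with the right-hand product computed in the linking $C^*$-algebra $C$, which is precisely the left $A$-module action on $X$; (2) is the analogous fact for the right $B$-action; and (3), (4) follow by realizing ${}_{A^{\rho}\rtimes_{\alpha,u^{\rho}}H^0}\langle x\rtimes_{\mu}\phi,y\rtimes_{\mu}\psi\rangle$ and $\langle x\rtimes_{\mu}\phi,y\rtimes_{\mu}\psi\rangle_{B^{\sigma}\rtimes_{\beta,u^{\sigma}}H^0}$ as the $pp$- and $qq$-entries of $(x\rtimes_{\mu}\phi)(y\rtimes_{\mu}\psi)^{*}$ and $(x\rtimes_{\mu}\phi)^{*}(y\rtimes_{\mu}\psi)$ and applying $\pi_C$. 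Identity (5) is the restriction of $\gamma\circ\pi_C=(\pi_C\otimes\id)\circ\widehat{\eta}$ to the off-diagonal corner: $\gamma$ restricted to $X$ is $\lambda$ by the definition of $\gamma$, and $\widehat{\eta}$ restricted to $X^{\lambda}\rtimes_{\mu}H^0$ is the dual coaction $\widehat{\mu}$, so $\lambda(\pi_X(z))=(\pi_X\otimes\id)(\widehat{\mu}(z))$, and applying $\id\otimes h$ converts this to $h\cdot_{\lambda}\pi_X(z)=\pi_X(h\cdot_{\widehat{\mu}}z)$.
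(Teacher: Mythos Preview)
Your proposal is correct and follows exactly the approach the paper itself takes: the paper's proof is the single line ``Using the above discussions, we can prove the theorem in a straightforward way,'' and the ``above discussions'' are precisely the linking-algebra construction, the application of Proposition~\ref{prop:nonunital2} to $(C,\gamma)$ via Lemma~\ref{lem:sim1}, the block-diagonal form of $U^{\gamma}$ and Lemma~\ref{lem:restriction}, and the definitions $\alpha=\eta|_{A^{\rho}}$, $\beta=\eta|_{B^{\sigma}}$, $\mu=\eta|_{X^{\lambda}}$, $\pi_A,\pi_B,\pi_X$ as corner restrictions of $\pi_C$. Your write-up simply spells out in detail what the paper leaves implicit in that one sentence.
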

\begin{proof}Using the above discussions, we can prove the theorem in a straightforward way.
\end{proof}

Let $A$ be a unital $C^*$-algebra and $\rho$ a coaction of $H^0$ on $A$.
Let $\BK$ be the $C^*$-algebra of all compact operators on a countably
infinite dimensional Hilbert space. Let $A^s =A\otimes\BK$ and $\rho^s =\rho\otimes\id$.
We identify $H^0 \otimes\BK$ with $\BK\otimes H^0$. Then $\rho^s$ is a coaction
of $H^0$ on $A^s$.

\begin{lemma}\label{lem:stable}With the above notations, $\rho$ and $\rho^s$ are strongly
Morita equivalent.
\end{lemma}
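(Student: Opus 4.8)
The plan is to exhibit the standard $A-A^s$-equivalence bimodule, equip it with the evident coaction, and check Definition~\ref{Def:coaction} (with trivial cocycles $1\otimes 1^0\otimes 1^0$, since $\rho$ and $\rho^s$ are honest coactions). Fix the countably infinite dimensional Hilbert space $\mathcal{H}$ with $\BK=\BK(\mathcal{H})$, and let $X$ be the exterior tensor product of the $A-A$-equivalence bimodule ${}_AA_A$ with the $\BC-\BK$-equivalence bimodule $\widetilde{\mathcal{H}}$; thus $X$ is the completion of the algebraic tensor product $A\otimes\widetilde{\mathcal{H}}$ and is an $A-A^s$-equivalence bimodule on which $A$ acts by left multiplication in the first leg, $A^s=A\otimes\BK$ acts in the first leg by right multiplication and in the second leg through the right action of $\BK$ on $\widetilde{\mathcal{H}}$, with inner products ${}_A\la b\otimes\widetilde{\xi},c\otimes\widetilde{\eta}\ra=\la\xi,\eta\ra bc^*$ and $\la b\otimes\widetilde{\xi},c\otimes\widetilde{\eta}\ra_{A^s}=b^*c\otimes\theta_{\xi,\eta}$, where $\theta_{\xi,\eta}$ is the rank-one operator on $\mathcal{H}$. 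Since $A$ is unital and $\mathcal{H}\neq 0$, $X$ is full with respect to both inner products. I would then set $\lambda=\rho\otimes\id_{\widetilde{\mathcal{H}}}$, meaning: under the flip identification $X\otimes H^0\cong(A\otimes H^0)\otimes\widetilde{\mathcal{H}}$, define $\lambda(b\otimes\widetilde{\xi})=\sum_i(b_i\otimes\widetilde{\xi})\otimes\phi_i$ whenever $\rho(b)=\sum_i b_i\otimes\phi_i$.

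The first point to settle is that $\lambda$, which is plainly a well-defined linear map on the algebraic tensor product $A\otimes\widetilde{\mathcal{H}}$, extends to all of $X$. For a finite sum $x\in A\otimes\widetilde{\mathcal{H}}$ I would verify directly, from the formula for the right inner product and the fact that $\rho$ is a $*$-homomorphism, that $\la\lambda(x),\lambda(x)\ra_{A^s\otimes H^0}=\rho^s(\la x,x\ra_{A^s})$; since $\rho$ is injective by Condition~(2) of Definition~\ref{Def:wcoaction}, hence isometric, so is $\rho^s$, and therefore $\|\lambda(x)\|=\|x\|$. Thus $\lambda$ extends uniquely to an isometry of $X$ into $X\otimes H^0$.

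It remains to verify Conditions~(1)--(6) of Definition~\ref{Def:coaction} for $\lambda$ with respect to $(A,A^s,\rho,1\otimes 1^0\otimes 1^0,\rho^s,1\otimes 1^0\otimes 1^0)$, equivalently that $\lambda$ is a coaction of $H^0$ on $X$ with respect to $(A,A^s,\rho,\rho^s)$ in the sense of Remark~\ref{remark:coaction}. Each condition reduces immediately to a property of $\rho$ already in hand: (1) and (2) hold because $\rho$ and $\rho^s$ are homomorphisms and the module actions are nontrivial only on the first leg; (4) is exactly the identity established in the previous paragraph, and (3) follows by the analogous computation with the left inner product; (5) is $(\id\otimes\epsilon^0)\circ\rho=\id_A$ applied leg-wise; and (6), in the coaction form of Remark~\ref{remark:coaction}, is $(\rho\otimes\id)\circ\rho=(\id\otimes\Delta^0)\circ\rho$ applied leg-wise. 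Having produced an $A-A^s$-equivalence bimodule carrying such a coaction $\lambda$, Definition~\ref{Def:morita} gives that $\rho$ and $\rho^s$ are strongly Morita equivalent. I anticipate no real obstacle: the only delicate points are keeping the conjugate-linear structure of $\widetilde{\mathcal{H}}$ straight across the inner-product computations and the routine passage of $\lambda$ from the algebraic tensor product to the completion.
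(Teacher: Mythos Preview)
Your proposal is correct and is exactly the ``routine computation'' the paper has in mind: the paper's own proof reads in its entirety ``This is immediate by routine computations,'' and what you have written is precisely a careful unpacking of that sentence via the standard $A-A^s$-imprimitivity bimodule $A\otimes\widetilde{\mathcal H}$ together with the tensor coaction $\rho\otimes\id$. There is nothing to add.
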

\begin{proof}This is immediate by routine computations.
\end{proof}

Let $A$ and $B$ be unital $C^*$-algebras. Let $\rho$ and $\sigma$ be coactions
of $H^0$ on $A$ and $B$, respectively. We suppose that $\rho$ and $\sigma$ are
strongly Morita equivalent. Also, we suppose that there are an $A-B$-equivalence
bimodule $X$ and a coaction $\lambda$ of $H^0$ on $X$ with respect to $(A, B, \rho, \sigma)$.
Let $C$ be the linking $C^*$-algebra for $X$ and $\gamma$ the coaction of $H^0$ on $C$
induced by $\rho, \sigma$ and $\lambda$, which is defined in the above.
Let $A^s =A\otimes\BK$, $B^s =B\otimes\BK$ and $C^s =C\otimes\BK$. Let $X^s =X\otimes\BK$,
the exterior tensor product of $X$ and $\BK$, which is an $A^s -B^s$-equivalence bimodule in
the usual way. Let $\rho^s =\rho\otimes\id$, $\sigma^s =\sigma\otimes\id$ and
$\gamma^s =\gamma\otimes\id$. Let $\lambda^s =\lambda\otimes\id$, which is a coaction of $H^0$
on $X^s$. Let
$$
p=\begin{bmatrix} 1_A \otimes 1_{M(\BK)} & 0 \\
0 & 0 \end{bmatrix} , \quad
q=\begin{bmatrix} 0 & 0 \\
0 & 1_B \otimes 1_{M(\BK)} \end{bmatrix} .
$$
Then $p$ and $q$ are full projections in $M(C^s )$ and
$A^s \cong pC^s p$, $B^s \cong qC^s q$. We identify $A^s$ and $B^s$ with
$pC^s p$ and $qC^s q$, respectively. By Brown \cite [Lemma 2.5]{Brown:hereditary},
there is a partial isometry $w\in M(C^s )$ such that $w^*w=p$, $ww^* =q$.
Let $\theta$ be a map from $A^s$ to $C^s$ defined by
$$
\theta(a)=waw^* =w\begin{bmatrix} a & 0 \\
0 & 0 \end{bmatrix}w^*
$$
for any $a\in A$. Since $w^* w=p$ and $ww^* =q$, by easy computations, we can see that
$\theta$ is an isomorphism of $A^s$ onto $B^s$.

\begin{prop}\label{prop:stable}With he above notations, there is a unitary element
$u\in M(B^s )\otimes H^0$ such that
\begin{align*}
(\theta\otimes\id_{H^0 })\circ\rho^s \circ \theta^{-1} & =\Ad(u)\circ\sigma^s , \\
(u\otimes1^0 )(\underline{\sigma^s} \otimes\id_{H^0 })(u) & =(\id_{M(B^s )}\otimes\Delta^0 )(u) ,
\end{align*}
where $\underline{\sigma^s}$ is the strictly continuous coaction of $H^0$ on $M(B^s )$ extending the
coaction $\sigma^s$ of $H^0$ on $B^s$.
\end{prop}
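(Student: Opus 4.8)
The plan is to exhibit the coaction $(\theta\otimes\id_{H^0})\circ\rho^s\circ\theta^{-1}$ of $H^0$ on $B^s$ as \emph{exterior equivalent} to $\sigma^s$ in the sense of Definition~\ref{Def:exterior}, the implementing unitary being manufactured from the partial isometry $w$ and the coaction $\gamma^s$ on the stabilised linking algebra $C^s$. Two structural facts set this up. Because $\rho$ and $\sigma$ are coactions of $H^0$ on the \emph{unital} algebras $A$ and $B$, we have $\rho(1_A)=1_A\otimes 1^0$ and $\sigma(1_B)=1_B\otimes 1^0$, so by the definition of $\gamma$ on the linking algebra the corner projections of $M(C)$ are $\underline{\gamma}$-fixed; tensoring with $\id_{\BK}$ gives $\underline{\gamma^s}(p)=p\otimes 1^0$ and $\underline{\gamma^s}(q)=q\otimes 1^0$. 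Hence $\gamma^s$ restricts to coactions on the corners $pC^sp$ and $qC^sq$, and under the identifications $A^s=pC^sp$, $B^s=qC^sq$ these restrictions are exactly $\rho^s$ and $\sigma^s$; moreover, by uniqueness of the strictly continuous extension (cf.\ Lemma~\ref{lem:fix}) $\underline{\gamma^s}$ restricts on $M(B^s)=qM(C^s)q$ to $\underline{\sigma^s}$. Also $\theta=\Ad(w)|_{A^s}$, so $\theta\otimes\id_{H^0}=\Ad(w\otimes 1^0)|_{A^s\otimes H^0}$.

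First I would set $u=(w\otimes 1^0)\,\underline{\gamma^s}(w)^{*}\in M(C^s)\otimes H^0$ and check it is a unitary in $M(B^s)\otimes H^0$. From $w^{*}w=p$, $ww^{*}=q$, $wp=w$ and the $\underline{\gamma^s}$-invariance of $p$ and $q$ one gets $(q\otimes 1^0)u=u=u(q\otimes 1^0)$, so $u\in qM(C^s)q\otimes H^0=M(B^s)\otimes H^0$, together with $u^{*}u=\underline{\gamma^s}(w)(p\otimes 1^0)\underline{\gamma^s}(w)^{*}=\underline{\gamma^s}(wpw^{*})=\underline{\gamma^s}(q)=q\otimes 1^0$ and $uu^{*}=(w\otimes 1^0)(p\otimes 1^0)(w^{*}\otimes 1^0)=wpw^{*}\otimes 1^0=q\otimes 1^0$.

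For the first equation, fix $b\in B^s$; then $\theta^{-1}(b)=w^{*}bw\in pC^sp=A^s$, and since $\rho^s=\gamma^s|_{A^s}$ and $\sigma^s=\gamma^s|_{B^s}$,
\[
\rho^s(\theta^{-1}(b))=\underline{\gamma^s}(w)^{*}\,\sigma^s(b)\,\underline{\gamma^s}(w).
\]
Applying $\theta\otimes\id_{H^0}=\Ad(w\otimes 1^0)$ and using $(w\otimes 1^0)\underline{\gamma^s}(w)^{*}=u$, $\underline{\gamma^s}(w)(w^{*}\otimes 1^0)=u^{*}$, this becomes $u\,\sigma^s(b)\,u^{*}$, i.e. $(\theta\otimes\id_{H^0})\circ\rho^s\circ\theta^{-1}=\Ad(u)\circ\sigma^s$.

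For the second equation I would compute both sides directly from $u=(w\otimes 1^0)\underline{\gamma^s}(w)^{*}$, invoking only the coaction identity $(\underline{\gamma^s}\otimes\id_{H^0})\circ\underline{\gamma^s}=(\id\otimes\Delta^0)\circ\underline{\gamma^s}$ (Remark~\ref{remark:extension}), the invariance $\underline{\gamma^s}(w^{*}w)=p\otimes 1^0$, and $wp=w$; together with $\underline{\sigma^s}=\underline{\gamma^s}|_{M(B^s)}$ these reduce the left-hand side to $(w\otimes 1^0\otimes 1^0)(\id\otimes\Delta^0)(\underline{\gamma^s}(w)^{*})$, which is visibly $(\id_{M(B^s)}\otimes\Delta^0)(u)$. (The same bookkeeping also gives $(\id_{M(B^s)}\otimes\epsilon^0)(u)=ww^{*}=1_{M(B^s)}$, so $u$ genuinely witnesses an exterior equivalence, though the statement does not ask for this.) I expect the only genuinely delicate points to be formal ones, of the same flavour as Lemma~\ref{lem:fix} and the multiplier-algebra bookkeeping around Lemma~\ref{lem:multiplier}: verifying that $p$ and $q$ are $\underline{\gamma^s}$-fixed, that the restrictions of $\gamma^s$ to the corners are $\rho^s$ and $\sigma^s$, and that $qM(C^s)q=M(B^s)$ with $\underline{\gamma^s}|_{M(B^s)}=\underline{\sigma^s}$; once these identifications are in hand, the two displayed equations are immediate, the only nuisance being the leg-numbering in $M(C^s)\otimes H^0\otimes H^0$.
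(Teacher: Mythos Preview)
Your proposal is correct and follows essentially the same approach as the paper: the paper also sets $u=(w\otimes 1^0)\underline{\gamma^s}(w^*)$ (equal to your $(w\otimes 1^0)\underline{\gamma^s}(w)^*$ since $\underline{\gamma^s}$ is a $*$-homomorphism) and then simply asserts that ``by routine computations'' $u$ is the desired unitary in $M(B^s)\otimes H^0$. You have carried out exactly those routine computations---the corner bookkeeping for $u\in M(B^s)\otimes H^0$, the unitarity check via $\underline{\gamma^s}(p)=p\otimes 1^0$, and the collapse $(\underline{\gamma^s}(w^*)\otimes 1^0)(\underline{\gamma^s}(w)\otimes 1^0)=p\otimes 1^0\otimes 1^0$ that reduces the cocycle identity to $(\id\otimes\Delta^0)(u)$---which the paper omits.
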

\begin{proof}
We note that $\theta=\Ad(w)$. Since $\rho^s =\gamma^s |_{A^s}$ and
$\sigma^s =\gamma^s |_{B^s}$, we can obtain that
$$
(\theta\otimes\id_{H^0})\circ \rho^s \circ \theta^{-1}=\Ad((w\otimes 1^0 )\underline{\gamma^s}(w^* ))\circ\sigma^s ,
$$
where $\underline{\gamma^s}$ is the strictly continuous coaction of $H^0$ on $M(C^s )$ extending the
coaction $\gamma^s$ of $H^0$ on $C^s$. Let $u=(w\otimes 1^0 )\underline{\gamma^s}(w^* )$.
By routine computations, we can show that $u$ is a desired unitary element in $M(B^s )\otimes H^0$.
\end{proof}

\section{Equivariant Picard groups}\label{sec:Picard}
Following Jansen and Waldmann \cite {JW:covariant}, we shall define the equivariant
Picard group of a $C^*$-algebra .
\par
Let $A$ be a $C^*$-algebra and $H$ a finite dimensiona $C^*$-Hopf algebra
with its dual $C^*$-Hopf algebra $H^0$. Let $(\rho, u)$ be a twisted coaction of $H^0$ on $A$.
We denote by $(X, \lambda)$, a pair of an $A-A$-equivalence bimodule $X$ and a twisted
coaction $\lambda$ of $H^0$ on $X$ with respect to $(A, A, \rho, u, \rho, u)$.
Let $\Equi_H ^{\rho, u}(A)$ be the set of all such pairs $(X, \lambda)$ as above. We define
an equivalence relation $\sim$ in $\Equi_H^{\rho, u}(A)$ as follows: For $(X, \lambda), (Y, \mu)
\in \Equi_H^{\rho, u}(A)$, $(X, \lambda)\sim (Y, \mu)$ if and only if there is an $A-A$-equivalence
bimodule isomorphism $\pi$ of $X$ onto $Y$ such that $\mu\circ\pi=(\pi\otimes\id_{H^0})\circ\lambda$,
that is, for any $x\in X$ and $h\in H$, $\pi(h\cdot_{\lambda}x )=h\cdot_{\mu}\pi (x)$.
We denote by $[X, \lambda]$ the equivalence class of $(X, \lambda)$ in $\Equi_H^{\rho, u}(A)$.
Let $\Pic_H^{\rho, u}(A)=\Equi_H^{\rho, u}(A)/\!\sim$. We define the product in $\Pic_H^{\rho, u}(A)$
as follows: For $(X, \lambda), (Y, \mu)\in \Equi_H^{\rho, u}(A)$,
$$
[X, \lambda][Y, \mu]=[X\otimes_A Y, \, \lambda\otimes\mu],
$$
where $\lambda\otimes\mu$ is the twisted coaction of $H^0$ on $X$ induced by the 
action $ `` \cdot_{\lambda\otimes\mu} "$ of $H$ on $X$ defined in \cite [Proposition 3.1]
{KT3:equivalence}. By easy computations, we can see that the above product is well-defined.
We regard $A$ as an $A-A$-equivalence bimodule in the usual way.
We sometimes denote it by ${}_A A_A$. Also, we can regard a twisted coaction $\rho$ of $H^0$ on
$C^*$-algebra $A$ as a twisted coaction of $H^0$ on the $A-A$-equivalence bimodule ${}_A A_A$
with respect to $(A, A, \rho, u, \rho, u)$.
Then $[{}_A A_A, \, \rho]$ is the unit element in $\Pic_H^{\rho ,u}(A)$. Let $\widetilde{\lambda}$
be the coaction of $H^0$ on $\widetilde{X}$ defined by $\widetilde{\lambda}(\widetilde{x})=\widetilde{\lambda(x)}$
for any $x\in X$, which is also defined in Remark \ref{remark:linking} (2).
Then we can see that $[\widetilde{X}, \, \widetilde{\lambda}]$ is the inverse
element of $[X, \lambda]$ in $\Pic_H^{\rho, u}(A)$. By the above product,
$\Pic_H^{\rho, u}(A)$ is a group. We call it the
\sl
$(\rho, u, H)$-equivariant Picard group
\rm
of $A$.
\par
Let $\Aut_H^{\rho, u}(A)$ be the group of all automorphisms $\alpha$ of $A$ satisfying
that $(\alpha\otimes\id_{H^0})\circ\rho=\rho\circ\alpha$ and $\Int_H^{\rho, u}(A)$ the set of
all generalized inner automorphisms $\Ad(v)$ of $A$ satisfying that
$\underline{\rho}(v)=v\otimes 1^0$, where $v$ is a unitary element in $M(A)$.
By easy computations $\Int_H^{\rho, u}(A)$ is a normal subgroup of $\Aut_H^{\rho, u}(A)$.
Modifying \cite {BGR:linking}, for each $\alpha\in \Aut_H^{\rho, u}(A)$, we construct
the element $(X_{\alpha}, \lambda_{\alpha})\in\Equi_H^{\rho, u}(A)$ as follows: Let $\alpha\in\Aut_H^{\rho, u}(A)$.
Let $X_{\alpha}$ be the vector space $A$ with the obvious left action of $A$ on $X_{\alpha}$
and the obvious left $A$-valued inner product, but define the right action of $A$ on $X_{\alpha}$
by $x\cdot a=x\alpha(a)$ for any $x\in X_{\alpha}$, $a\in A$ and the right $A$-valued inner
product by $\la x, \, y \ra_A =\alpha^{-1}(x^* y)$ for any $x, y\in X_{\alpha}$. Then
by \cite {BGR:linking}, $X_{\alpha}$ is an $A-A$-equivalence bimodule. Also, $\rho$ can be regarded
as a linear map from $X_{\alpha}$ to an $A\otimes H^0 -A\otimes H^0$-equivalence
bimodule $X_{\alpha}\otimes H^0$. We denote it by $\lambda_{\alpha}$. By
easy computations, $\lambda_{\alpha}$ is a twisted coaction of $H^0$ on $X_{\alpha}$
with respect to $(A, A, \rho, u, \rho, u)$.
Thus we obtain the map
$\Phi$
$$
\Phi : \Aut_H ^{\rho, u}(A)\to \Pic_H^{\rho, u}(A): \alpha\mapsto [X_{\alpha}, \lambda_{\alpha}] .
$$
Modifying \cite {BGR:linking}, we can see that the map $\Phi$ is a homomorphism of $\Aut_H^{\rho, u}(A)$
to $\Pic_H^{\rho, u}(A)$. We have the similar result to \cite [Proposition 3.1]{BGR:linking}.

\begin{prop}\label{prop:exact}With the above notations, we have the
exact sequence
$$
1\to\Int_H^{\rho, u}(A)\overset{\imath}\to\Aut_H^{\rho, u}(A)\overset{\Phi}\to\Pic_H^{\rho, u}(A) ,
$$
where $\imath$ is the inclusion map of $\Int_H^{\rho, u}(A)$ to $\Aut_H^{\rho, u}(A)$.
\end{prop}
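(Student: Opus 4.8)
The plan is to follow the classical argument of Brown--Green--Rieffel \cite{BGR:linking} adapted to the equivariant setting, establishing exactness at the two nontrivial spots. First I would verify that $\imath$ is injective, which is immediate since $\imath$ is literally the inclusion map of one group into another. Next I would establish exactness at $\Int_H^{\rho,u}(A)$, that is, $\Ima\imath=\Ker\Phi\cap\Int_H^{\rho,u}(A)$; but since $\imath$ is an inclusion the real content here is subsumed in the computation at the middle term. The substantive work is exactness at $\Aut_H^{\rho,u}(A)$: I must show $\Ker\Phi=\Int_H^{\rho,u}(A)$.

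For the inclusion $\Int_H^{\rho,u}(A)\subseteq\Ker\Phi$, take $\alpha=\Ad(v)$ with $v$ a unitary in $M(A)$ satisfying $\underline{\rho}(v)=v\otimes1^0$. I would exhibit an explicit $A$--$A$-equivalence bimodule isomorphism $\pi:{}_A A_A\to X_\alpha$, the natural candidate being $\pi(a)=av^*$ (or $a\mapsto va$, depending on conventions); one checks it intertwines the left actions, converts the right action $x\cdot a=xa$ on ${}_A A_A$ into $x\cdot a=x\alpha(a)$ on $X_\alpha$, and respects both inner products, using unitarity of $v$. Then I would verify $\lambda_\alpha\circ\pi=(\pi\otimes\id_{H^0})\circ\rho$, which reduces exactly to the cocycle-type condition $\underline{\rho}(v)=v\otimes1^0$ — this is precisely why that condition appears in the definition of $\Int_H^{\rho,u}(A)$. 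Hence $[X_\alpha,\lambda_\alpha]=[{}_A A_A,\rho]$, the unit of $\Pic_H^{\rho,u}(A)$, so $\alpha\in\Ker\Phi$.

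For the reverse inclusion $\Ker\Phi\subseteq\Int_H^{\rho,u}(A)$, suppose $\alpha\in\Aut_H^{\rho,u}(A)$ with $[X_\alpha,\lambda_\alpha]=[{}_A A_A,\rho]$. Then there is an $A$--$A$-equivalence bimodule isomorphism $\pi:{}_A A_A\to X_\alpha$ with $\lambda_\alpha\circ\pi=(\pi\otimes\id_{H^0})\circ\rho$. As in \cite{BGR:linking}, a left-module right-module map between ${}_A A_A$ and $X_\alpha$ is implemented by right multiplication by a unitary $v^*\in M(A)$: concretely $\pi(a)=av^*$, where one uses that $\pi$ is determined by $\pi(1)$ via left-linearity when $A$ is unital (and in general by passing to multipliers via Lemma \ref{lem:convergent}), and that compatibility with the right action forces $\alpha=\Ad(v)$. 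Feeding this back into the intertwining relation $\lambda_\alpha\circ\pi=(\pi\otimes\id_{H^0})\circ\rho$ and unwinding the definition of $\lambda_\alpha$ (namely that $\lambda_\alpha$ is just $\rho$ read on the bimodule $X_\alpha$) yields $\rho(a)v^*\otimes\text{(stuff)}=\ldots$, which after simplification gives $\underline{\rho}(v)=v\otimes1^0$. Therefore $\alpha=\Ad(v)\in\Int_H^{\rho,u}(A)$.

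The main obstacle I anticipate is the bookkeeping at the level of multiplier algebras when $A$ is not unital: one cannot simply evaluate $\pi$ at $1$, so identifying the implementing unitary $v\in M(A)$ requires the extension machinery (Lemma \ref{lem:extension}, Remark \ref{remark:extension}, Lemma \ref{lem:convergent}) and care that $\underline{\rho}$ is strictly continuous so that the identity $\underline{\rho}(v)=v\otimes1^0$ survives passage to the limit. The purely algebraic intertwining computations are routine given the equivalences (1)$'$--(6)$'$ recorded after Definition \ref{Def:coaction}, so I would not grind through them; the proof can simply say "modifying \cite{BGR:linking}" for those and concentrate the exposition on the equivariance bookkeeping and the non-unital multiplier argument.
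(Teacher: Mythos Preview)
Your proposal is correct and follows essentially the same route as the paper's proof: the map $\pi(a)=av^{*}$ for the inclusion $\Int_H^{\rho,u}(A)\subseteq\Ker\Phi$, and extraction of a unitary $v\in M(A)$ from the bimodule isomorphism $\pi$ for the reverse. For the non-unital extraction the paper does not go through Lemma~\ref{lem:convergent} but instead observes (as in \cite{BGR:linking}) that the pair $(\pi\circ\alpha^{-1},\,\pi)$ is a double centralizer of $A$, hence directly an element $v\in M(A)$ with $\alpha=\Ad(v^{*})$; plugging $\pi(a)=av^{*}$ into $\lambda_{\alpha}\circ\pi=(\pi\otimes\id)\circ\rho$ then gives $\underline{\rho}(v)=v\otimes 1^{0}$ exactly as you anticipate.
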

\begin{proof}
Modifying the proof of \cite [Proposition 3.1]{BGR:linking}, we shall prove this Proposition.
Let $v$ be a unitary element in $M(A)$ with $\underline{\rho}(v)=v\otimes 1^0$.
We show that $[X_{\Ad(v)}, \, \lambda_{\Ad(v)}]=[{}_A A_A, \, \rho]$ in $\Pic_H^{\rho, u}(A)$.
Let $\pi$ be the map from ${}_A A_A$ to $X_{\Ad(v)}$ defined by $\pi(a)=av^*$
for any $a\in {}_A A_A$. Then $\pi$ is an $A-A$-equivalence bimodule isomorphism.
Also, for any $a\in {}_A A_A$ and $h\in H$,
$$
h\cdot_{\lambda_{\Ad(v)}}\pi(a)=h\cdot_{\lambda_{\Ad(v)}}(av^* )=[h_{(1)}\cdot_{\rho}a][h_{(2)}\cdot_{\underline{\rho}}v^* ]
=[h\cdot_{\rho}a]v^* =\pi(h\cdot_{\rho}a).
$$
Thus $[X_{\Ad(v)}, \, \lambda_{\Ad(v)}]=[{}_A A_A , \, \rho]$ in $\Pic_H^{\rho, u}(A)$.
Conversely, let $\alpha\in \Aut_H^{\rho, u}(A)$ with
$[X_{\alpha}, \lambda_{\alpha}]=[{}_A A_A , \rho]$ in $\Pic_H^{\rho, u}(A)$. Then there is an $A-A$-equivalence bimodule
isomorphism $\pi$ of ${}_A A_A$ onto $X_{\alpha}$ such that
$$
\lambda_{\alpha}\circ\pi=(\pi\otimes\id)\circ\rho .
$$
By the proof of \cite [Proposition 3.1]{BGR:linking}, $(\pi\circ\alpha^{-1}, \, \pi)$ is
a double centralizer of $A$. Hence $(\pi\circ\alpha^{-1}, \, \pi)\in M(A)$.
Let $v=(\pi\circ\alpha^{-1}, \, \pi)$. Then $v$ is a unitary element in $M(A)$ such that
$\alpha=\Ad(v^* )$. Furthermore, since $\lambda_{\alpha}\circ\pi=(\pi\otimes\id)\circ\rho$,
for any $a\in A$, $\lambda_{\alpha}(\pi(a))=(\pi\otimes\id)(\rho(a))$.
It follows that $\rho(av^* )=\rho(a)(v\otimes 1^0 )^*$ for any $a\in A$.
That is, $\underline{\rho}(v)=v\otimes 1^0 $. Therefore, we obtain the conclusion.
\end{proof}

Next, we shall show a similar result to \cite [Corollary 3.5]{BGR:linking}. Let $A$ be
a $C^*$-algebra and $X$ an $A-A$-equivalence bimodule. Let $\rho$ be a coaction of $H^0$ on $A$
and $\lambda$ a coaction of $H^0$ on $X$ with respect to $(A, A, \rho, \rho)$.
Let $C$ be the linking $C^*$-algebra for $X$ and $\gamma$ the coaction of $H^0$
on $C$ induced by $\rho$ and $\lambda$ which is defined in Section \ref{sec:stable}.
Furthermore, we suppose that $A$ is unital and that $\widehat{\rho}(1\rtimes_{\rho}e)
\sim(1\rtimes_{\rho}e)\otimes 1$ in $(A\rtimes_{\rho}H)\otimes H$. Then $\rho$ is
saturated by \cite [Section 4]{KT1:inclusion}. Let $(\widehat{\rho})^s$ be the coaction of $H$
on $(A\rtimes_{\rho}H)^s \otimes H$ induced by the dual coaction $\widehat{\rho}$ of
$H$ on $A\rtimes_{\rho}H$. Also, let $(\rho^s )^{\widehat{}}$ be the dual coaction of $\rho^s$
which is a coaction of $H$ on $A^s \rtimes_{\rho^s}H$. By their definitions, we can see that
$(\widehat{\rho})^s =(\rho^s )^{\widehat{}}$, where we identify $(A\rtimes_{\rho}H)^s$ with
$A^s \rtimes_{\rho^s}H$. We denote them by $\widehat{\rho^s}$.

\begin{lemma}\label{lem:sim2}With the above notations, if $\widehat{\rho}(1\rtimes_{\rho}e)\sim
(1\rtimes_{\rho}e)\otimes 1$ in $(A\rtimes_{\rho}H)\otimes H$,
then $\widehat{\underline{\rho}^s} (1\rtimes_{\underline{\rho^s}}e)\sim (1\rtimes_{\underline{\rho^s}}e)
\otimes 1$ in $(M(A^s )\rtimes_{\underline{\rho^s}}H)\otimes H$.
\end{lemma}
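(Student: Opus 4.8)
The plan is to stabilize the partial isometry that implements the equivalence for $\rho$. By the hypothesis there is a partial isometry $W\in(A\rtimes_{\rho}H)\otimes H$ with $W^*W=(1\rtimes_{\rho}e)\otimes 1$ and $WW^*=\widehat{\rho}(1\rtimes_{\rho}e)$, where $1=1_A$ and $A\rtimes_{\rho}H$ is unital since $A$ is. I shall carry $W$ over to the stable algebra by tensoring with $1_{M(\BK)}$.

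First I would fix the identifications. We identify $(A\rtimes_{\rho}H)\otimes\BK$ with $A^s\rtimes_{\rho^s}H$ via $(x\rtimes_{\rho}h)\otimes k\mapsto(x\otimes k)\rtimes_{\rho^s}h$, and, by Lemma \ref{lem:multiplier} applied to $\rho^s$, $M(A^s)\rtimes_{\underline{\rho^s}}H=M(A^s\rtimes_{\rho^s}H)$. Let $\imath\colon A\rtimes_{\rho}H\to M(A^s)\rtimes_{\underline{\rho^s}}H$ be the unital $*$-homomorphism $y\mapsto y\otimes 1_{M(\BK)}$, read through these identifications, so that $\imath(x\rtimes_{\rho}h)=(x\otimes 1_{M(\BK)})\rtimes_{\underline{\rho^s}}h$ for $x\in A$, $h\in H$. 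Two properties of $\imath$ are needed, both obtained by direct computation on the generators $x\rtimes_{\rho}h$: that $\imath(1\rtimes_{\rho}e)=1\rtimes_{\underline{\rho^s}}e$ (the right-hand side being $(1_A\otimes 1_{M(\BK)})\rtimes_{\underline{\rho^s}}e$), and that $\imath$ intertwines the dual coactions,
$$
\widehat{\underline{\rho}^s}\circ\imath=(\imath\otimes\id_H)\circ\widehat{\rho}\quad\text{on }A\rtimes_{\rho}H .
$$
For the latter, recall from Remark \ref{remark:dual} that $\widehat{\underline{\rho}^s}(y\rtimes_{\underline{\rho^s}}h)=(y\rtimes_{\underline{\rho^s}}h_{(1)})\otimes h_{(2)}$ for $y\in M(A^s)$, while $\widehat{\rho}(x\rtimes_{\rho}h)=(x\rtimes_{\rho}h_{(1)})\otimes h_{(2)}$; applying $\imath$ to the generator and comparing the two sides gives the identity.

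Then I would set $\widetilde{W}=(\imath\otimes\id_H)(W)$. Since $H$ is finite dimensional, $(M(A^s)\rtimes_{\underline{\rho^s}}H)\otimes H=M\bigl((A^s\rtimes_{\rho^s}H)\otimes H\bigr)$, so $\widetilde{W}\in(M(A^s)\rtimes_{\underline{\rho^s}}H)\otimes H$, and it is a partial isometry because $\imath\otimes\id_H$ is a $*$-homomorphism. Applying $\imath\otimes\id_H$ to the two equations for $W$ and using the two properties of $\imath$ above yields
$$
\widetilde{W}^*\widetilde{W}=(1\rtimes_{\underline{\rho^s}}e)\otimes 1 ,\qquad
\widetilde{W}\widetilde{W}^*=\widehat{\underline{\rho}^s}(1\rtimes_{\underline{\rho^s}}e) ,
$$
which is exactly the assertion. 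The only non-formal point, and the one I expect to take the most care, is the verification of the two properties of $\imath$; both reduce to unwinding the identification $(A\rtimes_{\rho}H)\otimes\BK\cong A^s\rtimes_{\rho^s}H$ and its extension to multiplier algebras, where no genuine difficulty arises.
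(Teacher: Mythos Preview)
Your proposal is correct and is precisely the ``straightforward computation'' the paper alludes to: the paper's own proof consists of the single sentence ``This is immediate by straightforward computations,'' and you have supplied those computations by transporting the implementing partial isometry along the unital $*$-homomorphism $y\mapsto y\otimes 1_{M(\BK)}$ and checking the two compatibility identities on generators.
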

\begin{proof}This is immediate by straightforward computations.
\end{proof}

Let $C$ be the linking $C^*$-algebra for an $A^s -A^s$-equivalence bimodule
$X^s$ and $\gamma$ the coaction of $H$ on $C$ induced by $\rho^s$ and $\lambda^s$.

\begin{lemma}\label{lem:sim3}With the above notations, if $\widehat{\rho}(1\rtimes_{\rho}e)
\sim(1\rtimes_{\rho}e)\otimes 1$ in $(A\rtimes_{\rho}H)\otimes H$, then
$\widehat{\gamma}(1_{M(C)}\rtimes_{\underline{\gamma}}e)\sim(1_{M(C)}\rtimes_{\underline{\gamma}}e)\otimes 1$
in $(M(C)\rtimes_{\underline{\gamma}}H)\otimes H$.
\end{lemma}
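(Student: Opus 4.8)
The plan is to deduce the statement directly from Lemma \ref{lem:sim1}, after first stabilising the hypothesis by means of Lemma \ref{lem:sim2}. First I would apply Lemma \ref{lem:sim2}: the assumption $\widehat{\rho}(1\rtimes_{\rho}e)\sim(1\rtimes_{\rho}e)\otimes 1$ in $(A\rtimes_{\rho}H)\otimes H$ is precisely its hypothesis, so it yields $\widehat{\underline{\rho^s}}(1\rtimes_{\underline{\rho^s}}e)\sim(1\rtimes_{\underline{\rho^s}}e)\otimes 1$ in $(M(A^s)\rtimes_{\underline{\rho^s}}H)\otimes H$.

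Next I would observe that $C$ is the linking $C^*$-algebra of the $A^s$-$A^s$-equivalence bimodule $X^s$, and that $\gamma$ is the coaction of $H^0$ on $C$ induced by the coaction $\rho^s$ on each of the two copies of $A^s$ together with the coaction $\lambda^s$ on $X^s$. This is exactly the situation of Lemma \ref{lem:sim1} with $A$ and $B$ both taken to be $A^s$, with both $\rho$ and $\sigma$ taken to be $\rho^s$, with $\lambda$ taken to be $\lambda^s$, and with the linking algebra and its coaction being the present $C$ and $\gamma$. Under these substitutions the two hypotheses of Lemma \ref{lem:sim1} coincide, and each of them is exactly the statement just obtained from Lemma \ref{lem:sim2}. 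Hence Lemma \ref{lem:sim1} applies and gives $\widehat{\underline{\gamma}}(1_{M(C)}\rtimes_{\underline{\gamma}}e)\sim(1_{M(C)}\rtimes_{\underline{\gamma}}e)\otimes 1$ in $(M(C)\rtimes_{\underline{\gamma}}H)\otimes H$, which is the assertion (here $\widehat{\gamma}$ in the statement is read as its strictly continuous extension $\widehat{\underline{\gamma}}$, since $1_{M(C)}\rtimes_{\underline{\gamma}}e$ lies in $M(C)\rtimes_{\underline{\gamma}}H$ rather than in $C\rtimes_{\gamma}H$).

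The only point requiring a little care, and essentially the sole obstacle, is the bookkeeping that identifies the present $C$ with the linking algebra entering Lemma \ref{lem:sim1}: one must check that $\gamma$, built from $\rho^s$ and $\lambda^s$ on the linking algebra of $X^s$, agrees --- under the natural identifications of $C^s$ with the tensor product of the linking algebra of $X$ and $\BK$, and of $H^0\otimes\BK$ with $\BK\otimes H^0$ --- with the coaction to which Lemma \ref{lem:sim1} is being applied, keeping straight which copy of $A^s$ plays the role of $A$ and which that of $B$. This is routine, of the same nature as the identifications already used in Section \ref{sec:stable}, but it is the step where one has to be explicit; once it is in place, the proof is simply the two-step combination above.
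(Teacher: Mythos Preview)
Your proposal is correct and is exactly the paper's argument: the paper's proof reads ``This is immediate by Lemmas \ref{lem:sim1} and \ref{lem:sim2},'' and your two-step reduction---first stabilise via Lemma \ref{lem:sim2}, then apply Lemma \ref{lem:sim1} with $A=B=A^s$, $\rho=\sigma=\rho^s$ and $\lambda=\lambda^s$---is precisely that. Your remark about reading $\widehat{\gamma}$ as its strict extension $\widehat{\underline{\gamma}}$ on $1_{M(C)}\rtimes_{\underline{\gamma}}e$ is also the intended interpretation.
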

\begin{proof}This is immediate by Lemmas \ref{lem:sim1} and \ref{lem:sim2}.
\end{proof}

\begin{lemma}\label{lem:surjection2}With the above notations, we suppose that
$\widehat{\rho}(1\rtimes_{\rho}e)\sim (1\rtimes_{\rho}e)\otimes 1$ in $(A\rtimes_{\rho}H)\otimes H$.
Let $\Phi$ be the homomorphism of $\Aut_H^{\rho^s}(A^s )$ to $\Pic_H^{\rho^s}(A^s )$
defined by $\Phi(\alpha)=[X_{\alpha}, \, \lambda_{\alpha}]$
for any $\alpha\in\Aut_H^{\rho^s}(A^s )$. Then $\Phi$ is surjective.
\end{lemma}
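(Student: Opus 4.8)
The plan is to carry out a coaction-equivariant version of the Brown--Green--Rieffel argument \cite{BGR:linking}, working through the linking $C^*$-algebra of the given bimodule and its fixed-point algebra. Let $(X,\lambda)$ represent an arbitrary class in $\Pic_H^{\rho^s}(A^s)$, so that $X$ is an $A^s-A^s$-equivalence bimodule and $\lambda$ is a coaction of $H^0$ on $X$ with respect to $(A^s,A^s,\rho^s,\rho^s)$. Let $C$ be the linking $C^*$-algebra for $X$ and $\gamma$ the coaction of $H^0$ on $C$ induced by $\rho^s$ and $\lambda$. First I would make the saturation machinery available for $(C,\gamma)$: by Lemma \ref{lem:sim2} the standing hypothesis $\widehat{\rho}(1\rtimes_{\rho}e)\sim(1\rtimes_{\rho}e)\otimes 1$ gives $\widehat{\underline{\rho^s}}(1\rtimes_{\underline{\rho^s}}e)\sim(1\rtimes_{\underline{\rho^s}}e)\otimes 1$, and then Lemma \ref{lem:sim1}, applied with both coactions equal to $\rho^s$, yields $\widehat{\underline{\gamma}}(1_{M(C)}\rtimes_{\underline{\gamma}}e)\sim(1_{M(C)}\rtimes_{\underline{\gamma}}e)\otimes 1$. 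Hence $\underline{\gamma}$ is saturated; by Lemma \ref{lem:fix} we have $M(C^{\gamma})=M(C)^{\underline{\gamma}}$, and by Lemma \ref{lem:fix3} the fixed-point algebra $C^{\gamma}$ is the linking $C^*$-algebra $C_0$ of $X^{\lambda}$, an equivalence bimodule over $(A^s)^{\rho^s}=A^{\rho}\otimes\BK$.

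The crucial step is to produce a $\underline{\gamma}$-invariant partial isometry connecting the two corners. Since $A$ is unital, $\rho(1)=1\otimes 1^0$, so $A^{\rho}$ is unital and $(A^s)^{\rho^s}=A^{\rho}\otimes\BK$ is a stable $\sigma$-unital $C^*$-algebra. The projections $p=\begin{bmatrix} 1 & 0 \\ 0 & 0 \end{bmatrix}$, $q=\begin{bmatrix} 0 & 0 \\ 0 & 1 \end{bmatrix}$ are full in $M(C^{\gamma})$ (as $X^{\lambda}$ is full), $p+q=1$, and $pC^{\gamma}p\cong qC^{\gamma}q\cong A^{\rho}\otimes\BK$ is stable. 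Therefore, exactly as in the proof of Proposition \ref{prop:stable} (Brown \cite[Lemma 2.5]{Brown:hereditary}; equivalently, by \cite[Corollary 3.5]{BGR:linking} the module $X^{\lambda}$ is isomorphic, as a right Hilbert $A^{\rho}\otimes\BK$-module, to $A^{\rho}\otimes\BK$ itself, since $A^{\rho}\otimes\BK$ is stable), there is a partial isometry $w\in M(C^{\gamma})$ with $w^*w=p$ and $ww^*=q$. The decisive point is that $w\in M(C^{\gamma})=M(C)^{\underline{\gamma}}$, so $\underline{\gamma}(w)=w\otimes 1^0$.

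With this invariant $w$ I would run the standard linking-algebra recipe. Using the canonical identifications of the diagonal corners of $C$ with $A^s$ and of the corner $pCq$ with $X$, the element $w$ determines an automorphism $\alpha$ of $A^s$ (so that $waw^*$ corresponds to $\alpha(a)$) and a bijective map $\psi\colon X\to A^s$, $x\mapsto xw$, which, following \cite{BGR:linking}, is an $A^s-A^s$-equivalence bimodule isomorphism of $X$ onto $X_{\alpha}$ for a suitable $\alpha\in\Aut(A^s)$. Since $\gamma$ restricts to $\rho^s$ on each diagonal corner and to $\lambda$ on $pCq$, and since $\underline{\gamma}(w)=w\otimes 1^0$, conjugation by $w$ commutes with the coactions; concretely $\gamma(xw)=\lambda(x)(w\otimes 1^0)$ together with $\gamma|_{pCp}=\rho^s$ gives $\rho^s\circ\psi=(\psi\otimes\id_{H^0})\circ\lambda$, and likewise $(\alpha\otimes\id_{H^0})\circ\rho^s=\rho^s\circ\alpha$, so $\alpha\in\Aut_H^{\rho^s}(A^s)$. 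As $\lambda_{\alpha}$ is by definition $\rho^s$ regarded as a coaction on $X_{\alpha}$, this says precisely that $\psi$ implements $[X,\lambda]=[X_{\alpha},\lambda_{\alpha}]=\Phi(\alpha)$ in $\Pic_H^{\rho^s}(A^s)$. Since $(X,\lambda)$ was arbitrary, $\Phi$ is surjective.

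I expect the main obstacle to be this second step: an arbitrary partial isometry implementing $p\sim q$ only in $M(C)$ would produce, as in Proposition \ref{prop:stable}, a twisted coaction exterior equivalent to $\rho^s$ (with nontrivial cocycle $(w\otimes 1^0)\underline{\gamma}(w^*)$) rather than honest $\rho^s$-equivariance; what forces the cocycle to be trivial is the invariance $\underline{\gamma}(w)=w\otimes 1^0$, and that is exactly what the hypothesis $\widehat{\rho}(1\rtimes_{\rho}e)\sim(1\rtimes_{\rho}e)\otimes 1$ secures, through Lemmas \ref{lem:sim2}, \ref{lem:sim1}, \ref{lem:fix}, \ref{lem:fix3} together with the stability of $A^{\rho}\otimes\BK$. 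Once $w$ is fixed, verifying that $\alpha$ is $\rho^s$-equivariant and that $\psi$ intertwines $\lambda$ with $\lambda_{\alpha}$ is a routine but somewhat lengthy check with the linking-algebra identifications, parallel to the computations in \cite{BGR:linking} and in the proof of Proposition \ref{prop:exact}.
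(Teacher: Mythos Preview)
Your proposal is correct and follows essentially the same route as the paper: pass to the linking algebra $(C,\gamma)$, use Lemmas \ref{lem:sim2}, \ref{lem:sim1}, \ref{lem:fix}, \ref{lem:fix3} to identify $M(C^{\gamma})=M(C)^{\underline{\gamma}}$ with the multiplier algebra of the linking algebra of $X^{\lambda}$, find the partial isometry $w$ there via Brown/BGR (stability of $A^{\rho}\otimes\BK$), and then use $\underline{\gamma}(w)=w\otimes 1^0$ to make the resulting $\alpha$ and the bimodule isomorphism $\rho^s$-equivariant. The only point you pass over quickly is that $X^{\lambda}$ is full on both sides (hence $p,q$ are full in $C^{\gamma}$); the paper secures this via Theorem \ref{thm:surjective} and Lemma \ref{lem:full}, and you should cite these rather than assert it.
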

\begin{proof}Let $[X, \lambda]$ be any element in $\Pic_H^{\rho^s}(A^s )$. Let
$$
X^{\lambda}=\{x\in X \, | \, \lambda(x)=x\otimes 1^0 \}.
$$
Since $\widehat{\rho}(1\rtimes_{\rho}e)\sim (1\rtimes_{\rho}e)\otimes 1$ in
$(A\rtimes_{\rho}H)\otimes H$, by Lemma \ref{lem:sim2},
$\widehat{\underline{\rho^s}}(1\rtimes_{\underline{\rho^s}}e)\sim
(1\rtimes_{\underline{\rho^s}}e)\otimes 1$ in $(M(A^s )\rtimes_{\underline{\rho^s}}H)\otimes H$.
Since $X$ is an $A^s -A^s$-equivalence bimodule, by Lemma \ref{lem:full} and
Theorem \ref{thm:surjective}, $X^{\lambda}$ is an $(A^s )^{\rho^s}-(A^s )^{\rho^s}$-
equivalence bimodule, where $(A^s )^{\rho^s}$ is the fixed point $C^*$-subalgebra of
$A^s$ for the coaction $\rho^s$. Let $C$ be the linking $C^*$-algebra for $X$ and $\gamma$
the coaction of $H^0$ on $C$ induced by $\rho^s$ and $\lambda$.
Let $C^{\gamma}$ be the fixed point $C^*$-algebra of $C$ for $\gamma$.
Then by Lemma \ref{lem:fix3}, $C^{\gamma}$ is isomorphic to $C_0$, the linking
$C^*$-algebra for $X^{\lambda}$. We identify $C^{\gamma}$ with $C_0$. Let
$$
p=\begin{bmatrix} 1_A \otimes 1_{M(\BK)} & 0 \\
0 & 0 \end{bmatrix}, \quad
q=\begin{bmatrix} 0 & 0 \\
0 & 1_A \otimes 1_{M(\BK)} \end{bmatrix}
$$
Then $p$ and $q$ are projections in $M(C)^{\underline{\gamma}}$.
Since $M(C)^{\underline{\gamma}}=M(C^{\gamma})$ by Lemmas \ref{lem:fix} and \ref{lem:sim1},
$p$ and $q$ are full for $C^{\gamma}$.
By the proof of \cite [Theorem 3.4]{BGR:linking}, there is a partial isometry
$w\in M(C)^{\underline{\gamma}}$ such that
$$
w^* w=p, \quad q=ww^* .
$$
Hence $w\in M(C)$.
Let $\alpha$ be the map on $A^s$ defined by
$$
\alpha(a)=w^* aw=w^* \begin{bmatrix} 0 & 0 \\
0 & a \end{bmatrix}w
$$
for any $a\in A^s$. By routine computations, $\alpha$ is an automorphism of $A^s$.
Let $\pi$ be a linear map from $X$ to $X_{\alpha}$ defined by
$$
\pi(x)=\begin{bmatrix} 0 & x \\
0 & 0 \end{bmatrix}w=p\begin{bmatrix} 0 & x \\
0 & 0 \end{bmatrix}wp
$$
for any $x\in X$. In the same way as in the proof of \cite [Lemma 3.3]{BGR:linking},
we can see that $\pi$ is an $A^s -A^s$-equivalence bimodule isomorphism of $X$
onto $X_{\alpha}$. For any $a\in A^s$,
\begin{align*}
(\rho^s \circ\alpha)(a) & =\rho^s (w^* aw)=\gamma(w^* \begin{bmatrix} 0 & 0 \\
0 &a \end{bmatrix}w)=\underline{\gamma}(w^* )\begin{bmatrix} 0 & 0 \\
0 & \rho^s (a) \end{bmatrix}\underline{\gamma}(w) \\
& =(\alpha\otimes\id_{H^0})(\rho^s (a))
\end{align*}
since $w\in M(C)^{\underline{\gamma}}$.
Hence $\alpha\in \Aut_H^{\rho^s}(A^s )$. Furthermore, for any $x\in X$,
\begin{align*}
(\lambda_{\alpha}\circ\pi)(x)& =\lambda_{\alpha}(\begin{bmatrix} 0 & x \\
0 & 0 \end{bmatrix}w) =\rho^s (\begin{bmatrix} 0 & x \\
0 & 0 \end{bmatrix}w) \\
& =\gamma(\begin{bmatrix} 0 & x \\
0 & 0 \end{bmatrix}w)=\begin{bmatrix} 0 & \lambda(x) \\
0 & 0 \end{bmatrix}(w\otimes 1^0 )
=(\pi\otimes\id_{H^0 })(\lambda(x)) ,
\end{align*}
where we identify $\BK\otimes H^0$ with $H^0 \otimes\BK$. Thus
$\Phi(\alpha)=[X, \lambda]$. Therefore, we obtain the conclusion.
\end{proof}

\begin{thm}\label{thm:exact2}Let $A$ be a unital $C^*$-algebra and $\rho$ a coaction of $H^0$ on $A$.
We suppose that $\widehat{\rho}(1\rtimes_{\rho}e)\sim(1\rtimes_{\rho}e)\otimes 1$ in
$(A\rtimes_{\rho}H)\otimes H$. Then we have the following exact sequence:
$$
1\to \Int_H^{\rho^s}(A^s )\overset{\imath}\to\Aut_H^{\rho^s}(A^s )\overset{\Phi}\to\Pic_H^{\rho^s}(A^s )\to 1 ,
$$
where $\imath$ is the inclusion map of $\Int_H^{\rho^s}(A^s )$ to $\Aut_H^{\rho^s}(A^s )$.
\end{thm}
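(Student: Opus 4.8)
The plan is to deduce Theorem~\ref{thm:exact2} by assembling two results already established: the generic exact sequence of Proposition~\ref{prop:exact}, which supplies injectivity of $\imath$ and exactness at $\Aut_H^{\rho^s}(A^s)$, and Lemma~\ref{lem:surjection2}, which supplies surjectivity of $\Phi$. First I would record that $\rho^s=\rho\otimes\id$ is a genuine coaction of $H^0$ on $A^s=A\otimes\BK$ (as noted before Lemma~\ref{lem:stable}), so in particular it is a twisted coaction with trivial cocycle $u=1\otimes 1^0\otimes 1^0$, for which the subscript $u$ is suppressed in the notation $\Int_H^{\rho^s}$, $\Aut_H^{\rho^s}$, $\Pic_H^{\rho^s}$; and that $A^s$ acts faithfully and nondegenerately on a Hilbert space, so the standing hypotheses of Section~\ref{sec:pre} are met.

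Next I would apply Proposition~\ref{prop:exact} with $A$ replaced by $A^s$ and $(\rho,u)$ replaced by $(\rho^s,1\otimes 1^0\otimes 1^0)$. This gives exactly that the sequence
$$
1\to\Int_H^{\rho^s}(A^s)\overset{\imath}\to\Aut_H^{\rho^s}(A^s)\overset{\Phi}\to\Pic_H^{\rho^s}(A^s)
$$
is exact at $\Int_H^{\rho^s}(A^s)$ and at $\Aut_H^{\rho^s}(A^s)$; that is, $\imath$ is injective and $\Ima\imath=\Ker\Phi$. It then remains only to verify exactness at $\Pic_H^{\rho^s}(A^s)$, i.e.\ that $\Phi$ is onto. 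But the standing hypothesis $\widehat{\rho}(1\rtimes_\rho e)\sim(1\rtimes_\rho e)\otimes 1$ in $(A\rtimes_\rho H)\otimes H$ is precisely the hypothesis of Lemma~\ref{lem:surjection2}, which asserts that $\Phi\colon\Aut_H^{\rho^s}(A^s)\to\Pic_H^{\rho^s}(A^s)$ is surjective. Combining the two statements yields the full exact sequence, completing the proof.

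The only real content is therefore concentrated in Lemma~\ref{lem:surjection2} and the machinery feeding it, and that is where I expect the main obstacle to lie if one were to argue from scratch. The point is to show that every $(\rho^s,H)$-equivariant self-Morita-equivalence $[X,\lambda]$ of $A^s$ is implemented by an equivariant automorphism. Passing to the linking $C^*$-algebra $C$ with its induced coaction $\gamma$, one uses Lemmas~\ref{lem:sim1}--\ref{lem:sim3} to propagate the saturation-type condition from $\rho$ to $\rho^s$ and then to $\gamma$, so that Theorem~\ref{thm:surjective} (equivalently Proposition~\ref{prop:nonunital2} applied to $C$) guarantees that the fixed-point bimodule $X^\lambda$ is a genuine $(A^s)^{\rho^s}$--$(A^s)^{\rho^s}$-equivalence bimodule; consequently the corner projections $p,q\in M(C)^{\underline{\gamma}}=M(C^\gamma)$ are full in $C^\gamma$. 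The delicate step is then Brown's stabilization argument carried out inside the fixed-point algebra: one must produce a partial isometry $w$ with $w^*w=p$, $q=ww^*$ lying in $M(C)^{\underline{\gamma}}$, not merely in $M(C)$. Granting this, $\Ad(w^*)$ is automatically $\rho^s$-equivariant (because $w$ is $\underline{\gamma}$-fixed) and the associated bimodule $X_{\Ad(w^*)}$ is isomorphic to $X$ as an equivariant bimodule, so $\Phi(\Ad(w^*))=[X,\lambda]$. Checking that $w$ can be chosen $\underline{\gamma}$-invariant---which is exactly where \cite[Theorem~3.4]{BGR:linking} and Lemma~\ref{lem:fix} are combined---is the crux.
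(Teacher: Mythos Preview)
Your proposal is correct and matches the paper's own proof essentially verbatim: the paper states that Theorem~\ref{thm:exact2} ``is immediate by Proposition~\ref{prop:exact} and Lemma~\ref{lem:surjection2}.'' Your additional discussion of the mechanics behind Lemma~\ref{lem:surjection2} is accurate but goes beyond what is needed here, since that lemma is already established in the paper.
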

\begin{proof}This is immediate by Proposition \ref{prop:exact} and Lemma \ref{lem:surjection2}.
\end{proof}

Since the following lemma is obtained in a straightforward way, we omit its proof:

\begin{lemma}\label{lem:isom}Let $(\rho, u)$ and $(\sigma, v)$ be twisted
coactions on $C^*$-algebras $A$ and $B$, respectively. We suppose that $(\rho, u)$
is strongly Morita equivalent to $(\sigma, v)$. Then $\Pic_H^{\rho, u}(A)\cong\Pic_H^{\sigma, v}(B)$.
\end{lemma}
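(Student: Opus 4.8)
The plan is to build an explicit isomorphism $\Pic_H^{\rho,u}(A)\to\Pic_H^{\sigma,v}(B)$ from the strong Morita equivalence data. Let $Y$ be the $A-B$-equivalence bimodule carrying a twisted coaction $\mu$ of $H^0$ with respect to $(A,B,\rho,u,\sigma,v)$, so that $[Y,\mu]$ is, morally, an "invertible" element connecting the two equivariant Picard groupoids. Given $(X,\lambda)\in\Equi_H^{\rho,u}(A)$, I would send it to the internal tensor product $\widetilde Y\otimes_A X\otimes_A Y$ equipped with the coaction $\widetilde\mu\otimes\lambda\otimes\mu$, where $\widetilde\mu$ is the induced twisted coaction of $H^0$ on the dual bimodule $\widetilde Y$ (constructed as in Remark \ref{remark:linking}(2), with the antipode twist) and the tensor-product coaction is the one recalled in the definition of the product in $\Pic_H^{\rho,u}(A)$, coming from \cite[Proposition 3.1]{KT3:equivalence}. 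Since $\widetilde Y\otimes_A X\otimes_A Y$ is a $B-B$-equivalence bimodule, this does land in $\Equi_H^{\sigma,v}(B)$, and I would denote the resulting map on equivalence classes by $\Theta$.

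First I would check $\Theta$ is well-defined on $\sim$-classes: an $A-A$-equivalence bimodule isomorphism intertwining the coactions induces, by functoriality of $\otimes_A$ and of $X\mapsto\widetilde X$, a $B-B$-equivalence bimodule isomorphism intertwining the tensored coactions. Next I would verify $\Theta$ is a homomorphism. This reduces to the associativity/compatibility isomorphisms
$$
(\widetilde Y\otimes_A X\otimes_A Y)\otimes_B(\widetilde Y\otimes_A X'\otimes_A Y)\cong\widetilde Y\otimes_A(X\otimes_A X')\otimes_A Y,
$$
using that $Y\otimes_B\widetilde Y\cong {}_A A_A$ as coactions (i.e.\ $[Y,\mu][\widetilde Y,\widetilde\mu]=[{}_A A_A,\rho]$ in the appropriate equivariant Picard groupoid), together with $[{}_A A_A,\rho]$ and $[{}_B B_B,\sigma]$ being the units. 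These are exactly the Morita-invariance identities one expects; the equivariant refinement is just tracking that the canonical bimodule isomorphisms intertwine the coactions, which follows from the explicit formulas for $\lambda\otimes\mu$ and for $\widetilde\mu$ and is a routine Sweedler-notation computation. To see $\Theta$ is bijective I would exhibit the inverse $\Theta'([Z,\nu])=[Y\otimes_B Z\otimes_B\widetilde Y,\ \mu\otimes\nu\otimes\widetilde\mu]$ and check $\Theta'\circ\Theta=\id$, $\Theta\circ\Theta'=\id$ again by the same cancellation $Y\otimes_B\widetilde Y\cong{}_A A_A$, $\widetilde Y\otimes_A Y\cong{}_B B_B$ at the level of coactions.

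The main obstacle — and the only place real care is needed — is verifying that the tensor-product twisted coaction $\widetilde\mu\otimes\lambda\otimes\mu$ genuinely satisfies Conditions (1)--(6) of Definition \ref{Def:coaction} with respect to $(B,B,\sigma,v,\sigma,v)$, and that the cancellation isomorphisms respect the $2$-cocycles $u,v$ correctly. Concretely, when one collapses $Y\otimes_B\widetilde Y$ to $A$ the cocycle $u$ must reappear on the nose (not up to a coboundary), which is where the compatibility condition Definition \ref{Def:coaction}(6), $(\lambda\otimes\id)(\lambda(x))=u(\id\otimes\Delta^0)(\lambda(x))v^*$, for $\mu$ is used, via the interchange of $u$ through the equivalence bimodule as an element of $\BB_B(Y)$ and ${}_A\BB(Y)$. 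Once this bookkeeping is in place the rest is formal. Since the statement is asserted to follow "in a straightforward way", I would present $\Theta$ and $\Theta'$, note the three structural facts (well-defined, homomorphism, mutually inverse), and leave the Sweedler-notation verifications to the reader, in keeping with the level of detail of Lemmas \ref{lem:extmorita} and \ref{lem:isom}.
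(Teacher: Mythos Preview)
Your proposal is correct and matches the paper's intended (but omitted) argument: the paper states the lemma ``is obtained in a straightforward way'' and omits its proof, but in Section~\ref{sec:duality} it invokes ``Lemma~\ref{lem:isom} and its proof'' to write down exactly the conjugation-by-equivalence-bimodule map you describe (e.g.\ $g_3([X,\lambda])=[X\otimes M_N(\BC),\lambda\otimes\id_{M_N(\BC)}]$, which is your $\Theta$ specialized to $Y=A\otimes\BC^N$). The ingredients you rely on---well-definedness of $\widetilde\mu$ and of the tensor-product twisted coaction, and the equivariant identifications $Y\otimes_B\widetilde Y\cong{}_A A_A$, $\widetilde Y\otimes_A Y\cong{}_B B_B$---are precisely those established in \cite[Section~3]{KT3:equivalence} and recalled at the start of Section~\ref{sec:Picard}, so the ``bookkeeping'' you flag as the only delicate point is already covered.
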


\section{Ordinary Picard groups and equivariant Picard groups}\label{sec:ordinary}
In this section, we shall investigate the relation between ordinary Picard groups and
equivariant Picard groups.
Let $\rho$ be a coaction of $H^0$ on a $C^*$-algebra $A$ and
let $f_{\rho}$ be the map from $\Pic_H^{\rho}(A)$ to $\Pic(A)$ defined by
$$
f_{\rho}:\Pic_H^{\rho}(A)\to\Pic(A): [X, \lambda]\mapsto [X],
$$
where $\Pic(A)$ is the ordinary Picard group of $A$. Clearly $f_{\rho}$ is a homomorphism
of $\Pic_H^{\rho}(A)$ to $\Pic(A)$. Let $\Aut(A)$ be the group of all automorphisms of $A$ and
let $\alpha\in\Aut(A)$. Let $X_{\alpha}$ be the $A-A$-equivalence bimodule induced by $\alpha$
defined in Section \ref{sec:Picard}.
Let $\lambda$ be a coaction of $H^0$ on $X_{\alpha}$ with respect to $(A, A, \rho, \rho)$.
Then for any $a\in A$ and $x, y\in X_{\alpha}$,
\newline
(1) $\lambda(ax)=\lambda(a\cdot x)=\rho(a)\cdot \lambda(x)=\rho(a)\lambda(x)$,
\newline
(2) $\lambda(x\alpha(a))=\lambda(x\cdot a)=\lambda(x)\cdot \rho(a)=\lambda(x)(\alpha\otimes\id)(\rho(a))$,
\newline
(3) $\rho(xy^* )=\rho({}_A \la x, y \ra)={}_{A\otimes H^0 } \la \lambda(x), \lambda(y) \ra=\lambda(x)\lambda(y)^*$,
\newline
(4) $\rho(\alpha^{-1}(x^* y))=\rho(\la x, y \ra_A )=\la \lambda(x), \lambda(y) \ra_{A\otimes H^0}
=(\alpha^{-1}\otimes\id)(\lambda(x)^* \lambda(y))$,
\newline
(5) $(\id\otimes\epsilon^0 )(\lambda(x))=x$,
\newline
(6) $(\lambda\otimes\id)(\lambda(x))=(\id\otimes\Delta^0 )(\lambda(x))$.
\newline
Let $\{u_{\gamma}\}$ be an approximate unit of $A$. Then $\lambda(u_{\gamma})\in X_{\alpha}\otimes H^0$.
Since $X_{\alpha}=A$ as vector spaces, we regard $\lambda(u_{\gamma})$ as an element in $A\otimes H^0$.

\begin{lemma}\label{lem:unitary}With the above notations, we regard $\lambda(u_{\gamma})$ as an
element in $A\otimes H^0$. Then $\{\lambda(u_{\gamma})\}$ converges to a unitary element in
$M(A\otimes H^0 )$ strictly  and the unitary element does not depend on the choice of an approximate unit of $A$.
\end{lemma}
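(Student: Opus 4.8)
The plan is to prove that $\{\lambda(u_\gamma)\}$, viewed as a bounded net in $A\otimes H^0$, converges strictly in $M(A\otimes H^0)$ to a unitary $W$, and then to identify $W$ intrinsically as the unique element of $M(A\otimes H^0)$ satisfying $\lambda(a)=\rho(a)W$ for all $a\in A$, which makes the independence of the approximate unit automatic. Throughout I use the listed identities $(1)$--$(6)$ above, together with the facts that $\{u_\gamma^2\}$ and $\{\alpha^{-1}(u_\gamma^2)\}=\{\alpha^{-1}(u_\gamma)^2\}$ are again approximate units of $A$, and that $\{\rho(v_\gamma)\}$ is an approximate unit of $A\otimes H^0$ whenever $\{v_\gamma\}$ is one of $A$ (as in the proof of Lemma~\ref{lem:extension}), hence converges strictly to $1\otimes 1^0$. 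First I would record the uniform bound $\|\lambda(u_\gamma)\|\le 1$: the map $x\mapsto\alpha^{-1}(x)$ identifies $X_\alpha$ with ${}_A A_A$ as right Hilbert $A$-modules, so after exterior tensoring with $H^0$ the Hilbert module norm on $X_\alpha\otimes H^0$ agrees with the $C^*$-norm on $A\otimes H^0$, and $\lambda$ is isometric.

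Next I would establish strict convergence. From $(1)$, $\lambda(au_\gamma)=\rho(a)\lambda(u_\gamma)$ for $a\in A$; since $au_\gamma\to a$ in norm and $\lambda$ is isometric, $\rho(a)\lambda(u_\gamma)\to\lambda(a)$ in norm for every $a\in A$. Hence the set of $d\in A\otimes H^0$ for which $\{d\lambda(u_\gamma)\}_\gamma$ is norm-convergent is a norm-closed left ideal (closedness uses the uniform bound) containing $\rho(A)$, and since $\overline{(A\otimes H^0)\rho(A)}=A\otimes H^0$ (take adjoints in $\overline{\rho(A)(A\otimes H^0)}=A\otimes H^0$, using that $\rho$ is a $*$-homomorphism) this left ideal is all of $A\otimes H^0$. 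Symmetrically, from $(2)$, $\lambda(u_\gamma\alpha(a))=\lambda(u_\gamma)(\alpha\otimes\id)(\rho(a))$ together with $u_\gamma\alpha(a)\to\alpha(a)$ gives $\lambda(u_\gamma)(\alpha\otimes\id)(\rho(a))\to\lambda(\alpha(a))$; the set of $d$ with $\{\lambda(u_\gamma)d\}_\gamma$ norm-convergent is then a closed right ideal containing $(\alpha\otimes\id)(\rho(A))$, and the right ideal it generates is $(\alpha\otimes\id)$ applied to $\overline{\rho(A)(A\otimes H^0)}=A\otimes H^0$, hence everything. So $Wd:=\lim_\gamma\lambda(u_\gamma)d$ and $dW:=\lim_\gamma d\lambda(u_\gamma)$ exist for all $d$; continuity of multiplication gives $(dW)e=d(We)$, so this pair of maps is a double centralizer, i.e. $W\in M(A\otimes H^0)$ and $\lambda(u_\gamma)\to W$ strictly.

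For unitarity I would use $(3)$ and $(4)$. Putting $x=y=u_\gamma$ in $(3)$ gives $\lambda(u_\gamma)\lambda(u_\gamma)^*=\rho(u_\gamma^2)$, whose right side tends strictly to $1\otimes 1^0$ while the left side tends strictly to $WW^*$ (a bounded strictly convergent net times its adjoint), so $WW^*=1\otimes 1^0$. Putting $x=y=u_\gamma$ in $(4)$ gives $\lambda(u_\gamma)^*\lambda(u_\gamma)=(\alpha\otimes\id)(\rho(\alpha^{-1}(u_\gamma^2)))$; since $\{\alpha^{-1}(u_\gamma^2)\}$ is an approximate unit of $A$ and $\alpha\otimes\id$ is strictly continuous, the right side tends strictly to $1\otimes 1^0$, so $W^*W=1\otimes 1^0$. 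Thus $W$ is unitary. Finally, for independence: combining $\rho(a)\lambda(u_\gamma)\to\lambda(a)$ with $\rho(a)\lambda(u_\gamma)\to\rho(a)W$ yields $\lambda(a)=\rho(a)W$ for all $a\in A$; running the same argument for any other approximate unit $\{v_\delta\}$ produces a unitary $W'$ with $\lambda(a)=\rho(a)W'$ for all $a$, whence $\rho(a)(W-W')=0$ for all $a$ and, since $\overline{(A\otimes H^0)\rho(A)}=A\otimes H^0$, $W=W'$.

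I expect the main obstacle to be the strict-convergence step: one must be careful to invoke the density $\overline{\rho(A)(A\otimes H^0)}=A\otimes H^0$ on the correct side in each of the two cases (a left ideal generated by $\rho(A)$ versus a right ideal generated by $(\alpha\otimes\id)(\rho(A))$), and to keep the relevant nets uniformly bounded so that the ``sets of good $d$'' are genuinely closed. The verification that the resulting $W$ is a genuine multiplier and is unitary, and the identification $\lambda(a)=\rho(a)W$, are then routine manipulations of $(1)$--$(6)$.
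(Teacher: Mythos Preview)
Your proof is correct and follows essentially the same approach as the paper: both use identities~(1)--(2) to establish strict convergence (the paper via direct Cauchy estimates on elements of the form $(\alpha\otimes\id)(\rho(a)x)$, you via a closed-ideal argument) and identities~(3)--(4) to verify unitarity. Your independence argument via the characterization $\lambda(a)=\rho(a)W$ is a slight variant---the paper instead compares two approximate units directly with the same Cauchy estimate, and establishes $\lambda(a)=\rho(a)W$ only in the subsequent lemma---but this is a cosmetic difference, not a genuinely different route.
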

\begin{proof}Let $a\in A$ and $x\in A\otimes H^0$. Then by Equation (2),
\begin{align*}
& ||(\lambda(u_{\gamma})-\lambda(u_{\gamma'}))(\alpha\otimes\id)(\rho(a)x)||
=||\lambda((u_{\gamma}-u_{\gamma'})\alpha(a))(\alpha\otimes\id)(x)|| \\
& \leq ||\lambda((u_{\gamma}-u_{\gamma'})\alpha(a))||\, ||x||
=||(u_{\gamma}-u_{\gamma'})\alpha(a)|| \, ||x||
\end{align*}
since $\lambda$ is isometric. Since $\rho(A)(A\otimes H^0 )$ is dense in
$A\otimes H^0$, $\{\lambda(u_{\gamma})y\}$ is a Cauchy net for any $y\in A\otimes H^0$.
Similarly by Equation (1), $\{y\lambda(u_{\gamma})\}$ is also a Cauchy net for any $y\in A\otimes H^0$.
Thus $\{\lambda(u_{\gamma})\}$ converges to some element $u\in M(A\otimes H^0 )$ strictly.
We note
\begin{align*}
\lim_{\gamma\to\infty}\rho(u_{\gamma}) & =\lim_{\gamma\to\infty}\underline{\rho}(u_{\gamma})
=\underline{\rho}(\lim_{\gamma\to\infty}u_{\gamma})=\underline{\rho}(1)=1 , \\
\lim_{\gamma\to\infty}\alpha^{-1}(u_{\gamma}) & =\lim_{\gamma\to\infty}\underline{\alpha}^{-1}(u_{\gamma})
=\underline{\alpha}^{-1}(\lim_{\gamma\to\infty}u_{\gamma})=\underline{\alpha}^{-1}(1)=1 ,
\end{align*}
where the limits are taken under the strict topologies in $M(A\otimes H^0 )$ and $M(A)$, respectively and $\underline{\alpha}^{-1}$ is an
automorphism of $M(A)$ extending $\alpha^{-1}$ to $M(A)$, which is strictly continuous on $M(A)$.
Hence by Equations (3), (4), we can see that $u$ is a unitary element in $M(A\otimes H^0 )$.
Let $\{v_{\beta}\}$ be another approximate unit of $A$ and let $v$ be the limit
of $\lambda(v_{\beta})$ under the strict topology in $M(A\otimes H^0 )$. Then by the above discussion,
we have that 
$$
||(\lambda(u_{\gamma})-\lambda(v_{\beta}))(\alpha\otimes\id)(\rho(a)x)||
\leq ||(u_{\gamma}-v_{\beta})\alpha(a)||\, ||x||
$$
for any $a\in A$ and $x\in A\otimes H^0 $. Since $\rho(A)(A\otimes H^0 )$ is dense in
$A\otimes H^0 $, $u=v$.
\end{proof}

\begin{lemma}\label{lem:relation}Let $u$ be as in the proof of Lemma \ref{lem:unitary}.
Then $u$ satisfies that
$\lambda(x)=\rho(x)u$ for any $x\in X_{\alpha}$, 
$\rho(\alpha(a))=u(\alpha\otimes\id)(a)u^* $ for any $a\in A$ and that
$(\underline{\rho}\otimes\id)(u)(u\otimes 1^0 )=(\id\otimes\Delta^0 )(u)$.
\end{lemma}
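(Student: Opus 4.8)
The plan is to build on Lemma~\ref{lem:unitary}, using that for any approximate unit $\{u_\gamma\}$ of $A$ one has $\lambda(u_\gamma)\to u$ strictly in $M(A\otimes H^0)$ and $\rho(u_\gamma)\to 1$ strictly, together with the fact that $\lambda$ is isometric on $X_\alpha$ (as noted after Definition~\ref{Def:coaction}), hence norm continuous, and the Equations (1), (2), (6) preceding Lemma~\ref{lem:unitary}. For the first assertion, fix $x\in X_\alpha=A$. Since the left $A$-action on $X_\alpha$ is ordinary multiplication, $xu_\gamma=x\cdot u_\gamma$, so by Equation (1), $\lambda(xu_\gamma)=\rho(x)\lambda(u_\gamma)$. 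Letting $\gamma\to\infty$, the left side tends to $\lambda(x)$ in norm since $xu_\gamma\to x$ and $\lambda$ is isometric, while the right side tends to $\rho(x)u$ in norm since $\rho(x)\in A\otimes H^0$ is fixed and $\lambda(u_\gamma)\to u$ strictly. Hence $\lambda(x)=\rho(x)u$.

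For the second assertion I would compute $\lambda(\alpha(a))$ in two ways. By the relation just proved, $\lambda(\alpha(a))=\rho(\alpha(a))u$, where $\alpha(a)$ is viewed as an element of $X_\alpha$. On the other hand, the right action on $X_\alpha$ gives $u_\gamma\cdot a=u_\gamma\alpha(a)$, so by Equation (2), $\lambda(u_\gamma\alpha(a))=\lambda(u_\gamma)(\alpha\otimes\id)(\rho(a))$; since $u_\gamma\alpha(a)\to\alpha(a)$ in norm and $(\alpha\otimes\id)(\rho(a))\in A\otimes H^0$ is fixed, passing to the limit yields $\lambda(\alpha(a))=u\,(\alpha\otimes\id)(\rho(a))$. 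Equating the two expressions and cancelling the unitary $u$ from the right gives $\rho(\alpha(a))=u\,(\alpha\otimes\id)(\rho(a))\,u^*$.

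For the third assertion I would apply Equation (6), $(\lambda\otimes\id)\circ\lambda=(\id\otimes\Delta^0)\circ\lambda$, to $u_\gamma$. Substituting $\lambda(u_\gamma)=\rho(u_\gamma)u$, the right side becomes $(\id\otimes\Delta^0)(\rho(u_\gamma))\,(\id\otimes\Delta^0)(u)$; for the left side, the relation $\lambda(y)=\rho(y)u$ gives $(\lambda\otimes\id)(\xi)=(\rho\otimes\id)(\xi)(u\otimes 1^0)$ for $\xi\in X_\alpha\otimes H^0$, so the left side equals $(\rho\otimes\id)(\rho(u_\gamma))\,(\underline{\rho}\otimes\id)(u)\,(u\otimes 1^0)$, and by the coaction identity $(\rho\otimes\id)\circ\rho=(\id\otimes\Delta^0)\circ\rho$ this is $(\id\otimes\Delta^0)(\rho(u_\gamma))\,(\underline{\rho}\otimes\id)(u)\,(u\otimes 1^0)$. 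Since $\rho(u_\gamma)\to 1$ strictly and $\id\otimes\Delta^0$ is strictly continuous, $(\id\otimes\Delta^0)(\rho(u_\gamma))\to 1$ strictly, so passing to the limit cancels this common factor from both sides and gives $(\underline{\rho}\otimes\id)(u)(u\otimes 1^0)=(\id\otimes\Delta^0)(u)$. The step I expect to need the most care is the bookkeeping with the strict topology: checking that $\rho\otimes\id$, $\lambda\otimes\id$ and $\id\otimes\Delta^0$ extend to the relevant multiplier algebras and are strictly continuous on bounded sets (which uses finite-dimensionality of $H^0$), and that multiplication by a fixed multiplier preserves strict limits of bounded nets; once this is set up, the three identities follow from the substitutions and limit arguments above.
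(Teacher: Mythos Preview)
Your proof is correct and follows essentially the same route as the paper: both arguments use Equation~(1) with $x u_\gamma$ and a strict limit for the first identity, Equation~(2) with $u_\gamma\alpha(a)$ combined with the first identity for the second, and Equation~(6) together with the coaction identity $(\rho\otimes\id)\circ\rho=(\id\otimes\Delta^0)\circ\rho$ and a strict-limit cancellation for the third. Your treatment of the third identity is slightly more streamlined than the paper's---you substitute $\lambda(x)=\rho(x)u$ directly and apply $(\lambda\otimes\id)$ to $\lambda(u_\gamma)$, whereas the paper works with $\lambda(u_\gamma a)$ for a general $a\in A$ and expands via Equations~(1) and~(2) before reaching the same cancellation step; both methods ultimately reduce to the observation that $(\id\otimes\Delta^0)(\rho(u_\gamma))\to 1$ strictly (resp.\ that $((\id\otimes\Delta^0)\circ(\alpha\otimes\id)\circ\rho\circ\alpha^{-1})(A)$ separates multipliers), and the care you flag about extending $\rho\otimes\id$ and $\id\otimes\Delta^0$ to multipliers and preserving strict limits is exactly what is needed.
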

\begin{proof}Let $\{u_{\gamma}\}$ be an approximate unit of $A$.
By Equation (1), for any $x\in X_{\alpha}$, $\lambda(xu_{\gamma})=\rho(x)\lambda(u_{\gamma})$.
Thus $\lambda(x)=\rho(x)u$. Also, by Equation (2) for any $a\in A$,
$$
\lambda(u_{\gamma}\alpha(a))=\lambda(u_{\gamma})(\alpha\otimes\id)(\rho(a)) .
$$
Hence $\lambda(\alpha(a))=u(\alpha\otimes\id)(\rho(a))$.
Since $\lambda(\alpha(a))=\rho(\alpha(a))u$ for any $a\in A$ by the above
discussion, for any $a\in A$, 
$$
\rho(\alpha(a))u=u(\alpha\otimes\id)(\rho(a)) .
$$
for any $a\in A$. Since $u$ is a unitary element in $M(A\otimes H^0 )$,
$$
\rho(\alpha(a))=u(\alpha\otimes\id)(\rho(a))u^*
$$
for any $a\in A$. Furthermore, for any $a\in A$
\begin{align*}
(\lambda\otimes\id)(\lambda(u_{\gamma}a)) & =(\lambda\otimes\id)(\lambda(u_{\gamma})(\alpha\otimes\id)(\rho(\alpha^{-1}(a))) \\
& =(\rho\otimes\id)(\lambda(u_{\gamma}))((\lambda\otimes\id)\circ(\alpha\otimes\id)\circ\rho\circ\alpha^{-1})(a)
\end{align*}
by Equations (1), (2). Thus Equation (2)
\begin{align*}
(\lambda\otimes\id)(\lambda(a)) & =(\underline{\rho}\otimes\id)(u)((\lambda\otimes\id)\circ(\alpha\otimes\id)\circ\rho\circ\alpha^{-1})(a) \\
& =\lim_{\gamma\to\infty}(\underline{\rho}\otimes\id)(u)
(\lambda\otimes\id)((u_{\gamma}\otimes 1^0 )((\alpha\otimes\id)\circ\rho\circ\alpha^{-1})(a)) \\
& =\lim_{\gamma\to\infty}(\underline{\rho}\otimes\id)(u)(\lambda(u_{\gamma})\otimes 1^0 )((\alpha\otimes\id\otimes\id)\circ(\rho\otimes\id)\circ
\rho\circ\alpha^{-1})(a)) \\
& =(\underline{\rho}\otimes\id)(u)(u\otimes 1^0 )((\alpha\otimes\id\otimes\id)\circ(\id\otimes\Delta^0 )\circ\rho\circ\alpha^{-1})(a) \\
& =(\underline{\rho}\otimes\id)(u)(u\otimes\ 1^0 )((\id\otimes\Delta^0 )\circ(\alpha\otimes\id)\circ\rho\circ\alpha^{-1})(a) .
\end{align*}
Also, by Equation (2)
\begin{align*}
(\id\otimes\Delta^0 )(\lambda(u_{\gamma}a)) & =(\id\otimes\Delta^0 )(\lambda(u_{\gamma})((\alpha\otimes\id)\circ\rho\circ\alpha^{-1})(a)) \\
& =(\id\otimes\Delta^0 )(\lambda(u_{\gamma}))((\id\otimes\Delta^0 )\circ(\alpha\otimes\id)\circ\rho\circ\alpha^{-1})(a) .
\end{align*}
Thus
$$
(\id\otimes\Delta^0 )(\lambda(a))=(\id\otimes\Delta^0 )(u)((\id\otimes\Delta^0 )\circ(\alpha\otimes\id)\circ\rho\circ\alpha^{-1})(a) .
$$
By Equation (6)
$$
[(\rho\otimes\id)(u)(u\otimes 1^0 )-((\id\otimes\Delta^0 )(u)]((\id\otimes\Delta^0 )\circ(\alpha\otimes\id)\circ\rho\circ\alpha^{-1})(a)=0
$$
for any $a\in A$. Therefore,
$$
(\rho\otimes\id)(u)(u\otimes 1^0 )=(\id\otimes\Delta^0 )(u) .
$$
\end{proof}

\begin{remark}\label{rem:exterior}By Lemma \ref{lem:relation}, we can see that the coaction $(\alpha\otimes\id)\circ\rho\circ\alpha^{-1}$
of $H^0 $ on $A$ is exterior equivalent to $\rho$.
\end{remark}
    
Conversely, let $u$ be a unitary element in $M(A\otimes H^0 )$ satisfying that
$$
\rho=\Ad(u)\circ(\alpha\otimes\id)\circ\alpha^{-1} , \quad
(\underline{\rho}\otimes\id)(u)(u\otimes 1^0 )=(\id\otimes\Delta^0 )(u) .
$$
Let $\lambda_u$ be the linear map from $X_{\alpha}$ to $X_{\alpha}\otimes H^0 $ defined by
$$
\lambda_u (x)=\rho(x)u
$$
for any $x\in X_{\alpha}$. Then by routine computations, we can see that
$\lambda_u$ is a coaction of $H^0$ on $X_{\alpha}$ with respect to $(A, A, \rho, \rho)$.

\begin{prop}\label{prop:image}With the above notations, the following conditions are
equivalent:
\newline
$(1)$ $[X_{\alpha}]\in \Ima f_{\rho}$,
\newline
$(2)$ There is a unitary element $u\in M(A\otimes H^0 )$ such that
$$
\rho=\Ad(u)\circ(\alpha\otimes\id)\circ\rho\circ\alpha^{-1}, \quad
(\underline{\rho}\otimes\id)(u)(u\otimes 1^0 )=(\id\otimes\Delta^0 )(u) .
$$
\end{prop}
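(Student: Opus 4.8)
The plan is to unwind what $[X_{\alpha}] \in \Ima f_{\rho}$ means, read the unitary $u$ off from Lemmas \ref{lem:unitary} and \ref{lem:relation}, and then run the same argument in reverse for the converse. I would begin with $(1) \Rightarrow (2)$. If $[X_{\alpha}] \in \Ima f_{\rho}$, then $[X_{\alpha}] = f_{\rho}([Y, \mu])$ for some $(Y, \mu) \in \Equi_H^{\rho}(A)$, so there is an $A-A$-equivalence bimodule isomorphism $\pi$ of $Y$ onto $X_{\alpha}$. Transport $\mu$ along $\pi$ by setting $\lambda = (\pi \otimes \id_{H^0}) \circ \mu \circ \pi^{-1}$, so that $\lambda \circ \pi = (\pi \otimes \id_{H^0}) \circ \mu$. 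Since $\pi$ preserves the left and right $A$-actions and both $A$-valued inner products and $\mu$ is a coaction of $H^{0}$ on $Y$ with respect to $(A, A, \rho, \rho)$, one checks routinely that $\lambda$ is a coaction of $H^{0}$ on $X_{\alpha}$ with respect to $(A, A, \rho, \rho)$. Applying Lemmas \ref{lem:unitary} and \ref{lem:relation} to this $\lambda$, an approximate unit $\{u_{\gamma}\}$ of $A$ yields a strict limit $u \in M(A \otimes H^{0})$ of $\{\lambda(u_{\gamma})\}$ which is unitary and satisfies $\lambda(x) = \rho(x)u$ for $x \in X_{\alpha}$, $\rho(\alpha(a)) = u(\alpha \otimes \id)(\rho(a))u^{*}$ for $a \in A$, and $(\underline{\rho} \otimes \id)(u)(u \otimes 1^{0}) = (\id \otimes \Delta^{0})(u)$. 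Replacing $a$ by $\alpha^{-1}(a)$ in the middle identity rewrites it as $\rho = \Ad(u) \circ (\alpha \otimes \id) \circ \rho \circ \alpha^{-1}$, so condition $(2)$ holds.

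For $(2) \Rightarrow (1)$ I would reverse this. Given a unitary $u \in M(A \otimes H^{0})$ with $\rho = \Ad(u) \circ (\alpha \otimes \id) \circ \rho \circ \alpha^{-1}$ and $(\underline{\rho} \otimes \id)(u)(u \otimes 1^{0}) = (\id \otimes \Delta^{0})(u)$, define $\lambda_{u} \colon X_{\alpha} \to X_{\alpha} \otimes H^{0}$ by $\lambda_{u}(x) = \rho(x)u$, exactly as in the paragraph preceding the proposition. By the computations recorded there, $\lambda_{u}$ is a coaction of $H^{0}$ on $X_{\alpha}$ with respect to $(A, A, \rho, \rho)$, hence $(X_{\alpha}, \lambda_{u}) \in \Equi_H^{\rho}(A)$ and $f_{\rho}([X_{\alpha}, \lambda_{u}]) = [X_{\alpha}]$. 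Thus $[X_{\alpha}] \in \Ima f_{\rho}$, and the two conditions are equivalent.

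The one step that is not pure bookkeeping is the transport in $(1) \Rightarrow (2)$: one must check that $\lambda = (\pi \otimes \id_{H^0}) \circ \mu \circ \pi^{-1}$ satisfies Conditions (1)--(6) of Definition \ref{Def:coaction} with cocycle $1 \otimes 1^{0}$, equivalently the coaction form (1)'--(6)' of Remark \ref{remark:coaction}. This follows directly from $\pi$ being an $A-A$-equivalence bimodule isomorphism together with $\lambda \circ \pi = (\pi \otimes \id_{H^0}) \circ \mu$; after that the proof is just an invocation of Lemmas \ref{lem:unitary} and \ref{lem:relation} and the discussion immediately before the statement.
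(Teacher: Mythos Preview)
Your proposal is correct and follows essentially the same route as the paper: the paper's own proof is the one-line ``This is immediate by Lemma \ref{lem:relation} and the above discussion,'' where ``the above discussion'' is precisely the paragraph constructing $\lambda_u$ from a unitary $u$ satisfying the two conditions. Your only addition is making explicit the transport step $\lambda = (\pi \otimes \id_{H^0}) \circ \mu \circ \pi^{-1}$ needed to pass from an arbitrary $(Y,\mu)$ with $[Y] = [X_{\alpha}]$ to a coaction on $X_{\alpha}$ itself, which the paper tacitly absorbs into the hypothesis ``let $\lambda$ be a coaction of $H^0$ on $X_{\alpha}$'' preceding Lemma \ref{lem:unitary}; this is a reasonable clarification rather than a different argument.
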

\begin{proof}This is immediate by Lemma \ref{lem:relation} and the above discussion.
\end{proof}

Let $u$ be a unitary element in $M(A\otimes H^0 )$ satisfying Condition (2) in Proposition \ref{prop:image}.
Let $\lambda_u$ be as above. We call $\lambda_u$ the coaction of $H^0$ on $X_{\alpha}$ with respect to
$(A, A, \rho, \rho)$
\it
induced by
\rm
$u$.
\par
Let $\alpha, \beta\in \Aut(A)$ satisfying that there are unitary elements $u, v\in M(A\otimes H^0 )$
such that
\begin{align*}
\rho & =\Ad(u)\circ(\alpha\otimes\id)\circ\rho\circ\alpha^{-1} , \quad
(\underline{\rho}\otimes\id)(u)(u\otimes 1^0 )=(\id\otimes\Delta^0 )(u) ,\\
\rho & =\Ad(v)\circ(\beta\otimes\id)\circ\rho\circ\beta^{-1} , \quad
(\underline{\rho}\otimes\id)(v)(v\otimes 1^0 )=(\id\otimes\Delta^0 )(v) .
\end{align*}

\begin{lemma}\label{lem:product1}With the above notations, we have the following:
$$
(\underline{\rho}\otimes\id)(u(\underline{\alpha}\otimes\id)(v))(u(\underline{\alpha}\otimes\id)(v)\otimes 1^0 )
=(\id\otimes\Delta^0 )(u(\underline{\alpha}\otimes\id)(v)) .
$$
\end{lemma}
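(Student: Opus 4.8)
The plan is to prove the identity by a direct computation, writing $w=u(\underline{\alpha}\otimes\id)(v)$ and expanding $(\underline{\rho}\otimes\id)(w)(w\otimes 1^0)$ using the two cocycle identities (the ones for $u$ and for $v$) together with the intertwining relation coming from condition (1) for $u$. First I would record the multiplier-algebra form of that intertwining relation: extending $\rho=\Ad(u)\circ(\alpha\otimes\id)\circ\rho\circ\alpha^{-1}$ to $M(A)$, using that $\underline{\alpha}$ and $\underline{\rho}$ are strictly continuous (cf. Remark \ref{remark:extension}), gives
$$
\underline{\rho}\circ\underline{\alpha}=\Ad(u)\circ(\underline{\alpha}\otimes\id)\circ\underline{\rho}\quad\text{on }M(A).
$$
Tensoring with $\id_{H^0}$ and noting that conjugation by a unitary supported in the first two legs commutes with the identity on the third leg, this yields the key formula
$$
(\underline{\rho}\otimes\id)\bigl((\underline{\alpha}\otimes\id)(v)\bigr)=(u\otimes 1^0)\,(\underline{\alpha}\otimes\id\otimes\id)\bigl((\underline{\rho}\otimes\id)(v)\bigr)\,(u^*\otimes 1^0).
$$

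Next I would substitute this into the left-hand side of the claim. Since $\underline{\rho}\otimes\id$ is a homomorphism,
$$
(\underline{\rho}\otimes\id)(w)(w\otimes 1^0)=(\underline{\rho}\otimes\id)(u)\,(\underline{\rho}\otimes\id)\bigl((\underline{\alpha}\otimes\id)(v)\bigr)\,(u\otimes 1^0)\,\bigl((\underline{\alpha}\otimes\id)(v)\otimes 1^0\bigr),
$$
and plugging in the key formula makes the adjacent factors $u^*\otimes 1^0$ and $u\otimes 1^0$ cancel, leaving
$$
\bigl[(\underline{\rho}\otimes\id)(u)(u\otimes 1^0)\bigr]\,(\underline{\alpha}\otimes\id\otimes\id)\Bigl[(\underline{\rho}\otimes\id)(v)(v\otimes 1^0)\Bigr],
$$
where I have also absorbed $(\underline{\alpha}\otimes\id)(v)\otimes 1^0=(\underline{\alpha}\otimes\id\otimes\id)(v\otimes 1^0)$ into the second factor using that $\underline{\alpha}\otimes\id\otimes\id$ is multiplicative. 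Now the cocycle identity for $u$ turns the bracketed factor into $(\id\otimes\Delta^0)(u)$, and the cocycle identity for $v$ turns the inner bracket into $(\id\otimes\Delta^0)(v)$, so the expression equals $(\id\otimes\Delta^0)(u)\,(\underline{\alpha}\otimes\id\otimes\id)\bigl((\id\otimes\Delta^0)(v)\bigr)$. Finally I would invoke the evident commutation $(\underline{\alpha}\otimes\id\otimes\id)\circ(\id\otimes\Delta^0)=(\id\otimes\Delta^0)\circ(\underline{\alpha}\otimes\id)$ (the automorphism acts on the $A$-leg, $\Delta^0$ on the $H^0$-legs) to rewrite the right factor as $(\id\otimes\Delta^0)\bigl((\underline{\alpha}\otimes\id)(v)\bigr)$; since $\id\otimes\Delta^0$ is a homomorphism, the product is $(\id\otimes\Delta^0)(u(\underline{\alpha}\otimes\id)(v))=(\id\otimes\Delta^0)(w)$, which is the right-hand side.

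There is no essential obstacle here; the only care required is the bookkeeping of which tensor leg each map acts on and the passage to the multiplier algebras, where one uses that all the maps involved ($\underline{\rho}$, $\underline{\alpha}$, $\id\otimes\Delta^0$ and their ampliations) are strictly continuous, so the relevant identities on $A$ extend to $M(A)$ and all the manipulations are legitimate. It is perhaps worth remarking in passing — as the discussion following the lemma presumably does — that, combined with the parallel computation $\rho=\Ad\bigl(u(\underline{\alpha}\otimes\id)(v)\bigr)\circ((\alpha\beta)\otimes\id)\circ\rho\circ(\alpha\beta)^{-1}$, this lemma is exactly the associativity/closure input needed to make $(\alpha,u)(\beta,v)=(\alpha\beta,\,u(\underline{\alpha}\otimes\id)(v))$ a well-defined group law; but the lemma statement itself asks only for the cocycle identity verified above.
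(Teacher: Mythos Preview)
Your proof is correct and follows essentially the same route as the paper's: both derive the key intertwining formula
$(\underline{\rho}\otimes\id)((\underline{\alpha}\otimes\id)(v))=(u\otimes 1^0)(\underline{\alpha}\otimes\id\otimes\id)((\underline{\rho}\otimes\id)(v))(u^*\otimes 1^0)$
from $\rho\circ\alpha=\Ad(u)\circ(\alpha\otimes\id)\circ\rho$ and then obtain the cocycle identity from the two given cocycle identities for $u$ and $v$. The only difference is cosmetic: the paper records the $\Ad$-relation for $\alpha\circ\beta$ first and then dismisses the rest as ``routine computations,'' whereas you spell those computations out explicitly.
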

\begin{proof}By routine computations, we can see that
$$
((\alpha\circ\beta)\otimes\id)\circ\rho\circ(\alpha\circ\beta)^{-1}
=\Ad((\underline{\alpha}\otimes\id)(v^* ))\circ\Ad(u^* )\circ\rho .
$$
Thus we obtain that
$$
\rho=\Ad(u(\underline{\alpha}\otimes\id)(v))\circ((\alpha\circ\beta)\otimes\id)\circ\rho\circ(\alpha\circ\beta)^{-1} .
$$
Since $\rho\circ\alpha=\Ad(u)\circ(\alpha\otimes\id)\circ\rho$,
$$
(\underline{\rho}\otimes\id)((\underline{\alpha}\otimes\id)(v))
=(u\otimes 1^0 )(\underline{\alpha}\otimes\id\otimes\id)((\underline{\rho}\otimes\id)(v))(u\otimes 1^0 )^* .
$$
Hence by routine computations, we can see that
$$
(\underline{\rho}\otimes\id)(u(\underline{\alpha}\otimes\id)(v))(u(\underline{\alpha}\otimes\id)(v)\otimes 1^0 )
=(\id\otimes\Delta^0 )(u(\underline{\alpha}\otimes\id)(v)) .
$$
\end{proof}

Let  $\alpha, \beta$ and $u, v$ be as above.
Let $\lambda_u$ and $\lambda_v$ be coactions of $H^0$ on $X_{\alpha}$ and $X_{\beta}$
with respect to $(A, A, \rho, \rho)$ induced by $u$ and $v$, respectively.
Let $u\sharp v=u(\underline{\alpha}\otimes\id)(v)\in M(A\otimes H^0 )$.
By Lemma \ref{lem:product1}, we can define the coaction $\lambda_{u\sharp v}$ of
$H^0$ on $X_{\alpha\circ\beta}$ with respect to $(A, A, \rho, \rho)$induced by
$u\sharp v$. By easy computations, we can see that $X_{\alpha}\otimes X_{\beta}$
is isomorphic to $X_{\alpha\circ\beta}$ by an $A-A$-equivalence bimodule
isomorphism $\pi$
$$
\pi: X_{\alpha}\otimes_{A}X_{\beta}\rightarrow X_{\alpha\circ\beta} : x\otimes y\mapsto x\alpha(y) .
$$
We identify $X_{\alpha}\otimes_A X_{\beta}$ with $X_{\alpha\circ\beta}$ by the
above $A-A$-equivalence bimodule isomorphism $\pi$.

\begin{lemma}\label{lem:product2}With the above notations,
for $[X_{\alpha}, \lambda_u ]$, $[X_{\beta}, \lambda_v ]\in \Pic_H^{\rho}(A)$,
$$
[X_{\alpha}, \lambda_u ][X_{\beta}, \lambda_v ]=[X_{\alpha\circ\beta}, \lambda_{u\sharp v}]\in\Pic_H^{\rho}(A) ,
$$
where $u\sharp v=u(\underline{\alpha}\otimes\id)(v)\in M(A\otimes H^0 )$.
\end{lemma}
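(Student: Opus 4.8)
The plan is to reduce the claim to a single intertwining identity for the $A$-$A$-equivalence bimodule isomorphism $\pi$ fixed above, and then to verify that identity by a short computation on elementary tensors. By the definition of the product in $\Pic_H^{\rho}(A)$ we have $[X_{\alpha}, \lambda_u][X_{\beta}, \lambda_v]=[X_{\alpha}\otimes_A X_{\beta}, \, \lambda_u\otimes\lambda_v]$, and we have already identified $X_{\alpha}\otimes_A X_{\beta}$ with $X_{\alpha\circ\beta}$ through $\pi(x\otimes y)=x\alpha(y)$. Hence it suffices to show
$$\lambda_{u\sharp v}\circ\pi=(\pi\otimes\id_{H^0})\circ(\lambda_u\otimes\lambda_v),$$
for then $[X_{\alpha}\otimes_A X_{\beta}, \lambda_u\otimes\lambda_v]=[X_{\alpha\circ\beta}, \lambda_{u\sharp v}]$ straight from the definition of the equivalence relation $\sim$.

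First I would evaluate the left-hand side on $x\otimes y$, with $x\in X_{\alpha}$, $y\in X_{\beta}$ regarded as elements of $A$. Using $\lambda_{u\sharp v}(z)=\rho(z)(u\sharp v)$, $u\sharp v=u(\underline{\alpha}\otimes\id)(v)$, the relation $\rho(\alpha(y))u=u(\alpha\otimes\id)(\rho(y))$ of Lemma \ref{lem:relation}, and the fact that $\underline{\alpha}$ restricts to $\alpha$ on $A\otimes H^0$, one obtains
$$(\lambda_{u\sharp v}\circ\pi)(x\otimes y)=\rho(x)\rho(\alpha(y))u(\underline{\alpha}\otimes\id)(v)=\rho(x)u(\underline{\alpha}\otimes\id)(\rho(y)v)=\lambda_u(x)\,(\underline{\alpha}\otimes\id)(\lambda_v(y)),$$
where $\lambda_u(x)=\rho(x)u$, $\lambda_v(y)=\rho(y)v$, and the product is taken in $A\otimes H^0$.

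Next I would unwind the right-hand side. The coaction $\lambda_u\otimes\lambda_v$ corresponds, under the isomorphism $\imath$ of Section \ref{sec:pre}, to the $H$-action $h\cdot_{\lambda_u\otimes\lambda_v}(x\otimes y)=[h_{(1)}\cdot_{\lambda_u}x]\otimes[h_{(2)}\cdot_{\lambda_v}y]$ of \cite [Proposition 3.1]{KT3:equivalence}; unwinding this shows that, writing $\lambda_u(x)=\sum_i x_i\otimes\phi_i$ and $\lambda_v(y)=\sum_j y_j\otimes\psi_j$, one has $(\lambda_u\otimes\lambda_v)(x\otimes y)=\sum_{i,j}(x_i\otimes y_j)\otimes\phi_i\psi_j$, with $\phi_i\psi_j$ the product in $H^0$. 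Applying $\pi\otimes\id_{H^0}$ yields $\sum_{i,j}x_i\alpha(y_j)\otimes\phi_i\psi_j$, which equals $\lambda_u(x)\,(\underline{\alpha}\otimes\id)(\lambda_v(y))$ because $(\underline{\alpha}\otimes\id)(\lambda_v(y))=\sum_j\alpha(y_j)\otimes\psi_j$ and the product in $A\otimes H^0$ of $\sum_i x_i\otimes\phi_i$ and $\sum_j\alpha(y_j)\otimes\psi_j$ is $\sum_{i,j}x_i\alpha(y_j)\otimes\phi_i\psi_j$. Comparing with the previous display gives the intertwining identity, hence the lemma.

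The step I expect to be the main obstacle is the passage from the $H$-action $\cdot_{\lambda_u\otimes\lambda_v}$ to the coaction $\lambda_u\otimes\lambda_v$ written in the form above: since $\lambda_u\otimes\lambda_v$ is given only through its induced action, one must go through $\imath$ and check that the relevant matrix-coefficient evaluations reassemble into the $H^0$-product $\phi_i\psi_j$ (in Sweedler notation, that $\lambda_u\otimes\lambda_v$ is $x\otimes y\mapsto x_{(0)}\otimes y_{(0)}\otimes x_{(1)}y_{(1)}$). Everything else is routine bookkeeping, provided one keeps in mind that $X_{\alpha}$, $X_{\beta}$ and $X_{\alpha\circ\beta}$ all coincide with $A$ as vector spaces, and that $y$ lies in $A$ rather than merely in $M(A)$, so that $\underline{\alpha}$ may be replaced by $\alpha$ wherever it acts on $\rho(y)$ or on the components $y_j$.
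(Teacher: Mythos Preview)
Your argument is correct and is essentially the same as the paper's: both reduce the claim to the intertwining identity for $\pi$ and verify it by unwinding $\lambda_u\otimes\lambda_v$ on elementary tensors and invoking the relation $\rho(\alpha(y))u=u(\alpha\otimes\id)(\rho(y))$ from Lemma~\ref{lem:relation}. The only cosmetic difference is that the paper carries out the computation in the dual $H$-action picture (checking $\pi(h\cdot_{\lambda_u\otimes\lambda_v}x\otimes y)=h\cdot_{\lambda_{u\sharp v}}\pi(x\otimes y)$ in Sweedler notation), whereas you work directly with the coactions; your ``main obstacle'' is exactly the translation between the two, and you handle it correctly.
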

\begin{proof}By the definition of the product in $\Pic_H^{\rho}(A)$,
$$
[X_{\alpha}, \lambda_u ][X_{\beta}, \lambda_v ]=[X_{\alpha}\otimes_A X_{\beta}, \lambda_u \otimes\lambda_v ] .
$$
Hence it suffices to show that
$$
\pi(h\cdot_{\lambda_u \otimes\lambda_v}x\otimes y)=h\cdot_{\lambda_{u\sharp v}}\pi(x\otimes y)
$$
for any $x\in X_{\alpha}, y\in X_{\beta}$ and $h\in H$. For any $x\in X_{\alpha}, y\in X_{\beta}$ and $h\in H$,
\begin{align*}
\pi(h\cdot_{\lambda_u \otimes\lambda_v }x\otimes y) & =\pi([h_{(1)}\cdot_{\lambda_u }x]\otimes[h_{(2)}\cdot_{\lambda_v}y]) \\
& =\pi([h_{(1)}\cdot_{\rho}x]\widehat{u}(h_{(2)})\otimes[h_{(3)}\cdot_{\rho}y]\widehat{v}(h_{(4)})) \\
& =[h_{(1)}\cdot_{\rho}x]\widehat{u}(h_{(2)})\alpha([h_{(3)}\cdot_{\rho}y]\widehat{v}(h_{(4)})) .
\end{align*}
Since $\rho\circ\alpha=\Ad(u)\circ(\alpha\otimes\id)\circ\rho$,
\begin{align*}
\pi(h\cdot_{\lambda_u \otimes\lambda_v}x\otimes y) & =[h_{(1)}\cdot_{\rho}x][h_{(2)}\cdot_{\rho}\alpha(y)]\widehat{u}(h_{(3)})
\alpha(\widehat{v}(h_{(4)})) \\
& =[h_{(1)}\cdot_{\rho}x\alpha(y)](u(\underline{\alpha}\otimes\id)(v))\widehat{}(h_{(2)}) \\
& =h\cdot_{\lambda_{u\sharp v}}x\alpha(y) .
\end{align*}
Therefore, we obtain the conclusions.
\end{proof}

\begin{cor}\label{cor:inverse}With the above notations, for any $[X_{\alpha}, \lambda_u ]\in\Pic_H^{\rho}(A)$,
$$
[X_{\alpha}, \lambda_u ]^{-1}=[X_{\alpha^{-1}}, \lambda_{(\underline{\alpha}^{-1}\otimes\id)(u^* )}]\in\Pic_H^{\rho}(A) .
$$
\end{cor}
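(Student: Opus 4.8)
The plan is to realise the claimed inverse through the product formula of Lemma \ref{lem:product2}. Write $v=(\underline{\alpha}^{-1}\otimes\id)(u^*)\in M(A\otimes H^0)$, which is plainly a unitary. The first thing I would do is verify that $(X_{\alpha^{-1}},\lambda_v)$ is an element of $\Equi_H^{\rho}(A)$; by the discussion preceding Proposition \ref{prop:image} it suffices to check that $v$ satisfies Condition $(2)$ of that proposition with $\alpha^{-1}$ in place of $\alpha$, so that $\lambda_v(x)=\rho(x)v$ indeed defines a coaction of $H^0$ on $X_{\alpha^{-1}}$ with respect to $(A,A,\rho,\rho)$.

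For the first relation, I would start from $\rho=\Ad(u)\circ(\alpha\otimes\id)\circ\rho\circ\alpha^{-1}$, which gives $(\alpha\otimes\id)\circ\rho=\Ad(u^*)\circ\rho\circ\alpha$; composing with $\alpha^{-1}\otimes\id$ on the left and using $(\alpha^{-1}\otimes\id)\circ\Ad(u^*)=\Ad((\alpha^{-1}\otimes\id)(u^*))\circ(\alpha^{-1}\otimes\id)$ yields $\rho=\Ad(v)\circ(\alpha^{-1}\otimes\id)\circ\rho\circ(\alpha^{-1})^{-1}$. For the cocycle relation, the computation is the exact analogue of the one in Lemma \ref{lem:product1}: extending the intertwiner $\rho\circ\alpha=\Ad(u)\circ(\alpha\otimes\id)\circ\rho$ to multiplier algebras gives $(\underline{\alpha}^{-1}\otimes\id)\circ\underline{\rho}=\Ad(v^*)\circ\underline{\rho}\circ\underline{\alpha}^{-1}$; applying $\underline{\alpha}^{-1}\otimes\id\otimes\id$ to the cocycle identity $(\underline{\rho}\otimes\id)(u)(u\otimes 1^0)=(\id\otimes\Delta^0)(u)$ for $u$, using that this map commutes with $\id\otimes\Delta^0$ together with the previous identity, and cancelling a factor $(v\otimes 1^0)(v^*\otimes 1^0)$, one obtains (after taking adjoints) $(\underline{\rho}\otimes\id)(v)(v\otimes 1^0)=(\id\otimes\Delta^0)(v)$. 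Thus $(X_{\alpha^{-1}},\lambda_v)\in\Equi_H^{\rho}(A)$.

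Finally I would invoke Lemma \ref{lem:product2} with $\beta=\alpha^{-1}$: it gives $[X_{\alpha},\lambda_u][X_{\alpha^{-1}},\lambda_v]=[X_{\alpha\circ\alpha^{-1}},\lambda_{u\sharp v}]$. Here $\alpha\circ\alpha^{-1}=\id_A$, so $X_{\id_A}={}_A A_A$ with its usual bimodule structure, and $u\sharp v=u(\underline{\alpha}\otimes\id)((\underline{\alpha}^{-1}\otimes\id)(u^*))=uu^*=1$, so $\lambda_{u\sharp v}(x)=\rho(x)\cdot 1=\rho(x)$; that is, $\lambda_{u\sharp v}=\rho$ as a twisted coaction on ${}_A A_A$. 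Hence $[X_{\alpha},\lambda_u][X_{\alpha^{-1}},\lambda_v]=[{}_A A_A,\rho]$, the unit element of $\Pic_H^{\rho}(A)$, and the product in the opposite order equals the unit by the same computation (or because a one-sided inverse in a group is automatically two-sided). This yields $[X_{\alpha},\lambda_u]^{-1}=[X_{\alpha^{-1}},\lambda_{(\underline{\alpha}^{-1}\otimes\id)(u^*)}]$, as claimed.

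The step I expect to be the main obstacle is the verification of the cocycle relation for $v$; this is the only genuinely computational point, but it is a carbon copy of the argument already carried out in Lemma \ref{lem:product1}, so it requires no new idea.
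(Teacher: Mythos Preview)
Your proposal is correct and follows exactly the approach the paper has in mind: the paper's proof reads in its entirety ``This is immediate by Lemma \ref{lem:product2} and routine computations,'' and you have supplied precisely those routine computations---verifying that $v=(\underline{\alpha}^{-1}\otimes\id)(u^*)$ lies in $\rU_{\alpha^{-1}}^{\rho}(M(A\otimes H^0))$ and then observing that $u\sharp v=1$, so that the product is the unit $[{}_A A_A,\rho]$.
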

\begin{proof}
This is immediate by Lemma \ref{lem:product2} and routine computations.
\end{proof}

For any $\alpha\in\Aut(A)$, let $\rU_{\alpha}^{\rho}(M(A\otimes H^0 ))$ be the set of
all unitary elements $u\in M(A\otimes H^0 )$ satisfying that
$$
\rho=\Ad(u)\circ(\alpha\otimes\id)\circ\rho\circ\alpha^{-1}, \quad
(\underline{\rho}\otimes\id)(u)(u\otimes 1^0 )=(\id\otimes\Delta^0 )(u) .
$$

\begin{lemma}\label{lem:relation2}With the above notations, for any $\alpha\in\Aut(A)$, we have
the following:
\newline
$(1)$ For any $u\in\rU_{\id}^{\rho}(M(A\otimes H^0 ))$ and $v\in\rU_{\alpha}^{\rho}(M(A\otimes H^0 ))$,
$uv\in\rU_{\alpha}^{\rho}(M(A\otimes H^0 ))$,
\newline
$(2)$ For any $u, v\in\rU_{\alpha}^{\rho}(M(A\otimes H^0 ))$, $uv^* \in\rU_{\id}^{\rho}(M(A\otimes H^0 ))$.
\end{lemma}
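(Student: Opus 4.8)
The plan is to deduce both parts from Lemma~\ref{lem:product1} together with Corollary~\ref{cor:inverse}, reading the pair $(\alpha,u)$ through the assignment $u\mapsto[X_{\alpha},\lambda_u]$ that sends $\rU_{\alpha}^{\rho}(M(A\otimes H^0))$ into $\Pic_H^{\rho}(A)$. Recall that $u\in\rU_{\alpha}^{\rho}(M(A\otimes H^0))$ means precisely: $u$ is a unitary in $M(A\otimes H^0)$ with $\rho=\Ad(u)\circ(\alpha\otimes\id)\circ\rho\circ\alpha^{-1}$ and $(\underline{\rho}\otimes\id)(u)(u\otimes 1^0)=(\id\otimes\Delta^0)(u)$. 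In each part the conjugation identity is a one-line verification; the substantive point is that the cocycle identity is preserved, and that is exactly the computation carried out in the proof of Lemma~\ref{lem:product1}.

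For $(1)$, I would apply the proof of Lemma~\ref{lem:product1} with the first pair taken to be $(\id,u)$, where $u\in\rU_{\id}^{\rho}(M(A\otimes H^0))$, and the second pair $(\alpha,v)$, where $v\in\rU_{\alpha}^{\rho}(M(A\otimes H^0))$. Since $\underline{\id}\otimes\id$ is the identity map on $M(A\otimes H^0)$, the element $u\sharp v=u(\underline{\id}\otimes\id)(v)$ is simply $uv$; and the proof of Lemma~\ref{lem:product1} establishes both $\rho=\Ad(uv)\circ(\alpha\otimes\id)\circ\rho\circ\alpha^{-1}$ and the cocycle identity for $uv$. Hence $uv\in\rU_{\id\circ\alpha}^{\rho}(M(A\otimes H^0))=\rU_{\alpha}^{\rho}(M(A\otimes H^0))$.

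For $(2)$, let $u,v\in\rU_{\alpha}^{\rho}(M(A\otimes H^0))$. By Corollary~\ref{cor:inverse}, the element $w=(\underline{\alpha}^{-1}\otimes\id)(v^*)$ lies in $\rU_{\alpha^{-1}}^{\rho}(M(A\otimes H^0))$. Applying the proof of Lemma~\ref{lem:product1} to the first pair $(\alpha,u)$ and the second pair $(\alpha^{-1},w)$, the resulting $\sharp$-product is
$$
u(\underline{\alpha}\otimes\id)(w)=u\bigl((\underline{\alpha}\circ\underline{\alpha}^{-1})\otimes\id\bigr)(v^*)=uv^*,
$$
so that $uv^*\in\rU_{\alpha\circ\alpha^{-1}}^{\rho}(M(A\otimes H^0))=\rU_{\id}^{\rho}(M(A\otimes H^0))$.

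If one prefers a proof of $(2)$ not invoking Corollary~\ref{cor:inverse}, the conjugation identity is immediate: from $\rho=\Ad(u)\circ\Psi=\Ad(v)\circ\Psi$ with $\Psi=(\alpha\otimes\id)\circ\rho\circ\alpha^{-1}$ one gets $\rho=\Ad(uv^*)\circ\rho$. For the cocycle identity one substitutes the cocycle relations for $u$ and for $v$ into $(\underline{\rho}\otimes\id)(uv^*)(uv^*\otimes 1^0)$; here the main obstacle is that the substitution does not collapse purely formally, and one has to use that $uv^*$ commutes with $\rho(A)$ — hence, by strict continuity of $\underline{\rho}$, with $\underline{\rho}(M(A))$ — in order to move the factor $uv^*\otimes 1^0$ past the relevant terms. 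This is precisely the commutation bookkeeping already packaged in Lemma~\ref{lem:product1}, which is why routing the argument through Lemma~\ref{lem:product1} and Corollary~\ref{cor:inverse} is the cleaner path.
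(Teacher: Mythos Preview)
Your proof is correct and follows essentially the same route as the paper: for (1) you specialize the $\sharp$-product of Lemma~\ref{lem:product1} to the pair $(\id,u),(\alpha,v)$ so that $u\sharp v=uv$, and for (2) you invoke Corollary~\ref{cor:inverse} to pass to $(\underline{\alpha}^{-1}\otimes\id)(v^*)\in\rU_{\alpha^{-1}}^{\rho}(M(A\otimes H^0))$ and then apply the $\sharp$-product again. The paper's proof is terser (it cites Lemma~\ref{lem:product2} for (1), which in turn rests on Lemma~\ref{lem:product1}), but the substance is identical.
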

\begin{proof}(1) This is immediate by Lemma \ref{lem:product2}.
\newline
(2) By Corollary \ref{cor:inverse}, $(\alpha^{-1}\otimes\id)(v^* )\in \rU_{\alpha^{-1}}^{\rho}(M(A\otimes H^0 ))$.
Hence $uv^* \in\rU_{\id}^{\rho}(M(A\otimes H^0 ))$.
\end{proof}

\begin{lemma}\label{lem:kernel1}Let $u\in\rU_{\id}^{\rho}(M(A\otimes H^0 ))$. Then the following conditions are
equivalent:
\newline
$(1)$ $[{}_A A_A , \lambda_u ]=[{}_A A_A , \rho ]$ in $\Pic_H^{\rho}(A)$,
\newline
$(2)$ There is a unitary element $w\in M(A)\cap A'$ such that
$u=(w^* \otimes 1^0 )\underline{\rho}(w)$.
\end{lemma}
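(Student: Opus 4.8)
The plan is to read off the structure of $A$--$A$-equivalence bimodule isomorphisms of ${}_A A_A$ onto itself, exactly as in the proof of Proposition~\ref{prop:exact} (following \cite{BGR:linking}). First I would note that any such isomorphism $\pi$ is left $A$-linear, hence of the form $\pi(a)=aw$ for a uniquely determined right multiplier $w\in M(A)$; that $\pi$ is moreover right $A$-linear forces $w\in A'$, and that $\pi$ preserves both the left inner product ${}_A\la x,y\ra = xy^*$ and the right inner product $\la x,y\ra_A = x^*y$ on ${}_A A_A$ forces $w$ to be unitary. So the self-isomorphisms of ${}_A A_A$ are precisely the maps $a\mapsto aw$ with $w$ a unitary element of $M(A)\cap A'$. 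A small but essential observation to record here is that such a $w$ commutes with all of $M(A)$: for $a\in A$ and $m\in M(A)$ one has $(wm-mw)a=w(ma)-(ma)w=0$ since $ma\in A$, and $A$ acts nondegenerately; in particular $w\otimes 1^0$ and $w^*\otimes 1^0$ commute with $\underline{\rho}(w)\in M(A)\otimes H^0$.

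For $(2)\Rightarrow(1)$, given a unitary $w\in M(A)\cap A'$ with $u=(w^*\otimes 1^0)\underline{\rho}(w)$, I would put $\pi(a)=aw$ (an $A$--$A$-equivalence bimodule isomorphism of ${}_A A_A$ by the first paragraph) and verify the covariance identity $\rho\circ\pi=(\pi\otimes\id_{H^0})\circ\lambda_u$. Using $\lambda_u(a)=\rho(a)u$, $\underline{\rho}(aw)=\rho(a)\underline{\rho}(w)$, and the fact that $\pi\otimes\id$ acts on $A\otimes H^0$ as right multiplication by $w\otimes 1^0$, the two sides applied to $a\in A$ become $\rho(a)\underline{\rho}(w)$ and $\rho(a)(w^*\otimes 1^0)\underline{\rho}(w)(w\otimes 1^0)$; the commutation of $w\otimes 1^0$ with $\underline{\rho}(w)$ collapses the second expression to $\rho(a)\underline{\rho}(w)$. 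Hence $[{}_A A_A,\lambda_u]=[{}_A A_A,\rho]$ in $\Pic_H^{\rho}(A)$.

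For $(1)\Rightarrow(2)$, I would start from an $A$--$A$-equivalence bimodule isomorphism $\pi$ of ${}_A A_A$ onto itself with $\rho\circ\pi=(\pi\otimes\id_{H^0})\circ\lambda_u$, write $\pi(a)=aw$ with $w$ unitary in $M(A)\cap A'$, and expand the covariance identity to $\rho(a)\underline{\rho}(w)=\rho(a)u(w\otimes 1^0)$ for every $a\in A$. Applying this to an approximate unit $\{u_\gamma\}$ of $A$ and using $\rho(u_\gamma)\to 1$ strictly in $M(A\otimes H^0)$ (which holds since $\overline{\rho(A)(A\otimes H^0)}=A\otimes H^0$) yields $\underline{\rho}(w)=u(w\otimes 1^0)$, i.e.\ $u=\underline{\rho}(w)(w^*\otimes 1^0)=(w^*\otimes 1^0)\underline{\rho}(w)$ by the commutation observation. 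This is exactly condition~(2).

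The algebra here is routine; the part demanding care is the non-unital bookkeeping: producing the right multiplier $w$ attached to $\pi$ rather than writing $\pi(1)$, deducing $w\in M(A)\cap A'$ from bimodule linearity and unitarity of $w$ from inner-product preservation, and justifying that a unitary in $M(A)\cap A'$ commutes with $M(A)$ so that $(w^*\otimes 1^0)\underline{\rho}(w)=\underline{\rho}(w)(w^*\otimes 1^0)$. I would also note in passing that the legitimacy of $\underline{\rho}(aw)=\rho(a)\underline{\rho}(w)$ and of cancelling $\rho(u_\gamma)$ in the limit rests on $\underline{\rho}$ being the unique strictly continuous homomorphic extension of $\rho$ to $M(A)$, as recalled before Lemma~\ref{lem:extension}.
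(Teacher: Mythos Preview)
Your argument is correct and essentially the same as the paper's: both identify the $A$--$A$-equivalence bimodule automorphisms of ${}_A A_A$ with unitaries $w\in M(A)\cap A'$, and then compare $\rho\circ\pi$ with $(\pi\otimes\id)\circ\lambda_u$ to extract $u=(w^*\otimes 1^0)\underline{\rho}(w)$. The only cosmetic difference is that you write $\pi(a)=aw$ and spell out the non-unital bookkeeping (approximate units, centrality of $w$ in $M(A)$) explicitly, whereas the paper writes $\pi(x)=wx$ and invokes the identification ${}_A\BB_A({}_A A_A)\cong A'\cap M(A)$ directly; since $w$ is central these are the same map and the same computation.
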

\begin{proof}We suppose Condition (1). Then there is an $A-A$-equivalence bimodule
automorphism $\pi$ of ${}_A A_A$ such that
$$
\rho(\pi(x))=(\pi\otimes\id)(\lambda_u (x))=(\pi\otimes\id)(\rho(x)u)
$$
for any $x\in {}_A A_A$. We note that $\pi\in {}_A \BB_A ({}_A A_A )$ and that
$$
{}_A \BB_A ({}_A A_A )\cong A' \cap \BB_A (A_A )\cong A' \cap M(A) .
$$
Hence there is a unitary element $w\in A' \cap M(A)$ such that
$\pi(x)=wx$ for any $x\in A$. Thus for any $x\in A$
$$
\rho(wx)=(w\otimes 1^0 )\rho(x)u .
$$
Therefore $u=(w^* \otimes 1^0 )\underline{\rho}(w)$.
Next we suppose Condition (2). Let $\pi$ be the $A-A$-equivalence bimodule
automorphism of ${}_A A_A $ defined by $\pi(x)=wx$ for any $x\in {}_A A_A$.
Then for any $x\in {}_A A_A$
\begin{align*}
\rho(\pi(x)) &=\rho(wx)=\rho(xw)=\rho(x)\underline{\rho}(w)=\rho(x)(w\otimes 1^0 )u=(w\otimes 1^0 )\rho(x)u \\
& =(\pi\otimes\id)(\lambda_u (x)) .
\end{align*}
Thus we obtain Condition (1).
\end{proof}

\begin{cor}\label{cor:kernel2}Let $\alpha\in\Aut(A)$ and $u, v\in\rU_{\alpha}^{\rho}(M(A\otimes H^0 ))$.
Then the following conditions are equivalent:
\newline
$(1)$ $[X_{\alpha}, \lambda_u ]=[X_{\alpha}, \lambda_v ]$ in $\Pic_H^{\rho}(A)$,
\newline
$(2)$ There is a unitary element $w\in M(A)\cap A'$ such that
$u=(w^* \otimes 1^0 )\underline{\rho}(w)v$.
\end{cor}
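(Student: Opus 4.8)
The statement is a direct corollary of Lemma~\ref{lem:kernel1}, obtained by ``twisting away'' the automorphism $\alpha$, so the plan is to reduce to the case $\alpha=\id$ already treated there. First I would observe, via Corollary~\ref{cor:inverse} together with Lemma~\ref{lem:relation2}(2), that $uv^*\in\rU_{\id}^{\rho}(M(A\otimes H^0))$ whenever $u,v\in\rU_{\alpha}^{\rho}(M(A\otimes H^0))$, so that the unitary $u\sharp (\underline{\alpha}^{-1}\otimes\id)(v^*)$ lands in $\rU_{\id}^{\rho}(M(A\otimes H^0))$ and Lemma~\ref{lem:kernel1} becomes applicable to it.

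Next, using Lemma~\ref{lem:product2} and Corollary~\ref{cor:inverse}, I would rewrite the equality $[X_{\alpha},\lambda_u]=[X_{\alpha},\lambda_v]$ in $\Pic_H^{\rho}(A)$ in the equivalent form
$$
[X_{\alpha},\lambda_u][X_{\alpha},\lambda_v]^{-1}=[{}_A A_A,\rho],
$$
and compute the left-hand side by the product formula: $[X_{\alpha},\lambda_u][X_{\alpha^{-1}},\lambda_{(\underline{\alpha}^{-1}\otimes\id)(v^*)}]=[X_{\alpha\circ\alpha^{-1}},\lambda_{u\sharp(\underline{\alpha}^{-1}\otimes\id)(v^*)}]=[{}_A A_A,\lambda_{u(\underline{\alpha}\otimes\id)((\underline{\alpha}^{-1}\otimes\id)(v^*))}]=[{}_A A_A,\lambda_{uv^*}]$. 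Thus Condition~(1) is equivalent to $[{}_A A_A,\lambda_{uv^*}]=[{}_A A_A,\rho]$ in $\Pic_H^{\rho}(A)$.

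Then Lemma~\ref{lem:kernel1}, applied to the element $uv^*\in\rU_{\id}^{\rho}(M(A\otimes H^0))$, gives that this holds if and only if there is a unitary $w\in M(A)\cap A'$ with $uv^*=(w^*\otimes 1^0)\underline{\rho}(w)$. Multiplying on the right by $v$ and noting that, since $v\in\rU_\alpha^\rho$ and $w\in A'\cap M(A)$ commutes with $A$, one has $\underline{\rho}(w)v=v\,\underline{\rho}(w)$ (because $\underline{\rho}(w)\in M(A)\otimes H^0$ lies in the relative commutant of $\rho(A)$ and $v$ normalizes the coaction appropriately — this I would check by a short computation using $\rho=\Ad(v)\circ(\alpha\otimes\id)\circ\rho\circ\alpha^{-1}$ and $w\in A'$), one rewrites this as $u=(w^*\otimes 1^0)\underline{\rho}(w)v$, which is Condition~(2). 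Conversely, given such a $w$, reversing the computation and invoking Lemma~\ref{lem:kernel1} in the other direction yields Condition~(1).

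\textbf{Main obstacle.} The only genuinely delicate point is the commutation step $\underline{\rho}(w)\,v = v\,\underline{\rho}(w)$ needed to pass between $uv^*=(w^*\otimes 1^0)\underline{\rho}(w)$ and $u=(w^*\otimes 1^0)\underline{\rho}(w)v$; more precisely one must be careful that the identification of $[X_\alpha,\lambda_u][X_\alpha,\lambda_v]^{-1}$ with $[{}_A A_A,\lambda_{uv^*}]$ really produces the cocycle $uv^*$ on the nose and not a conjugate of it, so the bookkeeping in applying Lemma~\ref{lem:product2} and $u\sharp v=u(\underline{\alpha}\otimes\id)(v)$ must be done in the correct order. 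Everything else is a routine transcription of Lemma~\ref{lem:kernel1} through the group operations of $\Pic_H^{\rho}(A)$.
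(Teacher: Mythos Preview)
Your approach is essentially identical to the paper's: compute $[X_{\alpha},\lambda_u][X_{\alpha},\lambda_v]^{-1}=[{}_A A_A,\lambda_{uv^*}]$ via Lemma~\ref{lem:product2} and Corollary~\ref{cor:inverse}, then invoke Lemma~\ref{lem:kernel1}.

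The ``main obstacle'' you flag, however, is a phantom. Once Lemma~\ref{lem:kernel1} gives $uv^*=(w^*\otimes 1^0)\underline{\rho}(w)$, right-multiplying by $v$ yields $u=(w^*\otimes 1^0)\underline{\rho}(w)\,v$ \emph{immediately}, and this is already Condition~(2) verbatim. No commutation $\underline{\rho}(w)\,v=v\,\underline{\rho}(w)$ is required, and indeed that identity has no evident reason to hold (your sketched justification via $w\in A'$ and the cocycle relation for $v$ does not establish it). The paper simply multiplies through and is done; you should drop the commutation step entirely rather than try to verify it.
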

\begin{proof}We suppose Condition (1). By Lemma \ref{lem:product2} and Corollary \ref{cor:inverse},
we can see that 
$[{}_A A_A, \lambda_{uv^*}]=[{}_A A_A , \rho]$ in $\Pic_H^{\rho}(A)$. Thus by Lemma \ref{lem:kernel1}
there is a unitary element in $w\in M(A)\cap A'$ such that $uv^* =(w^* \otimes 1^0 )\underline{\rho}(w)$.
Hence we obtain Condition (2). Conversely we suppose Condition (2). Then there is a unitary element
$w\in M(A)\cap A'$ such that $uv^* =(w^* \otimes 1^0 )\underline{\rho}(w)$. Hence
$[{}_A A_A, \lambda_{uv^*}]=[{}_A A_A , \rho]$ in $\Pic_H^{\rho}(A)$.
Since $[X_{\alpha}, \lambda_u ][X_{\alpha}, \lambda_v ]^{-1}=[{}_A A_A , \lambda_{uv^* }]$ in
$\Pic_H^{\rho}(A)$ by Lemma \ref{lem:product2} and Corollary \ref{cor:inverse}, $[X_{\alpha}, \lambda_u ]=[X_{\alpha}, \lambda_v ]$ in $\Pic_H^{\rho}(A)$.
\end{proof}

We shall compute $\Ker f_{\rho}$, the kernel of $f_{\rho}$. Let $[X, \lambda]\in\Pic_H^{\rho}(A)$.
Then by Proposition \ref {prop:image}, we can see that $[X]=[{}_A A_A ]$ in $\Pic(A)$ if and only if
there is a unitary element $u\in\rU_{\id}^{\rho}(M(A\otimes H^0 ))$ such that $[X, \lambda]=[{}_A A_A , \lambda_u ]$
in $\Pic_H^{\rho}(A)$. Furthermore by Corollary \ref{cor:kernel2}, $[{}_A A_A , \lambda_u ]=[{}_A A_A , \lambda_v ]$
in $\Pic_H^{\rho}(A)$ if and only if there is a unitary element $w\in M(A)\cap A'$ such that
$u=(w^* \otimes 1^0 )\underline{\rho}(w)v$, where $u, v\in\rU_{\id}^{\rho}(M(A\otimes H^0 ))$.
We define an equivalence relation in $\rU_{\id}^{\rho}(M(A\otimes H^0 ))$ as follows:
Let $u, v\in\rU_{\id}^{\rho}(M(A\otimes H^0 ))$, written $u\sim v$ if there is a unitary element
$w\in M(A)\cap A'$ such that
$$
u=(w^* \otimes 1^0 )\underline{\rho}(w)v .
$$
Let $\rU_{\id}^{\rho}(M(A\otimes H^0 ))/\! \sim$ be the set of all equivalence classes in $\rU_{\id}^{\rho}(M(A\otimes H^0 ))$.
We denote by $[u]$ the equivalence class of $u\in\rU_{\id}^{\rho}(M(A\otimes H^0 ))$.
By Lemma \ref{lem:relation2}, $\rU_{\id}^{\rho}(M(A\otimes H^0 ))$ is a group.
Hence $\rU_{\id}^{\rho}(M(A\otimes H^0 ))/\!\sim$ is a group by easy computations.

\begin{prop}\label{prop:kernel3}With the above notations, $\Ker f_{\rho}\cong\rU_{\id}^{\rho}(M(A\otimes H^0 ))/ \! \sim$
as groups.
\end{prop}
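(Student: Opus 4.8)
The plan is to exhibit an explicit group isomorphism
$$
\Theta:\rU_{\id}^{\rho}(M(A\otimes H^0 ))/\!\sim\,\longrightarrow\Ker f_{\rho},\qquad [u]\mapsto[{}_A A_A ,\lambda_u ],
$$
and to verify the four things that make it one: well-definedness, that it is a homomorphism, injectivity, and surjectivity. Since $\lambda_u (x)=\rho(x)u$, each $\lambda_u$ is a coaction of $H^0$ on $X_{\id}={}_A A_A$ with respect to $(A,A,\rho,\rho)$, so $[{}_A A_A ,\lambda_u ]\in\Pic_H^{\rho}(A)$; and $f_{\rho}([{}_A A_A ,\lambda_u ])=[{}_A A_A ]$ is the identity of $\Pic(A)$, so the image lands in $\Ker f_{\rho}$. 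Well-definedness is then immediate from Corollary \ref{cor:kernel2} specialised to $\alpha=\id$: if $u\sim v$ then $[{}_A A_A ,\lambda_u ]=[{}_A A_A ,\lambda_v ]$ in $\Pic_H^{\rho}(A)$.

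To see $\Theta$ is a homomorphism I would first note that for $\alpha=\beta=\id$ one has $u\sharp v=u(\underline{\id}\otimes\id)(v)=uv$, so the group law on $\rU_{\id}^{\rho}(M(A\otimes H^0 ))/\!\sim$ coming from Lemma \ref{lem:relation2} is just $[u][v]=[uv]$. Then Lemma \ref{lem:product2} with $\alpha=\beta=\id$ (using the identification $X_{\id}\otimes_A X_{\id}\cong X_{\id}$) gives
$$
\Theta([u])\Theta([v])=[{}_A A_A ,\lambda_u ][{}_A A_A ,\lambda_v ]=[X_{\id},\lambda_{u\sharp v}]=[{}_A A_A ,\lambda_{uv}]=\Theta([uv]).
$$
For injectivity, if $\Theta([u])$ is the identity of $\Ker f_{\rho}$, that is $[{}_A A_A ,\lambda_u ]=[{}_A A_A ,\rho]=[{}_A A_A ,\lambda_1 ]$ (note $\lambda_1 =\rho$), then Lemma \ref{lem:kernel1} produces a unitary $w\in M(A)\cap A'$ with $u=(w^* \otimes 1^0 )\underline{\rho}(w)$, i.e. $u\sim 1$, so $[u]$ is the identity class. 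For surjectivity, given $[X,\lambda]\in\Ker f_{\rho}$ we have $[X]=[{}_A A_A ]$ in $\Pic(A)$; the discussion preceding the statement — which rests on Proposition \ref{prop:image} and Lemma \ref{lem:relation} with $\alpha=\id$ — then yields $u\in\rU_{\id}^{\rho}(M(A\otimes H^0 ))$ with $[X,\lambda]=[{}_A A_A ,\lambda_u ]=\Theta([u])$.

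The argument is essentially bookkeeping; the only thing requiring real care is checking that every auxiliary result quoted (Lemmas \ref{lem:relation}, \ref{lem:product2}, \ref{lem:kernel1} and Corollary \ref{cor:kernel2}) specialises cleanly to $\alpha=\id$, and that the unit $[{}_A A_A ,\rho]$ of $\Pic_H^{\rho}(A)$ really is $[{}_A A_A ,\lambda_1 ]$, so that the kernel of the surjection $u\mapsto[{}_A A_A ,\lambda_u ]$ is exactly $\{(w^* \otimes 1^0 )\underline{\rho}(w):w\in M(A)\cap A'\text{ unitary}\}$, whose cosets in $\rU_{\id}^{\rho}(M(A\otimes H^0 ))$ are the $\sim$-classes. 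One could equivalently package the whole proof as the first isomorphism theorem applied to this surjection.
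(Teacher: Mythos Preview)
Your proof is correct and follows essentially the same approach as the paper: both define the map $[u]\mapsto[{}_A A_A,\lambda_u]$ and verify it is a well-defined group isomorphism using Lemma~\ref{lem:product2} for the homomorphism property and the preceding discussion (Proposition~\ref{prop:image}, Lemma~\ref{lem:kernel1}, Corollary~\ref{cor:kernel2}) for well-definedness and bijectivity. The paper's proof is merely terser, collapsing your well-definedness, injectivity, and surjectivity checks into a single reference to ``the above discussions,'' whereas you spell them out explicitly.
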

\begin{proof}Let $\pi$ be a map from $\rU_{\id}^{\rho}(M(A\otimes H^0 ))/\!\sim$ to $\Ker f_{\rho}$
defined by
$$
\pi([u])=[{}_A A_A , \lambda_u ]
$$
for any $u\in\rU_{\id}^{\rho}(M(A\otimes H^0 ))$. By the above discussions,
we can see that $\pi$ is well-defined and bijective. For any $u, v\in\rU_{\id}^{\rho}(M(A\otimes H^0 ))$
$$
\pi([u])\pi([v])=[{}_A A_A , \lambda_u ][{}_A A_A , \lambda_v ]=[{}_A A_A , \lambda_{uv}]=\pi([uv])
$$
by Lemma \ref {lem:product2}. Therefore, we obtain the conclusion.
\end{proof}

We recall that there is a homomorphism $\Phi$ of $\Aut_H^{\rho^s}(A^s )$ to $\Pic_H^{\rho^s}(A^s )$ defined by
$$
\Phi(\alpha)=[X_{\alpha}, \lambda_{\alpha}]
$$
for any $\alpha\in\Aut_H^{\rho^s}(A^s )$, where $\lambda_{\alpha}$ is a coaction of $H^0$ on $X_{\alpha}$
induced by $\rho^s$ (See Section \ref{sec:Picard}). Then the following results hold:

\begin{lemma}\label{lem:image3}With the above notations, for any $\alpha\in\Aut _H^{\rho^s}(A^s )$
$$
(f_{\rho^s}\circ\Phi)(\alpha)=[X_{\alpha}]
$$
in $\Pic(A^s )$. Furthermore, if $\widehat{\rho}(1\rtimes_{\rho}e)\sim(1\rtimes _{\rho}e)\otimes 1$ in $(A\rtimes_{\rho}H)\otimes H$,
then
$$
\Ima f_{\rho^s}=\{ [X_{\alpha}]\in\Pic(A^s ) \, | \, \alpha\in\Aut_H^{\rho^s}(A^s ) \} .
$$
\end{lemma}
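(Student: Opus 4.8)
The plan is to dispose of the first assertion by unwinding the definitions and to reduce the second assertion to the surjectivity of $\Phi$ established in Lemma \ref{lem:surjection2}.

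For the first assertion, nothing but bookkeeping is required: by construction $\Phi(\alpha)=[X_\alpha,\lambda_\alpha]$ for $\alpha\in\Aut_H^{\rho^s}(A^s)$, where $\lambda_\alpha$ is the coaction of $H^0$ on $X_\alpha$ induced by $\rho^s$, while $f_{\rho^s}$ is the forgetful map $[X,\lambda]\mapsto[X]$. Hence $(f_{\rho^s}\circ\Phi)(\alpha)=f_{\rho^s}([X_\alpha,\lambda_\alpha])=[X_\alpha]$ in $\Pic(A^s)$, with no hypothesis on $\widehat{\rho}$ needed.

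For the second assertion I would prove the two inclusions separately. The inclusion $\{[X_\alpha]\mid\alpha\in\Aut_H^{\rho^s}(A^s)\}\subseteq\Ima f_{\rho^s}$ is immediate from the first part, since each such class equals $f_{\rho^s}([X_\alpha,\lambda_\alpha])$ with $[X_\alpha,\lambda_\alpha]\in\Pic_H^{\rho^s}(A^s)$, so it lies in the image of $f_{\rho^s}$. For the reverse inclusion, let $[Y]\in\Ima f_{\rho^s}$ and choose $[X,\lambda]\in\Pic_H^{\rho^s}(A^s)$ with $f_{\rho^s}([X,\lambda])=[Y]$, i.e. $[X]=[Y]$ in $\Pic(A^s)$. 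Here I would invoke Lemma \ref{lem:surjection2}, whose hypothesis $\widehat{\rho}(1\rtimes_{\rho}e)\sim(1\rtimes_{\rho}e)\otimes 1$ in $(A\rtimes_{\rho}H)\otimes H$ is precisely the standing assumption: it produces $\alpha\in\Aut_H^{\rho^s}(A^s)$ with $\Phi(\alpha)=[X_\alpha,\lambda_\alpha]=[X,\lambda]$ in $\Pic_H^{\rho^s}(A^s)$. Applying $f_{\rho^s}$ then gives $[X_\alpha]=[X]=[Y]$, so $[Y]$ is of the required form, and the two inclusions together yield the claimed equality.

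Since every step is a direct consequence of the definitions and of Lemma \ref{lem:surjection2}, I do not expect any genuine obstacle: the only substantive ingredient is the surjectivity of $\Phi$, and the assumption on $\widehat{\rho}$ is used solely to make that result available.
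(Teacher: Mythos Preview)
Your proposal is correct and follows essentially the same approach as the paper, which dismisses the proof as ``immediate by easy computations.'' You have simply made explicit what the paper leaves implicit: the first assertion is pure bookkeeping, and the nontrivial inclusion in the second assertion is exactly the surjectivity of $\Phi$ from Lemma~\ref{lem:surjection2}, available under the stated hypothesis on $\widehat{\rho}$.
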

\begin{proof}This is immediate by easy computations.
\end{proof}

Let $G$ be a subgroup of $\Pic(A^s )$ defined by
$$
G=\{ [X_{\alpha} ]\in\Pic(A^s ) \, | \, \alpha\in\Aut_H^{\rho^s}(A^s ) \} .
$$

\begin{thm}\label{thm:exact3}Let $H$ be a finite dimensional $C^*$-Hopf algebra with its
dual $C^*$-algebra $H^0$. Let $A$ be a unital $C^*$-algebra and $\rho$ a coaction
of $H^0$ on $A$ with $\widehat{\rho}(1\rtimes_{\rho}e)\sim(1\rtimes_{\rho}e)\otimes 1$
in $(A\rtimes_{\rho}H)\otimes H$. Let $A^s =A\otimes\BK$ and $\rho^s$ the coaction of $H^0$ on $A^s$
induced by $\rho$. Let $\rU_{\id}^{\rho}(M(A^s\otimes H^0 ))$ be the group of all unitary elements
$u\in M(A^s \otimes H^0 )$ satisfying that
$$
\rho^s =\Ad(u)\circ\rho^s , \quad (\underline{\rho^s}\otimes\id)(u)(u\otimes 1^0 )=(\id\otimes\Delta^0 )(u) .
$$
Then we have the following exact sequence:
$$
1\longrightarrow\rU_{\id}^{\rho^s}(M(A^s \otimes H^0 ))/\!\sim\,\longrightarrow\Pic_H^{\rho^s}(A^s )\longrightarrow G\longrightarrow 1 ,
$$
where ``$\sim$" is the equivalence relation in $\rU_{\id}^{\rho^s}(M(A^s \otimes H^0 ))$ defined in this section.
\end{thm}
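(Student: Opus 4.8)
The plan is to read off the exact sequence directly from the homomorphism $f_{\rho^s}\colon\Pic_H^{\rho^s}(A^s)\to\Pic(A^s)$, combining the description of its image with the description of its kernel, both of which have already been obtained in this section. I first note that $\rho^s$ is a genuine coaction of $H^0$ on $A^s$ (by the discussion preceding Lemma~\ref{lem:stable}) and that the standing hypothesis $\widehat{\rho}(1\rtimes_{\rho}e)\sim(1\rtimes_{\rho}e)\otimes 1$ in $(A\rtimes_{\rho}H)\otimes H$ is exactly the hypothesis required by Lemma~\ref{lem:surjection2} and Lemma~\ref{lem:image3}; so every earlier result of this section, and of Section~\ref{sec:Picard}, applies with $(A^s,\rho^s)$ in place of $(A,\rho)$.

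For the right-hand arrow: $f_{\rho^s}$ is a group homomorphism, so its image $\Ima f_{\rho^s}$ is a subgroup of $\Pic(A^s)$; by Lemma~\ref{lem:image3} this image equals $G=\{[X_\alpha]\in\Pic(A^s)\mid\alpha\in\Aut_H^{\rho^s}(A^s)\}$. In particular $G$ is a subgroup of $\Pic(A^s)$, and corestricting the codomain of $f_{\rho^s}$ to $G$ yields a \emph{surjective} group homomorphism $\Pic_H^{\rho^s}(A^s)\to G$. This is the map appearing on the right of the sequence, and its surjectivity is exactness at $G$.

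For the left-hand part: by Proposition~\ref{prop:kernel3}, applied to the coaction $\rho^s$ of $H^0$ on $A^s$, the assignment $[u]\mapsto[\,{}_{A^s}(A^s)_{A^s},\lambda_u]$ is a group isomorphism of $\rU_{\id}^{\rho^s}(M(A^s\otimes H^0))/\!\sim$ onto $\Ker f_{\rho^s}$, where ``$\sim$'' is precisely the equivalence relation in $\rU_{\id}^{\rho^s}(M(A^s\otimes H^0))$ used in the statement. Composing this isomorphism with the inclusion $\Ker f_{\rho^s}\hookrightarrow\Pic_H^{\rho^s}(A^s)$ gives an injective homomorphism whose image is the kernel of the corestricted map $\Pic_H^{\rho^s}(A^s)\to G$; this is exactness at $\Pic_H^{\rho^s}(A^s)$, and injectivity of the composite is exactness at the left-hand end. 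Assembling the two arrows produces the asserted exact sequence.

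Since all of the substantive work has already been carried out — the computation of $\Ima f_{\rho^s}$, which is where stability of $A^s$ and Brown's partial-isometry argument enter through Lemmas~\ref{lem:surjection2} and \ref{lem:image3}, and the computation of $\Ker f_{\rho^s}$ through Proposition~\ref{prop:kernel3} — the proof here is a bookkeeping assembly of those facts, and I do not expect a genuine obstacle. The only point that needs checking is that the hypotheses of Lemma~\ref{lem:image3} and Proposition~\ref{prop:kernel3} are met in the present setting, and they are met verbatim.
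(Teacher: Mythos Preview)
Your proof is correct and follows exactly the paper's approach: the paper's own proof reads in its entirety ``This is immediate by Proposition~\ref{prop:kernel3} and Lemma~\ref{lem:image3},'' and you have simply unpacked how those two results assemble into the exact sequence. Your additional remarks verifying that the hypotheses transfer to $(A^s,\rho^s)$ are helpful elaboration but not a different method.
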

\begin{proof}This is immediate by Proposition \ref{prop:kernel3} and Lemma \ref{lem:image3}.
\end{proof}

Let $A$ be a UHF-algebra of type $N^{\infty}$, where $N=\dim H$. Let $\rho$ be the coaction of $H^0$
on $A$ defined in \cite [Section 7]{KT2:coaction}, which has the Rohlin property.
We note that $\widehat{\rho}(1\rtimes_{\rho}e)\sim(1\rtimes_{\rho}e)\otimes 1$ in $(A\rtimes_{\rho}H)\otimes H$
by \cite [Definition 5.1]{KT2:coaction}.

\begin{cor}\label{cor:exact4}With the above notations, we have the following exact sequence:
$$
1\longrightarrow\rU_{\id}^{\rho^s}(M(A^s \otimes H^0 ))\longrightarrow\Pic_H^{\rho^s}(A^s )\longrightarrow G\longrightarrow 1 .
$$
\end{cor}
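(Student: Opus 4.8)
The plan is to deduce the corollary directly from Theorem~\ref{thm:exact3} by showing that, under the present hypotheses, the equivalence relation ``$\sim$'' on $\rU_{\id}^{\rho^s}(M(A^s \otimes H^0 ))$ is trivial, so that the quotient $\rU_{\id}^{\rho^s}(M(A^s \otimes H^0 ))/\!\sim$ appearing in Theorem~\ref{thm:exact3} coincides with $\rU_{\id}^{\rho^s}(M(A^s \otimes H^0 ))$ itself.

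First I would note that Theorem~\ref{thm:exact3} applies: $A$ is a unital $C^*$-algebra and, as remarked just above, $\widehat{\rho}(1\rtimes_{\rho}e)\sim(1\rtimes_{\rho}e)\otimes 1$ in $(A\rtimes_{\rho}H)\otimes H$. Hence we already have the exact sequence
$$
1\longrightarrow\rU_{\id}^{\rho^s}(M(A^s \otimes H^0 ))/\!\sim\,\longrightarrow\Pic_H^{\rho^s}(A^s )\longrightarrow G\longrightarrow 1 ,
$$
and the task reduces to identifying $\rU_{\id}^{\rho^s}(M(A^s \otimes H^0 ))/\!\sim$ with $\rU_{\id}^{\rho^s}(M(A^s \otimes H^0 ))$.

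The key point is that $M(A^s )\cap (A^s )' =\BC 1$. Indeed, since $A$ is a UHF-algebra it is simple and unital, so $A^s =A\otimes\BK$ is simple (for instance, $A\otimes\BK$ is the inductive limit of the simple $C^*$-algebras $M_n (A)$ with injective connecting homomorphisms). As $A^s$ is an essential ideal of $M(A^s )$, any element of $M(A^s )$ commuting with all of $A^s$ lies in the center $Z(M(A^s ))$; and $M(A^s )$ is prime because $A^s$ is simple, so a self-adjoint central element whose spectrum contained two distinct points would, via functional calculus, produce two nonzero ideals of $A^s$ with zero product, which is impossible. Therefore $Z(M(A^s ))=\BC 1$ and hence $M(A^s )\cap (A^s )' =\BC 1$. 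Consequently, if $u, v\in\rU_{\id}^{\rho^s}(M(A^s \otimes H^0 ))$ satisfy $u\sim v$, then the implementing unitary $w\in M(A^s )\cap (A^s )'$ is a scalar of modulus one, so $\underline{\rho^s}(w)=w\otimes 1^0$ and
$$
u=(w^* \otimes 1^0 )\underline{\rho^s}(w)v=(w^* w\otimes 1^0 )v=v .
$$
Thus ``$\sim$'' is the identity relation, and substituting $\rU_{\id}^{\rho^s}(M(A^s \otimes H^0 ))/\!\sim\,=\rU_{\id}^{\rho^s}(M(A^s \otimes H^0 ))$ into the exact sequence above yields the conclusion.

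The only point requiring any care is the identity $M(A^s )\cap (A^s )'=\BC 1$, that is, that the center of the multiplier algebra of the simple $C^*$-algebra $A^s$ is trivial; once this is in hand, the rest is immediate from Theorem~\ref{thm:exact3} and the definition of ``$\sim$''. I do not expect any serious obstacle here.
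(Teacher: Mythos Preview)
Your proof is correct and follows essentially the same approach as the paper: both argue that $A^s$ is simple, deduce $M(A^s)\cap(A^s)'=\BC 1$, and then apply Theorem~\ref{thm:exact3}. The only difference is that the paper cites Pedersen \cite[Corollary~4.4.8]{Pedersen:auto} for the triviality of the relative commutant, whereas you sketch a direct argument; you might simply invoke that reference instead.
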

\begin{proof}Since $A^s$ is simple, $M(A^s)\cap (A^s )'=\BC 1$ by
Pedersen \cite [Corollary 4.4.8]{Pedersen:auto}. Therefore by Theorem \ref{thm:exact3},
we obtain the conclusion.
\end{proof}

\section{Equivariant Picard groups and crossed products}\label{sec:duality}
Let $(\rho, u)$ be a twisted coaction of $H^0$ on a unital $C^*$-algebra $A$.
Let $f$ be a map from $\Pic_H^{\rho, u}(A)$ to $\Pic_{H^0}^{\widehat{\rho}}(A\rtimes_{\rho, u}H)$
defined by
$$
f([X, \lambda])=[X\rtimes_{\lambda}H, \widehat{\lambda}]
$$
for any $[X, \lambda]\in\Pic_H^{\rho, u}(A)$. In this section, we shall show that $f$ is an isomorphism
of $\Pic_H^{\rho, u}(A)$ onto $\Pic_{H^0}^{\widehat{\rho}}(A\rtimes_{\rho, u}H)$.
We can see that $f$ is well-defined in a straightforward way.
We show that $f$ is a homomorphism of $\Pic_H^{\rho, u}(A)$ to
$\Pic_{H^0}^{\widehat{\rho}}(A\rtimes_{\rho, u}H)$.
Let $A$, $B$ and $C$ be unital $C^*$-algebras and $(\rho, u)$, $(\sigma, v)$ and
$(\gamma, w)$ be twisted coactions of $H^0$ on $A$, $B$ and $C$,
respectively. Let $\lambda$ be a twisted coaction of $H^0$ on an $A-B$-equivalence
bimodule $X$ with respect to $(A, B, \rho, u, \sigma, v )$.
Also, let $\mu$ be a twisted coaction of $H^0$ on a $B-C$-equivalence bimodule
$Y$ with respect to $(B, C, \sigma, v, \gamma, w )$. Let $\Phi$ be a linear map from
$(X\otimes_{B} Y)\rtimes_{\lambda\otimes\mu}H$ to
$(X\rtimes_{\lambda}H)\otimes_{B\rtimes_{\sigma, v}H} (Y\rtimes_{\mu}H)$
defined by
$$
\Phi(x\otimes y\rtimes_{\lambda\otimes\mu}h)=(x\rtimes_{\lambda}1)\otimes(y\rtimes_{\mu}h )
$$
for any $x\in X$, $y\in Y$ and $h\in H$. By routine computations, $\Phi$ is well-defined.
We note that $(X\rtimes_{\lambda}H)\otimes_{B\rtimes_{\sigma, v}H}(Y\rtimes_{\mu}H)$
is consisting of finite sums of elements in the form $(x\rtimes_{\lambda}1)\otimes(y\rtimes_{\mu}h)$
by the definition of $(X\rtimes_{\lambda}H)\otimes_{B\rtimes_{\sigma, v}H}(Y\rtimes_{\mu}H)$,
where $x\in X$, $y\in Y$ and $h\in H$. Hence we can see that $\Phi$ is bijective and its inverse map
$\Phi^{-1}$ is :
$$
(X\rtimes_{\lambda}H)\otimes_{B\rtimes_{\sigma, u}H}(Y\rtimes_{\mu}H)
\to(X\otimes_B Y)\rtimes_{\lambda\otimes\mu}H:(x\rtimes_{\lambda}1)\otimes(y\rtimes_{\mu}h)
\mapsto x\otimes y\rtimes_{\lambda\otimes\mu}h .
$$
Furthermore, we have the following lemmas:

\begin{lemma}\label{lem:inner}With the above notations,
\begin{align*}
{}_{A\rtimes_{\rho, u}H} \la \Phi(x\otimes y\rtimes_{\lambda\otimes\mu}h), \,
\Phi(z\otimes r\rtimes_{\lambda\otimes\mu}l) \ra & ={}_{A\rtimes_{\rho, u}H} 
\la x\otimes y\rtimes_{\lambda\otimes\mu}h, \, z\otimes r \rtimes_{\lambda\otimes\mu} l \ra , \\
\la \Phi(x\otimes y\rtimes_{\lambda\otimes\mu}h), \,
\Phi(z\otimes r\rtimes_{\lambda\otimes\mu}l) \ra_{C\rtimes_{\gamma, w}H} & =
\la x\otimes y\rtimes_{\lambda\otimes\mu}h, \, z\otimes r \rtimes_{\lambda\otimes\mu}l \ra_{C\rtimes_{\gamma, w}H}
\end{align*}
for any $x, z\in X$, $y, r\in Y$ and $h, l\in H$.
\end{lemma}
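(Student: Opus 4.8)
The plan is to verify both equalities by a direct computation, expanding each side with the explicit formulas for the inner products on crossed products of Hilbert $C^*$-bimodules recorded in Section \ref{sec:morita}, together with the standard description of the interior tensor product of equivalence bimodules. Throughout I would use: the equivalent conditions (1)$'$--(6)$'$ of Definition \ref{Def:coaction} for the twisted coactions $\lambda$, $\mu$ and $\lambda\otimes\mu$; the identities of Remark \ref{remark:bimodule}(2) for sliding coefficients through the inner products; the description of $\cdot_{\lambda\otimes\mu}$ from \cite[Proposition 3.1]{KT3:equivalence}; and the cocycle, normality and antipode relations for $\widehat{u}$, $\widehat{v}$, $\widehat{w}$. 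I would write $\xi=(x\rtimes_{\lambda}1)\otimes(y\rtimes_{\mu}h)$, $\zeta=(z\rtimes_{\lambda}1)\otimes(r\rtimes_{\mu}l)$, and use that $\Phi(x\otimes y\rtimes_{\lambda\otimes\mu}h)=\xi$, $\Phi(z\otimes r\rtimes_{\lambda\otimes\mu}l)=\zeta$.

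First I would treat the right $C\rtimes_{\gamma,w}H$-valued inner product, which is the shorter of the two. By associativity of the interior tensor product, $\la\xi,\zeta\ra_{C\rtimes_{\gamma,w}H}=\la y\rtimes_{\mu}h,\ \la x\rtimes_{\lambda}1,z\rtimes_{\lambda}1\ra_{B\rtimes_{\sigma,v}H}(r\rtimes_{\mu}l)\ra_{C\rtimes_{\gamma,w}H}$. Because the first legs are $\rtimes 1$, normality of $\widehat{v}$ and the counit relations collapse $\la x\rtimes_{\lambda}1,z\rtimes_{\lambda}1\ra_{B\rtimes_{\sigma,v}H}$ to $\la x,z\ra_B\rtimes_{\sigma,v}1$, and then the left action formula collapses $(\la x,z\ra_B\rtimes_{\sigma,v}1)(r\rtimes_{\mu}l)$ to $\la x,z\ra_B\,r\rtimes_{\mu}l$. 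Feeding this into the right-inner-product formula for $Y\rtimes_{\mu}H$ and using $\la y,\la x,z\ra_B\,r\ra_C=\la x\otimes y,z\otimes r\ra_C$, one obtains exactly the right-inner-product formula for $(X\otimes_B Y)\rtimes_{\lambda\otimes\mu}H$ evaluated on $(x\otimes y)\rtimes_{\lambda\otimes\mu}h$ and $(z\otimes r)\rtimes_{\lambda\otimes\mu}l$, which is the asserted identity.

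For the left $A\rtimes_{\rho,u}H$-valued inner product I would again use associativity: $\la\xi,\zeta\ra_{A\rtimes_{\rho,u}H}={}_{A\rtimes_{\rho,u}H}\la (x\rtimes_{\lambda}1)\,{}_{B\rtimes_{\sigma,v}H}\la y\rtimes_{\mu}h,r\rtimes_{\mu}l\ra,\ z\rtimes_{\lambda}1\ra$. I would expand the inner ${}_{B\rtimes_{\sigma,v}H}$-inner product by the corresponding formula for $Y\rtimes_{\mu}H$, let $x\rtimes_{\lambda}1$ act on the resulting element of $B\rtimes_{\sigma,v}H$ (the $\rtimes 1$ again collapses the right-action formula to multiplication of the coefficient), and finally apply the left-inner-product formula for $X\rtimes_{\lambda}H$ paired against $z\rtimes_{\lambda}1$ (where $1^{*}=1$, $S(1)=1$, and normality of $\widehat{v}$ with the counit relations produce a further simplification). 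On the other side I would expand ${}_{A\rtimes_{\rho,u}H}\la (x\otimes y)\rtimes_{\lambda\otimes\mu}h,(z\otimes r)\rtimes_{\lambda\otimes\mu}l\ra$ by the left-inner-product formula for $(X\otimes_B Y)\rtimes_{\lambda\otimes\mu}H$, substitute the definition of $\cdot_{\lambda\otimes\mu}$ on $z\otimes r$ and the identity ${}_A\la x\otimes y, z\otimes r\ra={}_A\la x\cdot{}_B\la y,r\ra,\ z\ra$ for the interior tensor product, and match the two expressions using the twisted modular condition, the cocycle condition for $\widehat{w}$, and the antipode relations $S(hl)=S(l)S(h)$, $\epsilon\circ S=\epsilon$, $(S\otimes S)\circ\Delta=\Delta^{\mathrm{op}}\circ S$.

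The routine part is the bookkeeping; the genuine obstacle is controlling the Sweedler indices and the interaction of the antipode with the $*$-operation (the factors $S(\cdot)^{*}$ appearing in the crossed-product inner products), so that the cocycle $\widehat{w}$ reassembles into precisely the pattern dictated by the left-inner-product formula on $(X\otimes_B Y)\rtimes_{\lambda\otimes\mu}H$. It is worth noting a cross-check: once the right-inner-product identity is established, the left-inner-product identity can alternatively be deduced with no further computation from the compatibility ${}_A\la\eta,\eta'\ra\eta''=\eta\la\eta',\eta''\ra$ of equivalence bimodules and the surjectivity of $\Phi$, provided one knows $\Phi$ is a bimodule map; I would record this, either as a sanity check on the direct computation or, if the bimodule-linearity of $\Phi$ is proved first, as the actual argument.
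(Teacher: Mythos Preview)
Your proposal is correct and follows essentially the same route as the paper: both sides are expanded via the tensor-product identity ${}_{A\rtimes H}\la \xi\otimes\eta,\xi'\otimes\eta'\ra={}_{A\rtimes H}\la \xi\cdot{}_{B\rtimes H}\la\eta,\eta'\ra,\xi'\ra$ (resp.\ its right-hand analogue), the crossed-product inner-product formulas from Section~\ref{sec:morita}, and the collapse coming from the $\rtimes_{\lambda}1$ legs via normality of the cocycles and the counit. The paper writes out the left $A\rtimes_{\rho,u}H$-valued case in full and dispatches the right-valued case with ``similarly'', whereas you do the (indeed simpler) right-valued case first; your additional cross-check via the compatibility ${}_A\la\eta,\eta'\ra\eta''=\eta\la\eta',\eta''\ra_B$ is a nice sanity check that the paper does not record.
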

\begin{proof}
We can prove this lemma by routine computations. Indeed,
\begin{align*}
& {}_{A\rtimes_{\rho, u}H} \la \Phi(x\otimes y\rtimes_{\lambda\otimes\mu}h), \, 
\Phi(z\otimes r\rtimes_{\lambda\otimes\mu}l)\ra \\
& ={}_{A\rtimes_{\rho, u}H} \la (x\rtimes_{\lambda}1)\otimes(y\rtimes_{\mu}h), \, 
(z\rtimes_{\lambda}1)\otimes (r\rtimes_{\mu}l) \ra \\
& ={}_{A\rtimes_{\rho, u}H} \la (x\rtimes_{\lambda}1) \, {}_{B\rtimes_{\sigma, y}H} \la y\rtimes_{\mu}h, \,
r\rtimes_{\mu}l \ra , \, z\rtimes_{\lambda}1 \ra \\
& ={}_{A\rtimes_{\rho, u}H} \la (x\rtimes_{\lambda}1) \, ({}_B \la y , \,
[S(h_{(2)}l_{(3)}^* )^* \cdot_{\mu}r]\widehat{w}(S(h_{(1)}l_{(2)}^* )^* , \, l_{(1)}) \ra \\
& \rtimes_{\sigma, v}h_{(3)}l_{(4)}^* ), \, z\rtimes_{\lambda}1 \ra \\
& ={}_{A\rtimes_{\rho, u}H} \la x {}_B \la y, \, [S(h_{(2)}l_{(3)}^* )^* \cdot_{\mu} r]
\widehat{w}(S(h_{(1)}l_{(2)}^* )^* , \, l_{(1)}) \ra \rtimes_{\lambda}h_{(3)}l_{(4)}^* , \, z\rtimes_{\lambda}1 \ra \\
& ={}_A \la x \, {}_B \la y, \, [S(h_{(2)}l_{(3)}^* )^* \cdot_{\mu}r ]
\widehat{w}(S(h_{(1)}l_{(2)}^* )^* , l_{(1)}) \ra , \, [S(h_{(3)}l_{(4)}^* )^* \cdot_{\lambda}z ] \ra
\rtimes_{\rho, u}h_{(4)}l_{(5)}^* .
\end{align*}
On the other hand,
\begin{align*}
& {}_{A\rtimes_{\rho, u}H} \la x\otimes y\rtimes_{\lambda\otimes\mu}h, \, z\otimes r \rtimes_{\lambda\otimes\mu}l \ra \\
& ={}_A \la x\otimes y, \, [S(h_{(2)}l_{(3)}^* )^* \cdot_{\lambda\otimes\mu}z\otimes r]
\widehat{w}(S(h_{(1)}l_{(2)}^* )^* , \, l_{(1)}) \ra \rtimes_{\rho, u}h_{(3)}l_{(4)}^* \\
& ={}_A \la x\otimes y, \, [S(h_{(3)}l_{(4)}^* )^* \cdot_{\lambda}z]
\otimes[S(h_{(2)}l_{(3)}^* )^* \cdot_{\mu}r]\widehat{w}(S(h_{(1)}l_{(2)}^* )^*, \, l_{(1)} )\ra
\rtimes_{\rho, u}h_{(4)}l_{(5)}^* \\
& ={}_A \la x \, {}_B \la y, \, [S(h_{(2)}l_{(3)}^* )^* \cdot_{\mu}r ]
\widehat{w}(S(h_{(1)}l_{(2)}^* )^* , l_{(1)}) \ra , \, [S(h_{(3)}l_{(4)}^* )^* \cdot_{\lambda}z ] \ra
\rtimes_{\rho, u}h_{(4)}l_{(5)}^* .
\end{align*}
Thus we obtain that
$$
{}_{A\rtimes_{\rho, u}H} \la \Phi(x\otimes y\rtimes_{\lambda\otimes\mu}h), \,
\Phi(z\otimes r\rtimes_{\lambda\otimes\mu}l) \ra ={}_{A\rtimes_{\rho, u}H} 
\la x\otimes y\rtimes_{\lambda\otimes\mu}h, \, z\otimes r \rtimes_{\lambda\otimes\mu} l \ra .
$$
Similarly we obtain that
$$
\la \Phi(x\otimes y\rtimes_{\lambda\otimes\mu}h), \,
\Phi(z\otimes r\rtimes_{\lambda\otimes\mu}l) \ra_{C\rtimes_{\gamma, w}H}=
\la x\otimes y\rtimes_{\lambda\otimes\mu}h, \, z\otimes r \rtimes_{\lambda\otimes\mu}l \ra_{C\rtimes_{\gamma, w}H} .
$$
\end{proof}

\begin{lemma}\label{lem:equivariant}With the above notations, $\Phi$ is an
$A\rtimes_{\rho, u}H-C\rtimes_{\sigma, w}H$-equivalence bimodule isomorphism of
$(X\otimes_B Y)\rtimes_{\lambda\otimes\mu}H$ onto
$(X\rtimes_{\lambda}H)\otimes_{B\rtimes_{\sigma, v}H}(Y\rtimes_{\mu}H)$
satisfying that
$$
\Phi(\phi\cdot_{\widehat{\lambda\otimes\mu}}(x\otimes y\rtimes_{\lambda\otimes\mu}h))
=\phi\cdot_{\widehat{\lambda}\otimes\widehat{\mu}}\Phi(x\otimes y\rtimes_{\lambda\otimes\mu}h)
$$
for any $x\in X$, $y\in Y$, $h\in H$ and $\phi\in H^0$.
\end{lemma}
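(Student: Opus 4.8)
The plan is to build on what is already available: $\Phi$ has been shown just above to be a bijective linear map, and Lemma \ref{lem:inner} shows that it preserves both the left $A\rtimes_{\rho, u}H$-valued and the right $C\rtimes_{\gamma, w}H$-valued inner products. Hence it remains only to check that $\Phi$ intertwines the module actions and then to verify the identity relating the dual coactions.

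First I would verify left $A\rtimes_{\rho, u}H$-linearity and right $C\rtimes_{\gamma, w}H$-linearity of $\Phi$. Right linearity is essentially immediate from the defining formula $\Phi(x\otimes y\rtimes_{\lambda\otimes\mu}h)=(x\rtimes_{\lambda}1)\otimes(y\rtimes_{\mu}h)$ together with the right action formula on $(X\rtimes_{\lambda}H)\otimes_{B\rtimes_{\sigma, v}H}(Y\rtimes_{\mu}H)$: the right $C\rtimes_{\gamma, w}H$-action only affects the $Y\rtimes_{\mu}H$-leg, exactly as it affects the single $H$-component of $(X\otimes_B Y)\rtimes_{\lambda\otimes\mu}H$. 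For left linearity I would expand $(a\rtimes_{\rho, u}h)\Phi(x\otimes y\rtimes_{\lambda\otimes\mu}l)$ by the left action formula on the tensor product of crossed products, and $\Phi((a\rtimes_{\rho, u}h)(x\otimes y\rtimes_{\lambda\otimes\mu}l))$ by the left action formula on $(X\otimes_B Y)\rtimes_{\lambda\otimes\mu}H$ together with the definition of $\cdot_{\lambda\otimes\mu}$ from \cite[Proposition 3.1]{KT3:equivalence}; moving the cocycle $\widehat{v}$ across, both sides collapse to the same Sweedler expression, in the same manner as in the proof of Lemma \ref{lem:inner}. Alternatively, right linearity together with preservation of the left inner product and the identity ${}_A \la x, y \ra z=x\la y, z \ra_B$ forces left linearity, and symmetrically, so one of the two computations can be suppressed. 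Combined with Lemma \ref{lem:inner} and bijectivity, this makes $\Phi$ an $A\rtimes_{\rho, u}H-C\rtimes_{\gamma, w}H$-equivalence bimodule isomorphism.

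Finally I would check the equivariance identity. Writing out the dual coactions, $\phi\cdot_{\widehat{\lambda\otimes\mu}}(x\otimes y\rtimes_{\lambda\otimes\mu}h)=\phi(h_{(2)})(x\otimes y\rtimes_{\lambda\otimes\mu}h_{(1)})$, while on $(X\rtimes_{\lambda}H)\otimes_{B\rtimes_{\sigma, v}H}(Y\rtimes_{\mu}H)$ the action $\phi\cdot_{\widehat{\lambda}\otimes\widehat{\mu}}$ splits as $\phi_{(1)}\cdot_{\widehat{\lambda}}$ on the first leg and $\phi_{(2)}\cdot_{\widehat{\mu}}$ on the second (the dual coactions being honest coactions, no cocycle appears). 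Since the $X\rtimes_{\lambda}H$-leg of $\Phi(x\otimes y\rtimes_{\lambda\otimes\mu}h)$ carries the trivial group element $1$, one has $\phi_{(1)}\cdot_{\widehat{\lambda}}(x\rtimes_{\lambda}1)=\phi_{(1)}(1)(x\rtimes_{\lambda}1)$ and $\phi_{(2)}\cdot_{\widehat{\mu}}(y\rtimes_{\mu}h)=\phi_{(2)}(h_{(2)})(y\rtimes_{\mu}h_{(1)})$, and $\phi_{(1)}(1)\phi_{(2)}(h_{(2)})=\phi(h_{(2)})$ by the definition of $\Delta^0$. Thus $\phi\cdot_{\widehat{\lambda}\otimes\widehat{\mu}}\Phi(x\otimes y\rtimes_{\lambda\otimes\mu}h)=\phi(h_{(2)})(x\rtimes_{\lambda}1)\otimes(y\rtimes_{\mu}h_{(1)})$, which is exactly $\Phi(\phi\cdot_{\widehat{\lambda\otimes\mu}}(x\otimes y\rtimes_{\lambda\otimes\mu}h))$. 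The main obstacle is the Sweedler bookkeeping in the left module computation, where the cocycle $\widehat{v}$ appears on both sides and must be matched term by term; but this is of exactly the same nature and difficulty as the computation already carried out in Lemma \ref{lem:inner}, so no genuinely new difficulty arises.
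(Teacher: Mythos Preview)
Your proposal is correct and follows essentially the same approach as the paper: the equivariance computation you give is identical to the paper's, and for the bimodule isomorphism part the paper, like your ``alternative'' remark, observes that bijectivity together with preservation of both inner products (Lemma~\ref{lem:inner}) already forces $\Phi$ to be an equivalence bimodule isomorphism, citing the remark after \cite[Definition~1.1.18]{JT:KK} rather than checking module linearity directly.
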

\begin{proof}By Lemma \ref{lem:inner} and the remark after Jensen and Thomsen \cite [Definition 1.1.18]{JT:KK},
we see that $\Phi$ is an $A\rtimes_{\rho, u}H-C\rtimes_{\sigma, w}H$-equivalence bimodule isomorphism of
$(X\otimes_B Y)\rtimes_{\lambda\otimes\mu}H$ onto
$(X\rtimes_{\lambda}H)\otimes_{B\rtimes_{\sigma, v}H}(Y\rtimes_{\mu}H)$. 
Furthermore, for any $x\in X$, $y\in Y$, $h\in H$ and $\phi\in H^0$,
\begin{align*}
\Phi(\phi\cdot_{\widehat{\lambda\otimes\mu}}(x\otimes y\rtimes_{\lambda\otimes\mu}h)) & =
\Phi(x\otimes y\rtimes_{\lambda\otimes\mu}h_{(1)}\phi(h_{(2)})) \\
& =(x\rtimes_{\lambda}1)\otimes (y\rtimes_{\mu}h_{(1)})\phi(h_{(2)}) \\
& =[\phi_{(1)}\cdot_{\widehat{\lambda}}(x\rtimes_{\lambda}1)]
\otimes[\phi_{(2)}\cdot_{\widehat{\mu}}(y\rtimes_{\mu}h)] \\
& =\phi\cdot_{\widehat{\lambda}\otimes\widehat{\mu}}\Phi(x\otimes y\rtimes_{\lambda\otimes\mu}h) .
\end{align*}
Therefore, we obtain the conclusion.
\end{proof}

\begin{cor}\label{cor:homo}Let $f$ be a map from $\Pic_H^{\rho, u}(A)$ to
$\Pic_{H^0}^{\widehat{\rho}}(A\rtimes_{\rho, u}H)$ defined by $f([X,  \lambda])
=[X\rtimes_{\lambda}H, \widehat{\lambda}]$ for any $[X, \lambda]\in\Pic_H^{\rho, u}(A)$.
Then $f$ is a homomorphism of $\Pic_H^{\rho, u}(A)$ to
$\Pic_{H^0}^{\widehat{\rho}}(A\rtimes_{\rho, u}H)$.
\end{cor}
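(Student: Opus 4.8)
The plan is to deduce the statement directly from Lemma~\ref{lem:equivariant}. Recall that the product in $\Pic_H^{\rho, u}(A)$ is $[X, \lambda][Y, \mu] = [X\otimes_A Y, \, \lambda\otimes\mu]$, and likewise the product in $\Pic_{H^0}^{\widehat{\rho}}(A\rtimes_{\rho, u}H)$ is computed by the interior tensor product of equivalence bimodules over $A\rtimes_{\rho, u}H$ together with the tensor product of the corresponding dual coactions. Hence, for $[X, \lambda], [Y, \mu]\in\Pic_H^{\rho, u}(A)$, it suffices to exhibit an $A\rtimes_{\rho, u}H - A\rtimes_{\rho, u}H$-equivalence bimodule isomorphism
$$
(X\otimes_A Y)\rtimes_{\lambda\otimes\mu}H\longrightarrow(X\rtimes_{\lambda}H)\otimes_{A\rtimes_{\rho, u}H}(Y\rtimes_{\mu}H)
$$
that intertwines $\widehat{\lambda\otimes\mu}$ with $\widehat{\lambda}\otimes\widehat{\mu}$.

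First I would specialize Lemmas~\ref{lem:inner} and~\ref{lem:equivariant} to the case $B = C = A$ and $(\sigma, v) = (\gamma, w) = (\rho, u)$. In that situation the map $\Phi$, defined by $\Phi(x\otimes y\rtimes_{\lambda\otimes\mu}h) = (x\rtimes_{\lambda}1)\otimes(y\rtimes_{\mu}h)$, is precisely the isomorphism required above: Lemma~\ref{lem:inner} shows that $\Phi$ preserves both $A\rtimes_{\rho, u}H$-valued inner products, and Lemma~\ref{lem:equivariant} shows that $\Phi$ is an equivalence bimodule isomorphism satisfying $\Phi(\phi\cdot_{\widehat{\lambda\otimes\mu}}z) = \phi\cdot_{\widehat{\lambda}\otimes\widehat{\mu}}\Phi(z)$ for all $\phi\in H^0$ and all $z\in(X\otimes_A Y)\rtimes_{\lambda\otimes\mu}H$. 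Therefore
\begin{align*}
f([X, \lambda][Y, \mu]) & = [(X\otimes_A Y)\rtimes_{\lambda\otimes\mu}H, \, \widehat{\lambda\otimes\mu}] \\
& = [(X\rtimes_{\lambda}H)\otimes_{A\rtimes_{\rho, u}H}(Y\rtimes_{\mu}H), \, \widehat{\lambda}\otimes\widehat{\mu}] \\
& = f([X, \lambda])f([Y, \mu])
\end{align*}
in $\Pic_{H^0}^{\widehat{\rho}}(A\rtimes_{\rho, u}H)$, so $f$ is a homomorphism (and automatically carries the unit $[{}_A A_A, \rho]$ to the unit of $\Pic_{H^0}^{\widehat{\rho}}(A\rtimes_{\rho, u}H)$).

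There is no real obstacle here beyond bookkeeping, since the well-definedness of $f$ has already been noted and all the substantive work---constructing $\Phi$, checking it is isometric for both inner products, and checking equivariance with respect to the dual coactions---is contained in Lemmas~\ref{lem:inner} and~\ref{lem:equivariant}. The only points to verify are that the product on $\Pic_{H^0}^{\widehat{\rho}}(A\rtimes_{\rho, u}H)$ is indeed given by the interior tensor product over $A\rtimes_{\rho, u}H$ equipped with $\widehat{\lambda}\otimes\widehat{\mu}$, and that this tensor product of dual coactions agrees, under $\Phi$, with the dual coaction of $\lambda\otimes\mu$; both follow from the definitions in Sections~\ref{sec:morita} and~\ref{sec:Picard}, so no new computation is required.
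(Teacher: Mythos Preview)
Your proposal is correct and follows exactly the paper's approach: the paper's proof is simply ``This is immediate by Lemma~\ref{lem:equivariant},'' and your argument spells out precisely this specialization to $B=C=A$, $(\sigma,v)=(\gamma,w)=(\rho,u)$ together with the observation that the equivariant bimodule isomorphism $\Phi$ yields $f([X,\lambda][Y,\mu])=f([X,\lambda])f([Y,\mu])$.
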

\begin{proof}This is immediate by Lemma \ref{lem:equivariant}.
\end{proof}

Next, we construct the inverse homomorphism of $f$ of
$\Pic_{H^0}^{\widehat{\rho}}(A\rtimes_{\rho}H)$ to $\Pic_H^{\rho, u}(A)$.
First, we note the following:
Let $(\alpha, v)$ and $(\beta, z)$ be twisted coactions of $H^0$ on
unital $C^*$-algebras $A$ and $B$, respectively. We suppose that there is an isomorphism
$\Phi$ of $B$ onto $A$ such that $(\Phi\otimes\id)\circ\beta=\alpha\circ\Phi$ and $v=(\Phi\otimes\id)(z)$.
Let $(X, \lambda)\in\Equi_H^{\alpha, v}(A)$. We construct an element $(X_{\Phi}, \lambda_{\Phi})$
in $\Equi_H^{\beta, z}(B)$ from $(X, \lambda)\in\Equi_H^{\alpha, v}(A)$ and $\Phi$ as follows: Let
$X_{\Phi}=X$ as vector spaces. For any $x, y\in X_{\Phi}$ and $b\in B$,
\begin{align*}
& b\cdot x=\Phi(b)x, \quad x\cdot b=x\Phi(b) \\
& {}_B \la x, y \ra =\Phi^{-1}({}_A \la x, y \ra ), \quad \la x, y \ra_B =\Phi^{-1} (\la x, y \ra_A ) .
\end{align*}
We regard $\lambda$ as a linear map from $X_{\Phi}$ to $X_{\Phi}\otimes H^0$.
We denote it by $\lambda_{\Phi}$. Then $(X_{\Phi}, \lambda_{\Phi})$ is an
element in $\Equi_H^{\beta, z}(B)$. By easy computations, the map
$$
\Pic_H^{\alpha, v}(A)\to\Pic_H^{\beta, z}(B): [X, \lambda]\mapsto [X_{\Phi}, \lambda_{\Phi}]
$$
is well-defined and it is an isomorphism of $\Pic_H^{\alpha, v}(A)$ onto
$\Pic_H^{\beta, z}(B)$. By Corollary \ref{cor:homo}, there is the homomorphism $\widehat{f}$ of
$\Pic_{H^0}^{\widehat{\rho}}(A\rtimes_{\rho, u}H)$ to
$\Pic_H^{\widehat{\widehat{\rho}}}(A\rtimes_{\rho, u}H\rtimes_{\widehat{\rho}}H^0 )$
defined by
$$
\widehat{f}([Y, \mu])=[Y\rtimes_{\mu}H^0 , \widehat{\mu}]
$$
for any $[Y, \mu]\in\Pic_{H^0}^{\widehat{\rho}}(A\rtimes_{\rho, u}H)$.
By Proposition \ref{prop:nonunital}, there are an isomorphism $\Psi_A$ of $A\otimes M_N (\BC)$
onto $A\rtimes_{\rho, u}H \rtimes_{\widehat{\rho}}H^0$ and a unitary
element $U\in (A\rtimes_{\rho, u}H\rtimes_{\widehat{\rho}}H^0)\otimes H^0$
such that
\begin{align*}
\Ad(U)\circ\widehat{\widehat{\rho}} & =(\Psi_A \otimes\id_{H^0})\circ(\rho\otimes\id_{M_N (\BC)})\circ\Psi_A^{-1} , \\
(\Psi_A\otimes\id_{H^0}\otimes\id_{H^0})(u\otimes I_N ) & =(U\otimes 1^0 )(\widehat{\widehat{\rho}}\otimes
\id_{H^0})(U)(\id\otimes\Delta^0 )(U^* ) .
\end{align*}
Let $\overline{\rho}=(\Psi_A^{-1}\otimes\id_{H^0})\circ\widehat{\widehat{\rho}}\circ\Psi_A$.
By the above discussions, there is the isomorphism $g_1$ of
$\Pic_H^{\widehat{\widehat{\rho}}}(A\rtimes_{\rho, u}H\rtimes_{\widehat{\rho}}H^0 )$
onto $\Pic_H^{\overline{\rho}}(A\otimes M_N (\BC))$ defined by
$$
g_1 ([X, \lambda])=[X_{\Psi_A}, \, \lambda_{\Psi_A}]
$$
for any $[X, \lambda]\in\Pic_H^{\widehat{\widehat{\rho}}}(A\rtimes_{\rho, u}H\rtimes_{\widehat{\rho}}H^0 )$.
Furthermore, the coaction $\overline{\rho}$ of $H^0$ on $A\otimes M_N (\BC)$ is
exterior equivalent to the twisted coaction $(\rho\otimes\id, u\otimes I_N )$.
Indeed,
$$
\rho\otimes\id_{M_N (\BC)}=(\Psi_A^{-1}\otimes\id_{H^0})\circ\Ad(U)\circ\widehat{\widehat{\rho}}\circ\Psi_A
=\Ad(U_1 )\circ\overline{\rho} .
$$
Let $U_1 =(\Psi_A^{-1}\otimes\id_{H^0})(U)$. Since $(\Psi_A^{-1}\otimes\id_{H^0}\otimes\id_{H^0})\circ (\id\otimes\Delta^0 )=(\id\otimes\Delta^0 )
\circ(\Psi_A^{-1}\otimes\id_{H^0})$,
$$
u\otimes I_N =(U_1 \otimes 1^0 )(\overline{\rho}\otimes\id)(U_1 )(\id\otimes\Delta^0 )(U_1^* ).
$$

We also note the following: We consider twisted coactions $(\alpha, v)$ and $(\beta, z)$ of
$H^0$ on a unital $C^*$-algebra $A$. We suppose that
$(\alpha, v)$ and $(\beta, z)$ are exterior equivalent. Then there is a unitary element
$w$ in $A\otimes H^0$ such that
$$
\beta=\Ad(w)\circ\alpha, \quad z=(w\otimes 1^0 )(\rho\otimes\id)(w)v(\id\otimes\Delta^0 )(w^* )
$$
By Lemmas \ref{lem:extmorita} and \ref{lem:isom} and their proofs, there is the isomorphism
$g_2$ of $\Pic_H^{\alpha, v}(A)$ onto $\Pic_H^{\beta, z}(A)$ defined by
$g_2 ([X, \lambda])=[X, \, \Ad(w)\circ\lambda ]$ for any $[X, \lambda]\in\Pic_H^{\alpha, v}(A)$,
where $\Ad(w)\circ\lambda$ means a linear map from $X$ to $X\otimes H^0$ defined
by $(\Ad(w)\circ\lambda)(x)=w\lambda(x)w^*$
for any $x\in X$, which is a coaction of $H^0$ on $X\otimes H^0$
with respect to $(A, A, \beta, z, \beta, z)$. Since $\overline{\rho}$ and $(\rho\otimes\id, \, u\otimes I_N )$ are
exterior equivalent, by the above discussions, there is the isomorphism $g_2$ of
$\Pic_H^{\overline{\rho}}(A\otimes M_N (\BC))$ onto
$\Pic_H^{\rho\otimes\id_{M_N (\BC)}, \, u\otimes I_N}(A\otimes
M_N (\BC))$ defined by
$$
g_2 ([X, \lambda])=[X, \, \Ad(U_1 )\circ\lambda]
$$
for any $[X, \lambda]\in\Pic_H^{\overline{\rho}}(A\otimes  M_N (\BC))$. By easy
computations, $(\rho, u)$ is strongly Morita equivalent to $(\rho\otimes\id_{M_N (\BC)}, \, u\otimes I_N )$.
Hence by Lemma \ref{lem:isom} and its proof, there is the isomorphism $g_3$ of
$\Pic_H^{\rho, u}(A)$ onto $\Pic_H^{\rho\otimes\id_{M_N (\BC)}, \, u\otimes I_N}(A\otimes M_N (\BC))$
defined by
$$
g_3 ([X, \lambda])=[X\otimes M_N (\BC), \, \lambda\otimes\id_{M_N (\BC)}]
$$
for any $[X, \lambda]\in\Pic_H^{\rho, u}(A)$. Let $g=g_3^{-1}\circ g_2 \circ g_1 \circ\widehat{f}$.
Then $g$ is a homomorphism of $\Pic_{H^0}^{\widehat{\rho}}(A\rtimes_{\rho, u}H)$ to
$\Pic_H^{\rho, u}(A)$.

\begin{prop}\label{prop:identity}With the above notations, $g\circ f=\id$ on $\Pic_H^{\rho, u}(A)$.
\end{prop}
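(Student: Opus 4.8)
The plan is to evaluate $g\circ f$ on an arbitrary class $[X,\lambda]\in\Pic_H^{\rho,u}(A)$ and follow it step by step through the four maps whose composite is $g$, arriving at something that is visibly $g_3([X,\lambda])$, so that the final application of $g_3^{-1}$ returns $[X,\lambda]$. First I would unwind the definitions. By Corollary \ref{cor:homo}, $f([X,\lambda])=[X\rtimes_{\lambda}H,\widehat{\lambda}]$, and then $\widehat{f}$ sends this to $[X\rtimes_{\lambda}H\rtimes_{\widehat{\lambda}}H^0,\widehat{\widehat{\lambda}}]$, the second dual of $\lambda$, which is a coaction of $H^0$ on the equivalence bimodule $X\rtimes_{\lambda}H\rtimes_{\widehat{\lambda}}H^0$ with respect to $(\widehat{\widehat{\rho}},\widehat{\widehat{\rho}})$. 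Applying $g_1$ amounts to transporting the $A\rtimes_{\rho,u}H\rtimes_{\widehat{\rho}}H^0$-bimodule structure to an $A\otimes M_N(\BC)$-bimodule structure through $\Psi_A$; since by Proposition \ref{prop:dual2} the map $\Psi_X$ is an equivalence-bimodule isomorphism of $X\otimes M_N(\BC)$ onto $X\rtimes_{\lambda}H\rtimes_{\widehat{\lambda}}H^0$ that is compatible with $\Psi_A=\Psi_B$ on both coefficient algebras and on the inner products, one obtains
$$
(g_1\circ\widehat{f}\circ f)([X,\lambda])=[X\otimes M_N(\BC),\ (\Psi_X^{-1}\otimes\id_{H^0})\circ\widehat{\widehat{\lambda}}\circ\Psi_X]
$$
in $\Pic_H^{\overline{\rho}}(A\otimes M_N(\BC))$.

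The heart of the matter is the last assertion of Proposition \ref{prop:dual2}, applied with $B=A$, $\sigma=\rho$, $v=u$: there are unitaries $U,V$ with $U\,\widehat{\widehat{\lambda}}(\Psi_X(y))\,V=(\Psi_X\otimes\id_{H^0})((\lambda\otimes\id_{M_N(\BC)})(y))$ for all $y\in X\otimes M_N(\BC)$. Solving for $\widehat{\widehat{\lambda}}(\Psi_X(y))$ and transporting through $\Psi_X^{-1}$, and using that $\Psi_X$ restricts on the left and right multiplier algebras to $\Psi_A$, so that $(\Psi_A^{-1}\otimes\id_{H^0})(U)$ is exactly the unitary $U_1$ used in the construction of $g_2$, one finds
$$
(\Psi_X^{-1}\otimes\id_{H^0})\circ\widehat{\widehat{\lambda}}\circ\Psi_X=\Ad(U_1^*)\circ(\lambda\otimes\id_{M_N(\BC)}),
$$
which is the expected form, being the bimodule counterpart of the identity $\overline{\rho}=\Ad(U_1^*)\circ(\rho\otimes\id_{M_N(\BC)})$. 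Now $g_2$ is by construction the isomorphism $[Z,\nu]\mapsto[Z,\Ad(U_1)\circ\nu]$ induced by the exterior equivalence $\rho\otimes\id_{M_N(\BC)}=\Ad(U_1)\circ\overline{\rho}$, so composing with $g_2$ cancels the $\Ad(U_1^*)$ and produces $[X\otimes M_N(\BC),\lambda\otimes\id_{M_N(\BC)}]$, which is exactly $g_3([X,\lambda])$. Applying $g_3^{-1}$ then gives $[X,\lambda]$, hence $g\circ f=\id$ on $\Pic_H^{\rho,u}(A)$.

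The main obstacle will be the computation in the second paragraph: pinning down precisely which of $U_1$, $U_1^*$ acts by left and which by right multiplication on the equivalence bimodule, and checking that the unitary $V$ of Proposition \ref{prop:dual2} is the adjoint of the unitary attached to the right coefficient algebra (equal to $U^*$ in our situation, $B=A$, $\sigma=\rho$, $v=u$), so that the left and right factors that appear combine with those produced by $g_2$ to give the identity rather than a square of $U_1$. This requires tracing through the construction in Proposition \ref{prop:nonunital} and its bimodule refinement underlying Proposition \ref{prop:dual2} to confirm that the unitary implementing the exterior equivalence of the double dual depends only on $(\rho,u)$ and is literally the one entering the definitions of $g_1$ and $g_2$. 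Everything else is the formal diagram chase above, together with the already-recorded facts that $f$, $\widehat{f}$, $g_1$, $g_2$ and $g_3$ are well-defined homomorphisms respecting the relevant equivalence relations.
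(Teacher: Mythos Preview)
Your proposal is correct and follows essentially the same route as the paper. Both arguments trace $[X,\lambda]$ through $f$, $\widehat f$, $g_1$, $g_2$ and invoke the bimodule duality isomorphism $\Psi_X$ of Proposition~\ref{prop:dual2} to identify the result with $[X\otimes M_N(\BC),\lambda\otimes\id_{M_N(\BC)}]=g_3([X,\lambda])$; the only cosmetic difference is that the paper keeps $(X\rtimes_{\lambda}H\rtimes_{\widehat\lambda}H^0)_{\Psi_A}$ as the underlying bimodule through the $g_1$, $g_2$ steps and uses $\Psi_X$ as the witnessing equivalence-bimodule isomorphism at the very end, whereas you absorb $\Psi_X$ already at the $g_1$ stage. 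The concern you flag in your last paragraph---that the right-hand unitary $V$ of Proposition~\ref{prop:dual2} must be $U^*$ when $(B,\sigma,v)=(A,\rho,u)$---is precisely what the paper's proof uses when it writes $\Ad(U)\circ\widehat{\widehat\lambda}=(\Psi_X\otimes\id)\circ(\lambda\otimes\id)\circ\Psi_X^{-1}$ citing Proposition~\ref{prop:dual2}; the paper does not spell out this verification, so your caution there is well placed but does not indicate a divergence in strategy.
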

\begin{proof}Let $[X, \lambda]\in\Pic_H^{\rho, u}(A)$. By the definitions of $f, \widehat{f}, g_1$
and $g_2$,
$$
(g_2 \circ g_1 \circ\widehat{f}\circ f)([X, \lambda])=[(X\rtimes_{\lambda}H\rtimes_{\widehat{\lambda}}H^0 )_{\Psi_A}, \,
\Ad(U_1 )\circ(\widehat{\widehat{\lambda}})_{\Psi_A}] .
$$
Let $\Psi_X$ be the linear map from $X\otimes M_N (\BC)$ to
$X\rtimes_{\lambda}H\rtimes_{\widehat{\lambda}}H^0$ defined in
Proposition \ref{prop:dual2} and we regard $\Psi_X$ as an $A\otimes M_N (\BC)
-A\otimes M_N (\BC)$-equivalence bimodule isomorphism of $X\otimes M_N (\BC)$
onto $(X\rtimes_{\lambda}H\rtimes_{\widehat{\lambda}}H^0 )_{\Psi_A}$.
Also, since $\Ad(U)\circ\widehat{\widehat{\lambda}}=(\Psi_X \otimes\id)\circ(\lambda\otimes\id)\circ\Psi_X^{-1}$
by Proposition \ref{prop:dual2}, for any $x\in A\otimes M_N (\BC)$,
\begin{align*}
(\Ad(U_1 )\circ(\widehat{\widehat{\lambda}})_{\Psi_A})(x) & =
U_1\cdot(\widehat{\widehat{\lambda}})_{\Psi_A}(x)\cdot U_1^*
=U\widehat{\widehat{\lambda}}(x)U^* \\
& =((\Psi_X \otimes\id)\circ(\lambda\otimes\id)\circ\Psi_X^{-1})(x) .
\end{align*}
Thus
$$
[(X\rtimes_{\lambda}H\rtimes_{\widehat{\lambda}}H^0 )_{\Psi_A}, \, 
\Ad(U_1 )\circ(\widehat{\widehat{\lambda}})_{\Psi_A}]=[X\otimes M_N (\BC), \, \lambda\otimes\id]
$$
in $\Pic_H^{\rho\otimes\id_{M_N (\BC)}, \, u\otimes I_N}(A\otimes M_N (\BC))$.
Since $g_3 ([X, \lambda])=[X\otimes M_N (\BC), \, \lambda\otimes\id_{M_N (\BC)}]$,
we obtain the conclusion.
\end{proof}

\begin{thm}\label{thm:duality}Let $(\rho, u)$ be a twisted coaction of $H^0$ on a unital
$C^*$-algebra $A$. Then $\Pic_H^{\rho, u}(A)\cong\Pic_{H^0}^{\widehat{\rho}}(A\rtimes_{\rho, u}H)$.
\end{thm}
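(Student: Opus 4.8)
The plan is to combine Proposition~\ref{prop:identity} with a symmetric application of the very same construction to the dual coaction $\widehat{\rho}$. Recall from the discussion preceding the theorem that $f$ is the homomorphism of Corollary~\ref{cor:homo}, that $\widehat{f}$ is the homomorphism of $\Pic_{H^0}^{\widehat{\rho}}(A\rtimes_{\rho, u}H)$ to $\Pic_H^{\widehat{\widehat{\rho}}}(A\rtimes_{\rho, u}H\rtimes_{\widehat{\rho}}H^0 )$ sending $[Y,\mu]$ to $[Y\rtimes_{\mu}H^0,\widehat{\mu}]$, and that $g=g_3^{-1}\circ g_2\circ g_1\circ\widehat{f}$, where $g_1$, $g_2$, $g_3$ are the isomorphisms constructed there (coming respectively from conjugation by $\Psi_A$ in Proposition~\ref{prop:nonunital}, from the exterior equivalence $\Ad(U_1)$, and from the strong Morita equivalence $\otimes M_N(\BC)$ together with Lemma~\ref{lem:isom}). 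I would write $\Theta=g_3^{-1}\circ g_2\circ g_1$, an isomorphism of $\Pic_H^{\widehat{\widehat{\rho}}}(A\rtimes_{\rho, u}H\rtimes_{\widehat{\rho}}H^0 )$ onto $\Pic_H^{\rho, u}(A)$, so that $g=\Theta\circ\widehat{f}$.

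First I would observe that, by Proposition~\ref{prop:identity}, $g\circ f=\id$ on $\Pic_H^{\rho, u}(A)$; hence $\Theta\circ\widehat{f}\circ f=\id$, so $\widehat{f}\circ f=\Theta^{-1}$, which is an isomorphism. In particular $f$ is injective and $\widehat{f}$ is surjective.

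Next, since $A$ is unital and $H$ is finite dimensional, $A\rtimes_{\rho, u}H$ is a unital $C^*$-algebra, and $\widehat{\rho}$ is a (non-twisted) coaction of the finite dimensional $C^*$-Hopf algebra $H$, whose dual is $H^0$, on $A\rtimes_{\rho, u}H$. Applying Proposition~\ref{prop:identity} with the roles of the dual pair $(H,H^0)$ interchanged, to the coaction $\widehat{\rho}$ of $H$ on $A\rtimes_{\rho, u}H$ (with trivial cocycle $1\otimes 1^0$), the associated homomorphism ``$f$'' is precisely $\widehat{f}$, and we obtain a homomorphism $\widehat{g}$ of $\Pic_H^{\widehat{\widehat{\rho}}}(A\rtimes_{\rho, u}H\rtimes_{\widehat{\rho}}H^0 )$ to $\Pic_{H^0}^{\widehat{\rho}}(A\rtimes_{\rho, u}H)$ with $\widehat{g}\circ\widehat{f}=\id$. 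Thus $\widehat{f}$ is also injective, so $\widehat{f}$ is an isomorphism. Finally, from $\widehat{f}\circ f=\Theta^{-1}$ with both $\widehat{f}$ and $\Theta^{-1}$ isomorphisms, it follows that $f=\widehat{f}^{-1}\circ\Theta^{-1}$ is an isomorphism of $\Pic_H^{\rho, u}(A)$ onto $\Pic_{H^0}^{\widehat{\rho}}(A\rtimes_{\rho, u}H)$, which proves the theorem.

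The single point that needs care — and which I expect to be the only real obstacle — is justifying that Proposition~\ref{prop:identity} genuinely applies to $\widehat{\rho}$: namely that $A\rtimes_{\rho, u}H$ is unital, that the entire construction of $f$ and $g$ is symmetric under exchanging the dual pair $H\leftrightarrow H^0$ (so that Propositions~\ref{prop:nonunital} and \ref{prop:dual2}, on which $g_1,g_2,g_3$ rest, are available in that instance), and that the map ``$f$'' produced for $\widehat{\rho}$ coincides on the nose with the $\widehat{f}$ already defined, so that $\widehat{g}\circ\widehat{f}=\id$ is literally Proposition~\ref{prop:identity} in this case rather than merely an analogue whose proof must be rerun. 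Once this identification is made explicit, the argument above is purely formal.
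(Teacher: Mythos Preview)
Your proof is correct and follows essentially the same route as the paper's own argument: from Proposition~\ref{prop:identity} one gets $g\circ f=\id$, and a second application of that proposition to the dual coaction $\widehat{\rho}$ (with the pair $H\leftrightarrow H^0$ interchanged) yields injectivity of $\widehat{f}$, after which bijectivity of $f$ follows formally from $g=g_3^{-1}\circ g_2\circ g_1\circ\widehat{f}$ with the $g_i$ bijective. The point you flag as needing care --- that the whole construction is symmetric in $H$ and $H^0$ so that Proposition~\ref{prop:identity} literally applies to $(A\rtimes_{\rho,u}H,\widehat{\rho})$ and its ``$f$'' is the given $\widehat{f}$ --- is exactly the step the paper compresses into the single phrase ``we can see that $\widehat{f}$ is injective by Proposition~\ref{prop:identity}''.
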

\begin{proof}Let $f, \widehat{f}, g_i , \, (i=1,2,3)$ and $g$ as in the proof of
Proposition \ref{prop:identity}. By Proposition \ref{prop:identity}, $g\circ f=\id$ on
$\Pic_H^{\rho, u}(A)$. Hence $f$ is injective and $g$ is surjective. Furthermore,
we can see that $\widehat{f}$ is injective by Proposition \ref{prop:identity}. Since $g=g_3^{-1}\circ g_2 \circ g_1 \circ \widehat{f}$
and $g_i $ $(i=1,2,3)$ are bijective, $g$ is injective. It follows that $g$ is bijective.
Therefore, $f$ is an isomorphism of $\Pic_H^{\rho, u}(A)$ onto
$\Pic_{H^0}^{\widehat{\rho}}(A\rtimes_{\rho, u}H)$.
\end{proof}

\end{document}